\colorlet{linkequation}{blue}
\DeclareSymbolFont{rsfs}{U}{rsfs}{m}{n}
\DeclareSymbolFontAlphabet{\mathscrsfs}{rsfs}
\numberwithin{equation}{section}
\newtheoremstyle{myexample} 
    {\topsep}                    
    {\topsep}                    
    {\rm }                   
    {}                           
    {\bf }                   
    {.}                          
    {.5em}                       
    {}  
\newtheoremstyle{myremark} 
    {\topsep}                    
    {\topsep}                    
    {\rm}                        
    {}                           
    {\bf}                        
    {.}                          
    {.5em}                       
    {}  
\newtheorem{claim}{Claim}[section]
\newtheorem{lemma}[claim]{Lemma}
\newtheorem{conjecture}[claim]{Conjecture}
\newtheorem{theorem}{Theorem}
\newtheorem{proposition}[claim]{Proposition}
\newtheorem{corollary}[claim]{Corollary}
\newtheorem{definition}[claim]{Definition}
\theoremstyle{myremark}
\newtheorem{remark}{Remark}[section]
\theoremstyle{myremark}
\theoremstyle{myexample}
\newcommand\footnoteref[1]{\protected@xdef\@thefnmark{\ref{#1}}\@footnotemark}
\def\snr{\mathsf{snr}}
\def\<{\langle}
\def\>{\rangle}
\def\P{{\mathbb P}}
\def\toprob{\stackrel{{\mathrm p}}{\rightarrow}}
\def\toW{\stackrel{{\mathrm W}}{\rightarrow}}
\def\simprob{\stackrel{{\mathrm p}}{\simeq}}
\def\integers{{\mathbb Z}}
\def\E{{\mathbb E}} 
\def\Var{{\sf{Var}}}
\def\de{{\rm d}}
\def\normal{{\sf N}}
\def\reals{\mathbb{R}}
\def\normal{{\sf N}}
\def\cB{{\mathcal{B}}}
\def\cC{{\mathcal{C}}}
\def\cG{{\mathcal{G}}}
\def\cP{{\mathcal{P}}}
\def\cT{{\mathcal{T}}}
\def\cV{{\mathcal{V}}}
\def\bzero{{\boldsymbol 0}}
\def\binfty{{\boldsymbol \infty}}
\def\bdelta{{\boldsymbol \delta}}
\def\bg{{\boldsymbol g}}
\def\bB{{\boldsymbol B}}
\def\by{{\boldsymbol y}}
\def\bu{{\boldsymbol u}}
\def\br{{\boldsymbol r}}
\def\bk{{\boldsymbol k}}
\def\bS{{\boldsymbol S}}
\def\bT{{\boldsymbol T}}
\def\bQ{{\boldsymbol Q}}
\def\bI{{\boldsymbol I}}
\def\bx{{\boldsymbol x}}
\def\bz{{\boldsymbol z}}
\def\ba{{\boldsymbol a}}
\def\bp{{\boldsymbol p}}
\def\bh{\boldsymbol{h}}
\def\bT{{\boldsymbol T}}
\def\bX{{\boldsymbol X}}
\def\bv{{\boldsymbol v}}
\def\bu{{\boldsymbol u}}
\def\bw{{\boldsymbol w}}
\def\btheta{{\boldsymbol \theta}}
\def\bkappa{{\boldsymbol \kappa}}
\def\bPhi{{\boldsymbol \Phi}}
\def\bc{{\boldsymbol c}}
\def\bb{{\boldsymbol b}}
\def\bU{{\boldsymbol U}}
\def\bV{{\boldsymbol V}}
\def\bA{{\boldsymbol A}}
\def\bB{{\boldsymbol B}}
\def\bg{{\boldsymbol g}}
\def\bbeta{{\boldsymbol \beta}}
\def\bm{{\boldsymbol m}}
\def\U{{\rm U}}
\def\U1{{\rm U}(1)}
\def\Tr{{\rm Tr}}
\def\eps{{\varepsilon}}
\def\ones{{\mathbf 1}}
\def\cE{{\mathcal E}}
\def\lambdaorc{\lambda_{\mathsf{orc}}}
\def\tauorc{\tau_{\mathsf{orc}}}
\title{Fundamental Barriers to High-Dimensional Regression\\ with Convex Penalties}
\author{Michael Celentano\thanks{Department of Statistics, University of California, Berkeley} \;\; and \;\; Andrea Montanari\thanks{Department of Electrical Engineering and Department of Statistics, Stanford University} }
\date{\today}
\begin{document}

\maketitle

\begin{abstract}
    In high-dimensional regression, we attempt to estimate a parameter vector $\bbeta_0\in\reals^p$ from $n\lesssim p$
    observations $\{(y_i,\bx_i)\}_{i\le n}$ where $\bx_i\in\reals^p$ is a vector of predictors and $y_i$ is a response variable.
    A well-established approach uses convex regularizers to promote specific structures (e.g. sparsity) of the estimate $\widehat{\bbeta}$,
    while allowing for practical algorithms.
    Theoretical analysis implies that convex penalization schemes have nearly optimal estimation properties in certain settings. 
    However, in general the gaps between statistically optimal estimation (with unbounded computational resources) and convex methods
    are poorly understood.

    We show that when the statistican has very simple structural information about the distribution of the entries of $\bbeta_0$, a large gap frequently exists between the best performance achieved by \emph{any convex regularizer} satisfying a mild technical condition and either \emph{(i)}~the optimal statistical error or \emph{(ii)}~the statistical error achieved by optimal approximate message passing algorithms.
    Remarkably, a gap occurs at high enough signal-to-noise ratio if and only if the distribution of the coordinates of
        $\bbeta_0$ is not log-concave.
    These conclusions follow from an analysis of standard Gaussian designs.
    Our lower bounds are expected to be generally tight, and we prove tightness under certain conditions. 
\end{abstract}

\tableofcontents

\section{Introduction}

Consider the classical linear regression  model
\begin{equation}\label{linear-model}
\by = \bX \bbeta_0 + \bw,
\end{equation}
where $\bX \in \reals^{n \times p}$.
The statistician observes $\by$ and $\bX$ but not $\bbeta_0$ or $\bw$, and she seeks to estimate $\bbeta_0$. 
We assume she approximately knows the $\ell_2$-norm of the noise $\bw$ and the empirical distribution of the coordinates of $\bbeta_0$ in 
senses we will make precise below.

We are interested in the high-dimensional regime in which $p$ is comparable to $n$, and both are large. In this regime, computational considerations
are crucial: only estimators which can be implemented by polynomial-time algorithms are relevant to statistical practice. 

This paper develops precise lower bounds that characterize a broad class of estimators which are attractive in large part for their computational tractability.
These are penalized least-squares estimators of the form:
\begin{equation}\label{linear-cvx-estimator}
\widehat \bbeta_{\mathsf{cvx}} \in \arg\min_{\bbeta} \left\{\frac1n\|\by - \bX \bbeta\|^2 + \rho(\bbeta)\right\},
\end{equation}
where 
$\rho: \mathbb{R}^p \rightarrow \mathbb{R}\cup \{\infty\}$ 
is a lower semi-continuous (lsc), proper, convex function. 
The penalty $\rho$ is selected to incorporate prior knowledge on the structure of $\bbeta_0$ into the estimation procedure.
Convexity typically yields an estimator which is efficiently computable. 
Concretely, we address the following question:
\begin{itemize}
\item[] \emph{How well can we hope estimator \eqref{linear-cvx-estimator} to perform in the high-dimensional regime
by optimally designing $\rho$? 
How does this performance compare to other polynomial-time algorithms and to conjectured computational lower bounds?}
\end{itemize}

The design of optimal penalties or loss functions was considered only when the distribution of the noise or --in the case of Bayesian models-- the prior had log-concave density with respect to Lebesgue measure \cite{Bean2013,advani2016statistical}.
Log-concavity excludes important structural assumptions like sparsity, and,
as we will show, is exactly the condition which leads to gaps between convex procedures and important computational or information-theoretic benchmarks. 
Thus, the case of non-log-concave priors is both practically important and algorithmically more subtle.

We will illustrate our conclusions with two small simulation studies.

\subsection{A surprise: Exact recovery of a vector from 3-point prior}\label{sec:noiseless-3pt-recovery}

Consider the case of noiseless linear measurements, namely $\bw=\bzero$ in  Eq.~\eqref{linear-model}. We assume that the
empirical distribution of $\bbeta_0$ is known, and let $S$ be the set of vectors with that empirical distribution (i.e., vectors obtained by permuting the entries of $\bbeta_0$).
If we had unbounded computational resources, we would attempt reconstruction by finding  $\bbeta\in S$ such that $\by = \bX\bbeta$. 
If only one such vector exists, then 
we are sure it coincides $\bbeta_0$.
Otherwise, exact recovery is impossible.

What is the best we can achieve by convex procedures and practical (polynomial-time) algorithms? Most researchers with a knowledge of compressed sensing or high-dimensional
statistics would consider the following convex relaxation 
\begin{align}
\begin{split}\label{convex-feasibility-prob}
    \text{find} \quad & \bbeta \in \mathsf{conv}(S)\, ,\\
    \text{subject to}\quad & \by = \bX \bbeta.
\end{split}
\end{align}
This is the tightest possible relaxation of the combinatorial constraint $\bbeta\in S$.
It can be written in the form \eqref{linear-cvx-estimator}, where, setting $C:=\mathsf{conv}(S)$, the penalty is 
$\rho(\bbeta) = \mathbb{I}_{C}(\bbeta)$, and $\mathbb{I}_C(\bbeta):= 0$ if $\bbeta\in C$, $\mathbb{I}_C(\bbeta) := \infty$  otherwise.

Notice that the approach \eqref{convex-feasibility-prob} is at least as effective as ---for instance--- basis pursuit
\cite{BP95}, which minimizes  $\|\bbeta \|_1$ subject to $\by = \bX \bbeta$.
To see this, notice that (for a generic $\bX$) the approach \eqref{convex-feasibility-prob} fails if and only if there exists $\bbeta_*$
in the interior of $\mathsf{conv}(S)$ such that $\by = \bX\bbeta_*$. Since $S\subseteq\{\bbeta:\, \|\bbeta\|_1\le \|\bbeta_0\|_1\}$,
this implies $\|\bbeta_*\|_1<\|\bbeta\|_1$ and therefore basis pursuit fails as well.

Is replacing the combinatorial constraint $S$ with its tightest convex relaxation  $C\equiv\mathsf{conv}(S)$ the best we can do?
We report the results of a simulation study, with $p = 2000$, $n = 0.4\cdot p = 800$. We generate a parameter vector $\bbeta_0$ in which 
$0.75\cdot p = 1500$ coordinates are equal to $0$, $0.15p = 300$
coordinates are equal to $0.2/\sqrt{p}$, and $0.1\cdot p = 200$ coordinates are equal to $1/\sqrt{p}$.
In particular, the empirical distribution of the coordinates of $\sqrt{p}\bbeta_0$ is $\pi:= .75 \cdot \delta_0 + .15 \cdot \delta_{0.2} + .1\cdot \delta_1$, which is far from being log-concave.
We generate Gaussian features $(X_{ij})_{i\leq n, j \leq p} \stackrel{\mathrm{iid}}\sim \mathsf{N}(0,1)$ and response $\by$ according to linear model \eqref{linear-model} with $\bw = \bzero$.

We attempt to recover $\bbeta_0$ using two different methods: $(i)$ an accelerated proximal gradient method to solve \eqref{convex-feasibility-prob}, and $(ii)$ a Bayes-optimal approximate message passing (Bayes-AMP) algorithm at prior $\pi$
(see Section \ref{sec:AlgoLB}).
The former is a convex optimization method, while
the latter is an efficient but non-convex procedure.
We generate 500 independent realizations of the data, and for each realization, we attempt to recover $\bbeta_0$ by each method.
In Table \ref{tbl:3pt-full-recovery}, we report the percentage of simulations in which full recovery was achieved by each method.
For 498 of the 500 realizations of the data, Bayes-AMP achieved full recovery; that is, $\widehat \bbeta = \bbeta_0$
up to machine precision. 
In contrast, the convex procedure never fully recovered $\bbeta_0$.
We also report the median, minimal, and maximal value of the relative estimation error
$\|\widehat \bbeta - \bbeta_0\|^2 /\|\bbeta_0\|_2^2$.
The relative errors displayed indicate that projection denoising never comes close to achieving exact recovery of the true parameter vector. 
\begin{table}[ht]
\centering
\begin{tabular}{rrr}
  \hline
 & Projection Denoising & Bayes-AMP \\ 
  \hline
\% Full Recovery & 0.00 & 99.60 \\ 
  Median Est. Error & 0.14 & 0.00 \\ 
  Min Est. Error & 0.06 & 0.00 \\ 
  Max Est. Error & 0.22 & 0.03 \\ 
  Theory Lower Bounds & 0.06 & 0.00 \\ 
   \hline
\end{tabular}
\caption{Percentage of simulations in which full recovery is achieved by convex projection (estimator \eqref{convex-feasibility-prob}) and by Bayes-AMP, as well as median, minimum, and maximum value of $\|\widehat \bbeta - \bbeta_0\|^2 /\|\bbeta_0\|_2^2$ observed over 500 independent realization of the data. Full recovery for Bayes-AMP means $\widehat \bbeta = \bbeta_0$ up to machine precision. ``Theory lower bounds'' are high-probability asymptotic lower bounds on $\|\widehat \bbeta - \bbeta_0\|^2 /\|\bbeta_0\|_2^2$ for any convex procedure (left) and for Bayes-AMP (right).}
\label{tbl:3pt-full-recovery}
\end{table}

This study supports the perhaps surprising conclusion that estimator \eqref{convex-feasibility-prob} is sub-optimal among polynomial-time estimators 
for the task of noiseless recovery of a parameter vector whose coordinates have known empirical distribution $\pi$.
In fact, this paper rigorously establishes a substantially more powerful conclusions, namely, that \emph{(i)} \emph{any} convex estimator of the form \eqref{linear-cvx-estimator} will 
with high-probability not only fail to recover the true signal, but also have estimation error lower-bounded by a constant (we refer to Section \ref{sec:lower-bounds-and-benchmarks} for precise asymptotic statements).
This lower bound is reported in Table \ref{tbl:3pt-full-recovery}.
Thus, in this case full recovery is possible both information theoretically and in polynomial-time but not via convex procedures. 
As we will see, this gap is driven by the non log-concavity of $\pi$.
In fact, the convex estimator \eqref{convex-feasibility-prob} is suboptimal with respect to $\ell_2$-estimation error even among convex procedures.

In contrast to convex procedures, Bayes-AMP achieves vanishingly small reconstruction error in the current setting with probability approaching 1. 
Let us mention that for noiseless or nearly noiseless observations, an alternative polynomial-time algorithm that achieves exact recovery for
discrete priors was recently developed in \cite{gamarnik2017high}. 
However, the approach of \cite{gamarnik2017high} does not apply when the signal-to-noise ratio is of order one, which is the main
focus of the present paper.

\begin{figure}[t!]
\begin{tabular}{cc}
\centerline{\phantom{A}\hspace{-1cm}\includegraphics[width=0.96\columnwidth]{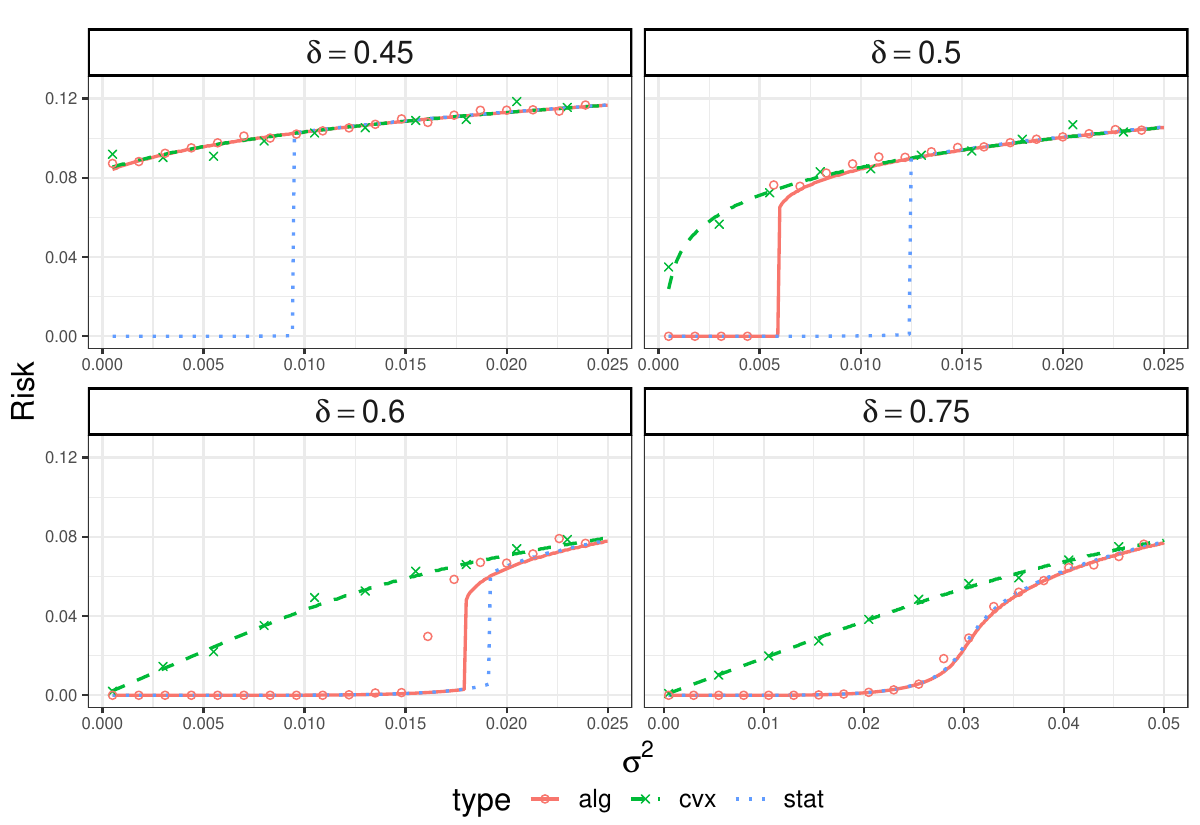}}
\end{tabular}
\caption{Median squared error of estimation in  high-dimensional regression. Symbols refer to simulations for two different polynomial-time algorithms. 
Crosses: M-estimator \eqref{linear-cvx-estimator} for a certain optimized penalty $\rho(\bbeta)$. Circles: Bayes-Approximate Message Passing.
Dashed and solid lines correspond to our theoretical predictions for the asymptotic behavior of these algorithms. Dotted line corresponds to the asymptotics of the Bayes error.
See main text for further details.} 
\label{fig:Simulations}
\end{figure}

\subsection{An example: Noisy estimation of a sparse vector}\label{sec:noisy-sparse-estimation}

Gaps between the performance of convex procedures and optimal polynomial-time algorithm persist in the presence of noise. 
They may also occur in regimes in which all known polynomial-time algorithms are suboptimal information theoretically.
To illustrate these claims, in Figure \ref{fig:Simulations} we report the results of a simulation study for $p = 2000$, $n= 2000\delta$.
We 
generated Gaussian features $(X_{ij})_{i\le n, j\le p}\stackrel{\mathrm{iid}}\sim\normal(0,1)$, noise $\bw \sim \mathsf{Unif}(\sqrt{n}\sigma S^{n-1})$ the uniform distribution on the sphere of radius $\sqrt{n}\sigma$ in $\reals^n$, and $\bbeta_0$ such that $0.1 p$ coefficients are $1/\sqrt{p}$, $0.1 p$ coefficients are $-1/\sqrt{p}$, and $0.8 p$ coefficients are $0$.
Observe that the empirical distribution of the coordinates of $\sqrt{p}\bbeta_0$ is $\pi := (\eps/2)\delta_{-1} + (1-\eps)\delta_0 + (\eps/2)\delta_1$ with $\eps = 0.2$, which is of course non log-concave.
We generated response variables $\by$ according to the linear model \eqref{linear-model} and attempted to estimate the parameter vector
$\bbeta_0$ using two different methods: 
$(i)$ a convex M-estimator of the form \eqref{linear-cvx-estimator}, with a penalty $\rho(\bbeta)$ which was carefully optimized for the prior $\pi$,
$(ii)$ an approximate message passing (AMP) algorithm called Bayes AMP (which is optimal among AMP algorithms
for the prior $\pi$, but not always Bayes optimal).

The choice of Bayes-AMP as a reference algorithm is not arbitrary. It is in fact justified by the following conjecture, which is motivated
by ideas in statistical physics and has appeared informally several times in the literature. 
In the context of statistical estimation problems arising in information theory, this
conjecture appears in  Chapters 15 and 21 of \cite{mezard2009information}.
For tutorials discussing it in the context of statistical estimation, see 
Sections III E and IV B of \cite{Zdeborov2015StatisticalPO}; Sections 4.2 and 4.3 of \cite{bandeira2018notes}.
For recent contributions mentioning this idea or analogous ones in the context of matrix estimation,
see \cite{barbier2017,lelarge2019fundamental,banks2021local}.
\begin{conjecture}\label{conj:Gap}
Consider the problem of estimating $\bbeta_0$ in the linear model  \eqref{linear-model} with standard Gaussian features
$(X_{ij})_{i\le n, j\le p}\stackrel{\mathrm{iid}}\sim\normal(0,1)$, noise $(w_i)_{i\le n}\stackrel{\mathrm{iid}}\sim\normal(0,\sigma^2)$ with $\sigma > 0$, and coefficients such that $(\sqrt{p}\beta_{0,i})_{i\le p}\stackrel{\mathrm{iid}}\sim \pi$, with $\pi$ a distribution
 with finite second moment. Assume $\pi$ is known to the statistician. 
Then Bayes-AMP achieves the minimum mean square estimation error among all polynomial-time algorithms in the limit $n,p\to\infty$ with $n/p \rightarrow \delta$ fixed.
\end{conjecture}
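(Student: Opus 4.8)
Since the final statement is a \emph{conjecture} rather than a theorem, any ``proof'' here is really a research program, and I would organize it around two pieces: an \emph{achievability} computation for Bayes-AMP and a matching \emph{hardness} lower bound against all polynomial-time algorithms. I expect essentially all of the difficulty to sit in the second piece.

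\textbf{Achievability.} Bayes-AMP is a polynomial-time algorithm, so here one only needs to pin down its asymptotic risk. The plan is standard: (i) write the Bayes-AMP iteration whose denoiser is the posterior mean $\eta_t(y)=\E[\Theta\mid\Theta+\tau_t Z=y]$ with $\Theta\sim\pi$ and $Z\sim\mathsf{N}(0,1)$; (ii) apply the state-evolution theorem (Bayati--Montanari, in the form valid for Bayes denoisers, with the standard care needed when $\pi$ has unbounded support but finite second moment) to reduce the $p$-dimensional dynamics to the scalar recursion $\tau_{t+1}^2=\sigma^2+\delta^{-1}\,\mathsf{mmse}_\pi(1/\tau_t^2)$, where $\mathsf{mmse}_\pi(s)$ is the minimum mean square error of estimating $\Theta$ from a Gaussian channel of signal-to-noise ratio $s$; (iii) note that the one-step map $\tau^2\mapsto\sigma^2+\delta^{-1}\mathsf{mmse}_\pi(1/\tau^2)$ is monotone nondecreasing in $\tau^2$ (since $\mathsf{mmse}_\pi$ is nonincreasing in SNR), so iterating from the uninformative start $\tau_0^2=\sigma^2+\delta^{-1}\E_\pi[\Theta^2]$ produces a nonincreasing sequence converging to the \emph{largest} fixed point $\tau_{\ast}^2$, whence the limiting per-coordinate error of Bayes-AMP equals $\mathsf{mmse}_\pi(1/\tau_{\ast}^2)=\delta(\tau_{\ast}^2-\sigma^2)$. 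The only genuine subtleties are regularity of $\eta_t$ for heavy-tailed $\pi$ and the behavior on the measure-zero set of $(\delta,\sigma,\pi)$ where the recursion has a non-isolated fixed point.

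\textbf{Hardness.} This is the crux, and in my view the true obstacle: one must show that no polynomial-time algorithm beats $\mathsf{mmse}_\pi(1/\tau_{\ast}^2)$ precisely in the regime where it strictly exceeds the information-theoretic MMSE --- i.e. where the replica-symmetric potential $\Phi_\pi$ (whose stationary points coincide with the state-evolution fixed points, and whose global minimizer yields the Bayes risk via the now-rigorous single-letter formula of Barbier et al.\ and Reeves--Pfister) has a first-order transition and $\tau_{\ast}^2$ is a local but not global minimizer. I see three candidate tools, none of which proves the statement as written. First, \emph{low-degree polynomials}: lower-bound the degree-$D$ minimum error and conjecture it matches $\tau_{\ast}^2$ for $D=\mathrm{polylog}\,p$; this controls only the low-degree class. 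Second, the \emph{overlap gap property}: exhibit, in the hard regime, a forbidden band of pairwise overlaps among near-optimal $\bbeta$, which rules out stable, local, Langevin, and AMP-type algorithms; again a restricted model. Third, \emph{average-case reductions} from planted clique or a planted sparse variant; these have so far produced only coarse thresholds for special $\pi$ and seem ill-suited to a sharp constant.

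Consequently, a realistic target is: (a) prove achievability unconditionally along the lines above; (b) prove the hardness half inside the low-degree and OGP frameworks with the barrier located exactly at $\tau_{\ast}^2$ --- the present paper's lower bound for \emph{convex} M-estimators is in the same spirit, covering yet another natural subclass; and (c) state the full optimality-among-all-polynomial-time-algorithms conclusion only conditionally on, say, the low-degree conjecture. The genuinely hard part --- upgrading from these restricted computational models to the class of \emph{all} polynomial-time algorithms --- is beyond every current technique for dense inference problems of this type, and I would not expect it to be resolved without a breakthrough in average-case complexity.
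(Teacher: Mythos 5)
You are right that this statement is a conjecture: the paper never proves it, and indeed restates it more precisely later (Conjecture \ref{conj-algorithmic-threshold}) and leaves it open, so there is no ``paper proof'' against which your argument can fail. What the paper does supply is exactly the two kinds of partial evidence you describe, and it is worth comparing routes. On the achievability side your plan coincides with the paper's: Proposition \ref{prop-bAMP-achieves-alg-bound} runs Bayes-AMP with posterior-mean denoisers, invokes state evolution (in the non-separable form of \cite{Berthier2017StateFunctions}), and shows the scalar recursion \eqref{B-AMP-SE} decreases monotonically from the uninformative initialization to the largest fixed point $\tau_{\mathsf{reg,alg}*}^2$, giving asymptotic risk $\delta\tau_{\mathsf{reg,alg}*}^2-\sigma^2$; the regularity issue you flag for heavy-tailed $\pi$ is handled there by truncating the Bayes denoiser at level $M$ and letting $M\to\infty$, and your measure-zero caveat about degenerate fixed points corresponds to Claim \ref{claim-alg-boudn-sup-to-inf-form-is-generic} (the identity $\tau_{\mathsf{reg,alg}}=\tau_{\mathsf{reg,alg}*}$ holds for a.e.\ $(\delta,\sigma)$). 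On the hardness side the two programs genuinely diverge: you propose low-degree, overlap-gap, and reduction-based barriers, i.e.\ hardness within other restricted computational classes, whereas the paper's contribution is the sharp lower bound over the class of all symmetric convex penalized M-estimators with $\delta$-bounded width (Theorem \ref{thm-cvx-lower-bound}, Theorem \ref{thm-cvx-cannot-beat-alg}), proved via an oracle perturbation and the AMP/proximal correspondence. Each buys something different — your frameworks cover non-convex local dynamics and spectral/low-degree methods that the paper's result does not, while the paper's bound is exact at the level of constants for the estimator class practitioners actually use — but, as you correctly stress, neither upgrades to \emph{all} polynomial-time algorithms, and no current technique does. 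So your proposal is a sound reading of the state of the art rather than a proof, which is all the statement admits.
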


We plot the median error under square loss achieved by these two  estimators, as a function of the noise level, for four values of $\delta =n/p$. We also plot:
$(i')$~the  asymptotic Bayes risk, as predicted by \cite{thrampoulidis2018precise,barbier2017,barbier2018optimal} 
(see Section \ref{subsection-comparing-stat-and-cvx}); $(ii')$~the predicted performance of Bayes-AMP (see Section \ref{sec:AlgoLB}); $(iii')$~our lower 
bound on the risk of convex M-estimators (cf.\ Theorem \ref{thm-cvx-lower-bound}).
Three qualitatively different behaviors can be discerned:
\begin{itemize}
\item For $\delta=0.45$, optimal convex M-estimators matches the performance of Bayes-AMP, and they are both substantially 
suboptimal with respect to Bayes estimation.
\item For $\delta\in \{0.5,0.6\}$, optimal convex M-estimation is suboptimal compared to Bayes AMP, and --in turn-- they are both inferior to Bayes estimation.
\item For $\delta = 0.75$, Bayes-AMP is Bayes optimal for all noise levels $\sigma$, and both Bayes-AMP and Bayes estimation are superior to optimal convex M-estimation.
\end{itemize}
We further note that our lower bound for convex M-estimation is nearly matched by the error achieved by the specific regularizer used in simulations. 
Our results rigorously establish the existence of these three qualitative behaviors,
and, as we will see, are driven by the non log-concavity of $\pi$ convolved with various levels of Gaussian noise.
Moreover, our convex lower bounds appear to be tight and are consistent with the conjectured computational lower bound achieved by Bayes AMP.

\subsection{Summary of contributions}

The present  paper establishes  the scenario illustrated by Figure \ref{fig:Simulations} and Table \ref{tbl:3pt-full-recovery} in a precise way.
Our results hold for the case of standard Gaussian  features. Since convex regularizers are thought to perform well in this setting,
establishing lower bounds in this case is particularly informative. 
Namely:
\begin{enumerate}
\item We prove that, for any given convex penalty, a solution to a certain system of equations provides a
  lower bound on the asymptotic estimation error achieved by this penalty. Further, this lower bound is tight 
--and hence precisely characterizes the asymptotic mean square error-- if the penalty $\rho$ is strongly convex.

\item  We prove the lower bound on the error of any convex M-estimator
  plotted in Figure \ref{fig:Simulations} and reported in Table \ref{tbl:3pt-full-recovery}.
  This lower bound applies to both log-concave and non log-concave priors for $\bbeta_0$.
\item  We  prove that the three behaviors illustrated by Figure \ref{fig:Simulations} are the only possible and that they indeed occur. Namely,
the Bayes error is smaller than the Bayes-AMP error, and sometimes strictly smaller, and the Bayes-AMP error is always smaller than the
convex M-estimation error, and sometimes strictly smaller.
\item The occurrence of these three phases is determined by the log-concavity or not of the prior convolved with Gaussian noise at a certain variance which we specify. 
Importantly, non-trivial phase diagrams occur exactly when the prior is non log-concave.
In particular, we provide a nearly complete characterization of when convex M-estimation achieves Bayes-optimal error, and when it does not.
In order get a quantitative understanding on the statistical-convex gap, we characterize it in  the high and low 
signal-to-noise ratio regimes.
\item Finally, our general lower bound holds under a certain technical condition on the regularizers $\rho$, which we call 
\emph{$\delta$-bounded width.}
We illustrate our results by considering a number of convex penalties introduced in the literature, including separable penalties,
convex constraints, SLOPE,  and OWL norms. We show that, in each of these cases, the bounded width condition holds.
\end{enumerate}
Our work is consistent with
Conjecture \ref{conj:Gap} in showing that no convex M-estimator of the form \eqref{linear-model} can surpass the
postulated lower bound on polynomial-time algorithms.  Further, we believe that the characterization
mentioned at the first point  holds beyond strongly convex penalties: since we are mostly interested in the lower bound,
we do not attempt to prove such general result.

The asymptotic characterization of Bayes-AMP is completely explicit and can be easily evaluated, 
hence it can provide concrete guidance in specific problems.
We expect that universality arguments \cite{korada2011applications,bayati2015universality,oymak2018universality}
can be used to show that the same asymptotics hold for iid non-Gaussian features. 

Finally, let us emphasize that we do not advocate the dismissal of
convex penalization method in favor of other approaches, such as message passing algorithms. Convex algorithms
present strong robustness properties that are practically important and not captured by our setting. At the same time,
our work points at  directions for improving their statistical properties. For instance,
Section \ref{sec-beyond-square-error} shows that
a suitable post-processing of a convex M-estimator can nearly bridge the gap to information-theoretically optimal
performance in a large sample size regime (namely for $n/p$ large but of order one). 

\subsection{Related literature}

By far the best-studied estimator of the form \eqref{linear-cvx-estimator} is the Lasso \cite{Tibs96,BP95}, which corresponds to the penalty 
$\rho(\bbeta) = \lambda\|\bbeta\|_1$. An impressive body of theoretical work supports the conclusion that the Lasso
achieves nearly optimal performances when we know that the true vector $\bbeta_0$ is sparse \cite{CandesTao,Dantzig,BickelEtAl,BuhlmannLASSO}. 
Our main conclusion is that, if we attempt to exploit richer
information about the empirical distribution of the coefficients $(\beta_{0,j})_{j\le p}$,
then not only the Lasso, but also any convex estimator \eqref{linear-cvx-estimator}
is substantially suboptimal as compared to the Bayes error or other polynomial-time algorithms. On the other hand,
convex estimators are optimal if the coefficients distribution is log-concave.

Our work builds on a series of recent theoretical advances. 
First, we make use of the sharp analysis of AMP algorithms using state evolution which was developed in \cite{bolthausen2014iterative,BM-MPCS-2011,javanmard2013state}.
In particular, the recent paper \cite{Berthier2017StateFunctions} proves that state evolution holds for certain classes of non-separable
nonlinearities. This is particularly relevant for the present setting, since we are interested in non-separable penalties $\rho(\bbeta)$.

The connection between M-estimation and AMP algorithms was first developed in \cite{DMM09} and subsequently used in 
\cite{BayatiMontanariLASSO} to characterize the asymptotic mean square error of the Lasso for standard Gaussian designs.
The same approach was subsequently used in the context of robust regression in \cite{Donoho2016HighPassing}.
AMP algorithms were developed and analyzed for a number of statistical estimation problems, 
including generalized linear models~\cite{rangan2011generalized}, phase retrieval~\cite{schniter2015compressive,ma2018optimization}, 
and logistic regression~\cite{sur2018modern}.

A different approach to sharp asymptotics in high-dimensional estimation problems makes use of Gaussian comparison inequalities.
This line of work was pioneered by Stojnic~\cite{stojnic2013framework} and
then developed by a number of authors in the context of regularized regression~\cite{thrampoulidis2015regularized},
M-estimation ~\cite{thrampoulidis2018precise}, generalized compressed sensing~\cite{Chandrasekaran2012TheProblems}, binary
compressed sensing~\cite{stojnic2010recovery}, the Lasso \cite{miolane2018distribution}, and so on. 

An independent approach to high-dimensional estimation based on leave-one-out techniques was developed by El Karoui
in the context of ridge-regularized robust regression~\cite{karoui2013asymptotic,el2018impact}.
Closely related to the present work is the paper \cite{Bean2013}, which considers convex M-estimation,
and constructs separable convex losses that match the Bayes optimal error in settings in which the noise distribution is log-concave and hence the gap between the two vanishes.
Our work extends this analysis to cases in which log-concavity assumptions are violated so that the Bayes error cannot be achieved.
In this paper, we focus on the role of regularization rather than the loss function,
though we suspect similar analyses should be possible for general convex losses. 
Optimal convex M-estimators were also studied
---using tools from statistical physics--- in \cite{advani2016statistical}.

As mentioned above, we compare the performance of convex M-estimators to the optimal Bayes error and conjectured computational lower bounds. 
The asymptotic value of the Bayes error for random designs was recently determined in
\cite{barbier2016CS,reeves2016replica}.  Generalizations of this
result were also obtained in~\cite{barbier2018optimal} for other regression problems. 

Finally, the gap between polynomial-time algorithms and statistically optimal estimators has been studied from other points of view as well.
It was noted early on that constrained least square methods (which exhaustively search over supports of given size)
perform accurate regression under weaker conditions than required by the Lasso \cite{wainwright2009information}.
Strong lower bounds for compressed sensing reconstruction were proved in \cite{ba2010lower}
using communication complexity ideas.
Gamarnik and Zadik \cite{gamarnik2017high} study the case of binary coefficients, namely $\bbeta_0\in \{0,1\}^p$,
and standard Gaussian designs $\bX$. They prove existence of a gap between the maximum likelihood estimator (which requires exhaustive search over binary vectors)
and the Lasso. They argue that the failure of polynomial-time algorithms originates in a certain `overlap gap property' which they also characterize.
Further implications of this point of view are investigated in \cite{gamarnik2017sparse}.
After a preprint of this paper appeared online, further work studied the design of optimal penalties and loss functions in classification models and analyzed the achievability of Bayes optimal performance \cite{mignacco2020,taheri2020,taheri2021}.

\subsection{Notations}\label{section-notations}

The Euclidean norm of a vector $\bx \in \reals^p$ is denoted by $\|\bx\|:=\|\bx\|_2$. 
The operator and nuclear norms of a matrix $\bX \in \reals^{n \times p}$ are denoted by $\|\bX\|_{\mathsf{op}}$ and $\|\bX\|_{\mathsf{nuc}}$, respectively.
We denote by $S_+^k$ the set of $k\times k$ positive semi-definite matrices.

Subscripts under the expectation or probability sign, e.g.\ $\E_{\bbeta_0,\bz}$ and $\P_{\bbeta_0,\bz}$ indicate the variables which
are random.
We denote by $\cP_k(\reals)$ the collection of Borel probability measures on $\reals$ with finite $k$-th moment.
For a distribution $\pi \in \cP_k(\reals)$, we will denote by $s_\ell(\pi)$ the $\ell$-th moment of $\pi$. 
We will often extend a distribution $\pi \in \cP_k(\reals)$ to a distribution on $\reals^p$ by taking $\bbeta_0 =(\beta_{0j})_{j\le p}\in \reals^p$ with coordinates 
such that  $(\sqrt{p}\beta_{0j})_{j\le p} \stackrel{\mathrm{iid}}\sim \pi$. We will write this succinctly as $\beta_{0j} \stackrel{\mathrm{iid}}\sim \pi/\sqrt{p}$.
Under this normalization, $\E_{\bbeta_0}[\|\bbeta_0\|^2] = s_2(\pi)$ does not depend on $p$. 
We reserve $z$ and $\bz$ to denote Gaussian random variables and vectors, respectively. We will always take $z \sim \mathsf{N}(0,1)$ and $\bz \sim \mathsf{N}(0,\bI_p/ p)$.
Convolution of probability measures will be denoted by $*$.

We define the Wasserstein distance between two probability measures $\pi,\pi' \in \cP_2(\reals)$ by
\begin{equation}\label{eqdef-wasserstein}
    d_{\mathrm{W}}(\pi,\pi') = \inf_{X,X'} \left(\E_{X,X'}\left[(X-X')^2\right]\right)^{1/2},
\end{equation}
where the infimum is taken over joint distributions of random variables $(X,X')$ with marginal distributions $X \sim \pi$ and $X' \sim \pi'$.
It is well known that this defines a metric on $\cP_2(\reals)$ \cite{Santambrogio2015OptimalMathematicians}.
Convergence in Wasserstein metric will be denoted $\toW$, and we use $\stackrel{\mathrm{p}}\rightarrow$,  $\stackrel{\mathrm{as}}\rightarrow$, $\stackrel{\mathrm{d}}\rightarrow$ for other standard notions of convergence.
For any sequence of real-valued random variables $\{X_p\}$, not necessarily defined on the same probability space, we denote
$$
\liminf^{\mathrm p}_{p \rightarrow \infty} X_p = \sup\Big\{t \in \reals \Big| \lim_{p\rightarrow\infty}\P\left(X_p < t \right) = 0 \Big\},
$$
and $\limsup\limits^{\mathrm p}_{p \rightarrow \infty}X_p = - \liminf\limits^{\mathrm p}_{p \rightarrow \infty} (-X_p)$.
For sequences $\{X_p\}$ and $\{Y_p\}$ of real-valued random variables such that, for each $p$, $X_p$ and $Y_p$ are defined on the same probability space,
we use the notation $X_p \simprob Y_p$ to denote $|X_p - Y_p| \toprob 0$.

We adopt the convention that when the minimizing set in \eqref{linear-cvx-estimator} is empty, $\widehat \bbeta_{\mathsf{cvx}} = \binfty$ and $\|\binfty -\bx\| = \infty$ for any $\bx \in \reals^p$.
Thus, the estimation error is infinite when no minimizer exists.

Finally, a collection of functions $\{\varphi:(\reals^p)^\ell \rightarrow \reals^m\}$, where $p$ and $m$ but not $\ell$ may vary, is said to be \emph{uniformly pseudo-Lipschitz of order $k$} if for all $\varphi$  and $\bx_i,\by_i \in \reals^p,\,i = 1,\ldots,\ell$, we have
\begin{equation}\label{def-uniformly-pseudo-lipschitz}
\|\varphi(\bx_1,\ldots,\bx_\ell) - \varphi(\by_1,\ldots,\by_\ell)\| \leq C \left(1 + \sum_{i=1}^\ell  \|\bx_i\|^{k-1} + \sum_{i=1}^\ell \|\by_i\|^{k-1} \right)\sum_{i=1}^\ell \|\bx_i - \by_i\|,
\end{equation}
for some $C$ which does not depend on $p,m$.

\section{The convex lower bound, the risk of Bayes-AMP, and the Bayes risk}
\label{sec:lower-bounds-and-benchmarks}

In this section, we present a rigorous lower bound on the $\ell_2$ estimation error of convex M-estimators of the form \eqref{linear-cvx-estimator} under proportional asymptotics, Gaussian noise, and structural assumptions on the unknown parameter $\bbeta_0$.
A primary focus will be comparing the convex lower bound to two important benchmarks which have been studied elsewhere \cite{reeves2016replica,barbier2016CS,barbier2018optimal}: 
\begin{itemize}

    \item \textbf{Risk of Bayes-AMP:} The $\ell_2$-estimation error of a certain message passing algorithm conjectured to be optimal among all polynomial-time algorithms (see Conjecture \ref{conj-algorithmic-threshold}).

    \item \textbf{Bayes risk:} The optimal risk over all (possibly computationally unbounded) estimators under a certain Bayesian model for the signal.

\end{itemize}
Before defining these quantities precisely, we may summarize the comparison we will establish by
\begin{gather*}
    \parbox{7em}{\centering Convex\\Lower Bound}
    \geq 
    \parbox{7em}{\centering Risk of\\Bayes AMP}
    \geq 
    \parbox{7em}{\centering Bayes Risk.}
\end{gather*}
While the second inequality holds by the statistical optimality of the Bayes risk, the first is non-trivial.
Previous work established exactly when the second inequality is strict \cite{barbier2018optimal}.
We will likewise specify exactly when the first inequality is strict.
Previous work has only considered optimal convex estimation in regimes in which strict inequality does not occur \cite{Bean2013,advani2016statistical}.

Precisely, we study these three quantities under a certain high-dimensional proportional asymptotics for model \eqref{linear-model}.
\begin{description}
    \item[High Dimensional Asymptotics (HDA)] \hfill\\
    The design matrix satisfies the following assumptions.
    \begin{itemize}
        \item The sample size and number of parameters $n,p \rightarrow \infty$ satisfy $n/p \rightarrow \delta \in (0,\infty)$, a fixed asymptotic aspect ratio.
        \item The matrix $\bX$ has entries $X_{ij}\stackrel{\mathrm{iid}}\sim\mathsf{N}(0,1)$.
    \end{itemize}
\end{description}
\noindent Further, we introduce two sets of assumptions on the unknown parameter $\bbeta_0$ and the 
the noise $\bw$.
\begin{description}
    \item[Deterministic Signal and Noise (DSN)]\hfill\\
    For each $p$ and $n$, we have deterministic parameter vector $\bbeta_0 \in \reals^p$ and noise vector $\bw \in \reals^n$. For some $\pi \in \cP_2(\reals)$ and $\sigma^2 \geq 0$, these satisfy
    \begin{equation}\label{DSN-assumption}
        \widehat \pi_{\bbeta_0} := \frac1p \sum_{j=1}^p \delta_{\sqrt p \beta_{0j}} \stackrel{\mathrm{W}}\rightarrow \pi \qquad \text{and} \qquad \frac1n\|\bw\|^2\rightarrow \sigma^2.
    \end{equation}
    \item[Random Signal and Noise (RSN) Assumption]\hfill\\
    For each $p$ and $n$, we have random parameter vector $\bbeta_0 \in \reals^p$ and noise vector $\bw \in \reals^n$ satisfying
\begin{equation}
    \beta_{0j} \stackrel{\mathrm{iid}}\sim \pi/\sqrt p,\quad \bw \sim \mathsf{N}(0,\sigma^2\bI_n),
\end{equation}
where $\pi \in \cP_2(\reals)$ and $\sigma^2 \geq 0$ do not depend on $p$.
\end{description}
When necessary to indicate where $\bbeta_0$ $\bw$ fall in the sequence of realizations with growing
dimensions, we include indices as $\bbeta_0(p)$ and $\bw(p)$.

Under the DSN assumption,
we will establish a convex lower bound for \emph{symmetric} convex penalties; that is, penalties which are invariant to permutation of the coordinates of their argument.
The DSN assumption specifies the limiting empirical distribution of the coordinates of $\bbeta_0$,
which captures structural information, like sparsity, which is permutation invariant.
Nevertheless, the lower bound applies also to models in which additional information about the order in which the coordinates appear is available: for example, the statistician may know that the coordinates are monotone, have sparse first differences, or satisfy other smoothness conditions.
The lower bound ---which applies only to symmetric convex penalties---
describes a limitation of convex procedures which fail to exploit such information.

In contrast, under the RSN assumption, 
we will establish a convex lower bound for \emph{arbitrary} convex penalties.
Here, the statistician can exploit all available information.
But because she has no prior knowledge about the ordering of the coordinates of $\bbeta_0$, she cannot benefit from asymmetric procedures.

The two sets of assumptions are complementary, differing in how they impose symmetry on the problem: either through the method or through the model.
It turns out that the lower bound on the estimation error under the two sets assumptions is the same.

We only make comparisons to information theoretic lower bounds ---that is, the Bayes risk--- under the RSN assumption.
Indeed, the RSN assumption is needed for the Bayes risk to be meaningful.

\subsection{The convex lower bound}\label{sec:convex-lower-bound}
The convex lower bound is defined via a comparison of the linear model \eqref{linear-model} to a simpler  Gaussian sequence model.
In the sequence model, we observe
\begin{equation}\label{sequence-model}
\by_{\mathsf{seq}} = \bbeta_0 + \tau \bz,
\end{equation}
where $\beta_{0j} \stackrel{\mathsf{iid}} \sim \pi/\sqrt{p},\;\bz \sim \mathsf{N}(\bzero,\bI_p/p)$ independent, and $\tau^2 \geq 0$.
Analogously to \eqref{linear-cvx-estimator}, we consider convex M-estimators in the sequence model, also known as \emph{proximal operators}:
\begin{equation}\label{sequence-cvx-estimator}
\widehat \bbeta_{\mathsf{seq}} := \arg\min_{\bbeta} \frac12\|\by_{\mathsf{seq}} - \bbeta\|^2 + \lambda\rho (\bbeta) =: \mathsf{prox}[\lambda\rho](\by_{\mathsf{seq}}).
\end{equation}
By strong convexity, when $\rho$ is lower semi-continuous and proper, the minimizer exists and is unique \cite{Parikh2013ProximalAlgorithms}.

A large body of work exactly characterizes the estimation error of the estimators \eqref{linear-cvx-estimator} in the linear model in terms of the behavior of the estimators \eqref{sequence-cvx-estimator} in the sequence model \cite{BayatiMontanariLASSO,Donoho2016HighPassing,ElKaroui2013OnPredictors.,karoui2013asymptotic,thrampoulidis2015regularized,thrampoulidis2018precise}.
A typical characterization takes the following form.
For a sequence of penalties $\{\rho_p\}$, let $(\tau,\lambda)$ solve
\begin{subequations}\label{eq:body-fixed-pt}
\begin{gather}
    \delta \tau^{2} - \sigma^2 = \lim_{p\rightarrow \infty}\E_{\bz}[\|\mathsf{prox}[\lambda\rho_p](\bbeta_0 + \tau \bz) - \bbeta_0\|^2],\label{eq:body-fixed-pt-1}\\
    2\lambda\left(1 - \frac1{\delta\tau} \lim_{p \rightarrow \infty}\E_{\bz}[\langle \bz , \mathsf{prox}[\lambda \rho_p](\bbeta_0 + \tau \bz)\rangle] \right) = 1.\label{eq:body-fixed-pt-2}
\end{gather}
\end{subequations}
Then under the HDA and DSN assumption,
\begin{equation}
    \|\widehat \bbeta_{\mathsf{cvx}} - \bbeta_0\|^2 
        \stackrel{\mathrm{p}}\rightarrow 
        \delta \tau^2 - \sigma^2 
        =
        \E_{\bz}[\|\mathsf{prox}[\lambda\rho_p](\bbeta_0 + \tau \bz) - \bbeta_0\|^2]. 
\end{equation}
In words, the $\ell_2$ estimation error in the linear model asymptotically agrees with the $\ell_2$  risk in the sequence model at noise variance $\tau^2$ and regularization $\lambda$.
Substantial effort is required to make this rigorous, and many technical assumptions are required.
For example, some work requires strong-convexity assumptions on the cost function \eqref{linear-cvx-estimator} \cite{Donoho2016HighPassing,karoui2013asymptotic}; 
other work involves analysis tailored to a specific penalty like the LASSO or SLOPE \cite{BayatiMontanariLASSO,bu2019}.
We instead provide a lower bound on the estimation error of estimators \eqref{linear-cvx-estimator} which holds simultaneously for a large class of penalties.
We rely on weak assumptions---weaker than what is needed for exact characterizations using existing techniques. 
At a high level, the lower bound follows from controlling the possible solutions to Eq.\ \eqref{eq:body-fixed-pt} and applying exact characterization results.

Denote by 
$\cC_p \subseteq \{ \rho:\reals^p \rightarrow \reals \cup \{\infty\} \}$
any collection of lsc, proper, and convex functions which is closed under scaling;
that is, $\rho_p \in \cC_p$ implies $\lambda \rho_p \in \cC_p$ for all $\lambda > 0$.
Denote by $\cC$ the collection of sequences $\{\rho_p\}_p$ such that $\rho_p \in \cC_p$ for all $p$.
We will mostly be interested in two cases: either $\cC$ consists of all the sequences of convex functions, or
it consists of all convex symmetric functions.

The optimal risk of convex M-estimation using collection $\cC_p$ in the sequence model is
\begin{gather}\label{R-seq-cvx-opt-finite}
\mathsf{R^{opt}_{seq,cvx}}(\tau;\pi,p) := \inf_{\rho \in \cC_p}\E_{\bbeta_0,\bz}\left[\left\|\mathsf{prox}[\rho](\bbeta_0 + \tau \bz) - \bbeta_0\right\|^2\right],
\end{gather}
where $\bbeta_0,\bz$ are as in \eqref{sequence-model},
and the optimal asymptotic risk using the sequences in $\cC$ is 
\begin{equation}\label{R-seq-cvx-opt-asymptotic}
\mathsf{R^{opt}_{seq,cvx}} (\tau;\pi) = \liminf_{p\rightarrow \infty} \mathsf{R_{\mathsf{seq,cvx}}^{opt}}(\tau;\pi ,p) = \inf_{\{\rho_p\} \in \cC} \liminf_{p \rightarrow \infty} \E_{\bbeta_0,\bz}\left[\left\|\mathsf{prox}[\rho_p](\bbeta_0 + \tau \bz) - \bbeta_0\right\|^2\right].
\end{equation}
We will study a quantity similar to \eqref{R-seq-cvx-opt-asymptotic} in the linear model \eqref{linear-model} except that the infimum is taken over a slightly more restrictive collection, which we now define.
\begin{definition}
    For $\pi \in \cP_2(\reals)$ and $\delta \in (0,\infty)$, 
    we say a sequence of lsc, proper, convex functions $\{\rho_p\}$ has \emph{$\delta$-bounded width} at prior $\pi$,  if the following holds:
    \begin{equation}\label{delta-bounded-width}
        \begin{gathered}
            \text{for all compact $T \subset (0,\infty)$, there exists $\bar \lambda =\bar\lambda(T)< \infty$ such that}\\
            \limsup_{p\rightarrow \infty}\sup_{\lambda >  \bar \lambda,\tau \in T} \frac1\tau\E_{\bbeta_0,\bz}\left[\left\langle \bz, \mathsf{prox}\left[\lambda \rho_p\right]\left(\bbeta_0 + \tau\bz\right)\right\rangle\right] < \delta.
            \end{gathered}
    \end{equation}
    For a collection of penalty sequences $\cC$, we denote by $\cC_{\delta,\pi}$ the subset of sequences that satisfy this condition.
\end{definition}
The terminology here is motivated by the resemblance of condition \eqref{delta-bounded-width} with the Gaussian width of convex cones \cite{Chandrasekaran2012TheProblems,amelunxen2014living}, see Section \ref{subsec-convex-constraints}.
It is straightforward to show  that for $\delta>1$ and any $\pi \in \cP_2(\reals)$, 
all sequences of penalties have $\delta$-bounded width at $\pi$ (see Section O, Eq.~(O.11) of the Supplementary Material \cite{supplement}).
Thus,
\begin{equation}\label{C-delta-pi-is-C}
\cC_{\delta,\pi} = \cC \quad \text{if $\delta > 1$}.
\end{equation}
The convex lower bound we establish in the next theorem applies to sequences of penalties in $\cC_{\delta,\pi}$.
\begin{theorem}\label{thm-cvx-lower-bound}
Fix $\pi \in \cP_2(\reals)$, $\delta \in (0,\infty)$, and $\sigma \geq 0$.
Define 
\begin{equation}\label{eqdef-tau-reg-cvx}
\tau_{\mathsf{reg,cvx}}^{2} = \sup\left\{\tau^2 \,\Big|\, \delta\tau^2 - \sigma^2 <  \mathsf{R^{opt}_{seq,cvx}}(\tau;\pi)\right\}.
\end{equation}
Under the HDA and RSN assumptions,\footnote{When the minimizing set has multiple elements, we make no assumption on the mechanism used to break ties.}
\begin{equation}\label{cvx-lower-bound}
\inf_{\{\rho_p\} \in \cC_{\delta,\pi}}\liminf^{\mathrm{p}}_{p\rightarrow \infty}\|\widehat \bbeta_{\mathsf{cvx}} - \bbeta_0\|^2 \geq \delta\tau^{2}_{\mathsf{reg,cvx}} - \sigma^2.
\end{equation}
If $\cC$ contains only symmetric penalties, then the preceding display holds also under DSN assumption.
(Note that we may have $\tau_{\mathsf{reg,cvx}}^2 = \infty$.)

In both cases, for $\delta > 1$, the infimum can be taken over the full collection $\cC$ (instead of $\cC_{\delta,\pi}$),
and the lower bound is tight. 
\end{theorem}
\noindent The proof of Theorem \ref{thm-cvx-lower-bound} is provided in Section E of the Supplemenatary Material \cite{supplement}.
In Section \ref{sec-examples}, we argue through examples that $\cC_{\delta,\pi}$ includes most, if not all, reasonable penalty sequences.
Section I of the Supplementary Material \cite{supplement} discusses the role of the restriction to $\cC_{\delta,\pi}$. 
Because $\mathsf{R^{opt}_{seq,cvx}}(\tau;\pi)$ is continuous in $\tau$ whenever $\cC$ is such that $\tau^{2}_{\mathsf{reg,cvx}}$ is finite (see Lemma C.2 of the Supplementary Material \cite{supplement}),
we will always have $\delta\tau^{2}_{\mathsf{reg,cvx}} - \sigma^2 = \mathsf{R^{opt}_{seq,cvx}}(\tau_{\mathsf{reg,cvx}};\pi)$ in this case.
Thus, Theorem \ref{thm-cvx-lower-bound} should be interpreted as stating:
\begin{itemize}
\item[] \emph{Optimal convex M-estimation in the linear model is no better than optimal convex M-estimation in the sequence model at noise variance $\tau^{2}_{\mathsf{reg,cvx}}$.}
\end{itemize}
Importantly, the convex lower bound applies even when $\pi$ is not log-concave.

Although Theorem \ref{thm-cvx-lower-bound} applies to any potentially restricted collection $\cC$ of convex penalty sequences,
our main interest is to apply it to the largest possible collections.
This is because we are interested in studying \emph{fundamental} barriers to regression with any convex estimators of the form \eqref{linear-cvx-estimator}.
Thus, for the remainder of the paper we will consider only two cases:
under the RSN assumption, we will consider $\cC$ to contain all sequences of convex penalties. 
In this case, $\{ \rho_p \} \in \cC_{\delta,\pi}$ contains any sequence of penalties satisfying \eqref{delta-bounded-width}.
Under the DSN assumption, we will consider $\cC$ to contain all sequences of symmetric convex penalties. 
In this case, $\{ \rho_p \} \in \cC_{\delta,\pi}$ contains any sequence of symmetric penalties satisfying \eqref{delta-bounded-width}.
The convex lower bound in these two cases is the same.
\begin{proposition}\label{claim:equiv-of-lb}
    The parameter $\tau^{2}_{\mathsf{reg,cvx}}$ defined with $\cC$ all sequences of convex penalties or with $\cC$ all sequences of symmetric convex penalties agree.
\end{proposition}
\noindent Although we consider two cases throughout the remainder of the paper, there is only one fundamental convex lower bound, and it applies to both cases.
In the first case---that described by the RSN assumption---the statistician has no information about the order in which the coordinates of the unknown parameter occur,
and the convex lower bound applies to any convex procedure.
In the second case---that described by the DSN assumption---the statistician may have information about the order in which the coordinates of the unknown parameter occur,
and the convex lower bound applies only to symmetric convex procedures.
Thus, the convex lower bound applies either to settings in which information about the order of the coordinates is not available or to settings where such information is not exploited.

\subsection{The risk of Bayes AMP}\label{sec:AlgoLB}
Bayes AMP, which we define below, is a fast iterative scheme for performing estimation in model \eqref{linear-model}.
Analogously to the convex lower bound, its estimation error is defined via a comparison of the linear model \eqref{linear-model} to the sequence model \eqref{sequence-model}.
In particular, define
\begin{equation}\label{R-dir-bayes-finite}
\mathsf{mmse}_\pi(\tau^2) = \E_{\beta_0,z}[(\E_{\beta_0,z}[\beta_0|\beta_0 + \tau z] - \beta_0)^2],
\end{equation}
for random scalars $\beta_0 \sim \pi$, $z \sim \mathsf{N}(0,1)$ independent.
Because
\begin{equation}\label{mmse-large-p}
\mathsf{mmse}_\pi(\tau^2) =  \E_{\bbeta_0,\bz}\left[\left\|\E_{\bbeta_0,\bz}\left[\bbeta_0|\sqrt p \bbeta_0 + \tau \bz\right] - \bbeta_0\right\|^2\right],
\end{equation}
we see that $\mathsf{mmse}_\pi(\tau^2)$ is analogous to \eqref{R-seq-cvx-opt-finite} except that the infimum is taken over all estimators, not just those in a restricted class.
Finally, analogous to \eqref{eqdef-tau-reg-cvx}, define
\begin{equation}\label{alg-bound-sup-to-inf-form-def}
    \tau_{\mathsf{reg,amp}*}^2 :=  \sup\left\{\tau^2 \,\Big|\, \delta \tau^2- \sigma^2 \leq \mathsf{mmse}_\pi(\tau^2)\right\}.
\end{equation}
Note that because $\mathsf{mmse}_\pi(\tau^2)$ is continuous in $\tau$ \cite{DongningGuo2011EstimationError},
\begin{equation}\label{tau-alg-is-stationary}
    \delta \tau_{\mathsf{reg,amp}*}^2 - \sigma^2 = \mathsf{mmse}_\pi(\tau_{\mathsf{reg,amp}*}^2).
\end{equation}
As we will see, Bayes AMP asymptotically achieves estimation error arbitrary close to $\delta \tau_{\mathsf{reg,amp}*}^2 - \sigma^2 = \mathsf{mmse}_\pi(\tau_{\mathsf{reg,amp}*}^2)$ in time $O(np)$.
That is,
\begin{itemize}
\item[] \emph{Bayes AMP in the linear model is exactly as good as Bayesian estimation in the sequence model at noise variance $\tau^{2}_{\mathsf{reg,amp}*}$.}
\end{itemize}
Thus, a comparison of the convex lower bound and the risk of Bayes AMP reduces to a comparison of the parameters $\tau_{\mathsf{reg,cvx}}^2$ and $\tau_{\mathsf{reg,amp}*}^2$.
The following corollary of Theorem \ref{thm-cvx-lower-bound} establishes under generic conditions,
the convex lower bound is no smaller than the estimation error of Bayes AMP, consistent with conjectured optimality of Bayes AMP among polynomial time algorithms.

\begin{corollary}\label{cor-alg-lower-bound}

For any $\pi \in \cP_2(\reals)$, 
\begin{equation}\label{tau-cvx-v-tau-amp}
    \tau_{\mathsf{reg,cvx}}^2 \geq \tau_{\mathsf{reg,amp}*}^2  
\end{equation}
holds for almost every value of $\delta,\sigma$ (w.r.t. Lebesgue measure).
In fact, for any fixed $\sigma$, it holds for almost all values of $\delta$, and for any fixed $\delta$, for almost all values of $\sigma$.

For such values $\delta,\sigma$, 
under the HDA and RSN assumptions,
then
\begin{equation}\label{alg-lower-bound}
\inf_{\{\rho_p\} \in \cC_{\delta,\pi}}\liminf^{\mathrm{p}}_{p\rightarrow \infty}\|\widehat \bbeta_{\mathsf{cvx}} - \bbeta_0\|^2 \geq \delta\tau^{2}_{\mathsf{reg,amp}*} - \sigma^2.
\end{equation}
If $\cC$ contains only symmetric penalties, then the preceding display holds instead under DSN assumption.
\end{corollary}

\begin{proof}[Proof of Corollary \ref{cor-alg-lower-bound}]
Define
\begin{equation}\label{tau-alg-def}
\tau_{\mathsf{reg,amp}}^{2} = \sup\left\{\tau^2 \,\Big|\, \delta\tau^2 - \sigma^2 < \mathsf{mmse}_\pi(\tau^2)\right\}.
\end{equation}
\sloppy
In Section L of the Supplementary Material \cite{supplement}, we show that for any $\pi \in \cP_2(\reals)$, the equality $\tau^{2}_{\mathsf{reg,amp}} = \tau^{2}_{\mathsf{reg,amp}*}$
holds for almost every value of $\delta,\sigma$ (w.r.t. Lebesgue measure).
In fact, for any fixed $\sigma$, it holds for almost all values of $\delta$, and for any fixed $\delta$, for almost all values of $\sigma$. Thus, we only need to establish the result for $\tau^{2}_{\mathsf{reg,amp}}$ in place of $\tau^{2}_{\mathsf{reg,amp}*}$.

By \eqref{R-seq-cvx-opt-finite} and \eqref{mmse-large-p}, we have $\mathsf{mmse}_\pi(\tau^2) \leq \mathsf{R^{opt}_{seq,cvx}}(\tau;\pi,p)$. 
By \eqref{R-seq-cvx-opt-asymptotic}), we obtain $\mathsf{mmse}_\pi(\tau^2) \leq \mathsf{R^{opt}_{seq,cvx}}\tau;\pi)$. 
Thus, the set $\left\{\tau^2 \mid  \delta\tau^2 - \sigma^2 < \mathsf{mmse}_\pi(\tau^2)\right\} \subseteq \left\{\tau^2 \mid \delta\tau^2 - \sigma^2 <  \mathsf{R^{opt}_{seq,cvx}}(\tau^2;\pi)\right\}$, 
and \eqref{tau-cvx-v-tau-amp} follows from \eqref{eqdef-tau-reg-cvx} and \eqref{tau-alg-def}. 
Theorem \ref{thm-cvx-lower-bound} then gives \eqref{alg-lower-bound}.
\end{proof}

In the remainder of this section, we describe the Bayes AMP algorithm and formally characterize its risk.
Bayes AMP and its characterization via state evolution has been derived elsewhere \cite{donoho2010message,barbier2018optimal}.
Define the scalar iteration
\begin{subequations}\label{B-AMP-SE}
\begin{gather}
\tau_0^2 = \frac1\delta \left(\sigma^2 + s_2(\pi)\right),\label{B-AMP-SE-1}\\
\tau_{t+1}^2 = \frac1\delta\left(\sigma^2 +  \mathsf{mmse}_\pi\left(\tau_t^2\right)\right).\label{B-AMP-SE-2}
\end{gather}
\end{subequations}
Moreover, let
\begin{subequations}\label{B-AMP-denoiser}
\begin{gather}
    \eta_{t}(y) = \E_{\beta_0,z}[\beta_0 | \beta_0 + \tau_{t} z = y]
\end{gather}
\end{subequations}
where $\beta_0 \sim \pi,\,z \sim \mathsf{N}(0,1)$ are independent.
Define
\begin{gather}\label{eqdef-bt}
\mathsf{b}_{t} = \frac1\delta \E_{\beta_0,z}\left[{\eta_{t-1}'}\left(\beta_0 + \tau_{t-1}z\right)\right],
\end{gather}
where $\eta'_{t}$ a weak derivative of $\eta_{t}$.
For each $p$, define $\eta_{t}:\reals^p \rightarrow \reals^p$ by 
\begin{equation}\label{B-AMP-full-p-denoiser}
\eta_{t}(\bx)_j = \frac1{\sqrt p} \eta_{t}(\sqrt p x_j),
\end{equation}
where for convenience, we use the same notation $\eta_{t}$ for the multivariate and scalar functions. 
They are distinguished by the nature of their argument. 
The Bayes-AMP iteration is
\begin{equation}\label{B-AMP}
\begin{gathered}
    \br^t = \frac{\by - \bX \widehat \bbeta^t}n + \mathsf{b}_{t} \br^{t-1},\\
    \widehat \bbeta^{t+1} = \eta_{t}\left(\widehat \bbeta^t + \bX^\mathsf{T}\br^t\right),
\end{gathered}
\end{equation}
with initialization $\widehat \bbeta^0 = \bzero$, $\br^{-1} = \bzero$.
For any fixed $t$, we may compute $\widehat \bbeta^t$ in $O(np)$ time.
The following proposition characterizes the asymptotic loss of $\widehat \bbeta^t$ as an estimator of $\bbeta_0$.
\begin{proposition}\label{prop-bAMP-achieves-alg-bound}
   Fix $\pi \in \cP_2(\reals)$, $\delta \in (0,\infty)$, and $\sigma \geq 0$. Assume $s_2(\pi) > 0$. 
   Consider $\tau_t$ as defined by \eqref{B-AMP-SE} and $\widehat \bbeta^t$ as defined by \eqref{B-AMP}.
    Under the HDA and either the DSN or RSN assumptions, for any fixed $t$ we have
    \begin{equation}\label{M-to-infty-iterates-loss}
    \lim_{p \rightarrow \infty}^{\mathrm p}\|\widehat \bbeta^t - \bbeta_0\|^2 = \mathsf{mmse}_\pi\left(\tau_t^2\right)
    \end{equation}
    Further, 
    \begin{equation}\label{large-t-bAMP-SE}
    \lim_{t \rightarrow \infty} \tau_t^2 = \tau_{\mathsf{reg,amp}*}^2.
    \end{equation}
    In particular, for all $\eps > 0$, there exists $t$ fixed such that
    \begin{equation}\label{bAMP-achieves-bound}
    \lim_{p \rightarrow \infty}^{\mathrm{p}} \|\widehat \bbeta^t - \bbeta_0 \|^2 \leq \delta\tau_{\mathsf{reg,amp}*}^2-\sigma^2+ \eps.
    \end{equation}
\end{proposition}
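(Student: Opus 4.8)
The plan is to decompose the statement into three pieces, each handled by a standard tool: (a) equation \eqref{M-to-infty-iterates-loss}, which follows from state evolution for AMP with (non-ideal, truncated) Bayes denoisers together with a limiting argument in $M$; (b) equation \eqref{large-t-bAMP-SE}, which is an elementary analysis of the scalar recursion \eqref{B-AMP-SE}; and (c) equation \eqref{bAMP-achieves-bound}, which is a diagonal-sequence argument chaining (a) and (b). I would present them in the order (b), (a), (c).

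For \eqref{large-t-bAMP-SE} I would first observe that $\tau \mapsto \mathsf{mmse}_\pi(\tau^2)$ is continuous and nondecreasing in $\tau^2$ (both facts are classical; monotonicity is immediate since observing $\beta_0$ through less noise cannot increase the MMSE, and continuity is cited earlier from \cite{DongningGuo2011EstimationError}). Hence the map $F(u) = \frac1\delta(\sigma^2 + \mathsf{mmse}_\pi(u))$ is continuous and nondecreasing, and the recursion $\tau_{t+1}^2 = F(\tau_t^2)$ started from $\tau_0^2 = \frac1\delta(\sigma^2 + s_2(\pi))$ is monotone: since $\mathsf{mmse}_\pi(u) \le s_2(\pi)$ always, we have $\tau_1^2 \le \tau_0^2$, and monotonicity of $F$ then gives $\tau_{t+1}^2 \le \tau_t^2$ by induction. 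A bounded monotone sequence converges, say to $\tau_\infty^2$, and by continuity of $F$ the limit is a fixed point, $\delta\tau_\infty^2 - \sigma^2 = \mathsf{mmse}_\pi(\tau_\infty^2)$. To identify $\tau_\infty^2$ with $\tau_{\mathsf{reg,alg}*}^2$ as defined in \eqref{alg-bound-sup-to-inf-form-def}, I would argue that $\tau_\infty^2$ is the \emph{largest} fixed point: any $\tau^2$ with $\delta\tau^2 - \sigma^2 \le \mathsf{mmse}_\pi(\tau^2)$, i.e.\ $\tau^2 \le F(\tau^2)$, satisfies $\tau^2 \le \tau_0^2$ (as $\tau_0^2$ is the largest point in the feasible set, using $\mathsf{mmse}_\pi \le s_2(\pi)$) and then, by induction using monotonicity of $F$ and $\tau^2 \le F(\tau^2)$, $\tau^2 \le \tau_t^2$ for all $t$, hence $\tau^2 \le \tau_\infty^2$; therefore $\tau_\infty^2 = \sup\{\tau^2 : \delta\tau^2 - \sigma^2 \le \mathsf{mmse}_\pi(\tau^2)\} = \tau_{\mathsf{reg,alg}*}^2$.

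For \eqref{M-to-infty-iterates-loss} I would invoke the state evolution theorem for AMP with Lipschitz (separable) denoisers — the truncated Bayes denoiser $\eta_{M,t}$ in \eqref{B-AMP-truncated-denoiser} is bounded by $M$ and, being a conditional expectation of a bounded-variation quantity, is Lipschitz, so the classical results of \cite{BM-MPCS-2011,bolthausen2014iterative,javanmard2013state} (or the pseudo-Lipschitz version) apply directly to the iteration \eqref{B-AMP} with Onsager coefficient \eqref{eqdef-bMt}. This yields $\|\widehat\bbeta_M^t - \bbeta_0\|^2 \toprob \E_{\beta_0,z}[(\eta_{M,t-1}(\beta_0 + \tau_{M,t-1}z) - \beta_0)^2]$, and more precisely $\|\widehat\bbeta_M^{t} - \bbeta_0\|^2 \toprob \delta \tau_{M,t}^2 - \sigma^2$ via \eqref{B-AMP-SE-M-2}, where the RSN assumption guarantees the hypotheses (iid Gaussian $\bX$, iid signal, Gaussian noise) and $s_2(\pi) > 0$ ensures nondegeneracy. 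Then I would take $M \to \infty$: by dominated convergence — the truncated denoiser converges pointwise to the true Bayes denoiser and is dominated appropriately, with the second moment of $\pi$ controlling the tails — one shows $\tau_{M,t}^2 \to \tau_t^2$ for each fixed $t$ (an induction on $t$, using continuity of the $\mathsf{mmse}$-type functionals in their noise argument and uniform integrability), and hence the right-hand side of the state evolution converges to $\mathsf{mmse}_\pi(\tau_t^2)$. Since the $p\to\infty$ limit is taken first and the $M\to\infty$ limit second, this gives \eqref{M-to-infty-iterates-loss} exactly as stated.

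Finally, \eqref{bAMP-achieves-bound} is a routine $\eps/2$ argument: given $\eps>0$, by \eqref{large-t-bAMP-SE} and continuity of $\mathsf{mmse}_\pi$ pick $t$ with $\mathsf{mmse}_\pi(\tau_t^2) \le \delta\tau_{\mathsf{reg,alg}*}^2 - \sigma^2 + \eps/2$, then by \eqref{M-to-infty-iterates-loss} pick $M$ large enough that $\lim^{\mathrm p}_{p\to\infty}\|\widehat\bbeta_M^t - \bbeta_0\|^2 \le \mathsf{mmse}_\pi(\tau_t^2) + \eps/2$. I expect the main obstacle to be the $M\to\infty$ truncation control in step (a): one must verify that the truncated denoisers $\eta_{M,t}$ are genuinely Lipschitz uniformly enough for state evolution to apply at each finite $M$, and that the second-moment bound on $\pi$ suffices to push the truncation to infinity while keeping the state-evolution recursion's limit well-behaved across all $t$ simultaneously — this is precisely the technical point the authors flag as deferred to Appendix \ref{app-proof-of-prop-bayes-risk-asymptotic-and-bAMP-achives-alg-bound}, and it requires care because the Bayes denoiser itself need not be Lipschitz or bounded without the truncation.
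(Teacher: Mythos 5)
Your proposal is correct and follows essentially the same route as the paper: state evolution for the truncated-denoiser AMP iteration at fixed $M$, removal of the truncation by a uniform-integrability argument giving $\tau_{M,t}\to\tau_t$, an elementary analysis of the scalar recursion for \eqref{large-t-bAMP-SE}, and an $\eps/2$ chaining for \eqref{bAMP-achieves-bound}. Two variations are worth noting. First, you invoke the classical separable state-evolution results, whereas the paper runs the iteration (after a change of variables) through Theorem 14 of \cite{Berthier2017StateFunctions}; this is harmless since the denoiser is separable, but the paper's route also supplies, via Lemma \ref{lem-pseudo-lipschitz-empirical-to-expectation}, the passage from expectations over the realized signal to expectations over the prior $\pi$, a step you would still need to make explicit in the classical framework. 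Second, your argument for \eqref{large-t-bAMP-SE} (monotone decreasing sequence, limit is a fixed point, identified as the largest fixed point, hence $\tau_{\mathsf{reg,alg*}}^2$) differs from the paper's (which shows $\tau_t>\tau_{\mathsf{reg,alg*}}$ using strict monotonicity of $\mathsf{mmse}_\pi$ and then forces descent below $\tau_{\mathsf{reg,alg*}}+\eps$ via a uniform positive gap); yours is equally valid and arguably cleaner, needing only weak monotonicity and continuity. One caveat: your stated reason that $\eta_{M,t}$ is Lipschitz (``a conditional expectation of a bounded-variation quantity'') is not a proof --- the truncation gives boundedness, not Lipschitz continuity; the paper obtains Lipschitzness from smoothness of the posterior mean (an exponential-family argument) combined with the truncation. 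You correctly flag this, together with the $M\to\infty$ control, as the deferred technical point, and the moment bound you sketch ($\E[\beta_0\mid y]^2\le\E[\beta_0^2\mid y]$ plus Markov, giving uniform integrability and then Vitali/dominated convergence) is exactly the one the paper uses.
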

\noindent Proposition \ref{prop-bAMP-achieves-alg-bound} states that the state evolution \eqref{B-AMP-SE} characterizes the large $n,p$ behavior of Bayes AMP.
It follows from standard results in the AMP literature \cite{BM-MPCS-2011}.
A minor technical difficulty is that the main theorem of \cite{BM-MPCS-2011} requires Lipschitz non-linearities in the AMP iteration.
The Bayes estimator $\eta_t$ need not be Lipschitz.
Thus, to apply the results of \cite{BM-MPCS-2011}, we must use a truncation trick.
Though this is a routine proof, we are unaware of a result that immediately implies Proposition
\ref{prop-bAMP-achieves-alg-bound}.
For completeness, we provide this argument in Section L of the Supplementary Material \cite{supplement}. 

Proposition \ref{prop-bAMP-achieves-alg-bound} shows that a polynomial-time (in fact, linear time) algorithm exists which achieves asymptotic loss arbitrarily close to $\delta \tau_{\mathsf{reg,amp}*}^2 - \sigma^2$.  As discussed in the introduction, we do not know of any 
polynomial-time algorithm that achieves asymptotic risk below $\delta\tau_{\mathsf{reg,amp}*}^2 - \sigma^2$. Below is a more precise restatement of 
Conjecture \ref{conj:Gap}.
\begin{conjecture}\label{conj-algorithmic-threshold}
    Fix $\pi \in \cP_2(\reals)$, $\delta \in (0,\infty)$, and $\sigma > 0$.
    Under the HDA and RSN assumptions at $\pi,\delta,\sigma$, 
    no polynomial-time algorithm achieves asymptotic risk smaller than $\delta\tau_{\mathsf{reg,amp}*}^2 - \sigma^2$. 
\end{conjecture}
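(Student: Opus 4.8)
This last statement is a conjecture, and so---in contrast to the results preceding it---we do not expect to supply a proof; we instead sketch the evidence already in hand and the routes along which a proof might be sought, together with the obstruction that places such a proof beyond the reach of current techniques.

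The achievability half is not in question: Proposition \ref{prop-bAMP-achieves-alg-bound} exhibits an explicit $O(np)$-time iteration whose asymptotic risk can be driven arbitrarily close to $\delta\tau_{\mathsf{reg,alg}*}^2-\sigma^2$, with the relevant state-evolution fixed point characterized by (\ref{B-AMP-SE}). The content of Conjecture \ref{conj-algorithmic-threshold} is the matching algorithmic lower bound: that the largest-$\tau$ stationary point of the state evolution is a genuine computational barrier. The primary rigorous evidence we can marshal is Theorem \ref{thm-cvx-lower-bound} (via Corollary \ref{cor-alg-lower-bound}), which certifies the bound $\delta\tau_{\mathsf{reg,alg}}^2-\sigma^2$ against the entire class of symmetric convex M-estimators; combined with the genericity statement of Claim \ref{claim-alg-boudn-sup-to-inf-form-is-generic}, this already establishes the conjecture restricted to that algorithm class for almost every $(\delta,\sigma)$. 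The plan for a broader argument would be to enlarge the family of algorithms for which the bound can be certified, in increasing order of difficulty: first, all $O(1)$-iteration AMP- or gradient-type procedures, for which state evolution directly shows the asymptotic risk is governed by a fixed point no smaller than $\tau_{\mathsf{reg,alg}*}$; next, all bounded-degree polynomial estimators, via a low-degree-polynomial analysis showing that the degree-$D$ minimum mean-square error in model (\ref{linear-model}) stays above $\delta\tau_{\mathsf{reg,alg}*}^2-\sigma^2$ for $D$ bounded; and finally local or noise-stable algorithms, via an overlap-gap-property argument in the spirit of Gamarnik and Zadik \cite{gamarnik2017high}---one would show that, in the regime where $\delta\tau^2-\sigma^2$ and $\mathsf{mmse}_\pi(\tau^2)$ cross transversally, the set of near-optimal $\bbeta$ decomposes into well-separated clusters, obstructing any sufficiently stable procedure.

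The step I expect to be the fundamental obstacle is the last one: passing from ``every natural algorithm class one can name'' to ``every polynomial-time algorithm.'' Unconditional lower bounds of this strength are not available for any nontrivial average-case estimation problem, so a rigorous proof of Conjecture \ref{conj-algorithmic-threshold} would almost certainly have to be conditional---for instance, a reduction from planted clique (or another canonical hard problem) calibrated so that its parameters land exactly at the state-evolution threshold---or else settle for one of the restricted but unconditional statements above. A secondary, more technical difficulty is that the overlap-gap route as usually stated controls only the geometry of near-optimizers of a combinatorial objective, whereas the object of interest here is the posterior mean; bridging this gap calls for a conditional second-moment (Franz--Parisi-type) analysis of the sequence model $\by_{\mathsf{seq}} = \bbeta_0 + \tau\bz$ at $\tau = \tau_{\mathsf{reg,alg}*}$, and it is precisely the non-log-concavity of $\pi * \mathsf{N}(0,\tau^2)$ flagged in Proposition \ref{prop-achieving-the-bound} that one expects to drive the clustering. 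The honest summary is thus that the convex lower bound of Theorem \ref{thm-cvx-lower-bound} is the strongest currently provable approximation to this conjecture, and closing the remaining gap is the open problem.
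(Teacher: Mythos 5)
This statement is a conjecture, and the paper supplies no proof of it; its only rigorous support is exactly what you cite, namely the convex lower bound of Theorem \ref{thm-cvx-lower-bound} (via Corollary \ref{cor-alg-lower-bound}) together with the Bayes-AMP achievability of Proposition \ref{prop-bAMP-achieves-alg-bound}. Your write-up correctly declines to claim a proof and accurately summarizes the paper's evidence, so it matches the paper's own stance; the additional routes you sketch (low-degree, OGP, reductions) go beyond the paper but are presented as speculation, which is appropriate.
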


\subsection{The Bayes risk}

The information theoretic lower bound under the RSN assumption is the Bayes risk 
$$
\E_{\bbeta_0,\bw,\bX}\left[\|\E_{\bbeta_0,\bw,\bX}[\bbeta_0|\by,\bX] - \bbeta_0 \|^2\right],
$$
which cannot be outperformed even in finite samples.
In this section, we recall recent results on the asymptotic value of the Bayes risk on the HDA and RSN assumptions.

Define the potential
\begin{equation}\label{rs-potential}
\phi(\tau^2;\pi,\delta,\sigma) = \frac{\sigma^2}{2\tau^2} - \frac\delta2\log\left(\frac{\sigma^2}{\tau^2}\right)  + i(\tau^2),
\end{equation}
where $i(\tau^2)$ is the base-$e$ mutual information between $\beta_0$ and $y$ in the univariate model $y = \beta_0 +  \tau z$ when $\beta_0 \sim \pi,\,z \sim \mathsf{N}(0,1)$ independent. 
That is,
\begin{equation}\label{eqdef-i}
i(\tau^2) = \E_{\beta_0,z} \left[\log\frac{p(y|\beta_0)}{p(y)}\right] = - \frac12 - \E_{\beta_0,z} \log \left\{\int  e^{-\frac1{2}(y - \beta/\tau)^2}\pi(\mathrm{d}\beta)\right\}\, .
\end{equation}
Also define
\begin{equation}\label{tau-stat-def}
\tau_{\mathsf{reg,stat}}(\pi;\delta,\sigma) = \arg\min_{\tau\geq0} \phi(\tau^2;\pi,\delta,\sigma),
\end{equation}
whenever $\pi,\delta$, and $\sigma$ are such that the minimizer is unique.
The derivative of $\phi$ will be useful in what follows. It is
\begin{equation}\label{potential-derivative}
    \frac{\mathrm{d}}{\mathrm{d}\tau^{-2}} \phi(\tau^2;\pi,\delta,\sigma) = \frac12\left(\sigma^2 - \delta \tau^2 + \mathsf{mmse}_\pi(\tau^2)\right),
\end{equation}
where we have used that $\frac{\mathrm{d}}{\mathrm{d} \tau^{-2}} i(\tau^2) = \frac12 \mathsf{mmse}_\pi(\tau^2)$ by \cite[Corollary 1]{DongningGuo2011EstimationError}.
We see that if $\tau_{\mathsf{reg,stat}} > 0$, then 
\begin{equation}\label{tau-stat-is-stationary}
    \delta \tau_{\mathsf{reg,stat}}^2 - \sigma^2 = \mathsf{mmse}_\pi(\tau_{\mathsf{reg,stat}}^2).
\end{equation}
Equation \eqref{tau-stat-is-stationary} is closely related to  \eqref{tau-alg-def}.
The next result relates the effective noise parameter $\tau_{\mathsf{reg,stat}}$ to the asymptotic Bayes risk in model \eqref{linear-model} under the RSN assumption.
\begin{proposition}[Theorem 2 of \cite{barbier2018optimal}]\label{prop-bayes-risk-asymptotic}
Fix $\pi \in \cP_\infty(\reals)$, $\delta \in (0,\infty)$, and $\sigma > 0$. 
Under the HDA and RSN assumptions,
\begin{equation}\label{bayes-risk-asymptotic}
\lim_{p \rightarrow \infty}\E_{\bbeta_0,\bw,\bX}\left[\|\E_{\bbeta_0,\bw,\bX}[\bbeta_0|\by,\bX] - \bbeta_0 \|^2\right] = \mathsf{mmse}_\pi(\tau_{\mathsf{reg,stat}}^2) = \delta\tau_{\mathsf{reg,stat}}^2 - \sigma^2,
\end{equation}
whenever the minimizer in \eqref{tau-stat-def} is unique. 
This occurs for almost every $(\delta,\sigma)$ (w.r.t. Lebesgue measure).
\end{proposition}
\noindent This is a specific case of Theorem 2 of \cite{barbier2018optimal}. We carry out the conversion from their notation to ours in Section L of the Supplementary Material \cite{supplement}.
This result was previously established under slightly less general conditions in \cite{thrampoulidis2018precise,barbier2017}.
In particular, Proposition \ref{prop-bayes-risk-asymptotic} states that:
\begin{itemize}
\item[] \emph{Bayesian estimation in the linear model is exactly as good as Bayesian estimation in the sequence model at noise variance $\tau^{2}_{\mathsf{reg,stat}}$.}
\end{itemize}
Thus, a comparison of the convex lower bound, the risk of Bayes AMP, and the Bayes risk reduces to a comparison of the noise variances $\tau^2_{\mathsf{reg,cvx}}$, $\tau^2_{\mathsf{reg,amp}*}$, and $\tau^2_{\mathsf{reg,stat}}$.
Because it is simply a lower bound, the convex lower bound could plausibly sometimes be smaller than the Bayes risk.
Fortunately, this does not occur: 
\begin{corollary}
    For all $\pi,\delta,\sigma$, we have
    \begin{equation}\label{compare-cvx-stat-effective-noise}
        \tau^2_{\mathsf{reg,cvx}} \geq \tau^2_{\mathsf{reg,stat}}.
    \end{equation}
\end{corollary}

\begin{proof}
    The inequality $\tau_{\mathsf{reg,cvx}}^2 \geq \tau_{\mathsf{reg,amp}}^2$ holds because the supremum in \eqref{tau-alg-def} is taken over a subset of the supremum in \eqref{eqdef-tau-reg-cvx}.
    Thus, it suffices to show $\tau_{\mathsf{reg,amp}}^2\geq \tau_{\mathsf{reg,stat}}^2$.
    For $\tau' < \tau_{\mathsf{reg,stat}}$,
    \begin{align}
        \phi(\tau_{\mathsf{reg,stat}};\pi,\delta,\sigma) &< \phi(\tau';\pi,\delta,\sigma) \\
        &= \phi(\tau_{\mathsf{reg,stat}};\pi,\delta,\sigma) +  \frac12\int_{\tau_{\mathsf{reg,stat}}^{-2}}^{{\tau'}^{-2}}\left(\sigma^2 - \delta \tau^2 + \mathsf{mmse}_\pi(\tau^2)\right)d\tau^{-2}.
    \end{align}
    Thus, the integral in the previous display must be positive for all $\tau' < \tau_{\mathsf{reg,stat}}$, which implies there exists $\tau' < \tau_{\mathsf{reg,stat}}$ arbitrarily close to $\tau_{\mathsf{reg,stat}}$ for which $\delta {\tau'}^2 - \sigma^2 < \mathsf{mmse}_\pi({\tau'}^2)$. 
    By \eqref{tau-alg-def}, we have $\tau_{\mathsf{reg,amp}} \geq \tau_{\mathsf{reg,stat}}$, as desired. 
\end{proof}

\section{Log-concavity and convex-algorithmic-statistical gaps}

The results in the preceding section establish that \emph{(i)} if $\tau^2_{\mathsf{reg,cvx}} > \tau^2_{\mathsf{reg,amp}*}$, there is a gap between the asymptotic estimation error achieved by convex M-estimators \eqref{linear-cvx-estimator} and that achieved by Bayes AMP, 
and \emph{(ii)} for generic $(\delta,\sigma)$ (i.e., those for which the minimizer in \eqref{tau-stat-def} is unique), if $\tau^2_{\mathsf{reg,cvx}} > \tau^2_{\mathsf{reg,stat}}$, 
there is a gap between the asymptotic estimation error achieved by convex M-estimators \eqref{linear-cvx-estimator} and that achieved by information theoretically optimal estimation. 
Two important questions remain.
\begin{enumerate}

    \item 
    Is the converse true? Namely, if $\tau^2_{\mathsf{reg,cvx}} = \tau^2_{\mathsf{reg,amp}*}$ or $\tau^2_{\mathsf{reg,cvx}} = \tau^2_{\mathsf{reg,stat}}$, is convex M-estimation as good as Bayes AMP or Bayesian estimation?

    \item 
    Can we provide more interpretable conditions which determine whether the strict inequalities $\tau^2_{\mathsf{reg,cvx}} > \tau^2_{\mathsf{reg,amp}*}$ and $\tau^2_{\mathsf{reg,cvx}} > \tau^2_{\mathsf{reg,stat}}$ occur?

\end{enumerate}
It turns out that the condition we provide to answer the second question  will provide an affirmative answer to the first question.
In particular,
we will show that $\tau^2_{\mathsf{reg,cvx}} = \tau^2_{\mathsf{reg,amp}*}$ (resp.\ $\tau^2_{\mathsf{reg,cvx}} = \tau^2_{\mathsf{reg,stat}}$) if and only if $\pi * \normal(0,\tau^2_{\mathsf{reg,amp}*})$ (resp.\ $\pi * \normal(0,\tau^2_{\mathsf{reg,stat}})$) is log-concave.
Moreover, while when $\delta \leq 1$ we do not guarantee the tightness of the convex lower bound generally, we will guarantee its tightness in the case that $\pi * \normal(0,\tau^2_{\mathsf{reg,cvx}})$ is log-concave.
Because $\tau^2_{\mathsf{reg,cvx}} = \tau^2_{\mathsf{reg,amp}*}$ implies $\pi * \normal(0,\tau^2_{\mathsf{reg,amp}*})$, and hence $\pi * \normal(0,\tau^2_{\mathsf{reg,cvx}})$, is log-concave,
it also implies that convex M-estimation is as good as Bayes AMP. 
A similar line of reasoning follows when $\tau^2_{\mathsf{reg,cvx}} = \tau^2_{\mathsf{reg,stat}}$.
Thus, the converse described in the first question indeed holds.

Before describing this argument in detail, we remark that when $\pi$ itself is log-concave, $\pi * \normal(0,\tau^2)$ is log-concave for all $\tau^2$.
In this case, the convex lower bound, the risk of Bayes AMP, and the Bayes risk agree for all values of $\sigma,\delta$.
Moreover, in this case the convex lower bound is always tight, so that convex M-estimators \eqref{linear-cvx-estimator} always achieve information theoretically optimal performance.
In contrast, we will show that when $\pi$ is not log-concave, 
there exist values of $\sigma,\delta$ for which the convex lower bound is strictly larger than the the risk of Bayes AMP and the Bayes risk.
Thus, non-trivial performance of convex M-estimation relative to computational and information-theoretic benchmarks occurs exactly when $\pi$ is not log-concave.
  
\begin{proposition}\label{prop-achieving-the-bound}
    Consider $\pi \in \cP_\infty(\reals)$, $\delta \in (0,\infty)$, and $\sigma \geq 0$. 
    If $\cC$ consists of all sequences of convex penalties, the following statements hold under the HDA and RSN assumptions; if $\cC$ consists of all sequences of symmetric convex penalties, we may replace the RSN by the DSN assumption.
    \begin{enumerate}[(i)]
    
    \item 
    If $\tau \geq 0$ is such that $\pi * \mathsf{N}(0,\tau^2)$ has log-concave density (w.r.t.\ Lebesgue measure) and $\delta \tau^2 - \sigma^2 > \mathsf{mmse}_\pi(\tau^2)$, then
    \begin{equation}\label{minimal-loss-upper-bound}
        \inf_{\{\rho_p\} \in \cC_{\delta,\pi}} \lim^{\mathrm{p}}_{p \rightarrow \infty} \|\widehat \bbeta_{\mathsf{cvx}} - \bbeta_0\|^2 \leq \delta\tau^2 - \sigma^2.
    \end{equation}
    Under the RSN assumption, we may replace the limit in probability with $\lim_{p \rightarrow \infty} \E_{\bbeta_0,\bw,\bX}\big[\|\widehat \bbeta_{\mathsf{cvx}} - \bbeta_0\|^2\big]$.
    (We set these limits to $\infty$ when they do not exist.)
    
    \item 
    If $\tau \geq 0$ is such that $\pi * \mathsf{N}(0,\tau^2)$ does not have log-concave density (w.r.t.\ Lebesgue measure) and $\delta\tau^2 - \sigma^2 \leq \mathsf{mmse}_\pi(\tau^2)$, then $\tau^2_{\mathsf{reg,cvx}} > \tau^2$ whence
    \begin{equation}\label{minimal-loss-lower-bound}
    \inf_{\{\rho_p\} \in \cC_{\delta,\pi}} \liminf^{\mathrm{p}}_{p \rightarrow \infty}  \|\widehat \bbeta_{\mathsf{cvx}} - \bbeta_0\|^2 > \delta\tau^2 - \sigma^2.
    \end{equation}

    \item 
    We have $\tau^2_{\mathsf{reg,cvx}} = \tau^2_{\mathsf{reg,stat}}$ if and only if $\pi * \normal(0,\tau^2_{\mathsf{reg,stat}})$ is log-concave.
    In the (generic) case that $\tau^2_{\mathsf{reg,amp}} = \tau^2_{\mathsf{reg,amp}*}$, 
    we have $\tau^2_{\mathsf{reg,cvx}} = \tau^2_{\mathsf{reg,amp}*}$ if and only if $\pi * \normal(0,\tau^2_{\mathsf{reg,amp}*})$.
    
    \end{enumerate}
\end{proposition}
\noindent The proof of Proposition \ref{prop-achieving-the-bound} is provided in Section J of the Supplementary Material \cite{supplement}. 

While the relevance of the log-concavity of the convolutional density $\pi * \normal(0,\tau^2)$ may seem surprising, it is related to the following fact: 
in the Gaussian sequence model \eqref{sequence-model}, the Bayes estimator is the proximal operator of some convex function if and only if $\pi * \normal(0,\tau^2)$ is log-concave. 
This is a remarkable consequence of Tweedie's formula.
Our construction of penalties achieving \eqref{minimal-loss-upper-bound} involves identifying the penalty whose proximal operator is the Bayes estimator at noise variance $\tau^2$ in the sequence model.
This is related to the construction of \cite{Bean2013}.
See Section J of the Supplementary Material \cite{supplement} for details of this fact and its use in proving Proposition \ref{prop-achieving-the-bound}.

\subsection{Gaps between convex M-estimators and Bayes AMP}\label{subsection-comparing-alg-and-cvx}

Under generic conditions, convex M-estimators achieve the risk of Bayes AMP if and only if $\pi * \normal(0,\tau^2_{\mathsf{reg,amp}*})$ has log-concave density.

\begin{theorem}\label{thm-cvx-cannot-beat-alg}
    Consider $\pi \in \cP_2(\reals)$, $\delta \in (0,\infty)$, $\sigma \geq 0$.
    Assume $\tau_{\mathsf{reg,amp}} = \tau_{\mathsf{reg,amp}*}$ (which holds generically, see the proof of Corollary \ref{cor-alg-lower-bound}, as well as Section L of the Supplementary Material \cite{supplement}).
    If $\cC$ contains all sequences of convex penalties, then under the HDA and RSN assumptions,
    inequality \eqref{alg-lower-bound} holds with equality \emph{if and only if} $\pi * \mathsf{N}(0,\tau_{\mathsf{reg,amp}*}^2)$ has log-concave density (w.r.t.\ Lebesgue measure), which occurs if and only if $\tau^2_{\mathsf{reg,cvx}} = \tau^2_{\mathsf{reg,amp}*}$.
    The same holds if we replace the limits in probability with the limits of expectations in \eqref{alg-lower-bound}.

    If $\cC$ contains all sequences of symmetric convex penalties, the preceding statements hold also under the DSN assumption.
\end{theorem}

When equality occurs in Theorem \ref{thm-cvx-cannot-beat-stat},
the penalty achieving the convex lower bound is (up to a small strong convexity term added for technical reasons) given by the convex function whose proximal operator is the Bayes estimator in the sequence model \eqref{sequence-model} at noise variance $\tau^2_{\mathsf{reg,amp}*}$. 
The existence of such a penalty is a consequence of the log-concavity of $\pi * \mathsf{N}(0,\tau_{\mathsf{reg,amp}}^2)$.
See the remark following Proposition \ref{prop-achieving-the-bound} and the proof of that proposition in Section J of the Supplementary Material \cite{supplement} for further details.

\begin{proof}[Proof of Theorem \ref{thm-cvx-cannot-beat-alg}]
    The equivalence of $\pi * \mathsf{N}(0,\tau_{\mathsf{reg,amp}*}^2)$ having log-concave density and $\tau^2_{\mathsf{reg,cvx}} = \tau^2_{\mathsf{reg,amp}*}$ holds by Proposition \ref{prop-achieving-the-bound}.$(iii)$. We now focus on the remaining parts of the Theorem.

    We first prove the ``if'' direction. 
    By \eqref{alg-bound-sup-to-inf-form-def}, we have for $\tau > \tau_{\mathsf{reg,amp}*}$ that $\delta\tau^2 - \sigma^2 > \mathsf{mmse}_\pi(\tau^2)$. 
    Further, because $\pi * \mathsf{N}(0,\tau_{\mathsf{reg,amp}*}^2)$ has log-concave density, so too does $\pi * \mathsf{N}(0,\tau^2)$ \cite[Proposition 3.5]{Saumard2014Log-concavityReview}. 
    By Proposition \ref{prop-achieving-the-bound}.$(i)$, we have that \eqref{minimal-loss-upper-bound} holds with this choice of $\tau$. 
    Taking $\tau \downarrow \tau_{\mathsf{reg,amp}*} = \tau_{\mathsf{reg,amp}}$, we conclude that \eqref{alg-lower-bound} holds with the inequality reversed, so in fact holds with equality.
    
    We now prove the ``only if'' direction.
    By \eqref{alg-bound-sup-to-inf-form-def} and the continuity of $\mathsf{mmse}_\pi(\tau^2)$ in $\tau^2$ \cite[Proposition 7]{DongningGuo2011EstimationError}, we have 
    \begin{equation}\label{tau-alg-star-is-stationary}
    \delta \tau_{\mathsf{reg,amp}*}^2 - \sigma^2 = \mathsf{mmse}_\pi(\tau_{\mathsf{reg,amp}*}^2).
    \end{equation}
    If $\pi * \mathsf{N}(0,\tau_{\mathsf{reg,amp}*}^2)$ does not have log-concave density, by Proposition \ref{prop-achieving-the-bound}.$(ii)$ Eq.~\eqref{alg-lower-bound} holds with strict inequality. By Lemma K.1 of the Supplementary Material \cite{supplement}, the same holds when replace limits in probability with limits of expectations.
\end{proof}

A corollary of Theorem \ref{thm-cvx-lower-bound} is that when $\pi$ has log-concave density, gaps between convex M-estimation and the risk of Bayes AMP do not occur, 
whereas when $\pi$ does not have log-concave density, they do occur at large enough signal-to-noise ratios.

\begin{corollary}\label{cor-cvx-alg-phase-diagram}
    Consider $\pi \in \cP_2(\reals)$ and $\sigma \geq 0$. Let $\cB\subseteq\reals$ be the set of $\delta>0$ for which $\tau_{\mathsf{reg,amp}} < \tau_{\mathsf{reg,amp}*}$
holds (recall that, by the proof of Corollary \ref{cor-alg-lower-bound}, $\cB$ has zero Lebesgue measure).
We have the following.
        \begin{enumerate}[(a)]
        \item If $\pi$ has log-concave density, then for all $\delta \in\reals_{>0}\setminus\cB$,
 inequality \eqref{alg-lower-bound} holds with equality.
        \item If $\sigma > 0$ and $\pi $ does not have log-concave density, then there exist $0 \leq \delta_{\mathsf{alg}} < \infty$ such that inequality \eqref{alg-lower-bound} holds with equality for $\delta \in (0, \delta_{\mathsf{alg}})\setminus\cB$  and with strict inequality for all $\delta \in(\delta_{\mathsf{alg}},\infty)\setminus \cB$.
    \end{enumerate}
\end{corollary}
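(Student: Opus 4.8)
The plan is to read both parts off Theorem~\ref{thm-cvx-cannot-beat-alg}, which already equates equality in~(\ref{convex-risk-lower-bound}) with log-concavity of $\pi * \mathsf{N}(0,\tau_{\mathsf{reg,alg}}^2)$; the only additional work is to track how this property varies with $\delta$. For \textbf{part (a)}, I would fix $\delta\in\reals_{>0}\setminus\cB$, so that $\tau_{\mathsf{reg,alg}}=\tau_{\mathsf{reg,alg}*}$ and Theorem~\ref{thm-cvx-cannot-beat-alg} applies. If $\tau_{\mathsf{reg,alg}}=0$ then $\pi*\mathsf{N}(0,\tau_{\mathsf{reg,alg}}^2)=\pi$ is log-concave by hypothesis; if $\tau_{\mathsf{reg,alg}}>0$ then $\pi*\mathsf{N}(0,\tau_{\mathsf{reg,alg}}^2)$ is a convolution of two log-concave densities and hence log-concave by \cite[Proposition~3.5]{Saumard2014Log-concavityReview}. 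In either case the ``if'' half of Theorem~\ref{thm-cvx-cannot-beat-alg} (valid for $\sigma=0$ and $\sigma>0$ alike) gives equality in~(\ref{convex-risk-lower-bound}), disposing of (a) immediately.

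For \textbf{part (b)} we have $\sigma>0$. I would first observe that $g_\delta(t):=\delta t-\sigma^2-\mathsf{mmse}_\pi(t)$ satisfies $g_\delta(0)=-\sigma^2<0$, so by continuity of $\mathsf{mmse}_\pi$ we get $\tau_{\mathsf{reg,alg}*}^2(\delta)>0$ for every $\delta$. By the ``if and only if'' half of Theorem~\ref{thm-cvx-cannot-beat-alg}, for $\delta\notin\cB$ equality in~(\ref{convex-risk-lower-bound}) holds exactly when $\tau_{\mathsf{reg,alg}*}^2(\delta)\in L$, where $L:=\{t\ge 0:\pi*\mathsf{N}(0,t)\text{ has log-concave density}\}$. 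It therefore suffices to establish two structural facts and then glue them.

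Fact~1: $L=[\tau_*^2,\infty)$ for some $\tau_*^2\in(0,\infty]$ (with the convention $[\infty,\infty)=\emptyset$). Upward-closedness is $\pi*\mathsf{N}(0,t')=(\pi*\mathsf{N}(0,t))*\mathsf{N}(0,t'-t)$ together with \cite[Proposition~3.5]{Saumard2014Log-concavityReview}; closedness follows since, if $t_n\to t>0$ with $t_n\in L$, the densities of $\pi*\mathsf{N}(0,t_n)$ converge pointwise to that of $\pi*\mathsf{N}(0,t)$ (dominated convergence, the Gaussian kernels being uniformly bounded because $\inf_n t_n>0$) and a pointwise limit of everywhere-positive log-concave densities is log-concave; and $\tau_*^2:=\inf L>0$ because if $t_n\downarrow 0$ with $t_n\in L$ then $\pi*\mathsf{N}(0,t_n)\to\pi$ weakly, a weak limit of log-concave measures on $\reals$ is log-concave, so $\pi$ would be a point mass or would have log-concave density, contradicting the hypothesis of~(b) (the point-mass case being read out of ``does not have log-concave density''). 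Fact~2: $\delta\mapsto\tau_{\mathsf{reg,alg}*}^2(\delta)$ is non-increasing (immediate from $g_{\delta'}\ge g_\delta$ pointwise for $\delta'>\delta$, which shrinks the defining sublevel set) and tends to $0$ as $\delta\to\infty$ (from $\mathsf{mmse}_\pi\le s_2(\pi)$ one gets $\tau_{\mathsf{reg,alg}*}^2(\delta)\le(\sigma^2+s_2(\pi))/\delta$).

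With these in hand I would set $\delta_{\mathsf{alg}}:=\sup\{\delta>0:\tau_{\mathsf{reg,alg}*}^2(\delta)\ge\tau_*^2\}$, with $\sup\emptyset:=0$. It is finite because $\tau_*^2>0$ while $\tau_{\mathsf{reg,alg}*}^2(\delta)\to 0$; for $\delta<\delta_{\mathsf{alg}}$, monotonicity gives $\tau_{\mathsf{reg,alg}*}^2(\delta)\ge\tau_*^2$, hence $\in L=[\tau_*^2,\infty)$, so equality holds (for $\delta\notin\cB$); for $\delta>\delta_{\mathsf{alg}}$ we have $\tau_{\mathsf{reg,alg}*}^2(\delta)<\tau_*^2$, hence $\notin L$, so the inequality is strict (for $\delta\notin\cB$). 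The only genuinely delicate point I anticipate is Fact~1's claim $\tau_*^2>0$: it is precisely what rules out $\delta_{\mathsf{alg}}=\infty$, and it is where non-log-concavity (and non-degeneracy) of $\pi$ must be invoked, through stability of log-concavity under weak limits; everything else is monotonicity bookkeeping layered on top of Theorem~\ref{thm-cvx-cannot-beat-alg}.
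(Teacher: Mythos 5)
Your proposal is correct and follows essentially the same route as the paper: reduce both parts to Theorem \ref{thm-cvx-cannot-beat-alg}, use \cite[Proposition 3.5]{Saumard2014Log-concavityReview} together with the monotonicity of the algorithmic effective noise in $\delta$, and rule out $\delta_{\mathsf{alg}}=\infty$ via $\tau^2\le(\sigma^2+s_2(\pi))/\delta\to 0$ and preservation of log-concavity under convergence in distribution. The only cosmetic differences are that you define the threshold through $\tau_{\mathsf{reg,alg}*}$ and a level set $L$ (whose closedness is an extra, not strictly needed, step, since the paper's definition of $\delta_{\mathsf{alg}}$ as an infimum over $\delta$ sidesteps the boundary case), which is equivalent off the null set $\cB$ where the statement makes claims.
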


\noindent Part $(b)$ states that, if $\pi$ is not log-concave, then either $(i)$ there is always a gap between convex M-estimation and the best algorithm we know of or $(ii)$ for small $\delta$, the algorithmic lower bound is achieved by a convex procedure,
while for large $\delta$ there is a gap between convex M-estimation and the best algorithm that we know of. This might seem counterintuitive, because large $\delta$
corresponds to larger sample size and therefore easier estimation. An intuitive explanation of this result is that, for large $\delta$, we can exploit more of the structure of
the prior $\pi$, and this requires non-convex methods. 

\begin{proof}[Proof of Corollary \ref{cor-cvx-alg-phase-diagram}]\hfill 

    \emph{Part (a):} By \cite[Proposition 3.5]{Saumard2014Log-concavityReview}, $\pi * \mathsf{N}(0,\tau_{\mathsf{reg,amp}}^2)$ has log-concave density. 
    The result follows by Theorem \ref{thm-cvx-cannot-beat-alg}.
    
    \emph{Part (b):} Define $\delta_{\mathsf{alg}} = \inf \{\delta \mid \pi * \mathsf{N}(0,\tau_{\mathsf{reg,amp}}^2)\text{ does not have log-concave density}\}$. 
     By \cite[Proposition 3.5]{Saumard2014Log-concavityReview}, if $\tau < \tau'$ and $\pi * \mathsf{N}(0,\tau^2)$ has log-concave density, then so too does $\pi * \mathsf{N}(0,{\tau'}^2)$. 
By \eqref{tau-alg-def}, $\tau_{\mathsf{reg,amp}}$ is non-increasing in $\delta$.
    Combining these two facts, for $\delta > \delta_{\mathsf{alg}}$ we have $\mathsf{N}(0,\tau_{\mathsf{reg,amp}}^2)$ does not have log-concave density, and for $\delta < \delta_{\mathsf{alg}}$ we have $\mathsf{N}(0,\tau_{\mathsf{reg,amp}}^2)$ does have log-concave density.
    Then, by Theorem \ref{thm-cvx-cannot-beat-alg}, inequality \eqref{alg-lower-bound} holds with equality for $\cB \ni \delta < \delta_{\mathsf{alg}}$ and with strict inequality when $\cB \ni \delta > \delta_{\mathsf{alg}}$.
    We need only check that $\delta_{\mathsf{alg}} < \infty$. 
    By \eqref{tau-alg-is-stationary}, $\tau_{\mathsf{reg,amp}}^2 = \frac1\delta\big( \sigma^2 +\mathsf{mmse}_\pi(\tau_{\mathsf{reg,amp}}^2)\big) \leq \frac1\delta\left(\sigma^2 +  s_2(\pi)\right)$.
    Thus, $\lim_{\delta \rightarrow \infty} \tau_{\mathsf{reg,amp}}^2 = 0$. 
    Because log-concavity is preserved under convergence in distribution \cite[Proposition 3.6]{Saumard2014Log-concavityReview} and $\pi * \mathsf{N}(0,\tau^2) \xrightarrow[\tau \rightarrow 0]{\mathrm{d}} \pi$, we conclude that for $\delta$ sufficiently large, $\pi * \mathsf{N}(0,\tau_{\mathsf{reg,amp}}^2)$ does not have log-concave density, as desired.
\end{proof}

\subsection{Gaps between convex M-estimators and the Bayes risk}\label{subsection-comparing-stat-and-cvx}

Under generic conditions, convex M-estimators achieve the Bayes risk exactly when the convex lower bound is equal to the Bayes risk, which in turn occurs exactly when $\pi * \normal(0,\tau^2_{\mathsf{reg,stat}})$ has log-concave density.

\begin{theorem}\label{thm-cvx-cannot-beat-stat}
    Consider $\pi \in \cP_\infty(\reals)$, $\delta \in (0,\infty)$, and $\sigma > 0$. 
    Assume  
    the potential $\phi$ defined in Eq.~\eqref{rs-potential} has a  unique minimizer. 
    If $\cC$ cosists of  all sequences of convex penalties, then under the HDA and RSN assumptions,
    $\tau^2_{\mathsf{reg,cvx}} = \tau^2_{\mathsf{reg,stat}}$ if and only if
    \begin{equation}\label{convex-risk-achieves-stat-lower-bound}
        \inf_{\{\rho_p\}_p \in \cC_{\delta,\pi}} \liminf_{p \rightarrow \infty}  \E_{\bbeta_0,\bw,\bX} \left[\|\widehat \bbeta_{\mathsf{cvx}} - \bbeta_0\|^2\right] = \lim_{p \rightarrow \infty} \E_{\bbeta_0,\bw,\bX}\left[\|\E_{\bbeta_0,\bw,\bX}[\bbeta_0|\by] - \bbeta_0 \|^2\right],
    \end{equation}
    which in turn occurs if and only if $\pi * \mathsf{N}(0,\tau_{\mathsf{reg,stat}}^2)$ has log-concave density with respect to Lebesgue measure on $\reals$.
\end{theorem}

\noindent 
Analogously to Theorem \ref{thm-cvx-cannot-beat-alg},
when equality occurs in Theorem \ref{thm-cvx-cannot-beat-stat},
the penalty achieving the convex lower bound is (up to a small strong convexity term added for technical reasons) given by the convex function whose proximal operator is the Bayes estimator in the sequence model \eqref{sequence-model} at noise variance $\tau^2_{\mathsf{reg,stat}}$. 
See the remark following Proposition \ref{prop-achieving-the-bound} and the proof of that proposition in Section J of the Supplementary Material \cite{supplement} for further details.
The condition that the minimizer of $\phi$ is unique holds --by analyticity considerations-- for all $(\delta,\sigma)$
except a set of Lebesgue measure zero.

\begin{proof}[Proof of Theorem \ref{thm-cvx-cannot-beat-stat}]
    The equivalence of $\pi * \mathsf{N}(0,\tau_{\mathsf{reg,stat}}^2)$ having log-concave density and $\tau^2_{\mathsf{reg,cvx}} = \tau^2_{\mathsf{reg,stat}}$ holds by Proposition \ref{prop-achieving-the-bound}(iii). We now focus on the remaining parts of the Theorem.

    The right-hand side of \eqref{convex-risk-achieves-stat-lower-bound} is $\delta \tau_{\mathsf{reg,stat}}^2 - \sigma^2$ by Proposition \ref{prop-bayes-risk-asymptotic} (this is where we use $\sigma > 0$). 
    By \eqref{compare-cvx-stat-effective-noise}, if $\tau^2_{\mathsf{reg,cvx}} \neq \tau^2_{\mathsf{reg,stat}}$, then $\tau^2_{\mathsf{reg,cvx}} > \tau^2_{\mathsf{reg,stat}}$. Then by Theorem \ref{thm-cvx-lower-bound}, as well as Lemma K.1 of the Supplementary Material \cite{supplement}, 
    we have under the RSN assumption that \eqref{convex-risk-achieves-stat-lower-bound} holds with equality replace by strict inequality.

    Now consider that $\tau^2_{\mathsf{reg,cvx}} = \tau^2_{\mathsf{reg,stat}}$, or equivalently, that $\pi * \mathsf{N}(0,\tau_{\mathsf{reg,stat}}^2)$ has log-concave density.
    Assume $\mathsf{N}(0,\tau_{\mathsf{reg,stat}}^2)$ has log-concave density, $\sigma > 0$, and $\phi$ has unique minimizer.
    For $\tau' > \tau_{\mathsf{reg,stat}}$ we have
    \begin{align}
        \phi(\tau_{\mathsf{reg,stat}};\pi,\delta,\sigma) &= \phi(\tau';\pi,\delta,\sigma) +  \frac12\int_{\tau'^{-2}}^{\tau_{\mathsf{reg,stat}}^{-2}}\left(\sigma^2 - \delta \tau^2 + \mathsf{mmse}_\pi(\tau^2)\right)d\tau^{-2} \nonumber\\
        &> \phi(\tau_{\mathsf{reg,stat}};\pi,\delta,\sigma) + \frac12\int_{\tau'^{-2}}^{\tau_{\mathsf{reg,stat}}^{-2}}\left(\sigma^2 - \delta \tau^2 + \mathsf{mmse}_\pi(\tau^2)\right)d\tau^{-2},\label{alg-vs-stat-free-energy-compare}
    \end{align}
    where in the inequality we use that the minimizer of $\phi$ is unique.
    Thus, the integral is negative for all $\tau' > \tau_{\mathsf{reg,stat}}$, so there exists $\tau' > \tau_{\mathsf{reg,stat}}$ arbitrarily close to $\tau_{\mathsf{reg,stat}}$ for which $\delta \tau'^2 - \sigma^2 >  \mathsf{mmse}_\pi(\tau'^2)$. 
    By \cite[Proposition 3.5]{Saumard2014Log-concavityReview}, we have for all such $\tau'$ that $\pi * \mathsf{N}(0,\tau'^2)$ has log-concave density.
    Taking $\tau' \downarrow \tau_{\mathsf{reg,stat}}$ along $\tau'$ for which $\delta \tau'^2 - \sigma^2 >  \mathsf{mmse}_\pi(\tau'^2)$ and applying Proposition \ref{prop-achieving-the-bound}.$(i)$, we have under the RSN assumption that
    \begin{equation}\label{tau-stat-DSN-upper-bound}
        \inf_{\{\rho_p\}_p \in \cC_{\delta,\pi}} \lim_{p \rightarrow \infty}  \E_{\bbeta_0,\bw,\bX} \left[\|\widehat \bbeta_{\mathsf{cvx}} - \bbeta_0\|^2\right] \leq \delta\tau_{\mathsf{reg,stat}}^2 - \sigma^2.
    \end{equation}
    By \eqref{bayes-risk-asymptotic}, we have $\delta \tau_{\mathsf{reg,stat}}^2 - \sigma^2$ equals the right-hand side of \eqref{convex-risk-achieves-stat-lower-bound}. 
        The reverse inequality holds by the optimality of the Bayes risk, whence we conclude \eqref{convex-risk-achieves-stat-lower-bound}.
\end{proof}

A corollary of Theorem \ref{thm-cvx-lower-bound} is that when $\pi$ has log-concave density, gaps between convex M-estimation and the Bayes risk do not occur, 
whereas when $\pi$ does have log-concave density, they do occur at large enough signal-to-noise ratios.

\begin{corollary}\label{cor-cvx-stat-phase-diagram}
    Consider $\pi \in \cP_\infty(\reals)$ and $\sigma > 0$.  We have the following.
    \begin{enumerate}[(a)]
        \item If $\pi$ has log-concave density with respect to Lebesgue measure, then for all $\delta > 0$ for which $\phi$ has unique minimizer, equality \eqref{convex-risk-achieves-stat-lower-bound} holds.
        \item If $\pi$ does not have log-concave density with respect to Lebesgue measure, then there exist $0 \leq \delta_{\mathsf{stat}} < \infty$ such that equality \eqref{convex-risk-achieves-stat-lower-bound} holds for all $\delta < \delta_{\mathsf{stat}}$ for which $\phi$ has unique minimizer, 
        and \eqref{convex-risk-achieves-stat-lower-bound} holds with strict inequality replacing equality for all $\delta > \delta_{\mathsf{stat}}$ for which $\phi$ has unique minimizer. Moreover, $\delta_{\mathsf{stat}} \leq \delta_{\mathsf{alg}}$.
    \end{enumerate}
\end{corollary}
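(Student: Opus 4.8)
The plan is to follow the proof of Corollary~\ref{cor-cvx-alg-phase-diagram}, with Theorem~\ref{thm-cvx-cannot-beat-stat} in place of Theorem~\ref{thm-cvx-cannot-beat-alg} and $\tau_{\mathsf{reg,stat}}$ in place of $\tau_{\mathsf{reg,alg}}$. The key point is a clean dichotomy: whenever $\phi(\,\cdot\,;\pi,\delta,\sigma)$ has a unique minimizer, the left-hand side of~\eqref{convex-risk-to-stat-lower-bound} is always at least the Bayes risk on its right-hand side (by optimality of the posterior mean), so exactly one of~\eqref{convex-risk-achieves-stat-lower-bound} (equality) and~\eqref{convex-risk-to-stat-lower-bound} (strict inequality) holds; and by the ``if and only if'' part of Theorem~\ref{thm-cvx-cannot-beat-stat}, which one holds is determined solely by whether $\pi*\mathsf{N}(0,\tau_{\mathsf{reg,stat}}^2)$ has a log-concave density. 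The corollary therefore reduces to tracking, as $\delta$ varies, the log-concavity of this convolution.

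\emph{Part (a).} If $\pi$ has a log-concave density, then for every $\tau>0$ the density of $\pi*\mathsf{N}(0,\tau^2)$ is log-concave, being a convolution of two log-concave densities \cite[Proposition~3.5]{Saumard2014Log-concavityReview}. Since $\sigma>0$ forces $\tau_{\mathsf{reg,stat}}^2\ge\sigma^2/\delta>0$ via~\eqref{tau-stat-is-stationary}, this applies at $\tau=\tau_{\mathsf{reg,stat}}$, and Theorem~\ref{thm-cvx-cannot-beat-stat} yields~\eqref{convex-risk-achieves-stat-lower-bound} for every $\delta$ at which $\phi$ has a unique minimizer.

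\emph{Part (b).} Set $\delta_{\mathsf{stat}}:=\inf\{\delta>0:\phi\text{ has a unique minimizer and }\pi*\mathsf{N}(0,\tau_{\mathsf{reg,stat}}^2)\text{ is not log-concave}\}$. Two monotonicity facts drive the argument. First, ``log-concavity of $\pi*\mathsf{N}(0,\tau^2)$'' is upward closed in $\tau$ \cite[Proposition~3.5]{Saumard2014Log-concavityReview}, so its negation is downward closed in $\tau$. Second, $\delta\mapsto\tau_{\mathsf{reg,stat}}^2$ is non-increasing on the set of $\delta$ with unique minimizer (argued below). Combining the two, the set of (unique-minimizer) $\delta$ at which $\pi*\mathsf{N}(0,\tau_{\mathsf{reg,stat}}^2)$ fails to be log-concave is an up-set, hence, up to the Lebesgue-null set of exceptional $\delta$, equal to $(\delta_{\mathsf{stat}},\infty)$; Theorem~\ref{thm-cvx-cannot-beat-stat} then gives~\eqref{convex-risk-achieves-stat-lower-bound} for $\delta<\delta_{\mathsf{stat}}$ and~\eqref{convex-risk-to-stat-lower-bound} for $\delta>\delta_{\mathsf{stat}}$, over $\delta$ with unique minimizer. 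To see $\delta_{\mathsf{stat}}<\infty$: by~\eqref{tau-stat-is-stationary}, $\tau_{\mathsf{reg,stat}}^2=\tfrac1\delta\big(\sigma^2+\mathsf{mmse}_\pi(\tau_{\mathsf{reg,stat}}^2)\big)\le\tfrac1\delta(\sigma^2+s_2(\pi))\to0$ as $\delta\to\infty$, so $\pi*\mathsf{N}(0,\tau_{\mathsf{reg,stat}}^2)$ converges weakly to $\pi$; since $\pi$ is not log-concave and log-concavity passes to weak limits \cite[Proposition~3.6]{Saumard2014Log-concavityReview}, $\pi*\mathsf{N}(0,\tau_{\mathsf{reg,stat}}^2)$ is not log-concave for all large $\delta$. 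Finally, $\delta_{\mathsf{stat}}\le\delta_{\mathsf{alg}}$: by~\eqref{compare-cvx-stat-effective-noise} we have $\tau_{\mathsf{reg,stat}}^2\le\tau_{\mathsf{reg,alg}}^2$, and the proof of Corollary~\ref{cor-cvx-alg-phase-diagram} shows $\pi*\mathsf{N}(0,\tau_{\mathsf{reg,alg}}^2)$ is not log-concave for $\delta>\delta_{\mathsf{alg}}$; by downward-closedness in $\tau$, $\pi*\mathsf{N}(0,\tau_{\mathsf{reg,stat}}^2)$ is not log-concave there either, whence $\delta_{\mathsf{stat}}\le\delta_{\mathsf{alg}}$.

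\emph{The main obstacle} is establishing that $\tau_{\mathsf{reg,stat}}^2$ is non-increasing in $\delta$. Unlike $\tau_{\mathsf{reg,alg}}$, which is an explicit supremum~\eqref{tau-alg-def} from which monotonicity is immediate, $\tau_{\mathsf{reg,stat}}$ is the argmin of the potential $\phi$ of~\eqref{rs-potential}. I would prove the monotonicity by a comparative-statics (Topkis) argument: in the variable $v:=\tau^{-2}$, the only $\delta$-dependent term of $\phi$ is $-\tfrac\delta2\log(\sigma^2 v)$, so the mixed partial $\partial^2\phi/\partial v\,\partial\delta=-\tfrac1{2v}$ is strictly negative; hence $\phi(1/v;\pi,\delta,\sigma)$ has strictly decreasing differences in $(v,\delta)$, so its set of minimizers in $v$ is monotone non-decreasing in $\delta$ in the strong set order, which on the set where this minimizer is unique says $\tau_{\mathsf{reg,stat}}^{-2}$ is non-decreasing, i.e.\ $\tau_{\mathsf{reg,stat}}^2$ is non-increasing. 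The only remaining subtlety, handled above by the ``up to a Lebesgue-null set'' bookkeeping, is that $\tau_{\mathsf{reg,stat}}$ is defined only where the minimizer is unique; since both the claimed dichotomy and the threshold inequality $\delta_{\mathsf{stat}}\le\delta_{\mathsf{alg}}$ concern only this co-null set, this causes no difficulty.
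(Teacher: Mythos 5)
Your proof is correct and follows essentially the same route as the paper: for (a), log-concavity of $\pi*\mathsf{N}(0,\tau_{\mathsf{reg,stat}}^2)$ via \cite[Proposition 3.5]{Saumard2014Log-concavityReview} plus the if-and-only-if part of Theorem \ref{thm-cvx-cannot-beat-stat}, and for (b), the threshold $\delta_{\mathsf{stat}}$ combined with monotonicity of $\tau_{\mathsf{reg,stat}}$ in $\delta$ (the paper asserts this directly from \eqref{potential-derivative}; your Topkis argument in $v=\tau^{-2}$ is exactly that computation made explicit), the bound $\tau_{\mathsf{reg,stat}}^2\le(\sigma^2+s_2(\pi))/\delta$, and preservation of log-concavity under weak limits. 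Your explicit derivation of $\delta_{\mathsf{stat}}\le\delta_{\mathsf{alg}}$ from \eqref{compare-cvx-stat-effective-noise} and the monotonicity of log-concavity in the noise level is a sound addition, as the paper's written proof leaves that step implicit.
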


\begin{proof}[Proof of Corollary \ref{cor-cvx-stat-phase-diagram}]\hfill

    \emph{Part (a):} 
    By \cite[Proposition 3.5]{Saumard2014Log-concavityReview}, we have $\pi * \mathsf{N}(0,\tau_{\mathsf{reg,stat}}^2)$ has log-concave density with respect to Lebesgue measure.
    The result follws by Theorem \ref{thm-cvx-cannot-beat-stat}.
    
    \emph{Part (b):} Define $\delta_{\mathsf{stat}} = \inf\{\delta \mid \pi * \mathsf{N}(0,\tau_{\mathsf{reg,stat}}^2)\text{ does not have log-concave density}\}$. Because the derivative \eqref{potential-derivative} of $\phi$ with respect to $\tau^{-2}$ is strictly decreasing in $\delta$, we have by \eqref{rs-potential} that $\tau_{\mathsf{reg,stat}}$ is strictly decreasing in $\delta$. 
    As in the proof of Corollary \ref{cor-cvx-alg-phase-diagram}, this implies that for for $\delta > \delta_{\mathsf{stat}}$ we have $\mathsf{N}(0,\tau_{\mathsf{reg,stat}}^2)$ does not have log-concave density and for $\delta < \delta_{\mathsf{stat}}$ we have $\mathsf{N}(0,\tau_{\mathsf{reg,stat}}^2)$ does have log-concave density.
    Then, by Theorem \ref{thm-cvx-cannot-beat-stat}, if $\phi$ has unique minimizer and $\delta > \delta_{\mathsf{stat}}$, 
    then the left-hand side of \eqref{convex-risk-achieves-stat-lower-bound} is strictly larger than the right-hand side, and if $\phi$ has unique minimizer and $\delta < \delta_{\mathsf{stat}}$, equality holds.
    We need only check that $\delta_{\mathsf{stat}} < \infty$. 
    By \eqref{tau-stat-def} and \eqref{potential-derivative}, we have
    $\tau_{\mathsf{reg,stat}}^2 = \frac1\delta\left(\sigma^2 + \mathsf{mmse}_\pi(\tau_{\mathsf{reg,stat}}^2)\right) \leq \frac1\delta \left(\sigma^2 + s_2(\pi)\right)$,
    where $s_2(\pi)$ is the second moment of $\pi$.
    Thus, $\lim_{\delta \rightarrow \infty} \tau_{\mathsf{reg,stat}}^2 = 0$. 
    Because log-concavity is preserved under convergence in distribution \cite[Proposition 3.6]{Saumard2014Log-concavityReview} and $\pi * \mathsf{N}(0,\tau^2)\xrightarrow[\tau \rightarrow 0]{\mathrm{d}} \pi$, we conclude that for sufficiently large $\delta$, $\pi * \mathsf{N}(0,\tau_{\mathsf{reg,stat}}^2)$ is not log-concave, as desired. 
\end{proof}

\section{Quantifying the gap: high and low signal-to-noise ratio (SNR) regimes}

We now provide quantitative estimates of the gap between convex M-estimation and the Bayes risk when such gaps occur. 
Consider $\pi \in \cP_\infty(\reals)$, $\delta \in (0,\infty)$, $\sigma > 0$, and let $\cC$ contain all sequences of convex penalties.
Define the asymptotic gap between convex M-estimation and Bayes error
\begin{align*}
&\Delta(\pi,\delta,\sigma) \equiv\\
&\quad\left(\inf_{\{\rho_p\}_p \in \cC_{\delta,\pi}} \liminf_{p \rightarrow \infty} \E_{\bbeta_0,\bw,\bX} \left[\|\widehat \bbeta_{\mathsf{cvx}} - \bbeta_0\|^2\right]\right) - \left(\lim_{p \rightarrow \infty}  \E_{\bbeta_0,\bw,\bX}\left[\|\E_{\bbeta_0,\bw,\bX}[\bbeta_0|\by,\bX] - \bbeta_0 \|^2\right]\right),
\end{align*}
where the limits are taken under the HDA and RSN assumptions.
The results of  Section \ref{subsection-comparing-stat-and-cvx} characterize whether  $\Delta(\pi,\delta,\sigma) = 0$ or
$\Delta(\pi,\delta,\sigma)> 0$.
Here we provide a more quantitative estimate of its size for large $\delta$ (high SNR) and for large $\sigma$ (low SNR).

\begin{theorem}\label{thm-high-low-snr-gaps}
    Fix $\pi \in \cP_\infty(\reals)$ and let $\cC$ contain all sequences of convex penalties.
    \begin{enumerate}[(i)]
    \item 
    Restricting ourselves to $\delta,\sigma > 0$ for which the minimizer of \eqref{tau-stat-def} is unique, we have
    \begin{equation}\label{gap-high-snr}
        \Delta(\pi,\delta,\sigma) \geq \mathsf{R^{opt}_{seq,cvx}} \left(\sigma/\sqrt \delta;\pi\right) - \mathsf{mmse}_\pi\left(\sigma^2/\delta\right) + O\left(1/\sqrt \delta\right),
    \end{equation}
    where $O$ hides constants depending only on the moments of $\pi$.
    \item 
    Let $\snr = \frac{s_2(\pi)}{\sigma^2}$ denote the signal-to-noise ratio for the sequence model.  
    For any fixed $\delta$, we have $\Delta(\pi,\delta,\sigma) = O(\snr^2)$ as $\snr\to 0$. More precisely
   \begin{equation}\label{gap-low-snr-third-moment}
    \limsup_{\snr \rightarrow 0} \frac{\Delta(\pi,\delta,\sigma)}{\snr^2} \leq s_2(\pi)\delta^2 \frac{s_3^2(\pi)}{2s_2^3(\pi)},
   \end{equation} 
   where the $\limsup$ is taken over $\sigma$ at which \eqref{rs-potential} has unique minimizer.
   \end{enumerate}
\end{theorem}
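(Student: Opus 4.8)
Both parts are reductions to the sequence‑model quantities already in hand. Under the HDA and RSN assumptions the Bayes risk equals $\delta\tau_{\mathsf{reg,stat}}^2-\sigma^2=\mathsf{mmse}_\pi(\tau_{\mathsf{reg,stat}}^2)$ (Proposition \ref{prop-bayes-risk-asymptotic}), while $\inf_{\{\rho_p\}\in\cC_{\delta,\pi}}\liminf_p\E_{\bbeta_0,\bw,\bX}\|\widehat\bbeta_{\mathsf{cvx}}-\bbeta_0\|^2\ge\delta\tau_{\mathsf{reg,cvx}}^2-\sigma^2$ (Theorem \ref{thm-cvx-lower-bound} plus the transfer Lemma \ref{lem-DSN-to-RSN-lower-bound}), so
\[
\Delta(\pi,\delta,\sigma)\;\ge\;\delta\big(\tau_{\mathsf{reg,cvx}}^2-\tau_{\mathsf{reg,stat}}^2\big).
\]
I will use two preliminary facts. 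First, $\tau\mapsto\mathsf{R^{opt}_{seq,cvx}}(\tau;\pi)$ is locally Lipschitz with constant controlled by $s_2(\pi)$: each map $\tau\mapsto\E_{\bbeta_0,\bz}\|\mathsf{prox}[\rho](\bbeta_0+\tau\bz)-\bbeta_0\|^2$ has derivative at most $2\,(\E_{\bbeta_0,\bz}\|\mathsf{prox}[\rho](\bbeta_0+\tau\bz)-\bbeta_0\|^2)^{1/2}$ by firm nonexpansiveness of the proximal operator, and an infimum of functions with a common local Lipschitz bound inherits that bound; in particular $\delta\tau_{\mathsf{reg,cvx}}^2-\sigma^2=\mathsf{R^{opt}_{seq,cvx}}(\tau_{\mathsf{reg,cvx}};\pi)$. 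Second (after the harmless preprocessing of subtracting $s_1(\pi)\,p^{-1/2}\bX\ones$ from $\by$, which reduces to a centered signal and sets up a bijection on symmetric convex penalties, so we may assume $\pi$ centered) we have the sandwich $\mathsf{mmse}_\pi(\tau^2)\le\mathsf{R^{opt}_{seq,cvx}}(\tau;\pi)\le\mathsf{L}_\pi(\tau^2):=s_2(\pi)\tau^2/(s_2(\pi)+\tau^2)$, the right inequality because $\by\mapsto(1+\lambda)^{-1}\by$ is the proximal operator of the symmetric convex penalty $\tfrac\lambda2\|\cdot\|^2$, whose sequence‑model risk is minimized at $\mathsf{L}_\pi(\tau^2)$. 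Since $\mathsf{L}_\pi\le s_2(\pi)$, both $\tau_{\mathsf{reg,cvx}}^2$ and $\tau_{\mathsf{reg,stat}}^2$ lie in $[\sigma^2/\delta,\ (\sigma^2+s_2(\pi))/\delta]$.

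\textbf{Part (i).} Starting from $\Delta\ge\mathsf{R^{opt}_{seq,cvx}}(\tau_{\mathsf{reg,cvx}};\pi)-\mathsf{mmse}_\pi(\tau_{\mathsf{reg,stat}}^2)$, I replace the two effective‑noise parameters by $\sigma/\sqrt\delta$. The interval bound gives $|\tau_{\mathsf{reg,cvx}}-\sigma/\sqrt\delta|$ and $|\tau_{\mathsf{reg,stat}}^2-\sigma^2/\delta|$ of size $O(1/\sqrt\delta)$ with constants depending only on the moments of $\pi$, and $\mathsf{mmse}_\pi$ is $1$‑Lipschitz in $\tau^2$ (increasing and concave in $\tau^2$ with $\mathsf{mmse}_\pi(0)=0$, hence derivative at most $\mathsf{mmse}_\pi'(0)\le1$). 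Invoking the Lipschitz bound for $\mathsf{R^{opt}_{seq,cvx}}(\cdot;\pi)$ on the first term and for $\mathsf{mmse}_\pi$ on the second yields the claimed
\[
\Delta(\pi,\delta,\sigma)\;\ge\;\mathsf{R^{opt}_{seq,cvx}}(\sigma/\sqrt\delta;\pi)-\mathsf{mmse}_\pi(\sigma^2/\delta)+O(1/\sqrt\delta).
\]

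\textbf{Part (ii).} For the upper bound I bound the convex risk by the best scaled ridge penalty $\rho_p(\bbeta)=\tfrac c2\|\bbeta\|^2$ (recall $\pi$ centered), which is uniformly strongly convex, hence lies in $\cC_{\delta,\pi}$ by Claim \ref{claim-a2-for-strong-convex-penalty} and is covered by Proposition \ref{prop-strongly-convex-loss}. For $\sigma$ large the state‑evolution system (\ref{fixed-pt-asymptotic}) is explicit — with $b=(1+\lambda c)^{-1}$ the shrinkage factor, $\delta\tau^2-\sigma^2=(1-b)^2 s_2(\pi)+b^2\tau^2$ and $2\lambda(1-b/\delta)=1$ — and is easily checked to be strongly stationary; minimizing over $c$ and expanding as $\sigma\to\infty$ gives $\mathsf{R_{ridge}}=s_2(\pi)-s_2(\pi)^2\delta/\sigma^2+O(\sigma^{-4})$. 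On the other hand $\tau_{\mathsf{reg,stat}}^2=\sigma^2/\delta+O(1)$ and $\mathsf{mmse}_\pi(\tau^2)=s_2(\pi)-s_2(\pi)^2/\tau^2+O(\tau^{-4})$, so the Bayes risk is also $s_2(\pi)-s_2(\pi)^2\delta/\sigma^2+O(\sigma^{-4})$. The two leading terms cancel, so $\Delta=O(\snr^2)$; carrying the expansions one order further, the $\sigma^{-4}$ discrepancy is $\delta^2\big(s_2(\pi)^3-c_2(\pi)\big)/\sigma^4+O(\sigma^{-6})$, where $c_2(\pi)$ is the $\tau^{-4}$‑coefficient of $\mathsf{mmse}_\pi(\tau^2)$; the classical low‑SNR expansion of the scalar Gaussian channel gives $s_2(\pi)^3-c_2(\pi)=\tfrac12 s_3(\pi)^2$ (the squared third cumulant, equal to $s_3(\pi)^2$ since $\pi$ is centered), proved by expanding $\E_{\beta_0,z}[\beta_0\mid\beta_0+\tau z]$ and $\Var(\beta_0\mid\beta_0+\tau z)$ in powers of $\tau^{-1}$, which is legitimate because $\pi\in\cP_\infty$. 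Dividing by $\snr^2=s_2(\pi)^2/\sigma^4$ gives $\limsup_{\snr\to0}\Delta/\snr^2\le s_2(\pi)\delta^2 s_3(\pi)^2/(2 s_2(\pi)^3)$.

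The step I expect to be most delicate is the second‑order bookkeeping in Part (ii): one must check that every ``transfer'' discrepancy between the sequence and regression models — the gap between the ridge effective noise and $\sigma/\sqrt\delta$, between $\tau_{\mathsf{reg,stat}}$ and $\sigma/\sqrt\delta$, and between the best convex denoiser and its linear surrogate $\mathsf{L}_\pi$ — enters only at order $\sigma^{-4}$, so that the $\sigma^{-4}$‑coefficient of $\Delta$ is exactly $\delta^2 s_3(\pi)^2/2$ rather than being contaminated by lower‑order effects; this rests on the quadratic‑order solution of the ridge fixed‑point system together with the second‑order low‑SNR expansion of $\mathsf{mmse}_\pi$, both requiring uniform control of derivatives of $\mathsf{mmse}_\pi$ and $\mathsf{R^{opt}_{seq,cvx}}(\cdot;\pi)$ over a $\tau$‑window shrinking toward $\sigma/\sqrt\delta$. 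Part (i), by contrast, is essentially immediate once the local Lipschitz continuity of $\tau\mapsto\mathsf{R^{opt}_{seq,cvx}}(\tau;\pi)$ and the sandwich $\mathsf{mmse}_\pi\le\mathsf{R^{opt}_{seq,cvx}}\le\mathsf{L}_\pi$ are in place.
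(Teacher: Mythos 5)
Your route is essentially the paper's: in part (i) you localize $\tau_{\mathsf{reg,cvx}}$ and $\tau_{\mathsf{reg,stat}}$ in $[\sigma^2/\delta,(\sigma^2+s_2)/\delta]$ and transfer to $\sigma/\sqrt\delta$ by Lipschitz continuity of $\mathsf{R^{opt}_{seq,cvx}}(\cdot;\pi)$ and $\mathsf{mmse}_\pi$, exactly as in the paper (which uses Lemma \ref{lem-r-continuity} for the first); in part (ii) you upper bound the convex side by an optimally tuned ridge (strongly convex, hence in $\cC_{\delta,\pi}$ by Claim \ref{claim-a2-for-strong-convex-penalty}, with exact asymptotics from Proposition \ref{prop-strongly-convex-loss}) and match low-SNR expansions of the two fixed points, which is the paper's argument with the explicit penalty $\frac{\sigma^2}{\delta s_2}\|\bx\|^2$ and the expansion \eqref{mmse-low-snr-expansion}. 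Two points fall short of a proof, though. First, your justification that $\mathsf{mmse}_\pi$ is $1$-Lipschitz in $\tau^2$ because it is ``increasing and concave in $\tau^2$'' is incorrect: $\mathsf{mmse}_\pi$ is not concave in $\tau^2$ in general (for a symmetric two-point prior it is exponentially small and convex near $\tau=0$). The Lipschitz control you need is true, but it comes from $\frac{\mathrm{d}}{\mathrm{d}\tau^{-2}}\mathsf{mmse}_\pi(\tau^2)=-\E[\Var(\beta_0|y)^2]$ together with the fourth-moment bound $\E[(\beta_0-\E[\beta_0|y])^4]\lesssim\tau^4$ (and a separate argument for large $\tau$, which is where $\pi\in\cP_\infty$ is used); the constant is universal but not $1$. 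This is how the paper proves it, and the fix does not affect the $O(1/\sqrt\delta)$ conclusion.

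Second, in part (ii) the $\snr^2$ coefficient is asserted rather than derived: the content you defer is precisely what the paper does, namely solving $\delta\tau^2-\sigma^2=\mathsf{mmse}_\pi(\tau^2)$ perturbatively (writing $\tau_{\mathsf{reg,stat}}^2=\kappa^2-\Delta$ with $\kappa^2=\sigma^2(1+\snr)/\delta$) and likewise for the ridge fixed point through $\mathsf{mmse}_{\mathsf{N}(0,s_2)}$, then subtracting; your claimed discrepancy $\delta^2(s_2^3-c_2)/\sigma^4$ with $s_2^3-c_2=\tfrac12 s_3^2$ does agree with the paper's computation, but the bookkeeping must be carried out (it is short: the cross term $\mathsf{mmse}_{\mathsf{N}(0,s_2)}'(\tau^2)\cdot(\tau_{\mathsf{ridge}}^2-\tau_{\mathsf{reg,stat}}^2)$ is $O(\sigma^{-8})$). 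You also need Lemma \ref{lem-strongly-convex-DSN-to-RSN-risk} to convert the in-probability ridge loss into the expected risk entering $\Delta$, and Proposition \ref{prop-bayes-risk-asymptotic} for the Bayes side (uniqueness of the minimizer of $\phi$ is covered by the restriction in the statement). Finally, your centering preprocessing yields the bound in the central moments of $\pi$, which matches the stated constant $s_2\delta^2 s_3^2/(2s_2^3)$ only when $\pi$ is centered; the paper instead works directly with the raw-moment expansion \eqref{mmse-low-snr-expansion}, so if you keep the centering step you should note this discrepancy with the statement as written.
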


The proof of this theorem is given in Section M of the Supplementary Material \cite{supplement}. We believe its results provide some useful insight:
\begin{itemize}
\item The large $\delta$ regime of point $(i)$ is most commonly analyzed in the statistics literature, because it ensures high-dimensional consistency.
In this regime, Theorem \ref{thm-high-low-snr-gaps} establishes that 
the gap between convex M-estimation and Bayes error is essentially determined by the analogous gap in the sequence model for noise level $\sigma/\sqrt{\delta}$.
As will be discussed in the next section, in this regime, it makes sense to refine the M-estimate by post-processing.
\item In the low SNR regime (large $\sigma$), the structure of the signal $\bbeta_0$ (and in particular the distribution of the coefficients $\beta_{0j}$) 
is blurred by the Gaussian noise, and the gap vanishes. 
This should be compared with the results of Corollary \ref{cor-cvx-stat-phase-diagram}, which state that gaps, when they occur, occur for small values of $\delta$, which also corresponds to a low SNR regime. 
Both of these results can be traced to the fact that the measure $\pi * \normal(0,\tau^2_{\mathsf{reg,stat}})$ will in some sense be ``more log-concave'' when $\tau^2_{\mathsf{reg,stat}}$ is larger.
Because  $\tau^2_{\mathsf{reg,stat}}$ quantifies, in a certain sense, the intrinsic noisiness of the problem, we see that convex M-estimation comes closer to achieving (or exactly achieves) information theoretic limits at low SNR.
\end{itemize}

\section{Beyond mean square error}\label{sec-beyond-square-error}

A natural concern with the optimality theory we have presented is that it only addresses $\ell_2$ loss.
With a certain type of efficient post-processing,
the optimality theory for general continuous losses is essentially unchanged.
In particular, if we consider two-step procedures in which we first compute a penalized least squares estimator $\widehat \bbeta_{\mathsf{cvx}}$ and second implement simple post-processing detailed below, 
the optimal choice of penalty in the first step should not depend on the loss $\ell$.
The main reason for this is captured by the following result.
(This proposition relies on the notion of strong stationarity introduced in Section B which formalizes the notion of solving the fixed point equations \eqref{eq:body-fixed-pt} and includes a few more technical conditions.
It also uses the collection of penalty sequences $\cC_*$ which are \emph{uniformly strongly convex}, defined below in Definition \ref{def-uniform-strong-convexity}. This is a subset of the collection of convex penalty sequences.)
\begin{proposition}\label{prop-beyond-squared-error}
    Consider $\pi \in \cP_\infty(\reals)$, $\delta \in (0,\infty)$, and $\sigma \geq 0$. 
    Let $\{\rho_p\},\{\tilde \rho_p\}$ be sequences of lsc, proper, convex penalties. 
    Let  $\cT = (\pi,\{\rho_p\})$ and $\tilde \cT = (\pi,\{\tilde \rho_p\})$, and assume $\tau,\lambda,\tilde \tau, \tilde \lambda$ are such that $\tau, \lambda, \delta, \cT$ and $\tilde \tau, \tilde \lambda, \delta, \tilde \cT$ are strongly stationary.
    Without loss of generality, consider $\tilde \tau \leq \tau$.
    Assume either $\delta > 1$ or $\{\rho_p\},\{\tilde\rho_p\} \in \cC_*$ (see Definition \ref{strongly-convex-sequence-C*} below). 
    Let $\widehat \bbeta_{\mathsf{cvx}}$ and $\widehat {\tilde \bbeta}_{\mathsf{cvx}}$ be defined by \eqref{linear-cvx-estimator} with penalties $\rho_p$ and $\tilde \rho_p$ respectively.
    For such sufficiently large $p$, let
    \begin{equation}\label{post-processing}
        \widehat \bbeta_{\mathsf{cvx}+} = \mathsf{prox}\left[\lambda  \rho_p\right]\left(\widehat {\tilde \bbeta}_{\mathsf{cvx}} +  \frac{2\lambda}{n}
\bX^\mathsf{T}(\by - \bX \widehat{\tilde \bbeta}_{\mathsf{cvx}})+ \sqrt{\tau^2 - \tilde \tau^2}\bz\right),
    \end{equation}
    where for each $p$, $\bz \sim \mathsf{N}(\bzero,\bI_p/p)$ is independent of $\bX$.
    
    Under the HDA and RSN assumptions, for any sequence of symmetric, uniformly pseudo-Lipschitz sequence of losses $\ell_p:(\reals^p)^2\rightarrow \reals$ of order $k$ for some $k$, we have 
    \begin{align}\label{post-processing-loss-equivalent}
         \ell_p\left(\bbeta_0,\widehat \bbeta_{\mathsf{cvx}+} \right) \stackrel{\mathrm{p}}\simeq \ell_p\left(\bbeta_0,\widehat \bbeta_{\mathsf{cvx}}\right).
    \end{align}
    If the penalties $\rho_p,\tilde\rho_p$ are symmetric, then the preceding display holds also under the DSN assumption.
\end{proposition}

\noindent We prove Proposition \ref{prop-beyond-squared-error} in Section H of the Supplementary Material \cite{supplement}.
Proposition \ref{prop-beyond-squared-error} establishes that when $\tilde \tau \leq \tau$, we can always post-process $\widehat{\tilde \bbeta}_{\mathsf{cvx}}$ to construct an estimator $\widehat \bbeta_{\mathsf{cvx}+}$ whose performance matches that of $\widehat{ \bbeta}_{\mathsf{cvx}}$ with respect to loss $\ell$.
Proposition \ref{prop-beyond-squared-error} suggests that for any loss, the optimal choice of penalty in the M-estimation step in this two-step procedure is that which minimizes the effective noise parameter $\tau$.
It turns out this is equivalent to choosing a penalty which minimizes $\ell_2$ loss.

A formalization of this discussion is provided in the next theorem. 
\begin{theorem}\label{conj-beyond-squared-error}
    Assume $\eta: \reals \rightarrow \reals$ is the Bayes estimator of $\beta_0$ in the scalar model $y = \beta_0 + \tau_{\mathsf{reg,cvx}} z$ with respect to loss $\ell$. 
    If $\cC$ contains all sequences of convex penalties, then under the HDA and RSN assumption
    \begin{equation}\label{beyond-squared-error-lower-bound}
        \inf_{\{\rho_p\} \in \cC_{*}} \liminf^{\mathrm{p}}_{p \rightarrow \infty} \frac1p \sum_{j=1}^p \ell\left(\sqrt{p} \beta_{0j},\sqrt{p}\widehat \beta_{\mathsf{cvx},j}\right) \geq \E_{\beta_0,z}[\ell(\beta_0,\eta(\beta_0 + \tau_{\mathsf{reg,cvx}}z)].
    \end{equation}
    When $\eta$ is not the proximal operator of a convex function, inequality \eqref{beyond-squared-error-lower-bound} is strict.
    
    Further, when $\delta > 1$,
    \begin{align}
        \inf_{\substack{\{\rho_p\} \in \cC_{*}\\\eta'\text{ Lipschitz} }} \lim^{\mathrm{p}}_{p \rightarrow \infty} \frac1p & \sum_{j=1}^p \ell\left(\sqrt{p}\beta_{0j},\eta'\left(\sqrt{p} \widehat \bbeta_{\mathsf{cvx},j} +  2\lambda\frac{[\bX^\mathsf{T}(\by - \bX \widehat \bbeta_{\mathsf{cvx}})]_j}{n}\right)\right)\nonumber \\
        &\qquad\qquad\qquad= \E_{\beta_0,z}[\ell(\beta_0,\eta(\beta_0 + \tau_{\mathsf{reg,cvx}}z)].\label{beyond-squared-error-achieved}
    \end{align}
    The sequences $\{\rho_p\}$ which minimize the $\ell_2$ loss of $\widehat \bbeta_{\mathsf{cvx}}$ also achieve the infimum in \eqref{beyond-squared-error-achieved}.
    (Note that the infimum over $\eta'$ is taken \emph{after} the
    limit $p\to\infty$, and in particular $\eta'$ does not depend on $p$.)   

    If $\cC$ contains all sequences of symmetric convex penalties, the preceding statements hold also under the DSN assumption.
\end{theorem}

\noindent 
We prove Theorem \ref{conj-beyond-squared-error} in Section H of the Supplementary Material \cite{supplement}.
We expect inequality \eqref{beyond-squared-error-lower-bound} to hold also when the infimum is taken over $\cC_{\delta,\pi}$, 
but we are not aware how to control the estimation error with respect to arbitrary pseudo-Lipschitz losses for $\{\rho_p\} \in \cC_{\delta,\pi}$.
We expect equality \eqref{beyond-squared-error-achieved} to hold also when $\delta \leq 1$, 
but this requires establishing the tightness of the convex lower bound when $\delta \leq 1$, which are are unable to do (see discussion following Theorem \ref{thm-cvx-lower-bound}).
We believe these extensions may be possible using currently available tools but leave it for future work.

For large $\delta$, post-processing nearly closes the gap between convex M-estimation and Bayes AMP.
Indeed, as is shown in Section M of the Supplementary Material \cite{supplement}, when $\delta$ is large (high SNR) --so that \eqref{gap-high-snr} provides a good approximation of the gap $\Delta(\pi,\delta,\sigma)$-- we have $\tau_{\mathsf{reg,cvx}} \approx \tau_{\mathsf{reg,amp}*} \approx \sigma/\sqrt{\delta}$. 
Thus, the gap between the convex lower bound and the Bayes risk in this case is driven not by the difference between $\tau_{\mathsf{reg,cvx}}$ and $\tau_{\mathsf{reg,amp}*}$ but rather by the difference between estimation at that noise level using the optimal proximal operator (as done in \eqref{R-seq-cvx-opt-finite}) and the Bayes estimator (as done in \eqref{mmse-large-p}).
Theorem \ref{conj-beyond-squared-error} states that by post-processing we may effectively replace the proximal operator in Eq.~(H.1) of the Supplementary Material \cite{supplement} by a non-proximal denoiser, which we may take to be the Bayes estimator (or a Lipschitz approximation of it) with respect to $\ell_2$ loss.
This is an important insight because we suspect that the behavior of M-estimation with one step of post-processing is more robust to model misspecification than is the behavior of Bayes AMP, whose finite sample convergence has been observed to be highly sensitive to distributional assumptions on the design matrix $\bX$ (see e.g.\ \cite{rangan2014convergence,rangan2017vector}).

\section{Examples}\label{sec-examples}

Recall that, for $\delta>1$, the assumption that $\rho$ has $\delta$-bounded width does not pose any restriction. 
For $\delta\le 1$, our proof requires $\rho \in\cC_{\delta,\pi}$ for technical reasons, which are discussed Section I of the Supplementary Material \cite{supplement}.  
We believe the conclusion of Theorem \ref{thm-cvx-lower-bound} should hold more generally.  
Nevertheless, as illustrated in the present section,
the assumption $\rho \in\cC_{\delta,\pi}$ is quite weak and is satisfied by broad classes of penalties.

Most proofs are omitted from this section and can be found in Section N of the Supplementary Material \cite{supplement}.
Through this section, we take $\cC$ to contain all sequences of convex penalties, so that $\cC_{\delta,\pi}$ contains all sequences with $\delta$-bounded width.

\subsection{Strongly convex penalties}

We introduce
the notion of uniform strong convexity.
\begin{definition}[Uniform strong convexity]\label{def-uniform-strong-convexity}
    A sequence $\rho_p:\reals^p \rightarrow \reals \cup \{\infty\}$ of lsc, proper, convex functions \emph{has uniform strong-convexity parameter $\gamma\geq 0$} if $\bx \mapsto \rho_p(\bx) - \frac\gamma 2 \|\bx\|^2$ is convex for all $p$. We say that $\{\rho_p\}$ is \emph{uniformly strongly convex} if this holds for some $\gamma > 0$. 
\end{definition}
We define
\begin{equation}\label{strongly-convex-sequence-C*}
    \cC_* = \Big\{\{\rho_p\} \in \cC \mid\text{$\{\rho_p\}$ is uniformly strongly convex} \Big\}.
  \end{equation}
 When the penalties are uniformly strongly convex, the situation is particularly nice. 
\begin{proposition}\label{claim-a2-for-strong-convex-penalty}
    For all $\pi \in \cP_2(\reals)$ and $\delta \in (0,\infty)$, we have $\cC_* \subset \cC_{\delta,\pi}$.
\end{proposition}

\subsection{Convex constraints}\label{subsec-convex-constraints}

Consider
\begin{equation}\label{def-convex-constraint}
\rho_p(\bx) = \mathbb{I}_{C_p}(\bx)
:= 
\begin{cases}
    0 & \bx \in C_p\\
    \infty & \text{otherwise,}
\end{cases}
\end{equation}
where $C_p$ is a closed convex set. 
Convex M-estimation using this penalty is equivalent to defining $\widehat \bbeta_{\mathsf{cvx}}$ via the constrained optimization problem 
\begin{align}\label{constrained-m-estimator}
  \widehat \bbeta_{\mathsf{cvx}} = \arg\min_{\bbeta} \Big\{\frac1n \|\by - \bX \bbeta\|^2\;: \;\;\bbeta \in C_p\Big\}\, .
\end{align}
In this context, the condition \eqref{delta-bounded-width} is closely related to bounding the Gaussian width of 
convex cones \cite{Chandrasekaran2012TheProblems,amelunxen2014living}.
We briefly recall the relevant notions.

Given a closed convex set $K$, we denote by $\Pi_K$ the orthogonal projector onto $K$. Namely
$\Pi_K(\by) := \arg\min_{\bx\in K}\|\by-\bx\|_2$.
Recall that $K$ is a convex cone if $K$ is convex and for every $\alpha > 0$,
$K = \{ \alpha \bx \mid \bx \in K\}$.
For any set $A \subseteq \reals^p$, we define the closed, conic hull of $A$ centered at $\bb \in \reals^p$ by
$$
T_{A}(\bb) := \mathsf{cone}(\{\bx - \bb \mid \bx \in A\}) := \overline{\mathsf{conv}\left(\{\alpha (\bx - \bb) \mid \bx \in A, \alpha \geq 0 \}\right)}\, ,
$$
where the overline denotes closure and $\mathsf{conv}$ denotes the convex hull.
There are several equivalent definitions of the Gaussian width of a closed, convex cone $K$. 
The following translates most readily into our setup (recall that $\bz\sim \normal(0,\bI_p/p)$:
\begin{equation}
w(K) := \E_{\bz}\left[\|\Pi_K(\bz)\|^2\right]\, .
\end{equation}
The Gaussian width is closely related to the geometry of high-dimensional linear inverse problems.
In particular, under the HDA and DSN assumptions,
exact recovery $\widehat \bbeta_{\mathsf{cvx}} = \bbeta_0$ in the noiseless setting (i.e., $\bw = \bzero$) is achieved with high probability by \eqref{constrained-m-estimator} if and only if $\limsup_{p \rightarrow \infty} w(T_{C_p}(\bbeta_0)) < \delta$ \cite{amelunxen2014living,Chandrasekaran2012TheProblems}.
The same condition which guarantees \emph{stable recovery} under noisy measurements, namely, that the error $\|\widehat \bbeta_{\mathsf{cvx}} - \bbeta_0\|$ is bounded, up to a constant, by the norm of the noise $\|\bw\|$.
Thus, when $w(T_{C_p}(\bbeta_0)) > \delta$, we expect the estimation error of $\widehat \bbeta_{\mathsf{cvx}}$ to be uncontrolled. It is therefore reasonable to focus on the case $w(T_{C_p}(\bbeta_0)) < \delta$.

In the case of convex constraints, the $\delta$-bounded width assumption reduces to a slightly weaker condition than
$w(T_{C_p}(\bbeta_0)) < \delta$.
This is perhaps not surprising in light of the fact that for $\rho_p = \mathbb{I}_{C_p}(\bx)$,
the proximal operator $\mathsf{prox}[\lambda \rho_p](\bbeta_0 + \tau \bz) = \Pi_{C_p}(\bbeta_0 + \tau \bz)$ and $\lim_{\tau \rightarrow 0} \frac1\tau \E_{\bz}[\langle \bz , \mathsf{prox}[\lambda \rho_p](\bbeta_0 + \tau \bz) = \Pi_{C_p}(\bbeta_0 + \tau \bz) \rangle] = \E_{\bz}[\| \Pi_{T_{C_p}(\bbeta_0)}(\bz)\|^2]$.
The following proposition makes the relationship between Gaussian widths and the $\delta$-bounded width assumption precise.
\begin{proposition}\label{claim-a2-for-convex-constraints}
    Consider $C_p$ closed, symmetric, convex sets, $\pi \in \cP_2(\reals)$, and $\delta \in (0,\infty)$. Assume that
    \begin{equation}\label{beta0-eventually-in-Cp}
    \lim_{p \rightarrow \infty} \E_{\bbeta_0}\left[d\left(\bbeta_0,C_p\right)\right] = 0\, .
    \end{equation}
    Further assume that 
    \begin{equation}\label{asymptotic-width-bounded-by-delta}
        \lim_{\eps\to 0}\limsup_{p \rightarrow \infty} \E_{\bbeta_0}\left[w(T_{C_p \cap B^c(\bbeta_0,\eps)}(\bbeta_0))\right] < \delta\,,
    \end{equation}
    where $B^c(\bbeta_0,\eps)$ denotes the complement of the ball of radius $\eps$ centered at $\bbeta_0$.
    Then $\{\mathbb{I}_{C_p}\} \in \cC_{\delta,\pi}$.
\end{proposition}
\noindent
The quantity $\lim_{\eps\to 0}w(T_{C_p \cap B^c(\bbeta_0,\eps)}(\bbeta_0))$ agrees with $w(T_{C_p}(\bbeta))$ when $\bbeta_0 \in \partial C_p$.
Thus, when $\bbeta_0 \in \partial C_p$ almost surely,
assumption \eqref{asymptotic-width-bounded-by-delta} of Proposition \ref{claim-a2-for-convex-constraints} is exactly that $\limsup_{p \rightarrow \infty} w(T_{C_p}(\bbeta_0)) < \delta$.
This condition guarantees exact and stable recovery for the convex program \eqref{constrained-m-estimator}.
Thus, Proposition \ref{claim-a2-for-convex-constraints} implies that if constraint sets $\{C_p\}$ guarantee exact and stable recovery,
then $\{ \mathbb{I}_{C_p} \} \in \cC_{\delta,\pi}$.

In the definition of the $\delta$-bounded width assumption (or under the RSN assumption),
$\bbeta_0$ is random.
Thus, it will in general be close to but not exactly on the boundary of $C_p$.
For $\bbeta_0$ in an $\eps$-neighborhood of the boundary but not on the boundary, the quantity $w(T_{C_p \cap B^c(\bbeta_0,\eps)}(\bbeta_0))$ describes the behavior of the convex program \eqref{constrained-m-estimator} and the quantity $w(T_{C_p}(\bbeta))$ does not.
Indeed, $w(T_{C_p}(\bbeta))$ is highly sensitive to small perturbations of $\bbeta_0$: it jumps to 1 when $\bbeta_0$ is in the interior of $C_p$.
In contrast, the behavior of the convex program \eqref{constrained-m-estimator} is not sensitive to such small perturbations.
When $\bbeta_0$ is asymptotically arbitrarily close to but not necessarily exactly on the boundary of $C_p$, the condition of Proposition \ref{claim-a2-for-convex-constraints} is the correct extension of the condition $\limsup_{p\rightarrow \infty} w(T_{C_p}(\bbeta_0)) < \delta$.
It guarantees recovery with asymptotically vanishing error $\|\widehat \bbeta_{\mathsf{cvx}} - \bbeta_0\|^2 \rightarrow 0$ when $d(\bbeta_0,\partial C_p) \rightarrow 0$.
For such $\bbeta_0$, this is the natural replacement of the more stringent notion of exact recovery, 
which will not occur if $\bbeta_0 \not \in \partial C_p$.

\subsection{Separable penalties}

A common class of penalties considered in high-dimensional regression are the separable penalties 
\begin{equation}\label{def-separable-penalty}
    \rho_p(\bx) = \frac1p\sum_{j = 1}^p \rho(\sqrt px_j),
\end{equation}
for an lsc, proper, convex function $\rho: \reals \rightarrow \reals \cup\{\infty\}$ which does not depend on $p$.
Much previous work has analyzed the asymptotic properties of M-estimators which use separable penalties \cite{Bean2013,ElKaroui2013OnPredictors.,Donoho2016HighPassing}, and a few works have broken the separability assumption \cite{thrampoulidis2018precise}.
While Theorem \ref{thm-cvx-lower-bound} is more general, it applies to separable penalties under a mild condition.
\begin{proposition}\label{claim-a2-for-separable-penalties}
    Consider $\rho_p$ as in \eqref{def-separable-penalty} for some lsc, proper, convex $\rho:\reals\rightarrow \reals \cup \{\infty\}$. 
    Let $C \subseteq \reals$ be the set of minimizers of $\rho$ (which is necessarily a closed interval). If $C$ is non-empty, we have
    $$
    \sup_{\tau > \eps} \P_{\beta_0,z}(\beta_0 + \tau z \in C) < \delta \;\; \text{for all} \;\; \eps  > 0,
    $$
    if and only if $\{\rho_p\} \in C_{\delta,\pi}$.
\end{proposition}

\begin{remark}
    Proposition \ref{claim-a2-for-separable-penalties} applies  whenever $C$ is a singleton set because in this case $\P(\beta_0 + \tau z \in C) = 0$ for all $\tau > 0$. Thus, Proposition \ref{claim-a2-for-separable-penalties} covers most, if not all, separable penalties commonly considered in practice (and many more).
\end{remark}

\subsection{SLOPE and OWL norms}

Here we consider the Ordered Weighted $\ell_1$ (OWL) norms defined by
\begin{equation}\label{def-owl-penalty}
    \rho_p(\bx) = \frac1{\sqrt p}\sum_{j=1}^p \kappa_j^{(p)} |x|_{(j)},
\end{equation}
where $\kappa_1^{(p)} \geq \kappa_2^{(p)} \geq \cdots \geq \kappa_p^{(p)} \geq 0$ are the coordinates of $\bkappa^{(p)} \in \reals^p$ and $|x|_{(j)}$ are the decreasing order statistics of the absolute values of the coordinates of $\bx$. 
When $\kappa_j^{(p)} = \Phi^{-1}(1 - jq/(2p))$ for some $q \in (0,1)$ and $\Phi^{-1}$ the standard normal cdf, the estimator \eqref{linear-cvx-estimator} is referred to as SLOPE.
Penalties of the form \eqref{def-owl-penalty} have been used for a few purposes.
SLOPE has recently been proposed for sparse regression because it automatically adapts to sparsity level \cite{Bogdan2015SLOPE---AdaptiveOptimization,Su2016SLOPEMinimax,Bellec2016SlopeOptimality}.
More generally, the use of OWL norms has been argued to produce estimators which are more stable than LASSO under correlated designs \cite{Bondell2008SimultaneousOSCAR,Figueiredo2014SparseRegularization}. 
\begin{proposition}\label{claim-a2-for-owl-penalty}
    Consider $\rho_p$ as in \eqref{def-owl-penalty}. If for all $\eps > 0$ there exists $\xi > 0$ such that $j \leq (1 - \eps) p$ implies $\kappa_j^{(p)} > \xi $, then $\{\rho_p\} \in \cC_{\delta,\pi}$.
\end{proposition}

\section*{Acknowledgements}

MC was supported by the National Science Foundation Graduate Research Fellowship under Grant No. DGE
– 1656518. 
AM was supported by NSF grants CCF-2006489 and the ONR grant N00014-18-1-2729.


\newcommand{\etalchar}[1]{$^{#1}$}

\newpage 

\begin{appendix}

\section{Equivalence of lower bounds: proof of Proposition \ref{claim:equiv-of-lb}}

In fact, for any finite $p$, 
$$
    \mathsf{R_{reg,cvx}^{opt}}(\tau;\pi,p) 
        =
        \inf_{\rho \in \cC_1} \E_{\beta_0,z} [(\mathsf{prox}[\rho](\beta_0+\tau z) - \beta_0)^2],
$$
both when $\cC$ contains all lsc, proper, convex functions and when $\cC$ contains all lsc, proper, convex functions.
Here $\cC_1$ is the set of all lsc, proper, convex functions on $\reals$.

First, note that $\mathsf{R_{reg,cvx}^{opt}}(\tau;\pi,p) \leq \inf_{\rho \in \cC_1} \E_{\beta_0,z} [(\mathsf[\rho](\beta_0+\tau z) - \beta_0)^2]$ because $\E_{\beta_0,z} [(\mathsf{prox}[\rho](\beta_0+\tau z) - \beta_0)^2]$ is in fact the risk in the sequence model of dimension $p$ of the procedure which uses separable penalty $\rho_p(\bx) = \frac1p\sum_{j=1}^p \rho(\sqrt{p}x)$. Indeed, $\mathsf{prox}[\rho_p](\by)_j = \mathsf{prox}[\rho](\sqrt{p}y_j)/\sqrt{p}$. (Note that $\rho_p$ is separable and symmetric).

Now note that for any lsc, proper, convex $\rho_p:\reals^p \rightarrow \reals \cup \{\infty\}$,
fixing $\by_{-j}$ the function $y_j \mapsto \mathsf{prox}[\rho](\by)$ is 1-Lipschitz, 
whence in fact $y_j \mapsto \mathsf{prox}[\rho](\by)_j$ is 1-Lipschitz.
Further, by the firm non-expansiveness of the proximal operator (Eq.~\eqref{prox-firm-non-expansive}), we also have that $y_j \mapsto \mathsf{prox}[\rho](\by)_j$ is non-decreasing.
By Fact 2.1 of \cite{celentano2019approximate}, the set $\{\mathsf{prox}[\rho]\}$ as $\rho$ varies over $\cC!$ is exactly the set of 1-Lipschitz and non-decreasing functions on $\reals$. 
Thus, we get $\E[(\mathsf{prox}[\rho_p](\bbeta_0 + \tau \bz)_j - \beta_{0j})^2|\by_{-j}] \geq \E_{\beta_0,z} [(\mathsf{prox}[\rho](\beta_0+\tau z) - \beta_0)^2]/p$ almost surely.
We conclude that 
\begin{equation*}
    \E[\|\mathsf{prox}[\rho_p](\bbeta_0 + \tau \bz) - \beta\|^2] = \sum_{j=1}^p \E[\E[(\mathsf{prox}[\rho_p](\bbeta_0 + \tau \bz)_j - \beta_{0j})^2|\by_{-j}]] \geq \E_{\beta_0,z} [(\mathsf{prox}[\rho](\beta_0+\tau z) - \beta_0)^2].
\end{equation*}
Having established both directions of the inequality completes the proof of Proposition \ref{claim:equiv-of-lb}.

\section{Exact asymptotics for the oracle estimator}\label{sec-exact-loss-characterization}

As discussed in Section \ref{sec:convex-lower-bound},
our proof of the convex lower bound (Theorem \ref{thm-cvx-lower-bound}) leverages exact asymptotics of the estimation error of penalized least squares estimators.
Because we cannot provide exact asymptotics under only the $\delta$-bounded width assumption, 
we will define an \emph{oracle estimator} which performs at least as well as the original estimator \eqref{linear-cvx-estimator} and to which we can apply exact asymptotic results.
For any $\gamma \geq 0$, the oracle estimator is
\begin{equation}\label{orc-cvx-estimator}
    \widehat \bbeta_{\mathsf{orc}}^{(\gamma)} \in \arg\min_{\bbeta}\left\{ \frac1n \|\by-\bX\bbeta\|^2 + \rho(\bbeta) + \frac\gamma2\|\bbeta - \bbeta_0\|^2\right\}.
\end{equation}
That is, we use the perturbed penalty
\begin{equation}\label{oracle-penalty}
\rho^{(\gamma)}(\bbeta) := \rho(\bbeta) + \frac \gamma 2 \|\bbeta - \bbeta_0\|^2,
\end{equation}
which includes a term which shrinks the estimate towards the true value $\bbeta_0$. 
We remark that for $\gamma > 0$, \emph{(i)} using this penalty in practice would require knowledge of the true parameter, so it cannot be implemented by the statistician, and \emph{(ii)} because of its dependence on $\bbeta_0$, the penalty defining the oracle estimator is itself random under the RSN assumption.

Previous work (e.g., \cite{karoui2013asymptotic}) has considered the addition of a small strongly-convex penalty in high-dimensional regression to permit rigorous exact asymptotics.
The oracle term we add also serves this purpose, but is tailored to our goal of establishing estimation error lower bounds.
Indeed, the oracle estimator performs at least as well as the original estimator for every realization of the data.
\begin{lemma}\label{lem-oracle-is-better}
    For $\rho:\reals^p \rightarrow \reals\cup\{\infty\}$ an lsc, proper, convex function, $\bbeta_0 \in \reals^p$, $\bw \in \reals^n$, $\gamma>0$, and all realizations of the design matrix $\bX \in \reals^{p \times n}$ and parameter $\bbeta_0$, we have 
    \begin{equation}\label{oracle-is-better}
        \|\widehat \bbeta_{\mathsf{orc}}^{(\gamma)} - \bbeta_0\|^2 \leq  \|\widehat \bbeta_{\mathsf{cvx}} - \bbeta_0\|^2,
    \end{equation}
    for any  $\bbeta_{\mathsf{orc}}^{(\gamma)}$ satisfying \eqref{orc-cvx-estimator}.
    That is, the $\ell_2$-loss of $\widehat \bbeta_{\mathsf{orc}}^{(\gamma)}$ is no larger than the $\ell_2$-loss of $\widehat \bbeta_{\mathsf{cvx}}$.
\end{lemma}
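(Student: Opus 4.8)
The plan is to prove \eqref{oracle-is-better} by a short comparison of objective values, using only that the extra term $\frac\gamma2\|\bbeta-\bbeta_0\|^2$ appearing in \eqref{orc-cvx-estimator} is globally minimized at $\bbeta_0$. Write $F(\bbeta):=\frac1n\|\by-\bX\bbeta\|^2+\rho(\bbeta)$ for the objective of \eqref{linear-cvx-estimator} and $G(\bbeta):=F(\bbeta)+\frac\gamma2\|\bbeta-\bbeta_0\|^2$ for that of \eqref{orc-cvx-estimator}. Since $\rho$ is lsc, proper, convex, so are $F$ and $G$; moreover, because $\gamma>0$, $G$ is $\gamma$-strongly convex, hence coercive, and proper (any point where $\rho$ is finite makes $G$ finite), so $G$ attains its infimum at a unique point, which lies in the effective domain of $\rho$. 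Thus $\widehat\bbeta_{\mathsf{orc}}^{(\gamma)}$ is well defined and $F(\widehat\bbeta_{\mathsf{orc}}^{(\gamma)})<\infty$. First I would dispose of the degenerate case: if the minimizing set in \eqref{linear-cvx-estimator} is empty, then by the stated convention $\|\widehat\bbeta_{\mathsf{cvx}}-\bbeta_0\|=\infty$ and \eqref{oracle-is-better} is immediate. So assume this set is nonempty and fix any minimizer $\widehat\bbeta_{\mathsf{cvx}}$ of $F$; again $F(\widehat\bbeta_{\mathsf{cvx}})<\infty$ since $\rho$ is proper.

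The core of the argument is then three elementary inequalities. By global optimality of $\widehat\bbeta_{\mathsf{cvx}}$ for $F$, $F(\widehat\bbeta_{\mathsf{orc}}^{(\gamma)})\ge F(\widehat\bbeta_{\mathsf{cvx}})$. By global optimality of $\widehat\bbeta_{\mathsf{orc}}^{(\gamma)}$ for $G$, $F(\widehat\bbeta_{\mathsf{orc}}^{(\gamma)})+\frac\gamma2\|\widehat\bbeta_{\mathsf{orc}}^{(\gamma)}-\bbeta_0\|^2\le F(\widehat\bbeta_{\mathsf{cvx}})+\frac\gamma2\|\widehat\bbeta_{\mathsf{cvx}}-\bbeta_0\|^2$. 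Subtracting the first inequality from the second (all quantities being finite, so there is no $\infty-\infty$ issue) and dividing by $\gamma/2>0$ yields $\|\widehat\bbeta_{\mathsf{orc}}^{(\gamma)}-\bbeta_0\|^2\le\|\widehat\bbeta_{\mathsf{cvx}}-\bbeta_0\|^2$, which is exactly \eqref{oracle-is-better}. An equally short alternative route, which I would use if it reads better in context, is via monotonicity of $\partial F$: Fermat's rule gives $0\in\partial F(\widehat\bbeta_{\mathsf{cvx}})$ and $-\gamma(\widehat\bbeta_{\mathsf{orc}}^{(\gamma)}-\bbeta_0)\in\partial F(\widehat\bbeta_{\mathsf{orc}}^{(\gamma)})$, whence $\langle\widehat\bbeta_{\mathsf{orc}}^{(\gamma)}-\bbeta_0,\ \widehat\bbeta_{\mathsf{cvx}}-\widehat\bbeta_{\mathsf{orc}}^{(\gamma)}\rangle\ge0$; writing $\widehat\bbeta_{\mathsf{cvx}}-\widehat\bbeta_{\mathsf{orc}}^{(\gamma)}=(\widehat\bbeta_{\mathsf{cvx}}-\bbeta_0)-(\widehat\bbeta_{\mathsf{orc}}^{(\gamma)}-\bbeta_0)$ and applying Cauchy--Schwarz gives the same conclusion.

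There is essentially no hard step here; this is a routine convex-analysis fact. The only points deserving a line of care are (i) that $\widehat\bbeta_{\mathsf{orc}}^{(\gamma)}$ exists and is unique, which follows from strong convexity once $\gamma>0$, and (ii) that the subtraction in the last step is legitimate, which holds because minimizers of proper lsc convex functions lie in the effective domain, so every value of $F$ used is finite. Note the argument is entirely pointwise in $\bX,\bbeta_0,\bw$ — it uses no randomness and no HDA/DSN structure — and it applies to \emph{any} minimizers $\widehat\bbeta_{\mathsf{cvx}}$ of $F$ and $\widehat\bbeta_{\mathsf{orc}}^{(\gamma)}$ of $G$, as the statement requires.
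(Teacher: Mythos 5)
Your proposal is correct and takes essentially the same route as the paper: both arguments rest on comparing the values of the plain objective and the oracle objective at the two minimizers, the paper simply phrasing it contrapositively (any $\bbeta$ with $\|\bbeta-\bbeta_0\|>\|\widehat\bbeta_{\mathsf{cvx}}-\bbeta_0\|$ has strictly larger oracle objective than $\widehat\bbeta_{\mathsf{cvx}}$, hence cannot minimize it), while you subtract the two optimality inequalities directly. Your remarks on existence of $\widehat\bbeta_{\mathsf{orc}}^{(\gamma)}$ via strong convexity and on finiteness of the values involved match the paper's brief appeal to the strong convexity of $\rho^{(\gamma)}$.
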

\begin{proof}[Proof of Lemma \ref{lem-oracle-is-better}]
If the minimizing set of \eqref{linear-cvx-estimator} is empty, then the right-hand side of \eqref{oracle-is-better} is $\infty$ by convention, and there is nothing to show.
    Thus, assume $\widehat \bbeta_{\mathsf{cvx}}$ satisfies \eqref{linear-cvx-estimator}.
    For any $\bbeta \in \reals^p$ with $\|\bbeta - \bbeta_0\| > \|\widehat \bbeta_{\mathsf{cvx}} - \bbeta_0\|$, we have
    \begin{align*}
        \frac1n\|\by - \bX\bbeta\|^2 + \rho(\bbeta) + \frac{\gamma}2\|\bbeta -\bbeta_0\|^2 &> \frac1n\|\by - \bX\bbeta\|^2 + \rho(\bbeta) + \frac{\gamma}2\|\widehat \bbeta_{\mathsf{cvx}}-\bbeta_0\|^2 \\ 
        &\geq \frac1n\|\by - \bX\widehat \bbeta_{\mathsf{cvx}}\|^2 + \rho(\widehat \bbeta_{\mathsf{cvx}}) + \frac{\gamma}2\|\widehat \bbeta_{\mathsf{cvx}} -\bbeta_0\|^2,
    \end{align*}
    where the second inequality follows from the definition of $\widehat \bbeta_{\mathsf{cvx}}$ in \eqref{linear-cvx-estimator}. Thus, $\bbeta$ cannot be a minimizer in \eqref{orc-cvx-estimator}). Moreover, because $\rho^{(\gamma)}$ has strong-convexity parameter $\gamma > 0$, the minimizing set of \eqref{orc-cvx-estimator}) is non-empty. Thus, we have \eqref{oracle-is-better}.
\end{proof}

The exact asymptotic characterization of the oracle estimator requires several definitions.
Denote by $\cT_p$ a pair $(\pi,\rho_p)$ where $\pi \in \cP_2(\reals)$ and $\rho_p:\reals^p \rightarrow \reals \cup \{\infty\}$ is an lsc, proper, convex function.
For any $\tau,\lambda \geq 0$ and $\bT \in S_+^2$, define
\begin{subequations}\label{summary-functions-fixed-finite}
\begin{gather}
\mathsf{R_{reg,cvx}}(\tau,\lambda,\cT_p) := \E_{\bbeta_0,\bz}\left[\|\mathsf{prox}[\lambda \rho_p](\bbeta_0 + \tau \bz) - \bbeta_0\|^2\right],\label{r-finite-p-def}\\
\mathsf{W_{reg,cvx}}(\tau,\lambda,\cT_p) := \frac1{\tau}\E_{\bbeta_0,\bz}\left[\langle \bz, \mathsf{prox}[\lambda\rho_p](\bbeta_0+\tau\bz)\rangle\right],\label{w-finite-p-def}\\
\mathsf{K_{reg,cvx}}(\bT,\lambda,\cT_p) :=  \E_{\bbeta_0,\bz_1,\bz_2}\left[\left\langle\mathsf{prox}\left[\lambda \rho_p\right]\left(\bbeta_0 + \bz_1\right)-\bbeta_0,\mathsf{prox}\left[\lambda\rho_p\right]\left(\bbeta_0 + \bz_2\right)-\bbeta_0\right\rangle\right],\label{k-finite-p-def}
\end{gather}
\end{subequations}
where $\beta_{0j} \stackrel{\mathrm{iid}}\sim \pi/\sqrt{p}$, $\bz \sim \mathsf{N}(\bzero,\bI_p/p)$, and $(\bz_1,\bz_2) \sim \mathsf{N}\left(\bzero ,\bT\otimes \bI_p/p\right)$.
Consider a sequence of penalties $\{\rho_p:\reals^p \rightarrow \reals \cup\{\infty\}\}$. 
Let $\cT = (\pi, \{\rho_p\})$.
Define
\begin{equation}\label{summary-functions-fixed-asymptotic}
\begin{gathered}
\mathsf{R^\infty_{reg,cvx}}(\tau,\lambda,\cT) := \lim_{p\rightarrow \infty} \mathsf{R_{reg,cvx}}(\tau,\lambda,\cT_p), \\
\mathsf{W^\infty_{reg,cvx}}(\tau,\lambda,\cT) := \lim_{p \rightarrow \infty} \mathsf{W_{reg,cvx}}(\tau,\lambda,\cT_p), \\
\mathsf{K^\infty_{reg,cvx}}(\bT,\lambda,\cT) := \lim_{p \rightarrow \infty} \mathsf{K_{reg,cvx}}(\bT,\lambda,\cT_p),
\end{gathered}
\end{equation}
whenever these limits exist. Here $\cT_p$ is related to $\cT$ in the obvious way.
Finally, denote
\begin{equation}\label{oracle-parameters}
\begin{gathered}
    \tau_{\mathsf{orc}} = \tau_{\mathsf{orc}}(\tau,\lambda,\gamma) = \frac{\tau}{\lambda \gamma + 1},
    \quad
    \lambda_{\mathsf{orc}} = \lambda_{\mathsf{orc}}(\lambda,\gamma) = \frac{\lambda}{\lambda\gamma + 1},
    \\
    \bT_{\mathsf{orc}} = \bT_{\mathsf{orc}}(\bT,\lambda,\gamma) = \frac{\bT}{(\lambda \gamma + 1)^2} .
\end{gathered}
\end{equation}

The exact asymptotic characterization is given by a solution $(\tau,\lambda)$ to the following system of equations.
\begin{subequations}\label{fixed-pt-prior-asymptotic}
\begin{gather}
    \delta \tau^{2} - \sigma^2 = \mathsf{R^\infty_{reg,cvx}}(\tau_{\mathsf{orc}},\lambda_{\mathsf{orc}},\cT),\label{se-var-fixed-pt-prior-asymptotic}\\
    2\lambda\left(1 - \frac1{\delta(\lambda\gamma+1)} \mathsf{W^\infty_{reg,cvx}}(\tauorc,\lambdaorc,\cT) \right) = 1.\label{se-lam-fixed-pt-prior-asymptotic}
\end{gather}
\end{subequations}
The following notion will be needed.
\begin{definition}[Strong stationarity]\label{def-stationarity-of-t}
    For any $\tau \geq0$, $\lambda > 0$, $\bT \in S_+^2$, and $\gamma \geq 0$, we denote $\tau_{\mathsf{orc}}$, $\lambda_{\mathsf{orc}}$, and $\bT_{\mathsf{orc}}$ as in \eqref{oracle-parameters}. 
        We say the quintuplet $\tau,\lambda,\gamma,\delta,\cT$ is \emph{strongly stationary} if at $\lambda_{\mathsf{orc}}$ and at all $\tau' \geq 0$, $\bT' \succeq \bzero$, the limits \eqref{summary-functions-fixed-asymptotic} exist, and at $\tau,\lambda, \gamma$, the equations \eqref{fixed-pt-prior-asymptotic} are satisfied.
\end{definition}

We are ready to provide our exact characterization of oracle estimators.
\begin{proposition}\label{prop-strongly-convex-loss}
    Consider $\pi \in \cP_\infty(\reals)$, $\delta \in (0,\infty)$, and $\sigma \geq 0$. 
    Consider a sequence of lsc, proper, convex functions $\rho_p:\reals^p \rightarrow \reals \cup \{\infty\}$.
    Let $\cT = (\pi,\{\rho_p\})$. Assume $\tau,\lambda,\gamma\geq0$ are such that $\tau,\lambda,\gamma,\delta,\cT$ is strongly stationary. 
    For each $p$, let $\widehat \bbeta_{\mathsf{cvx}}^{(\gamma)}$ be a solution to \eqref{linear-cvx-estimator}.
    If either $\delta > 1$, $\gamma > 0$, or the $\rho_p$ have uniform strong convexity parameter $\kappa > 0$, then
    \begin{enumerate}[(i)]
        \item 
    The solution to \eqref{orc-cvx-estimator})  exists and is unique for all $n$ large enough:
    \begin{equation}\label{unique-m-estimator}
                \P_{\bX}(\text{solution to \eqref{orc-cvx-estimator}) exists and is unique}) = 1 \text{ eventually.}
    \end{equation}
    \item 
    Under RSN assumption the loss obeys
     \begin{equation}\label{converging-sequence-l2-loss}
       \|\widehat \bbeta^{(\gamma)}_{\mathsf{cvx}} - \bbeta_0\|^2 \stackrel{\mathrm{p}}\rightarrow \mathsf{R^\infty_{reg,cvx}}(\tau_{\mathsf{orc}},\lambda_{\mathsf{orc}},\cT) = \delta\tau^2-\sigma^2.
    \end{equation}
    If the penalties are symmetric,
    then \eqref{converging-sequence-l2-loss} holds also under the DSN assumption.
    \item 
    Consider the case that $\gamma = 0$ and either $\delta > 1$ or $\rho_p$ are uniformly strongly convex.
    Consider any sequence of functions $\varphi_p: (\reals^p)^2 \rightarrow \reals$ which are uniformly pseudo-Lipschitz of order $k$ for some $k$.
    Under the RSN assumption
    \begin{equation}\label{converging-sequence-pl-loss}
            \varphi_p\left(\bbeta_0,\widehat \bbeta_{\mathsf{cvx}} + 2\lambda \frac{\bX^\mathsf{T}(\by - \bX \widehat \bbeta_{\mathsf{cvx}})}{n}\right) \stackrel{\mathrm{p}}\simeq \E_{\bz}\left[\varphi_p(\bbeta_0,\bbeta_0 + \tau \bz)\right].
    \end{equation}
    If the penalties are symmetric,
    then \eqref{converging-sequence-l2-loss} holds under the DSN assumption.
    \end{enumerate}
\end{proposition}
\noindent The proof of Proposition \ref{prop-strongly-convex-loss} is provided in Appendix \ref{app-proof-of-prop-strongly-convex-loss}.

Note that although $\mathsf{K_{reg,cvx}}(\bT,\lambda,\cT_p)$ and $\mathsf{K^\infty_{reg,cvx}}(\bT,\lambda,\cT)$ do not appear in the equations \eqref{fixed-pt-prior-asymptotic}, the existence of the limit \eqref{summary-functions-fixed-asymptotic} will play an essential role in the proof of Proposition \ref{prop-strongly-convex-loss}.
In particular, it will allow us to control the convergence of the iterates of a certain AMP algorithm to the convex M-estimator, which will be important for establishing its characterization (see Section \ref{app-proof-of-prop-strongly-convex-loss} for details).

In addition to its use in establishing the convex lower bound, Proposition \ref{prop-strongly-convex-loss} will play a role in establishing the tightness of the convex lower bound under log-concavity assumptions or when $\delta > 1$. Proposition \ref{prop-strongly-convex-loss}(iii) plays a role in our consideration of non-quadratic losses and post-processing in Section \ref{sec-beyond-square-error}.

Our proof follows closely the proof of the similar result in Theorem 1.2 of \cite{Donoho2016HighPassing}.
The authors of \cite{Donoho2016HighPassing} establish an asymptotic characterization of the loss of M-estimators of the form $\widehat \bbeta = \arg\min_{\bbeta} \sum_{i = 1}^n \rho\left(y_i - [\bX \bbeta]_i\right)$ where $\rho$ is strongly convex and $\delta > 1$. 
Our Proposition \ref{prop-strongly-convex-loss} differs from their theorem in that $(i)$ we impose a penalty on the parameters rather than an arbitrary penalty on the residuals, $(ii)$ we permit non-separable penalties, and $(iii)$ we consider $\delta \leq 1$. 
Nevertheless, our argument follows almost exactly theirs (see Appendix \ref{app-proof-of-prop-strongly-convex-loss}). 
In handling non-separable penalties, we rely on recent results on approximate message passing algorithms with non-separable denoisers \cite{Berthier2017StateFunctions}, which the authors of \cite{Donoho2016HighPassing} did not have access to.

A result similar to Proposition \ref{prop-strongly-convex-loss} was also proved in \cite{thrampoulidis2018precise} using Gaussian comparison inequalities. 
The conditions in \cite{thrampoulidis2018precise} are not directly comparable to the ones of Proposition \ref{prop-strongly-convex-loss}.
We prefer proving an independent statement, since checking the conditions of the general theorem in \cite{thrampoulidis2018precise} is non-trivial.
As an advantage, Proposition \ref{prop-strongly-convex-loss} gives access --via Eq.~\eqref{converging-sequence-pl-loss}-- to the empirical distribution 
of the entries of $\widehat \bbeta_{\mathsf{cvx}}$ and $\widehat \bbeta_{\mathsf{cvx}} + 2\lambda \frac{\bX^\mathsf{T}(\by - \bX \widehat \bbeta_{\mathsf{cvx}})}{n}$, which is not provided by \cite{thrampoulidis2018precise}.
As stated above, this plays a role in our consideration of non-quadratic losses and post-processing.

Finally, Proposition explicitly describes the impact of the oracle term.

\section{Regularity lemmas}\label{app-regularity-lemmas}

This appendix provides several lemmas controlling the regularity of various objects appearing in the exact characterization of Proposition \ref{prop-strongly-convex-loss}.
These will be required in both the proof of Proposition \ref{prop-strongly-convex-loss} and in its applications.

First, for $\tau > 0$, 
\begin{align}
    \mathsf{W_{reg,cvx}}(\tau,\lambda,\cT_p) &= \frac{1}{\tau}\E_{\bbeta_0,\bz}[\langle \bz, \mathsf{prox}[\lambda \rho_p](\bbeta_0 + \tau\bz)\rangle ] = \frac1p \E_{\bbeta_0,\bz}[\mathrm{div}\, \mathsf{prox}[\lambda \rho_p](\bbeta_0 + \tau\bz)] 
    \leq 1,\label{finite-p-width-bounded-by-1}
\end{align}
where in the first equality we have used the definition \eqref{w-finite-p-def}, in the second equality we have used \eqref{prox-gaussian-IBP}, and in the inequality we have used \eqref{prox-divergence-bound}. 
Taking limits, we have (using \eqref{prox-divergence-non-negative})
\begin{align}\label{width-bounds}
    \mathsf{W^\infty_{reg,cvx}}(\tau,\lambda,\cT) \leq 1,
    \quad \text{and} \quad
    \mathsf{W_{reg,cvx}}(\tau,\lambda,\cT_p) \geq 0,
    \quad\text{and}\quad 
    \mathsf{W^\infty_{reg,cvx}}(\tau,\lambda,\cT) \geq 0,
\end{align}
whenever these are defined.

Next, in this and other appendices we will sometimes need the following basic algebraic inequalities which hold for any vectors $\ba,\ba',\bb,\bb' \in \reals^p$ and are straightforward to verify.
\begin{gather}
        |\langle \ba, \bb \rangle - \langle \ba',\bb'\rangle| \leq 2 \underbrace{\max\{\|\ba\|,\|\ba'\|,\|\bb\|,\|\bb'\|\}}_{(*)} \underbrace{(\|\ba - \ba'\| \vee \|\bb - \bb'\|)}_{(**)} ,\label{inner-product-difference-inequality}\\
        \left|\|\ba\|^2 - \|\bb\|^2\right| \leq 2\underbrace{(\|\ba\| \vee \|\bb\|)}_{(*)}\underbrace{\|\ba - \bb\|}_{(**)}.\label{norm-squared-difference-inequality}
\end{gather}
We label the terms on the right-hand sides with $(*)$ and $(**)$ to facilitate future reference. The inequalities are straightforward to verify. In fact, \eqref{norm-squared-difference-inequality} is a special case of \eqref{inner-product-difference-inequality}.

We say a sequence of functions $\{\rho_p\} \in \cC$ \emph{does not shrink towards infinity} if 
\begin{equation}\label{eqdef-bounded-shrinkage-towards-infinity}
    \sup_p \|\mathsf{prox}[\rho_p](\bzero)\| < \infty.
\end{equation}
We define the collection of penalty sequences which do not shrink towards infinity
\begin{equation}\label{eqdef-cB}
    \cB =  \Big\{\{\rho_p\} \in \cC \Bigm| \eqref{eqdef-bounded-shrinkage-towards-infinity} \text{ holds}\Big\}.
\end{equation}

Finally, we provide a series of lemmas which we will need in later sections.
\begin{lemma}\label{lem-integrands-uniformly-pseudo-lipschitz}
    Consider $\{\rho_p\} \in \cB$ (see \eqref{eqdef-cB}). Then for any fixed $\tau,\lambda \geq 0$, the functions
    \begin{gather*}
        \varphi_{\mathsf{R}}^{(p)}(\bbeta_0,\bz) = \|\mathsf{prox}[\lambda \rho_p](\bbeta_0 + \tau \bz) - \bbeta_0\|^2,\\
        \varphi_{\mathsf{W}}^{(p)}(\bbeta_0,\bz) = \frac1{\tau}\left \langle  \bz, \mathsf{prox}[\lambda \rho_p](\bbeta_0 + \tau \bz)\right\rangle,\\
        \varphi_{\mathsf{K}}^{(p)}(\bbeta_0,\bz_1,\bz_2) = \left\langle \mathsf{prox} \left[\lambda \rho_p\right](\bbeta_0 + \bz_1 ) - \bbeta_0,\mathsf{prox} \left[\lambda \rho_p\right](\bbeta_0 + \bz_2 ) - \bbeta_0\right\rangle,
    \end{gather*}
    are uniformly pseudo-Lipschitz of order 2.
\end{lemma}

\begin{proof}[Proof of \ref{lem-integrands-uniformly-pseudo-lipschitz}]
    Let $M := \sup_p \|\mathsf{prox}[\rho_p](\bzero)\|$. We have $M < \infty$ because $\{\rho_p\} \in \cB$. 
    By \eqref{prox-continuous-in-lambda}, we have 
    \begin{align*}
    \|\mathsf{prox}[\lambda \rho_p](\bzero)\| &\leq \|\mathsf{prox}[\rho_p](\bzero)\| + \|\mathsf{prox}[\lambda \rho_p](\bzero) - \mathsf{prox}[\rho_p](\bzero)\| \\
    &\leq \|\mathsf{prox}[\rho_p](\bzero)\| + \|\mathsf{prox}[\lambda \rho_p](\bzero)\|\left|\lambda - 1\right| 
    \leq (2M + 1)\lambda.
    \end{align*}
    Thus, $\|\mathsf{prox}[\lambda \rho_p](\bzero)\|$ is bounded over $p$.
  Further, by \eqref{prox-is-lipschitz}, the functions $(\bbeta_0,\bz) \mapsto \mathsf{prox}[\lambda\rho_p](\bbeta_0 + \tau \bz) - \bbeta_0$ and $(\bbeta_0,\bz) \mapsto \mathsf{prox}[\lambda\rho_p](\bbeta_0 + \tau \bz)$ are uniformly pseudo-Lipschitz of order 1.
  Further, the function $(\bbeta_0,\bz) \mapsto \frac1\tau \bz$ is trivially uniformly pseudo-Lipschitz of order 1.
    Applying Lemma \ref{lem-pseudo-lipschitz-closed-under-inner-product}, the Lemma follows.
\end{proof}

\begin{lemma}\label{lem-r-continuity}
There exists universal constant $C$ such that the functions $\tau\mapsto\mathsf{R^{opt}_{seq,cvx}}(\tau;\pi,p)$ and $\tau\mapsto\mathsf{R^{opt}_{seq,cvx}}(\tau;\pi)$ defined in \eqref{R-seq-cvx-opt-finite} and \eqref{R-seq-cvx-opt-asymptotic} 
satisfy for any $\tau,\tau' \geq 0$ (using $f$ to denote each function)
\begin{equation}\label{eq:f-smoothness}
    |f(\tau')-f(\tau)|
    \leq 
    C(1 + |\tau' - \tau| + f(\tau))|\tau'-\tau|.
\end{equation}
This makes sense even when $\cC$ is such that $\mathsf{R^{opt}_{seq,cvx}}(\tau;\pi)$ is infinite (note, $f$ is always non-negative).

In particular, if $\mathsf{R^{opt}_{seq,cvx}}(\tau;\pi)$ is finite anywhere, it is finite and continuous everywhere.
\end{lemma}

\begin{proof}[Proof of Lemma \ref{lem-r-continuity}]
First, we develop a bound on
$$
\left|\E_{\bbeta_0,\bz}\left[\|\mathsf{prox}[\rho](\bbeta_0 + \tau \bz) - \bbeta_0\|^2\right] - \E_{\bbeta_0,\bz}[\|\mathsf{prox}[\rho](\bbeta_0 + \tau' \bz) - \bbeta_0\|^2]\right|,
$$
for $\rho:\reals^p \rightarrow \reals^p$ an lsc, proper, convex function.
We apply Jensen's inequality to get
\begin{align*}
&\left|\E_{\bbeta_0,\bz}\left[\|\mathsf{prox}[\rho](\bbeta_0 + \tau \bz) - \bbeta_0\|^2\right] - \E_{\bbeta_0,\bz}[\|\mathsf{prox}[\rho](\bbeta_0 + \tau' \bz) - \bbeta_0\|^2]\right|\\
&\qquad\qquad \leq \E_{\bbeta_0,\bz}\left[\left|\|\mathsf{prox}[\rho](\bbeta_0 + \tau \bz) - \bbeta_0\|^2-\|\mathsf{prox}[\rho](\bbeta_0 + \tau' \bz) - \bbeta_0\|^2\right|\right].
\end{align*}
We bound the integrand by applying \eqref{norm-squared-difference-inequality}:
\begin{align*}
&\left|\|\mathsf{prox}[\rho](\bbeta_0 + \tau \bz) - \bbeta_0\|^2-\|\mathsf{prox}[\rho](\bbeta_0 + \tau' \bz) - \bbeta_0\|^2\right|\\
&\qquad\qquad\leq 2 \underbrace{\left(\|\mathsf{prox}[\rho](\bbeta_0 + \tau \bz) - \bbeta_0\| + \|\tau \bz - \tau' \bz\|\right)}_{\text{bound on $(*)$}}\underbrace{ \|\tau \bz - \tau' \bz\| }_{\text{bound on $(**)$}}\\
&\qquad\qquad \leq 2 \|\mathsf{prox}[\rho](\bbeta_0 + \tau \bz) - \bbeta_0\|\|\bz\| |\tau - \tau'| + 2\|\bz\|^2 (\tau - \tau')^2\\
&\qquad\qquad \leq \left(\|\mathsf{prox}[\rho](\bbeta_0 + \tau \bz) - \bbeta_0\|^2 + \|\bz\|^2\right)|\tau - \tau'| + 2\|\bz\|^2 (\tau - \tau')^2.
\end{align*}
Combining the previous two displays,
\begin{align*}
&\left|\E_{\bbeta_0,\bz}\left[\|\mathsf{prox}[\rho](\bbeta_0 + \tau \bz) - \bbeta_0\|^2\right] - \E_{\bbeta_0,\bz}[\|\mathsf{prox}[\rho](\bbeta_0 + \tau' \bz) - \bbeta_0\|^2]\right|\nonumber\\
&\qquad\qquad \leq \left(\E_{\bbeta_0,\bz}\left[\|\mathsf{prox}[\rho](\bbeta_0 + \tau \bz) - \bbeta_0\|^2\right]+ 1\right)|\tau - \tau'| + 2(\tau - \tau')^2.
\end{align*}
Thus, $\tau \mapsto \E_{\bbeta_0,\bz}\left[\|\mathsf{prox}[\rho](\bbeta_0 + \tau \bz) - \bbeta_0\|^2\right]$ satisfies \eqref{eq:f-smoothness}.
To prove \eqref{eq:f-smoothness} for $\mathsf{R^{opt}_{seq,cvx}}(\tau;\pi,p)$ and $\mathsf{R^{opt}_{seq,cvx}}(\tau;\pi)$, we use that the property \eqref{eq:f-smoothness} is preserved by taking point-wise infima of collections of functions, as well as limit infima (provided infinite limit infima are permitted, with \eqref{eq:f-smoothness} interpreted in the natural way in this case).
The finiteness and continuity of $\mathsf{R^{opt}_{seq,cvx}}(\tau;\pi,p)$ and $\mathsf{R^{opt}_{seq,cvx}}(\tau;\pi)$ in the case that these are finite anywhere is then automatic.
\end{proof}

\begin{lemma}\label{lem-emp-to-exp-non-symm}
    Let $\pi \in \cP_2(\reals)$. 
    Let $\varphi_p : \reals^p \rightarrow \reals$ be a sequence of functions which is uniformly pseudo-Lipschitz of order 2.
    Then
    \begin{equation*}
        \varphi_p(\bbeta_0)
        \stackrel{\mathrm{as}}\simeq 
        \E_{\bbeta_0}[\varphi_p(\bbeta_0)],
    \end{equation*}
    where for each $p$, $\beta_{0j} \stackrel{\mathrm{iid}}\sim \pi/\sqrt{p}$, and the $\bbeta_0$ are independent across $p$.

    In particular, if $\{\rho_p\} \in\cB$ and the limits \eqref{summary-functions-fixed-asymptotic} exist (with $\bz \sim \normal(0,\bI_p/p)$),
    then with respect to the randomness in $\bbeta_0$,
    \begin{gather*}
        \E_{\bz}\left[\|\mathsf{prox}[\lambda \rho_p](\bbeta_0 + \tau \bz) - \bbeta_0\|^2\right] \stackrel{\mathrm{as}}\simeq \mathsf{R_{reg,cvx}}(\tau,\lambda,\cT),\\
        \frac1{\tau}\E_{\bz}\left[\langle \bz, \mathsf{prox}[\lambda\rho_p](\bbeta_0+\tau\bz)\rangle\right] \stackrel{\mathrm{as}}\simeq \mathsf{W_{reg,cvx}}(\tau,\lambda,\cT),\\
        \E_{\bz_1,\bz_2}\left[\left\langle\mathsf{prox}\left[\lambda \rho_p\right]\left(\bbeta_0 + \bz_1\right)-\bbeta_0,\mathsf{prox}\left[\lambda\rho_p\right]\left(\bbeta_0 + \bz_2\right)-\bbeta_0\right\rangle\right] \stackrel{\mathrm{as}}\simeq \mathsf{K_{reg,cvx}}(\bT,\lambda,\cT).
    \end{gather*}
\end{lemma}

\begin{proof}
    Throughout the proof, $\beta_0$ will denote a random variable drawn from $\pi$.
    Let $s_2(\pi)^2 = \E[\beta_0^2]$. 
    By assumption, the restriction of $\varphi_p$ to $\{\|\bbeta_0\|_2^2 \leq s_2(\pi)^2 + 1\}$ is $L$-Lipschitz for some $L$ which does not depend on $p$.
    Let $\bar \varphi_p$ be an $L$-Lipschitz extension of $\varphi_p|_{\{\|\bbeta_0\|_2^2 \leq s_2(\pi)^2 \leq s_2(\pi)^2 + 1\}}$ to all of $\reals^p$; 
    that is, it is $L$-Lipschitz and agrees with $\varphi_p$ on $\{\|\bbeta_0\|_2^2 \leq s_2(\pi)^2 + 1\}$.
    For example, one can check that $\bar \varphi_p(\bx) = \sup_{\|\bx'\| \leq R} \{\varphi_p(\bx') - \|\bx-\bx'\|\}$ is a valid Lipschitz extension.

    Fix $1 > \epsilon > 0$.
    Pick $M > 0$ such that $\E[|\beta_0|^2 \mathbf{1}_{|\beta_0|> M}] < \epsilon^2$.
    For each $p$, let $\bh$ have coordinates drawn iid from the Laplace distribution with scale parameter $1/\sqrt{p}$ (i.e., density $\frac{\sqrt{p}}2e^{-\sqrt{p}|x|}$), independent of $\bbeta_0$ and across $p$.
    Define $\bbeta_{0}^\epsilon = (|\beta_{0j}|\mathbf{1}_{\sqrt{p}|\beta_{0j}| \leq M} + h_j)_{j\in[p]}$.
    Then $\sqrt{p}\beta_{0j}^\epsilon$ satisfies a Poincar\'e  inequality (this follows by Corollary 1.6 of \cite{barky2008});
    that is, 
    for any weakly differentiable $f$, $\Var(f(\sqrt{p}\beta_{0j}^\epsilon)) \leq C \E[ f'(\sqrt{p}\beta_{0j}^\epsilon)^2]$ for some constant $C$ which does not depend on $p$ (but may depend on $\epsilon,\pi,M$).
    Then the product measure $\pi^{\otimes p}$ satisfies a Poincar\'e inequality with the same constant $C$: $\Var(\bar \varphi_p(\bbeta)) \leq C \E[\|\nabla \varphi_p(\bbeta_0)\|^2]/p$. 
    Then, by Corollary 4.6 of Ledoux \cite{ledoux1999}, $\bar \varphi_p$ has exponential concentration.
    In particular, there exists a constant $c$, which does not depend on $p$, (but may depend on $\epsilon,\pi,M,L$)
    such that
    \begin{equation*}
        \P\left(\Big|\bar \varphi_p(\bbeta_0) - \E[\bar \varphi_p(\bbeta_0^\epsilon)] \Big| > t \right) 
            \leq 
            2 e^{-c \min(\sqrt{p}t,pt^2)}.
    \end{equation*}
    Taking $t \rightarrow 0$ after $p \rightarrow \infty$ and using the Borel-Cantelli lemma,
    we get that $\bar \varphi_p(\bbeta_0^\epsilon) \stackrel{\mathsf{as}}\simeq \E[\bar \varphi_p(\bbeta_0^\epsilon)]$.

    By the strong law of large numbers and the definition of $\bbeta_0^\epsilon$, 
    we have $\|\bbeta_0 - \bbeta_0^\epsilon\|^2 \stackrel{\mathsf{as}}\rightarrow \E[(\beta_0 - \beta_{0j} \mathbf{1}_{|\beta_0|\leq M} - \epsilon h)^2] < 2\epsilon^2$, where $h$ has Laplace distribution with scale parameter 1 and is independent of $\beta_0$.
    Thus, almost surely we have that for large enough $p$, $|\bar \varphi_p(\bbeta_0) - \bar \varphi_p(\bbeta_0^\epsilon)| < \sqrt{2}L\epsilon$.
    Also by the strong law of large numbers, we have $\|\bbeta_0^2\| \rightarrow s_2(\pi)^2$ and $\|\bbeta_0^\epsilon\|^2 \stackrel{\mathrm{as}}\rightarrow \E[(\sqrt{p}\beta_{0j}^\epsilon)^2] < s_2(\pi)^2 + 1$, where the inequality holds because $\epsilon < 1$. 
    Thus, almost surely we have that for large enough $p$, $\varphi_p(\bbeta_0) = \bar \varphi_p(\bbeta_0)$ and $\varphi_p(\bbeta_0^\epsilon) = \bar \varphi_p(\bbeta_0^\epsilon)$.
    Combining the preceding observations, almost surely we have that for large enough $p$, $|\varphi_p(\bbeta_0) - \E[\bar \varphi_p(\bbeta_0^\epsilon)]| < \sqrt{2}L\epsilon$.
    Finally, $\E[\|\bbeta_0 - \bbeta_0^\epsilon\|] \leq \E[\|\bbeta_0 - \bbeta_0^\epsilon\|^2]^{1/2} < \sqrt{2}\epsilon$.
    Thus, almost surely we have that for large enough $p$, $|\varphi_p(\bbeta_0) - \E[\bar \varphi_p(\bbeta_0)]| < 2\sqrt{2}L\epsilon$.
    
    Because $\varphi_p$ is pseudo-Lipschitz of order 2, $\bar \varphi_p$ is in fact $C(1 + s_2(\pi)^2 + 1)$-Lipschitz, so that $|\varphi(\bx) - \bar \varphi(\bx)| \leq |\varphi(\bx) - \varphi(\bzero)| + |\bar \varphi(\bx) - \bar\varphi(\bzero)| \leq C(2+s_2(\pi)^2)\|\bx\| + C(1 + \|\bx\|)\|\bx\|$. 
    Thus, there exists $C$ which does not depend on $p,\epsilon,M$ such that $|\varphi_p(\bx) - \bar\varphi_p(\bx)| \leq C\|\bx\|^2\mathbf{1}_{\|\bx\|^2 \geq s_2(\pi)^2 + 1}$.
    Thus, $|\E[\bar \varphi_p(\bbeta_0)] - \E[\varphi_p(\bbeta_0)]| < C\E[\|\bbeta_0\|^2\mathbf{1}_{\|\bbeta_0\|^2 \geq s_2(\pi)^2 + 1}] \rightarrow 0$ because $\|\bbeta_0\|^2$ is uniformly integrable and $\P(\|\bbeta_0\|^2 \geq s_2(\pi)^2 + 1) \rightarrow 0$.
    We conclude that almost surely we have that for large enough $p$, $|\varphi_p(\bbeta_0) - \E[ \varphi_p(\bbeta_0)]| < 2\sqrt{2}L\epsilon$.
    Because the left-hand side does not depend on $\epsilon$, in fact $\varphi_p(\bbeta_0) \stackrel{\mathrm{as}}\simeq \E[ \varphi_p(\bbeta_0)]$ as desired.

    The identities involving $\mathsf{R_{reg,cvx}}(\tau,\lambda,\cT)$, $\mathsf{W_{reg,cvx}}(\tau,\lambda,\cT)$, and $\mathsf{K_{reg,cvx}}(\bT,\lambda,\cT)$ now hold because $\bbeta_0 \mapsto \E_{\bz}\left[\|\mathsf{prox}[\lambda \rho_p](\bbeta_0 + \tau \bz) - \bbeta_0\|^2\right]$ is uniformly pseudo-Lipschitz of order 2, and likewise for the remaining relevant functions. The proof is complete.
\end{proof}

\begin{lemma}\label{lem-pseudo-lipschitz-empirical-to-expectation}
Consider a sequence $\{\varphi_p: (\reals^p)^{k+1} \rightarrow \reals\}$ of uniformly pseudo-Lipschitz functions of order 2. 
Moreover, assume $\varphi$ are symmetric in the sense that for any $\sigma \in S_p$, the symmetric group on $[p]$, we have
$$
\varphi_p(\bx_0^\sigma,\ldots,\bx_k^\sigma) = \varphi_p(\bx_0,\ldots,\bx_k),
$$
where $(\bx^\sigma)_i := \bx_{\sigma(i)}$. 
Fix deterministic sequence $\{\bx_0(p)\}$ such that $p^{-1}\sum_{i=1}^p\delta_{\sqrt{p}x_{0,i}}\toW \pi$ for some $\pi \in \cP_2(\reals)$. 
Then
\begin{equation}
    \lim_{p \rightarrow \infty}  \E_{\bz_1,\ldots,\bz_k}[\varphi_p(\bx_0,\bz_1,\ldots,\bz_k)] = \lim_{p \rightarrow \infty}  \E_{\tilde\bx_0,\bz_1,\ldots,\bz_k}[\varphi_p(\tilde \bx_0,\bz_1,\ldots,\bz_k)]
\end{equation}
whenever either of the limits exists, where on the right-hand side we take $\tilde \bx_0$ with coordinates distributed iid from $\pi/\sqrt{p}$. In particular, both limits exist as soon as one of them exists.
\end{lemma}

\begin{proof}[Proof of Lemma \ref{lem-pseudo-lipschitz-empirical-to-expectation}]
 We now drop index $p$ from our notation to avoid clutter.
Consider a probability space on which we have random vectors $\tilde \bx_0,\bz_1,\ldots,\bz_k \in \reals^p$ for each $p$ such that the coordinates of $\tilde \bx_0$ are distributed iid from $\pi /\sqrt p$ and the $\tilde \bx_0$ are independent for different values of $p$.
By \cite[Lemma 8.4]{Bickel1981SomeBootstrap},
 \begin{equation}\label{wasserstein-convergence-under-iid}
     d_{\mathrm{W}}\left(\widehat \pi_{\bx_0}, \widehat \pi_{\tilde \bx_0}\right) \leq d_{\mathrm{W}}\left(\widehat \pi_{\bx_0}, \pi\right) + d_{\mathrm{W}}\left(\pi, \widehat \pi_{\tilde \bx_0}\right) \stackrel{\mathrm as}\rightarrow 0,
 \end{equation}
 where $\widehat \pi_{\bv}\equiv p^{-1}\sum_{i=1}^p\delta_{\sqrt{p}v_{i}}$ denotes the empirical distributions of the entries of $\bv\in\reals^p$.
 For each $p$ and realization $\tilde \bx_0$, there is a permutation $\sigma_p$ (depending on $\tilde \bx_0$) such that 
 $
\left\|\bx_0 - \tilde \bx_0^{\sigma_p}\right\| = d_\mathrm{W}\left(\widehat \pi_{\bx_0}, \widehat \pi_{\tilde \bx_0}\right).
 $
 By the symmetry of $\varphi_p$, we have
 $
 \varphi_p(\tilde \bx_0,\bz_1,\ldots,\bz_k) = \varphi_p\left(\tilde \bx_0^{\sigma_p},\bz_1^{\sigma_p},\ldots,\bz_k^{\sigma_p}\right) \stackrel{\mathrm{d}}= \varphi_p\left(\tilde \bx_0^{\sigma_p},\bz_1,\ldots,\bz_k\right),
 $
 where the equality of distribution follows because $\sigma_p$ is independent of $\bz_1,\ldots,\bz_k$, and the distribution of $\bz_1,\ldots,\bz_k$ is invariant under permutation of the coordinates.
 We have
 \begin{align}
\left|\varphi_p\left(\bx_0,\bz_1,\ldots,\bz_k\right) - \varphi_p\left(\tilde \bx_0^{\sigma_p},\bz_1,\ldots,\bz_k\right)\right|
&\leq  L\left(1 + \|\bx_0\| + \|\tilde \bx_0^{\sigma_p}\|  + 2\sum_{i=1}^k \|\bz_i\|\right)\|\bx_0 - \tilde \bx_0^\sigma\|\nonumber\\
 &= L\left(1 + \|\bx_0\|+ \|\tilde \bx_0\|  + 2\sum_{i=1}^k \|\bz_i\|\right)d_\mathsf{W}\left(\widehat \pi_{\bx_0}, \widehat \pi_{\tilde \bx_0}\right)
 \stackrel{\mathrm p}\rightarrow 0,\label{probabilistic-convergence-under-iid-sampling}
 \end{align}
 where we have used \eqref{wasserstein-convergence-under-iid} and that $\left(1 + \|\bx_0\| + \|\tilde \bx_0\|  + 2\sum_{i=1}^k \|\bz_i\|\right) = O_p(1)$. 
 Further, we check uniform integrability. First,
 \begin{align*}
  \left|\varphi_p\left(\bx_0,\bz_1,\ldots,\bz_k\right) - \varphi_p\left(\tilde \bx_0^{\sigma_p},\bz_1,\ldots,\bz_k\right)\right| 
  \leq L\left(1 + \|\bx_0\| + \|\tilde \bx_0\|  + \sum_{i=1}^k \|\bz_i\|\right)\left(\|\bx_0\| + \|\tilde \bx_0\|\right).
 \end{align*}
 Because $\|\bx_0\|$ is bounded, we only need to check that $\|\tilde \bx_0\|^2$ and $\|\bz_i\|\|\tilde \bx_0\|$ are uniformly integrable over $p$. Observe that $\|\tilde \bx_0\|^2 = \frac1p \sum_{j=1}^p (\sqrt p \tilde x_{0j})^2 $. The random variables $(\sqrt p \tilde x_{0j})^2$ are iid from an $L_1$ probability distribution (which does not depend on $p$), so that $\|\tilde \bx_0\|^2$ are uniformly integrable.
Also, $\|\bz_i\|\|\tilde \bx_0\| \leq \frac12 \left(\|\tilde \bx_0\|^2 + \|\bz_i\|^2\right)$, so these are uniformly integrable for the same reason.
Thus, the probabilistic convergence \eqref{probabilistic-convergence-under-iid-sampling} and Vitali's Convergence Theorem (see e.g.~\cite[Theorem 5.5.2]{Durrett2010Probability:Examples}) implies that
 \begin{align}
& \left| \E_{\bz_1,\ldots,\bz_k}[\varphi_p(\bx_0,\bz_1,\ldots,\bz_k)] - \E_{\tilde \bx_0,\bz_1,\ldots,\bz_k}[ \varphi_p(\tilde \bx_0,\bz_1,\ldots,\bz_k)] \right|\nonumber\\
 &\qquad\qquad\qquad=  \left| \E_{\bz_1,\ldots,\bz_k}\left[\varphi_p\left(\bx_0,\bz_1,\ldots,\bz_k\right)\right] - \E_{\tilde \bx_0,\bz_1,\ldots,\bz_k}\left[ \varphi_p\left(\tilde \bx_0^{\sigma_p},\bz_1,\ldots,\bz_k\right)\right] \right|\nonumber \\
 &\qquad\qquad\qquad\leq \E_{\tilde \bx_0,\bz_1,\ldots,\bz_k}\left[\left|\varphi_p\left(\bx_0,\bz_1,\ldots,\bz_k\right) - \varphi_p\left(\tilde \bx_0^{\sigma_p},\bz_1,\ldots,\bz_k\right)\right|\right] 
 \rightarrow 0.
 \end{align}
 Thus, if $\E_{\bz_1,\ldots,\bz_k}[\varphi_p(\bx_0,\bz_1,\ldots,\bz_k)]$ converges, then $\E_{\tilde \bx_0,\bz_1,\ldots,\bz_k}[ \varphi_p(\tilde \bx_0,\bz_1,\ldots,\bz_k)]$ also converges and has the same limit, and conversely.
\end{proof}

\begin{lemma}\label{lem-uniform-modulus-of-continuity}
Consider $\pi \in \cP_2(\reals)$ and $\{\rho_p\} \in \cB$ (see \eqref{eqdef-cB}).
For each $p$, let $\cT_p = (\pi,\rho_p)$ and consider the functions $\mathsf{R_{reg,cvx}}(\tau,\lambda,\cT_p)$, $\mathsf{W_{reg,cvx}}(\tau,\lambda,\cT_p)$, and $\mathsf{K_{reg,cvx}}(\bT,\lambda,\cT_p)$ defined by \eqref{summary-functions-fixed-finite}. 
Consider $0 < \tau_{\mathsf{min}} \leq \tau_{\mathsf{max}}$ and $0 < \lambda_{\mathsf{min}} \leq \lambda_{\mathsf{max}}$. 
We have the following:
\begin{enumerate}[(i)]
    \item 
    $\mathsf{R_{reg,cvx}}$ is uniformly (over $p$) Lipschitz continuous in $\tau$ and $\lambda$ for $(\tau,\lambda) \in [0,\tau_{\mathsf{max}}] \times [\lambda_{\mathsf{min}},\lambda_{\mathsf{max}}]$.
    \item 
    $\mathsf{W_{reg,cvx}}$ is uniformly (over $p$) Lipschitz continuous in $\tau$ and $\lambda$ for $(\tau,\lambda) \in [\tau_{\mathsf{min}},\tau_{\mathsf{max}}] \times [\lambda_{\mathsf{min}},\lambda_{\mathsf{max}}]$.
    \item 
    $\mathsf{K_{reg,cvx}}$ is uniformly (over $p$) equicontinuous in $\bT$ and uniformly Lipschitz continuous in $\lambda$ for $\bzero \preceq \bT \preceq \tau_{\mathsf{max}}^2 \bI_2$ and $\lambda \in [\lambda_{\mathsf{min}},\lambda_{\mathsf{max}}]$.
\end{enumerate}
\end{lemma}

\begin{proof}[Proof of Lemma \ref{lem-uniform-modulus-of-continuity}]
Let $M > \sup_p \|\mathsf{prox}[\rho_p](\bzero)\|$ with $M < \infty$, which is permitted because $\{\rho_p\} \in \cB$.
Throughout the proof, we will denote by $C$ a constant which may depend on $M$, $\pi$, $ \tau_{\mathsf{max}}$, $\lambda_{\mathsf{min}}$, or $ \lambda_{\mathsf{max}}$, but not on $p$ or $\tau_{\mathsf{min}}$, and will denote by $C_+$ a constant which may depend also on $\tau_{\mathsf{min}}$ but not on $p$. Both $C$ and $C_+$ may differ at different appearances, even within the same chain of inequalities, as it absorbs terms. 

Observe that for any $\lambda$ we have 
\begin{align}
    \|\mathsf{prox}[\lambda \rho_p](\bbeta_0 + \tau \bz)\| &\leq \left\|\mathsf{prox}[ \rho_p](\bzero)\right\| + \left\|\mathsf{prox}[\lambda \rho_p](\bzero) - \mathsf{prox}[\rho_p](\bzero)\right\| + \left\|\mathsf{prox}[\lambda \rho_p](\bbeta_0 + \tau \bz) - \mathsf{prox}[\lambda \rho_p](\bzero)\right\| \nonumber \\
    &\leq M + \|\mathsf{prox}[\rho_p](\bzero)\| |\lambda - 1| + \|\bbeta_0\| + \tau \|\bz\|
    \leq M(2 + \lambda_{\mathsf{max}}) + \|\bbeta_0\| + \tau_{\mathsf{max}} \|\bz\| \nonumber\\
    &\leq C\left(1 + \|\bbeta_0\| + \|\bz\|\right),\label{prox-bounded-above}
\end{align}
where in the second inequality we have used \eqref{prox-is-lipschitz} and \eqref{prox-continuous-in-lambda}. 
With one more application of the triangle inequality, we get
\begin{align}\label{prox-error-bounded-above}
    \left\|\mathsf{prox}[\lambda \rho_p](\bbeta_0 + \tau \bz)-\bbeta_0\right\| \leq C\left(1 + \|\bbeta_0\| + \|\bz\|\right).
\end{align}
Further, observe that for $\lambda,\lambda' \in [\lambda_{\mathsf{min}},\lambda_{\mathsf{max}}]$ and $\tau \in [\tau_{\mathsf{min}},\tau_{\mathsf{max}}]$, we have by applying \eqref{prox-bounded-above} and the triangle inequality
\begin{align}
    \left\| \mathsf{prox}[\lambda \rho_p](\bbeta_0 + \tau \bz) - \mathsf{prox}[\lambda' \rho_p](\bbeta_0 + \tau \bz) \right\| & \leq \left\|\bbeta_0 + \tau \bz - \mathsf{prox}[\lambda \rho_p](\bbeta_0 + \tau \bz) \right\| \left|\frac{\lambda'}\lambda - 1\right|\nonumber \\
    &\leq C \left(1 + \|\bbeta_0\| + \|\bz\| \right) |\lambda - \lambda'|,\label{prox-pseudo-lipschitz-in-lambda}
\end{align}
where in the first inequality we have used \eqref{prox-continuous-in-lambda} and in the second inequality we have used\ref{prox-error-bounded-above} and that $\left|\frac{\lambda'}{\lambda} - 1\right| = \frac{|\lambda - \lambda'|}{\lambda } \leq \frac{|\lambda - \lambda'|}{\lambda \wedge \lambda' } \leq C|\lambda - \lambda'|$.

We are ready to demonstrate the claimed continuity properties of $\mathsf{R_{reg,cvx}}$, $\mathsf{W_{reg,cvx}}$, and $\mathsf{K_{reg,cvx}}$.
Fix $\tau,\tau' \in [0,\tau_{\mathsf{max}}]$, $\lambda,\lambda' \in [\lambda_{\mathsf{min}},\lambda_{\mathsf{max}}]$ and $\bzero \preceq \bT,\bT' \preceq \tau_{\mathsf{max}}^2 \bI_2$.
These will remain fixed throughout the remainder of the proof unless otherwise stated.

\noindent {\bf Uniform Lipschitz continuity of $\mathsf{R_{reg,cvx}}$ in $\tau$.} 
We apply \eqref{norm-squared-difference-inequality} identifying $\ba = \mathsf{prox}[\lambda \rho_p](\bbeta_0 + \tau \bz) - \bbeta_0$ and $\bb = \mathsf{prox}[\lambda \rho_p](\bbeta_0 + \tau' \bz) - \bbeta_0$.
Using \ref{prox-error-bounded-above} and \eqref{prox-is-lipschitz} to bound $(*)$ and $(**)$ respectively, we get
\begin{align}
\left|\| \ba\|^2 - \|\bb\|^2\right|
&\leq \underbrace{C\left(1 + \|\bbeta_0\| + \|\bz\| \right)}_{\text{bound on $(*)$}}\cdot \underbrace{ |\tau - \tau'|\|\bz\|}_{\text{bound on $(**)$}} 
\leq C\left(1 + \|\bbeta_0\|^2 + \|\bz\|^2\right)|\tau-\tau'|.\label{R-in-tau-continuity}
\end{align}
We have by Jensen's inequality
\begin{align}
    \Big|\mathsf{R_{reg,cvx}}(\tau,\lambda,\cT_p) &- \mathsf{R_{reg,cvx}}(\tau',\lambda,\cT_p)\Big| = \left|\E_{\bbeta_0,\bz}\left[\|\ba\|^2\right] - \E_{\bbeta_0,\bz}\left[\|\bb\|^2\right]\right|
    \leq \E_{\bbeta_0,\bz}\left[\left|\|\ba\|^2 -\|\bb\|^2\right|\right]\nonumber\\
    & \leq C\E_{\bbeta_0,\bz}\left[\left(1 + \|\bbeta_0\|^2 + \|\bz\|^2\right)\right]|\tau - \tau'| = C|\tau - \tau'|.
\end{align}

\noindent {\bf Uniform Lipschitz continuity of $\mathsf{R_{reg,cvx}}$ in $\lambda$.}
We apply \eqref{norm-squared-difference-inequality} identifying $\ba = \mathsf{prox}[\lambda \rho_p](\bbeta_0 + \tau \bz) - \bbeta_0$ and $\bb = \mathsf{prox}[\lambda' \rho_p](\bbeta_0 + \tau \bz) - \bbeta_0$.
Using \ref{prox-error-bounded-above} and \eqref{prox-pseudo-lipschitz-in-lambda} to bound $(*)$ and $(**)$ respectively, we get
\begin{align}
\left|\| \ba\|^2 - \|\bb\|^2\right|
&\leq \underbrace{C\left(1 + \|\bbeta_0\| + \|\bz\|\right)}_{\text{bound on $(*)$}}\cdot \underbrace{C\left(1 + \|\bbeta_0\| + \|\bz\|\right)|\lambda - \lambda'|}_{\text{bound on $(**)$}}\nonumber\\
&\leq C\left(1 + \|\bbeta_0\|^2 + \|\bz\|^2\right)|\lambda-\lambda'|.
\end{align}
We have by Jensen's inequality
\begin{align}
    \Big|\mathsf{R_{reg,cvx}}(\tau,\lambda,\cT_p) &- \mathsf{R_{reg,cvx}}(\tau,\lambda',\cT_p)\Big| = \left|\E_{\bbeta_0,\bz}\left[\|\ba\|^2\right] - \E_{\bbeta_0,\bz}\left[\|\bb\|^2\right]\right|
    \leq \E_{\bbeta_0,\bz}\left[\left|\|\ba\|^2 -\|\bb\|^2\right|\right]\nonumber\\
    &\leq C\E_{\bbeta_0,\bz}\left[\left(1 + \|\bbeta_0\|^2 + \|\bz\|^2\right)\right]|\lambda - \lambda'|
     = C|\lambda - \lambda'|.
\end{align}

\noindent {\bf Uniform Lipschitz continuity of $\mathsf{W_{reg,cvx}}$ in $\tau$.}
In this section only, we require also that $\tau,\tau' \geq \tau_{\mathsf{min}}$.
We apply \eqref{inner-product-difference-inequality} identifying $\ba = \frac{\bz}{\tau}$, $\bb = \mathsf{prox}[\lambda\rho_p](\bbeta_0 + \tau\bz)$, $\ba' = \frac{\bz}{ \tau'}$, and $\bb' = \mathsf{prox}[\lambda \rho_p](\bbeta_0 + \tau' \bz)$.
Observe that 
$
    \|\ba - \ba'\| = |1/\tau - 1/\tau'|\|\bz\| \leq C_+\|\bz\||\tau - \tau'|,
$
where the last inequality holds because $\tau,\tau' \geq \tau_{\mathsf{min}} > 0$.  
Using \eqref{prox-bounded-above} and \eqref{prox-is-lipschitz} to bound  $\max\{\|\ba\|,\|\ba'\|,\|\bb\|,\|\bb'\|\}$ and $\|\bb - \bb'\|$ respectively, we get
\begin{align}
\left|\langle \ba, \bb\rangle - \langle \ba',\bb'\rangle\right|
&\leq \underbrace{C\left(1 + \|\bbeta_0\| + \|\bz\|\right)}_{\text{bound on $(*)$}}\cdot \underbrace{C_+\|\bz\||\tau - \tau'|}_{\text{bound on $(**)$}}
\leq C_+\left(1 + \|\bbeta_0\|^2 + \|\bz\|^2\right)|\tau-\tau'|.
\end{align}
We have by Jensen's inequality
\begin{align}
    \left|\mathsf{W_{reg,cvx}}(\tau,\lambda,\cT_p) - \mathsf{W_{reg,cvx}}(\tau',\lambda,\cT_p)\right| &= \left|\E_{\bbeta_0,\bz}\left[\langle \ba , \bb \rangle \right] - \E_{\bbeta_0,\bz}[\langle \ba' , \bb' \rangle ]\right|
    \leq \E_{\bbeta_0,\bz}\left[\left|\langle  \ba , \bb \rangle -\langle \ba' , \bb' \rangle \right|\right]\nonumber\\
    &\leq C_+\E_{\bbeta_0,\bz}\left[\left(1 + \|\bbeta_0\|^2 + \|\bz\|^2\right)\right]|\tau - \tau'|
     = C_+|\tau - \tau'|.
\end{align}

\noindent {\bf Uniform Lipschitz continuity of $\mathsf{W_{reg,cvx}}$ in $\lambda$.}
We apply \eqref{inner-product-difference-inequality} identifying $\ba = \frac{\bz}{\tau}$, $\bb = \mathsf{prox}[\lambda \rho_p](\bbeta_0 + \tau\bz)$, $\ba' = \frac{\bz}{ \tau }$, and $\bb' = \mathsf{prox}[\lambda' \rho_p](\bbeta_0 + \tau \bz)$. 
Using \eqref{prox-bounded-above} and \eqref{prox-pseudo-lipschitz-in-lambda} to bound $(*)$ and $(**)$ respectively, we get 
\begin{align}
\left|\langle \ba, \bb\rangle - \langle \ba',\bb'\rangle\right|
&\leq \underbrace{C\left(1 + \|\bbeta_0\| + \|\bz\|\right)}_{\text{bound on $(*)$}}\cdot \underbrace{C\left(1 + \|\bbeta_0\| + \|\bz\|\right)|\lambda - \lambda'|}_{\text{bound on $(**)$}}
\leq C\left(1 + \|\bbeta_0\|^2 + \|\bz\|^2\right)|\lambda-\lambda'|.
\end{align}
We have by Jensen's inequality
\begin{align}
    &\left|\mathsf{W_{reg,cvx}}(\tau,\lambda,\cT_p) - \mathsf{W_{reg,cvx}}(\tau,\lambda',\cT_p)\right| = \left|\E_{\bbeta_0,\bz}\left[\langle \ba , \bb \rangle \right] - \E_{\bbeta_0,\bz}[\langle \ba' , \bb' \rangle ]\right|\leq \E_{\bbeta_0,\bz}\left[\left|\langle  \ba , \bb \rangle -\langle \ba' , \bb' \rangle \right|\right]\nonumber\\
    &\qquad\qquad\qquad \leq C\E_{\bbeta_0,\bz}\left[\left(1 + \|\bbeta_0\|^2 + \|\bz\|^2\right)\right]|\lambda - \lambda'|
     = C|\lambda - \lambda'|.
\end{align}

\noindent {\bf Uniform equicontinuity of $\mathsf{K_{reg,cvx}}$ in $\bT$.}
Let $\bT,\bT' \in S_+^2$. 
By \cite[Proposition 7]{Givens1984ADistributions}, we have 
\begin{equation}
d_{\mathrm{W}}\left(\mathsf{N}(\bzero,\bT),\mathsf{N}(\bzero,\bT')\right) = \sqrt{\Tr\left(\bT + \bT' - 2 (\bT^{1/2}\bT' \bT^{1/2})^{1/2}\right)}.
\end{equation}
By \cite[Proposition 1]{Givens1984ADistributions}, there exists a coupling which achieves the infimum in \eqref{eqdef-wasserstein}.
Let $\nu$ a probability distribution on $\reals^4$ which implements the minimal coupling between $\mathsf{N}(\bzero,\bT)$ and $\mathsf{N}(\bzero,\bT')$. 
Consider a probability space with random vectors $\bbeta_0$ and $\bz_1,\bz_2,\bz_1',\bz_2'$ for all $p$ such that $(z_{1j},z_{2j},z_{1j}',z_{2j}') \stackrel{\mathrm{iid}}\sim \nu/\sqrt p$.
Then
\begin{equation}\label{gaussian-wasserstein-coupling}
\E_{\bz_1,\bz_2,\bz_1',\bz_2'} \left[\|\bz_1 - \bz_1'\|^2 + \|\bz_2 - \bz_2'\|^2\right] = \Tr\left(\bT + \bT' - 2 (\bT^{1/2}\bT' \bT^{1/2})^{1/2}\right).
\end{equation}
We apply \eqref{inner-product-difference-inequality} identifying $\ba = \mathsf{prox}[\lambda \rho_p](\bbeta_0 + \bz_1) - \bbeta_0$, $\bb = \mathsf{prox}[\lambda \rho_p](\bbeta_0 + \bz_2) - \bbeta_0$, $\ba' = \mathsf{prox}[\lambda \rho_p](\bbeta_0 + \bz_1') - \bbeta_0$, and $\bb' = \mathsf{prox}[\lambda \rho_p](\bbeta_0 + \bz_2') - \bbeta_0$.
Using \ref{prox-error-bounded-above} and \eqref{prox-is-lipschitz} to bound $(*)$ and $(**)$ respectively, we get
\begin{align}
\left|\langle \ba, \bb\rangle - \langle \ba',\bb'\rangle\right|
&\leq \underbrace{C\left(1 + \|\bbeta_0\| + \max( \|\bz_1\|, \|\bz_2\|,\|\bz_1'\|,\|\bz_2'\|)\right)}_{\text{bound on $(*)$}} \cdot \underbrace{\max(\|\bz_1 - \bz_1'\|,\|\bz_2-\bz_2'\|)}_{\text{bound on $(**)$}}.
\end{align}
We have by Jensen's inequality and Cauchy-Schwartz
\begin{align}
&\left|\mathsf{K_{reg,cvx}}(\bT,\lambda,\cT_p) - \mathsf{K_{reg,cvx}}(\bT',\lambda,\cT_p)\right|\nonumber\\
&\qquad \qquad \leq C\E_{\bbeta_0,\bz_1,\bz_2,\bz_1',\bz_2'}\left[\left(1 + \|\bbeta_0\| + \max( \|\bz_1\|, \|\bz_2\|,\|\bz_1'\|,\|\bz_2'\|)\right)^2\right]^{1/2} \nonumber\\
&\qquad\qquad\qquad \times \E_{\bz_1,\bz_2,\bz_1',\bz_2'}\left[\max(\|\bz_1 - \bz_1'\|,\|\bz_2-\bz_2'\|)^2\right]^{1/2}.
\end{align}
We have 
\begin{align}
&\E_{\bbeta_0,\bz_1,\bz_2,\bz_1',\bz_2'}\left[\left(1 + \|\bbeta_0\| + \max( \|\bz_1\|, \|\bz_2\|,\|\bz_1'\|,\|\bz_2'\|)\right)^2\right]^{1/2}\nonumber \\
&\qquad\qquad\qquad\leq C\E_{\bbeta_0,\bz_1,\bz_2,\bz_1',\bz_2'}\left[1 + \|\bbeta_0\|^2 + \|\bz_1\|^2 + \|\bz_2|^2 + \|\bz_1'\|^2 + \|\bz_2'\|^2\right]^{1/2} \leq C.
\end{align}
Further, by \eqref{gaussian-wasserstein-coupling}, we have 
\begin{equation}
     \E_{\bz_1,\bz_2,\bz_1',\bz_2'}\left[\max(\|\bz_1 - \bz_1'\|,\|\bz_2-\bz_2'\|)^2\right]^{1/2} \leq  \sqrt{\Tr\left(\bT + \bT' - 2 (\bT^{1/2}\bT' \bT^{1/2})^{1/2}\right)}.
\end{equation}
Thus,
\begin{align}
 \left|\mathsf{K_{reg,cvx}}(\bT,\lambda,\cT_p) - \mathsf{K_{reg,cvx}}(\bT',\lambda,\cT_p)\right| &\leq C \sqrt{\Tr\left(\bT + \bT' - 2 (\bT^{1/2}\bT' \bT^{1/2})^{1/2}\right)}.
 \end{align}
 Now observe that $(\bT,\bT') \mapsto \sqrt{\Tr\left(\bT + \bT' - 2 (\bT^{1/2}\bT' \bT^{1/2})^{1/2}\right)}$ is continuous and is 0 when $\bT = \bT'$.
 Thus, it is uniformly continuous on the compact domain $\{(\bT,\bT') \in (S_+^2)^2 \mid \bzero \preceq \bT,\bT' \preceq \tau_{\mathsf{max}}^2 \bI_2\}$ (where because this is a finite dimensional Euclidean space, continuity holds with respect to any norm by equivalence of norms). 
 Thus, for any $\eps > 0$, there exists $\delta > 0$ such that if $\bzero \preceq \bT,\bT' \preceq \tau_{\mathsf{max}}^2 \bI_2$ and $\|\bT - \bT\|_{\mathsf{op}} < \delta$, then $\sqrt{\Tr\left(\bT + \bT' - 2 (\bT^{1/2}\bT' \bT^{1/2})^{1/2}\right)} < \eps$. 
 Because this modulus of continuity does not depend upon $p$, 
 we have $\mathsf{K_{reg,cvx}}(\bT,\lambda,\cT_p) $ is uniformly (over $p$) equicontinuous in $\bT$.

\noindent{\bf Uniform Lipschitz continuity of $\mathsf{K_{reg,cvx}}$ in $\lambda$.}
Let $(\bz_1,\bz_2) \sim \mathsf{N}\left(0, \bT \otimes \bI_p/p\right)$. 
We apply \eqref{inner-product-difference-inequality} identifying $\ba = \mathsf{prox}[\lambda \rho_p](\bbeta_0 + \tau\bz_1) - \bbeta_0$, $\bb = \mathsf{prox}[\lambda \rho_p](\bbeta_0 + \tau\bz_2) - \bbeta_0$, $\ba' = \mathsf{prox}[\lambda' \rho_p](\bbeta_0 + \tau\bz_1) - \bbeta_0$, and $\bb' = \mathsf{prox}[\lambda' \rho_p](\bbeta_0 + \tau\bz_2) - \bbeta_0$.
Using \ref{prox-error-bounded-above} and \eqref{prox-pseudo-lipschitz-in-lambda} to bound $(*)$ and $(**)$ respectively, we get
\begin{align}
\left|\langle \ba, \bb\rangle - \langle \ba',\bb'\rangle\right| &\leq \underbrace{C\left(1 + \|\bbeta_0\| + \|\bz_1\|\vee \|\bz_2\|\right)}_{\text{bound on $(*)$}} \cdot \underbrace{C\left(1 + \|\bbeta_0\| + \|\bz_1\|\vee \|\bz_2\| \right) |\lambda - \lambda'|}_{\text{bound on $(**)$}}\nonumber\\
&\leq C\left(1 + \|\bbeta_0\|^2 + \|\bz_1\|^2 + \|\bz_2\|^2\right)|\lambda - \lambda'|.
\end{align}
We have by Jensen's inequality
\begin{align}
    \left|\mathsf{K_{reg,cvx}}(\bT,\lambda,\cT_p) - \mathsf{K_{reg,cvx}}(\bT,\lambda',\cT_p)\right| &= \left|\E_{\bbeta_0,\bz}\left[\langle \ba , \bb \rangle \right] - \E_{\bbeta_0,\bz}[\langle \ba' , \bb' \rangle ]\right|
    \leq \E_{\bbeta_0,\bz}\left[\left|\langle  \ba , \bb \rangle -\langle \ba' , \bb' \rangle \right|\right]\nonumber\\
    &\leq C\E_{\bbeta_0,\bz}\left[\left(1 + \|\bbeta_0\|^2 + \|\bz\|^2\right)\right]|\lambda - \lambda'|
    = C|\lambda - \lambda'|.
\end{align}
This completes the proof.
\end{proof}


\section{Proof of Proposition \ref{prop-strongly-convex-loss}}\label{app-proof-of-prop-strongly-convex-loss}

This argument follows closely that of \cite{Donoho2016HighPassing}. 
In contrast to \cite{Donoho2016HighPassing}, we consider penalized procedures and use non-separable penalties.
Our analysis also establishes the impact of the oracle penalty on the fixed point equations \eqref{fixed-pt-prior-asymptotic}.
We find that using the recent results \cite{Berthier2017StateFunctions} for AMP with non-separable denoisers, their argument goes through.

Throughout the argument, we will frequently (but not always) drop the index $p$ from our notation.
For sequences $\{X_p\}$ and $\{Y_p\}$ of real-valued random variables all defined on the same probability space,
we use the notation $X_p \stackrel{\mathsf{as}}\simeq Y_p$ to denote $|X_p - Y_p| \stackrel{\mathsf{as}} \rightarrow 0$. 

\subsection{Proof of part (i)}

For each $p$, define
\begin{equation}\label{def-L}
L(\bbeta) := \frac1n \|\by - \bX^\mathsf{T}\bbeta\|^2 + \rho_p(\bbeta_0) + \frac\gamma2 \|\bbeta - \bbeta_0\|^2,
\end{equation}
the objective in \eqref{orc-cvx-estimator}).
If $n > p$, $\gamma > 0$, or $\rho_p$ is strongly convex, then $L$ is strongly convex almost surely.
Thus, $\P_{\bX}(\text{solution to \eqref{linear-cvx-estimator} exists and is unique}) = 1$ eventually.
This justifies assuming existence and uniqueness of solutions to \eqref{linear-cvx-estimator} for the remainder of the proof.

We now prove (ii) and (iii), which require substantially more work.

\subsection{Pick a typical sequence of normal vectors}

Without loss of generality, we may assume $p$ is increasing.    
The remainder of the argument will occur conditional on the realization of the sequence of parameters $\{\bbeta_0\}$.
We will be able to carry out all steps under the DSN assumption if the penalties are symmetric,
or almost surely under the RSN. (We will justify this as we go).

We construct a deterministic sequence of vectors $\{\bz^0(p) \in \reals^p\}$ such that for all $\tau' \geq 0$,  
\begin{gather}
\lim_{p \rightarrow \infty} \|\bz^0\|^2 = 1,\nonumber\\
\lim_{p \rightarrow \infty} \|\mathsf{prox}[\lambda \rho_p^{(\gamma)}](\bbeta_0 + \tau \bz^0) - \bbeta_0\|^2 = \delta \tau^2 - \sigma^2,\label{z0-looks-normal}\\
\lim_{p \rightarrow \infty} \E_{\bz} \left[\left\langle \mathsf{prox}[\lambda \rho_p^{(\gamma)}](\bbeta_0 + \tau \bz^0) - \bbeta_0, \mathsf{prox}[\lambda \rho_p^{(\gamma)}](\bbeta_0 + \tau' \bz) - \bbeta_0 \right \rangle \right] = \mathsf{K^\infty_{reg,cvx}}\left(\bT_{\tau'},\lambda,\cT\right),\nonumber
\end{gather}
where $\bz \sim \mathsf{N}(\bzero,\bI_p/p)$ and $\bT_{\tau'} := \begin{pmatrix} \tau^2 & 0 \\ 0 & \tau'^2 \end{pmatrix}$. 
Such a sequence exists because if we draw $\bz^0(p) \sim \mathsf{N}(\bzero,\bI_p/p)$ independently across $p$,
then $\{\bz^0\}$ satisfies the required properties (simultaneously over $\tau'$) almost surely, as we now show.

For such random $\bz^0$,
by Gaussian Lipschitz concentration (Lemma \ref{lem-gaussian-lipschitz-concentration}), we have
\begin{equation}
    \P_{\bz_0}\left(\left| \left\| \bz^0 \right\| - \E_{\bz^0}\left[ \left\| \bz^0 \right\| \right] \right| > t/p^{1/4}\right) \leq 2 e^{-\frac{p^{1/2}}2 t^2}.
\end{equation}
Because the right-hand side is summable over $p$ (recall we assume $p$ is increasing), we have by Borel-Cantelli that $\left \|  \bz^0 \right \| \stackrel{\mathsf{as}}\simeq \E_{\bz^0} \left[\left\|\bz^0\right\|\right] \rightarrow 1$.
Thus, the first identity of \eqref{z0-looks-normal} holds almost surely.

For the remaining two identities, first note that by 
by \eqref{oracle-prox-noise-reduction-form}, 
the second and third identities of \eqref{z0-looks-normal} are equivalent to
\begin{gather*}
    \lim_{p \rightarrow \infty} \|\mathsf{prox}[\lambdaorc \rho_p](\bbeta_0 + \tauorc \bz^0) - \bbeta_0\|^2 = \delta \tau^2 - \sigma^2,\\
    \lim_{p \rightarrow \infty} \E_{\bz} \left[\left\langle \mathsf{prox}[\lambdaorc \rho_p](\bbeta_0 + \tauorc \bz^0) - \bbeta_0, \mathsf{prox}[\lambdaorc \rho_p](\bbeta_0 + \tauorc' \bz) - \bbeta_0 \right \rangle \right] = \mathsf{K^\infty_{reg,cvx}}\left(\bT_{\tau'},\lambda,\cT\right).
\end{gather*}
If the $\rho_n$ are symmetric, $\pi$ has finite second moments, and the $\rho_n$ are symmetric,
then by Lemmas \ref{lem-integrands-uniformly-pseudo-lipschitz} and \ref{lem-pseudo-lipschitz-empirical-to-expectation} and the symmetry of $\rho_p$, 
under the DSN assumption, for all $\tau' \geq 0,$ $\lambda' \geq 0$, $\bT \succeq 0$ fixed,
\begin{gather}
\mathsf{R^\infty_{reg,cvx}}(\tau',\lambda',\cT_p) = \lim_{p \rightarrow \infty}\E_{\bz}\left[\|\mathsf{prox}[\lambda' \rho_p](\bbeta_0 + \tau' \bz) - \bbeta_0\|^2\right],\nonumber\\
\mathsf{W^\infty_{reg,cvx}}(\tau',\lambda',\cT_p) = \lim_{p\rightarrow \infty}\frac1{\tau'}\E_{\bz}\left[\langle \bz, \mathsf{prox}[\lambda'\rho_p](\bbeta_0+\tau'\bz)\rangle\right],\\
\mathsf{K^\infty_{reg,cvx}}(\bT,\lambda',\cT_p) = \lim_{p\rightarrow \infty}\E_{\bz_1,\bz_2}\left[\left\langle\mathsf{prox}\left[\lambda' \rho_p\right]\left(\bbeta_0 + \bz_1\right)-\bbeta_0,\mathsf{prox}\left[\lambda'\rho_p\right]\left(\bbeta_0 + \bz_2\right)-\bbeta_0\right\rangle\right],\nonumber
\end{gather} 
(Note the expectations are only over the Gaussian random vectors and $\bbeta_0$ is fixed).
If the $\rho_n$ are not necessarily symmetric, then under the RSN assumption, the previous display holds almost surely with respect to the realization of $\{\bbeta_0\}$ by Lemma \ref{lem-emp-to-exp-non-symm}.

Because $f_p(\bx;\tauorc) := \mathsf{prox}[\lambda \rho_p](\bbeta_0 + \tauorc \bx) - \bbeta_0$ is $\tauorc$-Lipschitz by \eqref{prox-is-lipschitz}, we have by Gaussian Lipschitz concentration (Lemma \ref{lem-gaussian-lipschitz-concentration}) and Borel-Cantelli that
\begin{equation}\label{B4-as-convergence}
\left\|f_p(\bz^0;\tauorc)\right\| \stackrel{\mathsf{as}}\simeq \E_{\bz^0}\left[\left\|f_p(\bz^0;\tauorc)\right\|\right].
\end{equation}     
Now observe by Jensen's inequality that $
\E_{\bz^0}\left[\left\|f_p(\bz^0;\tauorc)\right\|^2\right] \geq \E_{\bz^0}\left[\left\|f_p(\bz^0;\tauorc)\right\|\right]^2$.
By assumption, the left-hand side and hence the right-hand side is bounded.
By exponential concentration of $\left\|f_p(\bz^0;\tauorc)\right\|$, 
we conclude that $\left\|f_p(\bz^0;\tauorc)\right\|^2$ is uniformly integrable.
Because it concentrates on $\E_{\bz^0}\left[\left\|f_p(\bz^0;\tauorc)\right\|\right]^2$,
we have
\begin{align}\label{first-moment-converges}
\lim_{p \rightarrow \infty} \E_{\bz^0}\left[\left\|f_p(\bz^0;\tauorc)\right\|\right]^2 = \lim_{p \rightarrow \infty}\E_{\bz^0}\left[\left\|f_p(\bz^0;\tauorc)\right\|^2\right] = \delta \tau^2 - \sigma^2.
\end{align}
Then by \eqref{B4-as-convergence}, $\left\|f_p(\bz^0;\tauorc)\right\|^2 \stackrel{\mathrm{as}}\rightarrow \delta \tau^2 - \sigma^2$.
Thus, the second identity of \eqref{z0-looks-normal} hold almost surely.

We now show that almost surely, the third identity \eqref{z0-looks-normal} for all $\tau' \geq 0$. 
Recall by strong stationarity that the limit \eqref{summary-functions-fixed-asymptotic} holds for $\bT_{\tau'} = \begin{pmatrix} \tauorc^2 & 0 \\ 0 & \tau'^2 \end{pmatrix}$ for all $\tau' \geq 0$.         
Now fix a particular $\tau' \geq 0$.
Let $h_p(\bx;\tau') = \E_{\bz}[\langle f_p(\bx;\tauorc), f_p(\bz;\tau')\rangle]$, where $\bz \sim \mathsf{N}(\bzero,\bI_p/p)$.
By Cauchy-Schwartz and because $f_p$ is Lipschitz,
\begin{align}
    \left|h_p(\bx_1;\tau') - h_p(\bx_2;\tau')\right|
    &\leq \tauorc \E_{\bz}\left[\left\|f_p(\bz;\tau')\right\|\right]\|\bx_1 - \bx_2\|.\label{hp-lipschitz}
\end{align} 
By \eqref{first-moment-converges} and \eqref{prox-is-lipschitz}, we have $\E_{\bz}[\left\|f_p(\bz;\tau')\right\|] \leq \E_{\bz}[\left\|f_p(\bz;\tauorc)\right\|] + |\tauorc - \tau'| \E_{\bz}[\left\|\bz\right\|] \leq  \E_{\bz}[\left\|f_p(\bz;\tauorc)\right\|] + |\tauorc - \tau'| $ is bounded in $p$.
Thus, for a fixed $\tau'$,  inequality \eqref{hp-lipschitz} gives that $h_p(\cdot;\tau')$ is uniformly (over $p$) Lipschitz.
Then, applying Lemma \ref{lem-gaussian-lipschitz-concentration} and Borel-Cantelli in the same way we did to establish \eqref{B4-as-convergence}, we have (using also \eqref{k-finite-p-def} and condition \eqref{summary-functions-fixed-asymptotic} of strong stationarity)
\begin{equation}
        h_p(\bz^0;\tau') \stackrel{\mathsf{as}}\simeq \E_{\bz^0}[h_p(\bz^0)] = \mathsf{K_{reg,cvx}}(\bT',\lambda,\cT_p) \rightarrow \mathsf{K^\infty_{reg,cvx}}\left(\bT_{\tau'},\lambda,\cT\right).
\end{equation}
This establishes the results for fixed $\tau' > 0$.
We may extend to all of $\reals_+$ by considering a countable dense subset of $\reals_+$ and using continuity of $h_p$ in $\tau'$.
    
In summary, we have proved that if $\bz^0 \sim \mathsf{N}(0,\bI_p/p)$ for all $p$, then almost surely the limits \eqref{z0-looks-normal} hold simultaneously for all $\tau' \geq 0$.
Therefore, we may choose a deterministic sequence $\{\bz^0\}$ such that these limits all hold.
     
\subsection{The Approximate Message Passing (AMP) iteration}

Let $\{\bz^0\}$ be a deterministic sequence of vectors satisfying limits \eqref{z0-looks-normal} for all $\tau' \geq 0$, as permitted by the previous section.
For each $p$, define the sequence $\{\widehat \bbeta^t\}_{t \geq 0}$ via the following iteration. 
Define
\begin{equation}\label{eqdef-b}
\mathsf{b} = 1 - \frac1{2\lambda},
\end{equation}
and for $t \geq 0$
\begin{subequations}\label{M-AMP}
\begin{gather}
\br^t = \frac{\by - \bX \widehat \bbeta^{t}}{n} + \mathsf{b} \br^{t-1},\label{M-AMP-1}\\
\widehat \bbeta^{t+1} = \mathsf{prox}[\lambda \rho_p^{(\gamma)}]\left(\widehat \bbeta^t + \bX^\mathsf{T}\br^{t}\right),\label{M-AMP-2}\\
\widehat \bbeta^0 = \mathsf{prox}[\lambda \rho_p^{(\gamma)}]\left(\bbeta_0 + \tau \bz^0\right) \quad \text{and} \quad \br^{-1} = \bzero.\label{M-AMP-initialization}
\end{gather}
\end{subequations}
This iteration is an approximate message passing (AMP) algorithm. Several papers (see \cite{Berthier2017StateFunctions} and references therein) precisely characterize the iterates of such algorithms in the $p\rightarrow \infty$ limit, as we will see in Appendix \ref{app-relating-AMP-and-SE} below. 
Further, they satisfy certain identities relating them to $\rho_p$ and $L$.
First, by \eqref{prox-to-rho-subgradient} and \eqref{M-AMP-2}, we have for $t \geq 0$ that
\begin{equation}\label{subgradient-1}
    \widehat \bbeta^t + \bX^\mathsf{T}\br^t - \widehat \bbeta^{t+1} \in \lambda \partial  \rho_p^{(\gamma)}
    \left(\widehat \bbeta^{t+1}\right).
\end{equation}
Second, by \eqref{M-AMP-1},
\begin{equation}\label{subgradient-2}
    \nabla_{\bbeta}\left( \frac1n \|\by - \bX\bbeta\|^2\right)\Big|_{\bbeta = \widehat \bbeta^{t+1}} = \frac2n\bX^\mathsf{T}\left(\bX \widehat \bbeta^{t+1} - \by\right) = 2\bX^\mathsf{T}\left(\mathsf{b}\br^t - \br^{t+1}\right).
\end{equation}
Combining \eqref{subgradient-1} and \eqref{subgradient-2} with \eqref{def-L},
\begin{align}\label{amp-to-subgradient}
    \partial L\left(\widehat \bbeta^{t+1}\right) \ni  2\bX^\mathsf{T}&\left(\mathsf{b} \br^t - \br^{t+1}\right) + \frac{\widehat \bbeta^t + \bX^\mathsf{T}\br^t - \widehat \bbeta^{t+1} }\lambda \nonumber\\
     &=2\mathsf{b}\bX^\mathsf{T}\left( \br^t - \br^{t+1}\right) + \frac{(\widehat \bbeta^t + \bX^\mathsf{T}\br^t) - (\widehat \bbeta^{t+1} + \bX^\mathsf{T}\br^{t+1}) }\lambda,
\end{align}
where in the equality we have used that $\frac1\lambda = 2(1 - \mathsf{b})$.
If $L$ is $\kappa$-strong convex for some $\kappa > 0$ and $\bg \in \partial L \left(\widehat \bbeta^{t+1}\right)$, then for any $\bbeta \in \reals^p$
\begin{equation*}
    L(\bbeta) \geq L\left(\widehat \bbeta^{t+1}\right) + \left \langle \bg , \bbeta - \widehat \bbeta^{t+1} \right\rangle + \frac\kappa 2 \left\|\bbeta - \widehat \bbeta^{t+1}\right\|^2 \geq L\left(\widehat \bbeta^{t+1}\right) - \left \| \bg \right\| \left\|  \bbeta - \widehat \bbeta^{t+1} \right\| + \frac\kappa 2 \left\|\bbeta - \widehat \bbeta^{t+1}\right\|^2.
\end{equation*}
Because $L(\widehat \bbeta_{\mathsf{cvx}}) \leq L(\widehat \bbeta^{t+1})$ by \eqref{linear-cvx-estimator}, we have
\begin{equation}\label{strong-convexity-distance-to-minimizer}
\left\|\widehat \bbeta_{\mathsf{cvx}} - \widehat \bbeta^{t+1}\right\|\leq\frac{2\|\bg\|}{\kappa}\, .
\end{equation}
Combining \eqref{amp-to-subgradient} and \eqref{strong-convexity-distance-to-minimizer}, we have that if $L$ is $\kappa$-strongly convex, then
\begin{equation}\label{iterates-to-minimizer-strong-convexity-bound}
    \left\|\widehat \bbeta_{\mathsf{cvx}} - \widehat \bbeta^{t+1}\right\| \leq \frac2\kappa\left(2\mathsf{b}\left\|\bX^\mathsf{T}\right\|_{\mathsf{op}}\left\| \br^t - \br^{t+1}\right\| + \frac{\left\|(\widehat \bbeta^t + \bX^\mathsf{T}\br^t) - (\widehat \bbeta^{t+1} + \bX^\mathsf{T}\br^{t+1})\right\| }{\lambda}\right).
\end{equation}
Inequality \eqref{iterates-to-minimizer-strong-convexity-bound} allows us to control the distance of the iterates $\widehat \bbeta^t$ from the minimizer $\widehat \bbeta_{\mathsf{cvx}}$ of $L$ in terms of the rate at which the iterates are changing and the strong convexity parameter of $L$. 
We will control this distance in the $p \rightarrow \infty$, fixed $t$ asymptotic regime by controlling the terms on the right-hand side of \eqref{iterates-to-minimizer-strong-convexity-bound}. 

\subsection{The state evolution}\label{app-the-state-evolution}

We now study a certain scalar iteration which, in the following sections, will allow us to characterize the $p\rightarrow\infty$, fixed $t$ behavior of the AMP iteration \eqref{M-AMP}.
For $q \in [0,1]$, define $\bQ_q = \begin{pmatrix} \tauorc^2 & q \tauorc^2 \\
q\tauorc^2 & \tauorc^2 \end{pmatrix}$, and observe that $\bQ_q \succeq \bzero$.
Define $\Psi:[0,1] \rightarrow \reals$ by
\begin{equation}\label{correlation-recursion}
\Psi(q) = \frac1{\delta\tau^2}\left(\sigma^2 + \mathsf{K^\infty_{reg,cvx}}\left(\bQ_q,\lambdaorc,\cT\right)\right).
\end{equation}
Because $\tau,\lambda,\gamma,\delta,\cT$ is strongly stationary, $\Psi(q)$ is well-defined for all $q \in [0,1]$ (recall  strong stationarity requires the limit \eqref{summary-functions-fixed-asymptotic} exist for all $\bT \in S_+^2$). 
Define the doubly-infinite symmetric matrix $Q = (q_{ij})_{i,j = 1}^\infty$ via the following scalar iteration, referred to as the \emph{state evolution}:
\begin{subequations}\label{full-se}
\begin{gather}
q_{1,1} = 1,\, q_{1,i} = q_{i,1} = 0 \text{ for } i > 1,\label{full-se-1}\\
q_{s+1,t+1} = \Psi(q_{s,t}).\label{full-se-2}
\end{gather}
\end{subequations}
In order for \eqref{full-se-2} to make sense, we must verify that $q_{s,t} \in [0,1]$ for all $s,t \geq 1$. 
By induction, it will suffice to show that $\Psi(q) \in [0,1]$ for all $q \in [0,1]$. 
In preparation for what is to come later in the proof, we will in fact show more. 

\begin{lemma}\label{lem-state-evolution-properties}
For any sequence of symmetric convex function $\rho_p$,
\begin{subequations}\label{q-recursion-properties}
\begin{gather}
\Psi(1) = 1,\label{q-recursion-property-1}\\
 \text{$\Psi(q)$ \emph{ is non-decreasing and convex for } $q \in [0,1]$},\label{q-recursion-property-2}\\
\text{$\Psi(q) \geq 1 - \frac1{(\lambda \gamma + 1) \vee \delta} (1 - q)$ \emph{for} $q \in [0,1]$.}\label{q-recursion-property-3}
\end{gather}
\end{subequations}
\end{lemma}

\begin{proof}[Proof of properties \eqref{q-recursion-property-1}, \eqref{q-recursion-property-2} of Lemma \ref{lem-state-evolution-properties}]
    By \eqref{r-finite-p-def}, \eqref{k-finite-p-def}, and \eqref{summary-functions-fixed-asymptotic},
\begin{equation}
    \mathsf{K^\infty_{reg,cvx}}\left(\tauorc^2\bI_2,\lambdaorc,\cT\right) = \mathsf{R^\infty_{reg,cvx}}(\tauorc,\lambdaorc,\cT).\label{k-to-r-identity}
\end{equation}
Because $\tau,\lambda,\gamma,\delta,\cT$ is strongly stationary, \eqref{fixed-pt-prior-asymptotic}, \eqref{correlation-recursion}, and \eqref{k-to-r-identity} imply \eqref{q-recursion-property-1}.

For each $p$, define $\Psi_p :[0,1] \rightarrow \reals$ by
\begin{equation}\label{correlation-recursion-finite-p}
\Psi_p(q) = \frac1{\delta\tau^2}\left(\sigma^2 + \mathsf{K_{reg,cvx}}\left(\bQ_q,\lambdaorc,\cT_p\right)\right),
\end{equation}
where $\cT_p$ is related to $\cT$ in the obvious way.
By strong stationarity condition \eqref{summary-functions-fixed-asymptotic}, $\Psi(q) = \lim_{p \rightarrow \infty} \Psi_p(q)$ for every $q \in [0,1]$. 
It is straightforward to see that properties \eqref{q-recursion-property-2}, \eqref{q-recursion-property-3} will hold if we can establish 
\begin{subequations}\label{q-recursion-finite-p-properties}
    \begin{gather}
        \text{for all $p$, $\Psi_p(q) $ is increasing in $q$ for $q \in [0,1]$,}\label{q-recursion-finite-p-property-1}\\
        \text{for all $p$, $\Psi_p(q) $ is convex in $q$ for $q \in [0,1]$,}\label{q-recursion-finite-p-property-2}\\
        \text{$\limsup_{p \rightarrow \infty} \Psi_p'(1) \leq \frac1{(\lambda \gamma + 1)\vee \delta}$.}\label{q-recursion-finite-p-property-3}
    \end{gather}
\end{subequations}

To show \eqref{q-recursion-finite-p-property-1}, \eqref{q-recursion-finite-p-property-2} we extend the argument of \cite[Lemma 6.9]{Donoho2016HighPassing} and \cite[Lemma C.1]{BayatiMontanariLASSO} to multivariate maps.
Fix $p$. Define $f:\reals^p \rightarrow \reals^p,\,\bx \mapsto \frac1{\sqrt \delta \tau}\left(\mathsf{prox}\left[\lambdaorc \rho_p\right](\bbeta_0 + \tauorc \bx) - \bbeta_0 \right)$.
Let $\{\bz_t\}_{t \geq 0}$ be the $p$-dimensional Ornstein-Uhlenbeck process with mean $\bzero$ and covariance $\E\left[\bz_s\bz_t^\mathsf{T}\right] = e^{-|t-s|}\bI_p/p$. 
By \eqref{correlation-recursion-finite-p} and \eqref{k-finite-p-def}, we may write $\Psi_p(q) = \frac{\sigma^2}{\delta \tau^2} + \E_{\bz_0,\bz_t}[\langle f(\bz_0), f(\bz_t)\rangle]$ for $t = \log(1/q)$.
Denoting the $i^\text{th}$ component of $f$ by $f_i$, we have the spectral representation
\begin{align*}
    f_i(\bx) = \sum_{\bk \in \integers_{\geq 0}^p} c_{i \bk} \prod_{j=1}^p \phi_{k_j}(x_j),
\end{align*}
where for each $k \geq 0$ we have $\phi_{k}$ is the eigenvector of the generator of the univariate Ornstein-Uhlenbeck process corresponding to eigenvalue $k$ \cite[p.~134]{Gardiner1985HandbookScience}. The equality is in $L_2$ with respect to base measure $\left(\frac{p}{2\pi}\right)^{p/2}e^{-\frac{p}{2}\|\bx\|^2}d\bx$.
Then,
\begin{align*}
    \Psi_p(q) &= \sum_{i=1}^p \E_{\bz_0,\bz_t}[f_i(\bz_0)f_i(\bz_t)] = \sum_{i=1}^p\E_{\bz_0}[f_i(\bz_0) \E_{\bz_0,\bz_t}[f_i(\bz_t)|\bz_0]] \\
    &= \sum_{\bk \in \integers^p_{\geq 0}} c_{i\bk} \E_{\bz_0}\left[f_i(\bz_0) \prod_{j=1}^p \phi_{k_j}(z_{0j}) e^{-k_jt}\right] = \sum_{i=1}^p \sum_{\bk \in \integers_{\geq 0}^p} c_{i\bk}^2e^{-\left(\sum_{j=1}^p k_j\right)t}\\
    & = \sum_{i=1}^p \sum_{\bk \in \integers_{\geq 0}^p} c_{i\bk}^2q^{\sum_{j=1}^p k_j},
\end{align*}
whence \eqref{q-recursion-finite-p-property-1}, \eqref{q-recursion-finite-p-property-2} follow.
\end{proof}

\noindent The proof of property \eqref{q-recursion-property-3} of Lemma \ref{lem-state-evolution-properties} requires the following technical lemma.

\begin{lemma}\label{lem-derivative-at-q-1}
    If $h :\reals^p \rightarrow \reals$ is Lipschitz and $\bz_1,\bz_2 \sim \mathsf{N}(\bzero,\bI_p/p)$ are independent, then
    $$
    \frac{\mathrm{d}}{\mathrm{d}q} \E_{\bz_1,\bz_2}\left[h\left(\bz_1\right)h\left(q\, \bz_1 + \sqrt{1-q^2}\bz_2\right)\right] \Big|_{q = 1}= -\frac1p\sum_{j=1}^p\E\left[ (\partial_j h(\bz))^2\right],
    $$
    (where the derivative on the left-hand side is a left-derivative, and the derivatives on the right-hand side exist almost everywhere by \cite[Theorem 3.2]{Evans2015MeasureFunctions}).
\end{lemma}

\begin{proof}
    This is an elementary fact, so we only sketch the proof idea.
    Denote by $F(q)$ the expectation on the left hand side, and $\bg_1:= (\bz_1+(q\, \bz_1 + \sqrt{1-q^2}\bz_2))/2$,
    $\bg_2= (\bz_1-(q\, \bz_1 + \sqrt{1-q^2}\bz_2))/2$ assuming $\nabla^2h$  bounded, Taylor's expansion implies
    \begin{align*}
      2(F(0)-F(q)) &= 4\, \E\{\<\nabla h(\bg_1),\bg_2\>^2\} +O(\E\{\|\bg_2\|^4_2\})
      =  \frac{2}{p}\E\{\|\nabla h(\bg_1)\|^2\} (1-q) +O((1-q)^2)\, .
    \end{align*}
    and the claim follows by dominated convergence.
This is extended to  general Lipschitz $h$ by a routine approximation argument.
  \end{proof}

\noindent We are now ready to prove property \eqref{q-recursion-property-3} of Lemma \ref{lem-state-evolution-properties}.

\begin{proof}[Proof of property \eqref{q-recursion-property-3} of Lemma \ref{lem-state-evolution-properties}]
As in the proof of properties \eqref{q-recursion-property-1}, \eqref{q-recursion-property-2}, define $f:\reals^p \rightarrow \reals^p$ the function which maps $\bx \mapsto \frac1{\sqrt \delta \tau}\left( \mathsf{prox}\left[\lambda \rho_p\right](\bbeta_0 + \tau \bx) - \bbeta_0 \right)$ and let $f_i$ be its $i$th coordinate.
Applying Lemma \ref{lem-derivative-at-q-1} to $h = f_i$ and summing over $i$, we have
\begin{align}
\Psi_p'(1) 
    &= 
    \frac1{p(\lambda\gamma+1)^2}\E_{\bz}[\|\mathrm{D}f(\bz)\|_{\mathsf{F}}^2] 
    = \frac1{\delta p (\lambda\gamma+1)^2 } \E_{\bz}\left[\left\|\mathrm{D}\,\mathsf{prox}\left[\lambdaorc\rho_p](\bbeta_0 + \tauorc \bz)\right]\right\|_{\mathsf{F}}^2\right]\nonumber\\
    &\leq 
    \frac1{\delta p (\lambda\gamma+1)^2 }\E_{\bz}\left[\left\|\mathrm{D}\, \mathsf{prox}\left[\lambdaorc \rho_p\right](\bbeta_0 + \tauorc \bz)\right\|_{\mathsf{op}}\left\|\mathrm{D}\, \mathsf{prox}\left[\lambdaorc \rho_p\right](\bbeta_0 + \tauorc\bz)\right\|_{\mathsf{nuc}}\right]\nonumber\\
    &\leq 
    \frac1{(\lambda \gamma + 1)^2}\frac1{\delta p} \E_{\bz}\left[ \left\|\mathrm{D}\,\mathsf{prox}\left[\lambdaorc \rho_p\right](\bbeta_0 + \tauorc \bz)\right\|_{\mathsf{nuc}}\right]\nonumber\\
    &=
    \frac1{(\lambda \gamma + 1)^2}\frac1{\delta p} \E_{\bz}\left[\mathsf{div}\, \mathsf{prox}\left[\lambdaorc \rho_p\right](\bbeta_0 + \tauorc \bz)\right]\nonumber\\
    &= 
    \frac1{(\lambda \gamma + 1)^2}\cdot \frac{1}{\delta }\mathsf{W_{reg,cvx}}(\tauorc,\lambdaorc,\cT_p).\label{correlation-recursion-derivative-at-1}\
\end{align}
In the first equality, we have used that $\tauorc^2/\tau^2 = 1/(\lambda\gamma+1)^2$.
In the first inequality, we have used that the operator and nuclear norms are dual with respect to the matrix inner product $\langle \bA, \bB \rangle = \Tr(\bA^\mathsf{T}\bB)$, which induces the Frobenius norm.
In the second inequality, we have applied \eqref{prox-jacobian-bound}.
In the second-to-last line we have used that $\|\mathrm{D}\, \mathsf{prox}[\lambda \rho_p](\bbeta_0 + \tau \bz)\|_{\mathsf{nuc}} = \mathsf{div}\,\mathsf{prox}[\lambda \rho_p](\bbeta_0 + \tau \bz)$ because all eigenvalues of $\mathrm{D}\,\mathsf{prox}[\lambda \rho_p](\bbeta_0 + \tau \bz)$ are non-negative by \eqref{prox-jacobian-non-negative-definite}. 
In the last equality, we have used \eqref{w-finite-p-def} and \eqref{prox-gaussian-IBP}.
Because $\tau,\lambda,\gamma,\delta,\cT$ is a strongly stationary quadruplet, by \eqref{fixed-pt-prior-asymptotic} we have $\lim_{p \rightarrow \infty} \frac{1}{\delta (\lambda\gamma+1)}\mathsf{W_{reg,cvx}}(\tauorc,\lambdaorc,\cT_p) < 1$, whence $\limsup_{p \rightarrow \infty} \Psi_p'(1) \leq \frac1{\lambda \gamma + 1}$. Further, by \eqref{w-finite-p-def} and \eqref{prox-width-bound}, we have $\mathsf{W_{reg,cvx}}(\tauorc,\lambdaorc,\cT) \leq 1$, whence we also have $\limsup_{p \rightarrow \infty} \Psi_p'(1) \leq \frac1{\delta}$. Inequality \eqref{q-recursion-finite-p-property-3} follows.
\end{proof}

\noindent 
We are ready to verify that the recursion \eqref{correlation-recursion}, \eqref{full-se} makes sense and establish some of its properties.
By \eqref{q-recursion-property-1} and \eqref{q-recursion-property-2}, we have $\Psi(q) \leq 1$ for $q \in [0,1]$, and by \eqref{q-recursion-property-3}, we have $\Psi(q) \geq 0$ for $q \in [0,1]$. 
Then, inductively we have $q_{s,t} \in [0,1]$ for all $s,t$, so that \eqref{full-se} makes sense.
Further, by \eqref{q-recursion-property-3}, we have for all $t \geq 1$ that $1 - q_{t+1,t+2} = 1 - \Psi(q_{t,t+1}) \leq \frac{1}{(\lambda \gamma + 1) \vee \delta}(1 - q_{t,t+1})$, so that inductively, with base case $1 - q_{1,2} = 1$, we have 
\begin{equation}\label{one-minus-q-goes-to-0}
    1 - q_{t,t+1} \leq \left( \frac{1}{(\lambda \gamma + 1) \vee \delta}\right)^{t-1}.
\end{equation} 
If either $\lambda \gamma > 0$ or $\delta > 1$, we have
\begin{equation}\label{q-goes-to-1}
    q_{t,t+1} \xrightarrow[t \rightarrow \infty]{} 1.
\end{equation} 
Further, by \eqref{full-se} and \eqref{q-recursion-property-1}, we get for all $t \geq 1$,
\begin{equation}\label{q-tt-is-1}
    q_{t,t} = 1.
\end{equation}

\subsection{Relating AMP and state evolution}\label{app-relating-AMP-and-SE}
We will show that for $t \geq 2$,
\begin{subequations}\label{iterates-convergence}
\begin{gather}
    \lim^{\mathrm{p}}_{p \rightarrow \infty} \sqrt{n}\left\| \br^t - \br^{t+1}\right\| = \sqrt{2(1 - q_{t+1,t+2})}\tau,\label{residual-iterates-convergence}\\
    \lim^{\mathrm{p}}_{p \rightarrow \infty} \left\|(\widehat \bbeta^t + \bX^\mathsf{T}\br^t) - (\widehat \bbeta^{t+1} + \bX^\mathsf{T}\br^{t+1})\right\|  = \sqrt{2(1 - q_{t+1,t+2})}\tau.\label{signal-iterates-convergence}
\end{gather}
\end{subequations}
These identities are a consequence of the characterization of the AMP iteration proved in \cite{Berthier2017StateFunctions}, as we now describe. 
The authors of \cite{Berthier2017StateFunctions} study a more general AMP iteration given by
\begin{align}\label{AMP}
    \bv^t &= \frac1{\sqrt n} \bX e_t(\bu^t) - \mathsf{\hat b}_tg_{t-1}(\bv^{t-1}),
    & \bu^{t+1} &= \frac1{\sqrt n}\bX^\mathsf{T} g_t(\bv^t) - \mathsf{\hat d}_t e_t(\bu^t),
\end{align}
with initialization given by deterministic vector 
\begin{equation}\label{AMP-initialization}
    \bu^0 \in \reals^p\quad\text{and}\quad g_{-1}(\cdot) = \bzero,
\end{equation}
and for each $t\geq0$ the functions $e_t:\reals^p \rightarrow \reals$ and $g_t:\reals^n\rightarrow \reals^n$ are uniformly (in $p$) pseudo-Lipschitz of order 1 (a.k.a.\ uniformly Lipschitz).
In \cite{Berthier2017StateFunctions}, iteration \eqref{AMP} is written with respect to a random matrix $\bA$ with entries $A_{ij}\stackrel{\mathrm{iid}}\sim \mathsf{N}(0,1/n)$.
We have replaced this with $\bX/\sqrt n$, which is distributed in this way.
Theorem 1 and Corollary 2 of \cite{Berthier2017StateFunctions} give that certain functionals of the iterates in \eqref{AMP} converge in probability to deterministic constants given by a scalar iteration called state evolution, of which the iteration in Appendix \ref{app-the-state-evolution} is, as we will see, a special case.
In particular, we will show that iteration \eqref{M-AMP} is a special case of the iteration \eqref{AMP}, the scalar recursion \eqref{correlation-recursion} and \eqref{full-se} is the corresponding state evolution, and the limits \eqref{iterates-convergence} are the result of Theorem 1 and Corollary 2 of \cite{Berthier2017StateFunctions}  applied to particular functions.\footnote{In fact, most of this task has already been carried out by Theorem 14 of \cite{Berthier2017StateFunctions}.
Unfortunately, Theorem 14 of \cite{Berthier2017StateFunctions} uses a different initialization than \eqref{M-AMP-initialization} and does not address limits of the form  \eqref{iterates-convergence}. 
Thus, Theorem 14 gives us almost what we need, but not quite. 
To conclude  \eqref{iterates-convergence}, we perform the required change of variables and apply their more general theorem on the iteration \eqref{AMP} ourselves.} To avoid confusion with corollaries which appear in this paper, we will refer to Corollary 2 of \cite{Berthier2017StateFunctions} as Corollary SE.

Iteration \eqref{M-AMP} is equivalent to iteration \eqref{AMP} under the following change of variables. 
\begin{equation}\label{COV}
\begin{aligned}
    \bv^t &= \sqrt{p/n}\bw - \sqrt{np}\br^t, & 
    \bu^{t+1} &= \sqrt{p}\left(\bbeta_0 - \left(\bX^\mathsf{T}\br^t + \widehat \bbeta^t\right)\right),\\
    e_t(\bu) &= \sqrt{p}\big(\mathsf{prox}[\lambda \rho_p^{(\gamma)}]\left(\bbeta_0 - \bu/\sqrt{p}\right) - \bbeta_0\big), \quad t \geq 0, & 
    g_t(\bv) &= \bv - \sqrt{p/n}\bw, \; t \geq 0,\\
    \bu^0 &= \sqrt{p} \tau \bz^0, & 
    \mathsf{\hat b}_t &= -\mathsf{b} \quad\text{and}\quad \mathsf{\hat d}_t = 1.
\end{aligned}
\end{equation}

Due to their different choice of normalization, the authors of \cite{Berthier2017StateFunctions} use a slightly different notion of a collection of functions' being uniformly pseudo-Lipschitz of order $k$ than used in this paper. 
In particular, for them a collection of functions $\{\varphi: (\reals^p)^{\ell} \rightarrow \reals^m\}$, where $p$ and $m$ but not $\ell$ may vary, is uniformly pseudo-Lipschitz of order $k$ if for all $\varphi$ and $\bx_i,\by_i \in \reals^p,\,i=1,\ldots,\ell$, we have
\begin{equation}\label{berthier-pseudo-lipschitz}
\frac{\|\varphi(\bx_1,\ldots,\bx_\ell) - \varphi(\by_1,\ldots,\by_\ell)\|}{\sqrt m} \leq C \left(1 + \sum_{i=1}^\ell  \left(\frac{\|\bx_i\|}{\sqrt p}\right)^{k-1} + \sum_{i=1}^\ell \left(\frac{\|\by_i\|}{\sqrt p}\right)^{k-1}\right)\sum_{i=1}^\ell \frac{\|\bx_i - \by_i\|}{\sqrt p},
 \end{equation}
 for some $C$ which does not depend on $p,m$.
 We will refer to their notion as \cite{Berthier2017StateFunctions}-uniformly pseudo-Lipschitz of order $k$.
 It is exactly equivalent to our notion under a change of normalization.
 In particular, the following claim is easy to check.
 \begin{claim}\label{claim-pseudo-lipschitz-change-of-normalization}
 A collection of functions $\{\varphi\}$ is uniformly pseudo-Lipschitz of order $k$ if and only if the collection of functions $\{\tilde \varphi\}$ defined by $\tilde\varphi(\bx_1,\ldots,\bx_\ell) = \sqrt m \varphi\left(\bx_1/\sqrt p,\ldots,\bx_\ell/\sqrt p\right)$ is \cite{Berthier2017StateFunctions}-uniformly pseudo-Lipschitz of order $k$. 
 \end{claim}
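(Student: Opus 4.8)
\textbf{Proof proposal for Claim \ref{claim-pseudo-lipschitz-change-of-normalization}.}

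The plan is to verify the equivalence directly from the two definitions by plugging in the scaling relations and tracking factors of $\sqrt{p}$ and $\sqrt{m}$. First I would fix a collection $\{\varphi\}$ of functions $\varphi:(\reals^p)^\ell \to \reals^m$ and define the associated rescaled collection $\tilde\varphi(\bx_1,\ldots,\bx_\ell) = \sqrt{m}\,\varphi(\bx_1/\sqrt p,\ldots,\bx_\ell/\sqrt p)$, noting that the map $\varphi \mapsto \tilde\varphi$ is a bijection on collections indexed by $(p,m)$ (its inverse is $\tilde\varphi \mapsto \varphi$ with $\varphi(\bx_1,\ldots,\bx_\ell) = m^{-1/2}\tilde\varphi(\sqrt p\,\bx_1,\ldots,\sqrt p\,\bx_\ell)$), so it suffices to prove one implication.

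Next I would substitute. Suppose $\{\varphi\}$ is uniformly pseudo-Lipschitz of order $k$ in the sense of Definition \ref{eqdef-uniformly-pseudo-lipschitz}, with constant $C$. Given $\bx_i,\by_i \in \reals^p$, apply \eqref{def-uniformly-pseudo-lipschitz} at the points $\bx_i/\sqrt p$ and $\by_i/\sqrt p$:
\begin{equation*}
\left\|\varphi\left(\tfrac{\bx_1}{\sqrt p},\ldots,\tfrac{\bx_\ell}{\sqrt p}\right) - \varphi\left(\tfrac{\by_1}{\sqrt p},\ldots,\tfrac{\by_\ell}{\sqrt p}\right)\right\| \leq C\left(1 + \sum_{i=1}^\ell \left\|\tfrac{\bx_i}{\sqrt p}\right\|^{k-1} + \sum_{i=1}^\ell \left\|\tfrac{\by_i}{\sqrt p}\right\|^{k-1}\right)\sum_{i=1}^\ell \left\|\tfrac{\bx_i - \by_i}{\sqrt p}\right\|.
\end{equation*}
Multiplying both sides by $\sqrt m$ and recognizing the left-hand side as $\|\tilde\varphi(\bx_1,\ldots,\bx_\ell) - \tilde\varphi(\by_1,\ldots,\by_\ell)\|$, then dividing by $\sqrt m$ on the right where it was introduced (equivalently, rearranging so that $\sqrt m$ appears only in the denominator on the left as in \eqref{berthier-pseudo-lipschitz}), yields exactly the inequality \eqref{berthier-pseudo-lipschitz} with the same constant $C$: the $\|\bx_i\|/\sqrt p$ terms appear because $\|\bx_i/\sqrt p\| = \|\bx_i\|/\sqrt p$, and the $\|\bx_i - \by_i\|/\sqrt p$ factor appears identically. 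The reverse implication is the same computation run backwards, using the inverse rescaling; since $C$ is preserved in both directions and neither $C$ depends on $p$ or $m$ in either formulation, the uniformity is preserved.

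I do not expect any genuine obstacle here: the claim is purely bookkeeping, and the only thing to be careful about is matching the placement of $\sqrt m$ (it divides the left-hand side in \cite{Berthier2017StateFunctions}'s convention but multiplies nothing extra on the right) and confirming that the constant $C$ truly transfers unchanged, so that ``uniformly'' (i.e., $C$ independent of $p,m$) holds on one side iff it holds on the other. I would state the argument in a couple of sentences and leave the symmetric reverse direction to the reader.
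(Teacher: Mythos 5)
Your verification is correct and is exactly the routine bookkeeping the paper intends: the paper states the claim without proof ("easy to check"), and the direct substitution $\|\tilde\varphi(\bx)-\tilde\varphi(\by)\|/\sqrt m = \|\varphi(\bx/\sqrt p)-\varphi(\by/\sqrt p)\|$ together with $\|\bx_i/\sqrt p\| = \|\bx_i\|/\sqrt p$ transfers the inequality with the same constant $C$, with the converse following by the inverse rescaling. Nothing is missing.
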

\noindent  This will allow us to translate their conditions and results into our normalization. 
Corollary SE requires six assumptions on the iteration \eqref{AMP}, which the authors label (B1) - (B6). 
In our setting, assumption (B1) holds by assumption; assumption (B2) holds by \eqref{COV}, \eqref{prox-is-lipschitz}, and inspection;
assumption (B3), (B4), and (B5) hold by \eqref{COV}, \eqref{z0-looks-normal}, and the HDA assumption.
Assumption (B6) holds by \eqref{COV}, strong stationarity (i.e.\ definition \eqref{k-finite-p-def} and the existence of the limit \eqref{summary-functions-fixed-asymptotic}) and the proximal operator identity \eqref{oracle-prox-noise-reduction-form}, the HDA assumption $n/p \rightarrow \delta$, and the DSN assumption $\|\bw\|^2/n \rightarrow \sigma^2$. 

Finally, the authors of \cite{Berthier2017StateFunctions} require that 
\begin{equation}\label{correction-factor}
    \mathsf{\hat d}_t \stackrel{\mathrm p}\simeq \frac1n \E_{\bz}\left[\mathrm{div}\, g_t(\Sigma_{t,t}\sqrt n \bz)\right],\quad \mathsf{\hat b}_t \stackrel{\mathrm{p}}\simeq \frac1n \E_{\bz}\left[\mathrm{div}\, e_t(T_{t,t} \sqrt p \bz)\right],
\end{equation}
where $\Sigma_{t,t}$ and $T_{t,t}$ are deterministic scalars which we now define.
The authors of \cite{Berthier2017StateFunctions} define the  double infinite arrays $(\Sigma_{s,t})_{s,t\geq 0}$ and  $(T_{s,t})_{s,t\geq1}$ through the recursion
\begin{subequations}\label{AMP-se}
\begin{gather}
    T_{s+1,t+1} = \lim_{p\rightarrow \infty}\frac1n\E_{\bz_1,\bz_2}\left[\left\langle g_s\left(\sqrt n\bz_1\right),g_t\left(\sqrt n\bz_2\right)\right\rangle\right],\label{AMP-se-1}\\
    \Sigma_{s,t} = \lim_{p \rightarrow \infty} \frac1n \E_{\bz_1,\bz_2}\left[\left\langle e_s\left(\sqrt p \bz_1\right),e_t\left(\sqrt p\bz_2\right)\right\rangle \right],\label{AMP-se-2}\\
    \Sigma_{0,0} = \lim_{p \rightarrow \infty} \frac1n \|e_0(\bu^0)\|^2, \quad \Sigma_{0,i} = \Sigma_{i,0} = 0 \text{ for } i \geq 1,\label{AMP-se-initialization}
\end{gather}
\end{subequations}
where in \eqref{AMP-se-1} we take $(\bz_1,\bz_2) \sim \mathsf{N}\left(\bzero,\begin{pmatrix} \Sigma_{s,s} & \Sigma_{s,t} \\ \Sigma_{t,s} & \Sigma_{t,t} \end{pmatrix} \otimes \bI_n/n \right)$ and in \eqref{AMP-se-2} we take $(\bz_1,\bz_2) \sim \mathsf{N}\left(\bzero,\begin{pmatrix} T_{s,s} & T_{s,t} \\ T_{t,s} & T_{t,t} \end{pmatrix} \otimes \bI_p/p \right)$.
We claim that for all $s,t \geq 1$,
\begin{equation}\label{T-to-q}
    T_{s,t} = q_{s,t}\tau^2.
\end{equation}
We establish this inductively.
By \eqref{COV}, \eqref{z0-looks-normal}, and HDA assumption $n/p \rightarrow \delta$, we have
\begin{align}
    \Sigma_{0,0} &=  \lim_{p \rightarrow \infty} \frac1n\|e_0(\bu^0)\|^2= \lim_{p \rightarrow \infty} \frac pn  \|\mathsf{prox}[\lambda \rho_p^{(\gamma)}](\bbeta_0 + \tau \bz^0) - \bbeta_0\|^2 = \tau^2 - \frac1\delta \sigma^2.\label{Sigma-00}
\end{align}
Moreover, for any $s,t \geq 0$, we have by \eqref{COV}, \eqref{AMP-se-1}, the HDA assumption $n/p \rightarrow \delta$, and the DSN assumption $\|\bw\|^2/n \rightarrow \sigma^2$, that 
\begin{align}\label{T-recursion}
    T_{s+1,t+1} &= \lim_{p\rightarrow \infty}\frac1n\E_{\bz_1,\bz_2}\left[\left\langle \sqrt n \bz_1 - \sqrt{p/n} \bw, \sqrt n \bz_2 - \sqrt{p/n} \bw\right\rangle\right] = \frac1\delta \sigma^2 +  \Sigma_{s,t}.
\end{align}
By \eqref{Sigma-00} and \eqref{T-recursion}, we have $T_{1,1} = \tau^2$, the base case.
Now assume \eqref{T-to-q} holds for all $1\leq s,t\leq l$.
Fix $1\leq s,t\leq l$.
By  \eqref{COV}, \eqref{AMP-se-2}, strong stationarity definition \eqref{k-finite-p-def} and condition \eqref{summary-functions-fixed-asymptotic}, and HDA assumption $n/p \rightarrow \delta$, we have
\begin{align}
    T_{s+1,t+1} &= \frac1\delta \sigma^2 + \lim_{p \rightarrow \infty} \frac pn \E_{\bz_1,\bz_2}\left[\langle \mathsf{prox}[\lambda \rho_p^{(\gamma)}](\bbeta_0 - \bz_1)-\bbeta_0,\mathsf{prox}[\lambda \rho_p^{(\gamma)}](\bbeta_0 - \bz_2) - \bbeta_0\rangle \right]\nonumber\\
    &= \frac1\delta\sigma^2 + \frac1\delta \mathsf{K^\infty_{reg,cvx}}\left(\bQ_{q_{s,t}},\lambdaorc,\cT\right)
    = \Psi(q_{s,t})\tau^2 
    = q_{s+1,t+1}\tau^2,\label{T-recursion-full-calculation}
\end{align}
where we have used in the second equality the HDA assumption $n/p \rightarrow \delta$, the inductive hypothesis that $(\bz_1,\bz_2) \sim \mathsf{N}\left(\bzero , \begin{pmatrix} T_{s,s} & T_{s,t} \\ T_{t,s} & T_{t,t}  \end{pmatrix}\otimes \bI_p / p \right) = \mathsf{N}(\bzero,\bQ_{s,t} \otimes \bI_p/p)$, and the oracle proximal identity \eqref{oracle-prox-noise-reduction-form}; in the third equality, we have used definition \eqref{correlation-recursion}; and in the last equality, we have used \eqref{full-se-2}. 
This confirms the inductive step.
Thus, state evolution \eqref{correlation-recursion}, \eqref{full-se-2} exactly corresponds to the state evolution \eqref{AMP-se}, \eqref{AMP-se-initialization} of \cite{Berthier2017StateFunctions}.

Now we are able to verify assumptions \eqref{correction-factor}, which are the final assumptions the authors of \cite{Berthier2017StateFunctions} require for Corollary SE.
By \eqref{COV}, we see that $\mathrm{div}\,g_t = n$, so that again by \eqref{COV} we see the first identity in \eqref{correction-factor} holds with equality even in finite samples.
By \eqref{T-to-q} and \eqref{q-tt-is-1}, we have $T_{t,t} = \tau^2$ for all $t \geq 1$.
The second identity in \eqref{correction-factor} holds because 
\begin{align}
    \frac1n \E_{\bz}\left[(\mathrm{div}\, e_t)(\tau \sqrt p \bz)\right] &= -\frac{1}{n} \E_{\bz}\left[(\mathrm{div}\, \mathsf{prox}[\lambda \rho_p^{(\gamma)}])(\bbeta_0 - \tau  \bz)\right] 
    = 
    \frac{p}{\tauorc (\lambda\gamma+1) n}\E_{\bz}[\langle \bz , \mathsf{prox}[\lambdaorc \rho_p](\bbeta_0 - \tauorc  \bz) \rangle ]\nonumber\\
    &\stackrel{\mathsf{p}}\simeq 
    -\frac1{\delta(\lambda\gamma+1)}\mathsf{W^\infty_{reg,cvx}}(\tauorc,\lambdaorc,\cT)
    = 
    \frac1{2\lambda} - 1 = - \mathsf{b} =  \mathsf{\widehat b}^t,
\end{align}
where in the first equality we have used \eqref{COV}; in the second equality we have used \eqref{prox-gaussian-IBP}; and in the fourth equality we have used strong stationarity condition \eqref{fixed-pt-prior-asymptotic} and  \eqref{eqdef-b}.
The third equality has two distinct justifications, depending on whether we are working under the DSN assumption, or under the RSN assumption conditional on the realization of $\{\bbeta_0\}$.
Under the DSN assumption and if the penalties are symmetric, we have used strong stationarity definition \eqref{w-finite-p-def}, condition \eqref{summary-functions-fixed-asymptotic}, and Lemma \ref{lem-pseudo-lipschitz-empirical-to-expectation}.
Under the RSN assumption and if the penalties are not necessarily symmetric, we have instead used Lemma \ref{lem-emp-to-exp-non-symm}.
Having verified (B1) - (B6) and \eqref{correction-factor}, we have verified all assumptions required to apply Corollary SE.

Finally, we show that Corollary SE implies \eqref{residual-iterates-convergence}, \eqref{signal-iterates-convergence}.
The collection of maps $(\reals^p)^2 \ni (\bx,\bx') \mapsto \frac{\|\bx - \bx'\|}{\sqrt p}$ is \cite{Berthier2017StateFunctions}-uniformly pseudo-Lipschitz of order 1. Thus, Corollary SE gives (because we have verified its assumptions) that $
    \lim_{p \rightarrow \infty} \frac{\|\bv^{t-1} - \bv^t\|}{\sqrt p} = \sqrt{\Sigma_{t-1,t-1} + \Sigma_{t,t} - 2\Sigma_{t-1,t}}$ and $
    \lim_{p \rightarrow \infty} \frac{\left\|\bu^{t+1} - \bu^{t+2}\right\|}{\sqrt p} = \sqrt{T_{t+1,t+1} + T_{t+2,t+2} - 2T_{t+1,t+2}}$
in probability. 
Under the change of variables \eqref{COV} and using \eqref{q-tt-is-1}, \eqref{T-to-q}, and \eqref{T-recursion}, we get \eqref{residual-iterates-convergence}, \eqref{signal-iterates-convergence}.

\subsection{Relating AMP and convex optimization}

We now complete the proof of parts (ii) and (iii) of Proposition \ref{prop-strongly-convex-loss}.
Observe that by \eqref{COV} and \eqref{prox-is-lipschitz},
\begin{align}
    \frac{\|e_t(\bzero) \|^2}{p} &= \|\mathsf{prox}[\lambda \rho_p^{(\gamma)}](\bbeta_0) - \bbeta_0\|^2 \leq \left(\|\mathsf{prox}[\lambda \rho_p^{(\gamma)}](\bbeta_0 - \tau \bz) - \bbeta_0\| + \tau \|\bz\|\right)^2\nonumber\\
    &\leq 2\|\mathsf{prox}[\lambda \rho_p^{(\gamma)}](\bbeta_0 - \tau \bz) - \bbeta_0\|^2 + 2\tau^2 \|\bz\|^2.
\end{align}
Considering $\bz \sim \mathsf{N}(\bzero,\bI_p/p)$, taking expectations on both sides, and using \eqref{r-finite-p-def} and \eqref{summary-functions-fixed-asymptotic}, we get that $\frac{\|e_t(\bzero)\|}{\sqrt p}$ is bounded.
Moreover, by \eqref{prox-is-lipschitz} and \eqref{COV}, we have $\bx \mapsto \frac{e_t(\sqrt p \bx)}{\sqrt p}$ is uniformly (over $p$) pseudo-Lipschitz of order 1. 
By these two facts, Lemma \ref{lem-pseudo-lipschitz-closed-under-composition} gives that $\bx \mapsto \frac{\|e_t(\sqrt p \bx)\|^2}{p}$ is uniformly pseudo-Lipschitz of order 2. 
By Claim \ref{claim-pseudo-lipschitz-change-of-normalization}, we have $\bu \mapsto \frac{\|e_t(\bu)\|^2}{p}$ is \cite{Berthier2017StateFunctions}-uniformly pseudo-Lipschitz of order 2.
Thus, by \eqref{M-AMP-2}, \eqref{COV}, Corollary SE, oracle proximal identity \eqref{oracle-prox-noise-reduction-form}, and strong stationarity condition \eqref{fixed-pt-prior-asymptotic}, we have
\begin{align}\label{beta-hat-t-loss}
    \left\|\widehat \bbeta^{t+1} - \bbeta_0\right\|^2 &= \frac{\left\|e_t(\bu^{t+1})\right\|^2}{p} \stackrel{\mathrm{p}}\simeq \E_{\bz} \left[\frac{\left\|e_t(\tau\sqrt p \bz)\right\|^2}{p}\right]\stackrel{\mathrm{p}}\simeq \mathsf{R^\infty_{reg,cvx}}(\tauorc,\lambdaorc,\cT).
\end{align}
By the triangle inequality,
\begin{align}
    &\left|\left\|\widehat \bbeta_{\mathsf{cvx}} - \bbeta_0\right\|- \sqrt{\mathsf{R^\infty_{reg,cvx}}(\tauorc,\lambdaorc,\cT)}\right| \nonumber\\
        &\qquad\qquad\qquad  \leq \left\|\widehat \bbeta^{t+1} - \widehat \bbeta_{\mathsf{cvx}}\right\| + \left|\left\|\widehat \bbeta^{t+1} - \bbeta_0\right\| - \sqrt{\mathsf{R^\infty_{reg,cvx}}(\tauorc,\lambdaorc,\cT)}\right|.\nonumber
\end{align}
By \eqref{beta-hat-t-loss} and  \eqref{limsup-p-sums}, we have for fixed $t$
\begin{align}\label{pre-t-post-p-limit}
&\limsup_{p \rightarrow \infty}^{\mathrm{p}} \left|\left\|\widehat \bbeta_{\mathsf{cvx}} - \bbeta_0\right\| - \sqrt{\mathsf{R^\infty_{reg,cvx}}(\tauorc,\lambdaorc,\cT)}\right| \leq \limsup_{p \rightarrow \infty}^{\mathrm{p}}\left\|\widehat \bbeta^{t+1} - \widehat \bbeta_{\mathsf{cvx}}\right\|\nonumber\\
&\qquad\leq  \limsup_{p \rightarrow \infty}^{\mathrm{p}} \frac2\kappa\left(2\mathsf{b}\left\|\bX\right\|_{\mathsf{op}}\left\| \br^t - \br^{t+1}\right\| + \frac{\left\|(\widehat \bbeta^t + \bX^\mathsf{T}\br^t) - (\widehat \bbeta^{t+1} + \bX^\mathsf{T}\br^{t+1})\right\| }{\lambda}\right),
\end{align}
where $\kappa > 0$ is such that with probability going to 1 as $p \rightarrow \infty$, we have $L$ is $\kappa$ strongly convex and in the second inequality, we have used \eqref{iterates-to-minimizer-strong-convexity-bound}. .
Such a $\kappa$ exists whenever $\delta > 0$, $\gamma > 0$, or $\{\rho_p\}$ has positive uniform strong convexity parameter.
By \eqref{q-goes-to-1}, \eqref{residual-iterates-convergence}, and \eqref{limsup-p-products}, we have
\begin{gather}\label{p-then-t-residuals-to-zero}
    \lim_{t \rightarrow \infty} \limsup_{p \rightarrow \infty}^{\mathrm{p}} \left\|\bX\right\|_{\mathsf{op}}\left\| \br^t - \br^{t+1}\right\| \leq \lim_{t \rightarrow \infty} \limsup_{p \rightarrow \infty}^{\mathrm{p}} \frac{\|\bX\|_{\mathsf{op}}}{\sqrt n} \sqrt{2(1-q_{t+1,t+2})} \tau = 0,
\end{gather}
where we have used that $\limsup\limits _{p \rightarrow \infty}^{\mathrm{p}}\|\bX\|_{\mathsf{op}}/\sqrt n < \infty$ (see \cite[Theorem 5.31]{Vershynin2012IntroductionMatrices}).
Similarly, by \eqref{q-goes-to-1} and \eqref{signal-iterates-convergence}, we have
\begin{gather}\label{p-then-t-signals-to-zero}
    \lim_{t \rightarrow \infty} \lim_{p \rightarrow \infty}^{\mathrm{p}} \frac{\left\|(\widehat \bbeta^t + \bX^\mathsf{T}\br^t) - (\widehat \bbeta^{t+1} + \bX^\mathsf{T}\br^{t+1})\right\| }{\lambda} = \lim_{t \rightarrow \infty} \frac{\sqrt{2(1-q_{t+1,t+2})}\tau }{\lambda} = 0.
\end{gather}
We conclude
\begin{equation}\label{p-then-t-iterates-to-estimates}
    \lim_{t \rightarrow \infty} \limsup_{p \rightarrow \infty}^{\mathrm{p}} \left\|\widehat \bbeta^{t+1} - \widehat \bbeta_{\mathsf{cvx}}\right\|  = 0,
\end{equation}
whence again by \eqref{pre-t-post-p-limit}
\begin{equation}\label{p-then-t-estimates-to-beta-0}
    \lim_{t \rightarrow \infty} \limsup_{p \rightarrow \infty}^{\mathrm{p}} \left|\left\|\widehat \bbeta_{\mathsf{cvx}} - \bbeta_0\right\| - \sqrt{\mathsf{R^\infty_{reg,cvx}}(\tauorc,\lambdaorc,\cT)}\right| = 0.
\end{equation}
Thus, \eqref{converging-sequence-l2-loss} holds, as desired. This complete the proof of part (ii) of Proposition \ref{prop-strongly-convex-loss}.

Now we complete the proof of part (iii) of Proposition \ref{prop-strongly-convex-loss}. 
Take $\varphi_p$ as given in part (iii).
By the DSN assumption, $\widehat \pi_{\bbeta_0} \stackrel{\mathrm{W}}\rightarrow \pi \in \cP_2(\reals)$, we have $\|\bbeta_0\|$ is bounded (over $p$).
Thus, by Lemmas \ref{lem-pseudo-lipschitz-closed-under-fixing-arguments} and \ref{lem-pseudo-lipschitz-closed-under-composition}, we have
$
 \psi_p\left(\bx\right) = \varphi_p(\bbeta_0,\bbeta_0  - \bx),
$
is uniformly pseudo-Lipschitz of order $k$.
Then by Claim \ref{claim-pseudo-lipschitz-change-of-normalization}
\begin{align}
\tilde \varphi_p(\bu) = \psi_p(\bu/\sqrt p)
\end{align}
is \cite{Berthier2017StateFunctions}-uniformly pseudo-Lipschitz of order $k$.
Corollary SE and \eqref{COV} then gives 
\begin{align}\label{arbitrary-loss-applied-to-iterates}
\varphi_p\left(\bbeta_0,\widehat \bbeta^t + \bX^{\mathsf{T}}\br^t \right) &= \varphi_p\left(\bbeta_0,\bbeta_0 - \bu^{t+1}/\sqrt p\right)= \psi_p\left(\bu^{t+1}/\sqrt p\right) = \tilde \varphi_p\left(\bu^{t+1}\right)\nonumber\\
& \stackrel{\mathrm{p}}\simeq \E_{\bz}\left[\tilde \varphi_p(\tau \sqrt p \bz)\right] = \E_{\bz}\left[\varphi_p(\bbeta_0,\bbeta_0 - \tau \bz)\right].
\end{align}
By \eqref{M-AMP-1}, we have
\begin{equation}
\frac{\by - \bX \widehat \bbeta_{\mathsf{cvx}}}{(1 - \mathsf{b})n} = \frac{\by - \bX \widehat \bbeta^t}{(1-\mathsf{b})n} + \frac{\bX(\widehat \bbeta^t - \widehat \bbeta_{\mathsf{cvx}})}{(1-\mathsf{b})n} = \br^t + \frac{\mathsf{b}}{1 - \mathsf{b}}(\br^t - \br^{t-1}) +   \frac{\bX(\widehat \bbeta^t - \widehat \bbeta_{\mathsf{cvx}})}{(1-\mathsf{b})n}.
\end{equation}
Some algebra and the triangle inequality gives
\begin{align}
\Bigg\|\underbrace{\left(\widehat \bbeta^t + \bX^{\mathsf{T}} \br^t\right)}_{:= \ba^t} &- \underbrace{\left(\widehat \bbeta_{\mathsf{cvx}} + \frac{\bX^{\mathsf{T}}(\by - \bX\widehat \bbeta_{\mathsf{cvx}})}{(1 - \mathsf{b})n}\right)}_{:\bb^t}\Bigg\| = \left\|\widehat \bbeta^t - \widehat \bbeta_{\mathsf{cvx}} - \bX^\mathsf{T}\left(\frac{\mathsf{b}}{1 - \mathsf{b}}(\br^t - \br^{t-1}) + \frac{\bX(\widehat \bbeta^t - \widehat \bbeta_{\mathsf{cvx}})}{(1 - \mathsf{b})n}\right) \right|\nonumber\\
&\leq \left\|\widehat \bbeta^t - \widehat \bbeta_{\mathsf{cvx}}\right\| + \|\bX\|_{\mathsf{op}} \left\|\frac{\mathsf{b}}{1 - \mathsf{b}}(\br^t - \br^{t-1}) + \frac{\bX(\widehat \bbeta^t - \widehat \bbeta_{\mathsf{cvx}})}{(1 - \mathsf{b})n}\right\|\nonumber\\
& \leq \left(1 + \frac{\|\bX\|_{\mathsf{op}}^2}{(1 - \mathsf{b})n}\right)\left\|\widehat \bbeta^t - \widehat \bbeta_{\mathsf{cvx}} \right\| +\frac{ \|\bX\|_{\mathsf{op}} \mathsf{b}}{1 - \mathsf{b}} \|\br^t - \br^{t-1}\|,
\end{align}
where we have defined $\ba^t,\bb^t$ for future reference.
Now combining \eqref{p-then-t-residuals-to-zero}, \eqref{p-then-t-iterates-to-estimates}, and $\limsup\limits _{p \rightarrow \infty}^{\mathrm{p}}\|\bX\|_{\mathsf{op}}/\sqrt n < \infty$ (see \cite[Theorem 5.31]{Vershynin2012IntroductionMatrices}) using \eqref{limsup-p-sums} and \eqref{limsup-p-products}, we get 
\begin{gather}\label{a-close-to-b}
\lim_{t \rightarrow \infty} \limsup_{p \rightarrow \infty}^\mathrm{p} \|\ba^t - \bb^t\| = 0.
\end{gather}
In the remainder of the argument, we let $C$ be a constant which does not depend on $p$ or $t$ but which may change at each appearance.
By \eqref{COV}, for each $t$
\begin{align}\label{a-not-too-big}
\|\ba^t\| = \|\bbeta_0 - \bu^{t+1}/\sqrt{p}\| \leq \|\bbeta_0\| + \|\bu^{t+1}\|/\sqrt{p} \stackrel{\mathrm{p}}\simeq \|\bbeta_0\| +  \tau\E_{\bz}[\|\bz\|] \stackrel{\mathrm{p}}\simeq s_2^{1/2}(\pi) + \tau,
\end{align}
where for each $p$ we let $\bz \sim \mathsf{N}(\bzero,\bI_p/p)$, and
in the first probabilistic equality we have used Corollary SE and that $\bu \mapsto \|\bu\|/\sqrt{p}$ is \cite{Berthier2017StateFunctions}-uniformly pseudo-Lipschitz of order 1, and in the second probabilistic equality we have used the DSN assumption \eqref{DSN-assumption}.
Because $\|\bb^t\| \leq \|\ba^t\| + \|\ba^t - \bb^t\|$, by \eqref{a-close-to-b}, \eqref{a-not-too-big}, and \eqref{limsup-p-sums},  we have for every $t$ that
\begin{align}\label{b-not-too-big}
\limsup_{p \rightarrow \infty}^{\mathrm{p}} \|\bb^t\| \leq s_2^{1/2}(\pi) + \tau.
\end{align}
Combining \eqref{a-not-too-big} and \eqref{b-not-too-big} using \eqref{limsup-p-sums}, we have for each $t$
\begin{equation}\label{a-b-pseudo-lipschitz-prefactor-finite}
    \limsup_{p \rightarrow \infty}^{\mathrm{p}} \left(1 + \|\bbeta_0\|^{k-1} + \|\ba^t \|^{k-1} + \|\bb^t\|^{k-1} \right) \leq  C.
\end{equation}
Because the $\varphi_p$ are uniformly pseudo-Lipschitz of order $k$,
\begin{align}
    \lim_{t \rightarrow \infty} \limsup_{p \rightarrow \infty}^{\mathrm{p}} &\left|\varphi_p\left(\bbeta_0,\ba^t \right) - \varphi_p\left(\bbeta_0,\bb^t \right)\right|
    \leq C\lim_{t \rightarrow \infty} \limsup_{p \rightarrow \infty}^{\mathrm{p}} \left(1 + \|\bbeta_0\|^{k-1} + \|\ba^t\|^{k-1} + \|\bb^t\|^{k-1}\right)\|\ba^t - \bb^t\|\nonumber\\
    & \leq C \lim_{t \rightarrow \infty} \limsup_{p \rightarrow \infty}^{\mathrm{p}} \|\ba^t - \bb^t\| = 0,\label{phi-a-close-to-phi-b}
\end{align}
where in the first inequality we have used Definition \ref{def-uniformly-pseudo-lipschitz}, in the second inequality we have used \eqref{limsup-p-products} and \eqref{a-b-pseudo-lipschitz-prefactor-finite}, and in the equality we have used \eqref{a-close-to-b}.
By \eqref{arbitrary-loss-applied-to-iterates}, \eqref{phi-a-close-to-phi-b}, and the triangle inequality, 
\begin{align}
\left|\varphi_p(\bbeta_0,\bb^t) - \E_{\bz}[\varphi(\bbeta_0,\bbeta_0 - \tau \bz)]\right| &\leq \left|\varphi_p(\bbeta_0,\bb^t) - \varphi_p(\bbeta_0,\ba^t)\right| + \left|\varphi_p(\bbeta_0,\ba^t) - \E_{\bz}[\varphi(\bbeta_0,\bbeta_0 - \tau \bz)]\right| \nonumber\\
&\stackrel{\mathrm{p}}\rightarrow 0.
\end{align}
Plugging in for $\bb^t$ yields \eqref{converging-sequence-pl-loss}, as desired.

Thus, we have shown part (iii) and completed the proof of Proposition \ref{prop-strongly-convex-loss} \hfill $\square$

\section{Proof of Theorem \ref{thm-cvx-lower-bound}}\label{sec-proof-of-thm-cvx-lower-bound}

The main technical challenge is that exact asymptotics for the estimation error of penalized least squares estimators rely on several  technical assumptions we would like to avoid.
We summarize the main technical hurdles below.
\begin{enumerate}

    \item \textbf{$\delta > 1$ or strong convexity.} 
    One set of technical assumptions under which exact asymptotics can be established in full generality is that either $\delta > 1$ or the penalties are strong-convexity. 
    Proposition \ref{prop-strongly-convex-loss} leverages this fact in establishing exact asymptotics for oracle estimators when either $\delta > 1$, $\gamma > 0$, or $\rho_p$ is uniformly strongly convex.
    To establish our lower bound when $\delta \leq 1$ and $\rho_p$ need not be uniformly strongly convex, we construct an oracle estimator with oracle parameter $\gamma > 0$ which improves the estimation error of the original estimator.
    Its exact characterization then provides a lower bound on the estimation error of the original estimator.

    \item \textbf{Strong stationarity.} 
    Proposition \ref{prop-strongly-convex-loss} requires the limits \eqref{summary-functions-fixed-asymptotic} exist and satisfy \eqref{fixed-pt-prior-asymptotic} (i.e., strong stationarity), but the $\delta$-bounded width assumption $\{\rho_p\} \in \cC_{\delta,\pi}$ does not require that \eqref{fixed-pt-prior-asymptotic} be satisfied or even that the limits \eqref{summary-functions-fixed-asymptotic} exist.
    To address this, we establish the existence of limits satisfying \eqref{fixed-pt-prior-asymptotic} along certain subsequences of penalties using a compactness argument. 
    This is done in the proof of Lemma \ref{lem-oracle-risk-to-cvx-bound}.

    \item \textbf{Solutions to fixed point equations are appropriately bounded.} 
    We must show the oracle estimator does not have risk which is too small.
    For this we will use the $\delta$-bounded width assumption.
    In fact, this is the only place the $\delta$-bounded width assumption is used in our argument.
    This is done in the proof of Lemma \ref{lem-oracle-risk-to-cvx-bound}.
    For further discussion of the role of the $\delta$-bounded width assumption, see Appendix \ref{sec-role-of-delta-bounded-width-restriction} (though our proof does not use that Appendix).

\end{enumerate}

The proof is organized as follows.
In Section \ref{subsec-bounded-shrinkage-towards-infinity}, we argue that without loss of generality it is enough to consider penalty sequences $\{\rho_p\}$ which satisfy an additional technical assumption.
This technical assumption will be important for the compactness argument mentioned in item 2 above.
In Section \ref{subsec-oracle-bounded-improvement}, we carry out the main technical steps of our proof: defining the oracle estimator and showing that its risk is smaller than the risk of the original estimator but is not much smaller than the convex lower bound \eqref{cvx-lower-bound}. 
The proof of the main technical lemma in that section, Lemma \ref{subsec-oracle-bounded-improvement}, is deferred to Appendix \ref{app-proof-of-lem-oracle-risk-to-cvx-bound}. It is here that the $\delta$-bounded width assumption plays a role.
In Section \ref{subsection-finish-cvx-lower-bound-proof}, we combine the first two parts to finish the proof of the lower bound.
In Section \ref{sec:tightness-cvx-lb}, we show the lower bound is tight when $\delta > 1$.

\subsection{Penalty sequences which do not shrink towards infinity}\label{subsec-bounded-shrinkage-towards-infinity}

The following claim shows that it is enough to prove Theorem \ref{thm-cvx-lower-bound} under the additional assumption that the penalty sequence does not shrink towards infinity (see \eqref{eqdef-bounded-shrinkage-towards-infinity}).
\begin{claim}\label{bounded-minimization-is-enough}
    To show \eqref{cvx-lower-bound} under the conditions of Theorem \ref{thm-cvx-lower-bound}, it is enough to show 
    \begin{equation}\label{cvx-with-bounded-minimization-lower-bound}
        \inf_{\{\rho_p\} \in \cC_{\delta,\pi} \cap \cB}\liminf^{\mathrm{p}}_{p\rightarrow \infty}\|\widehat \bbeta_{\mathsf{cvx}} - \bbeta_0\|^2 \geq \delta\tau^{2}_{\mathsf{reg,cvx}} - \sigma^2,
    \end{equation}
    under the conditions of Theorem \ref{thm-cvx-lower-bound}. (If $\cC_{\delta,\pi} \cap \cB$ is empty, we take the infimum to be infinite).
\end{claim}

\noindent The proof of Claim \ref{bounded-minimization-is-enough} is based on the following lemma. 

\begin{lemma}\label{lem-beta-hat-large-from-prox-zero}
        Fix $\pi \in \cP_2(\reals)$, $\delta \in (0,\infty)$, and $\sigma \geq 0$.
        For a sequence of convex functions $\{\rho_p\}$,
        we have 
        under there exist constants $c_1 >  0$ and $c_2 \geq 0$ depending only on $\pi,\delta,\sigma$ such that
    \begin{equation}\label{beta-hat-large-from-prox-zero}
        \liminf^{\mathrm{p}}_{p \rightarrow \infty} \|\widehat \bbeta_{\mathsf{cvx}} - \bbeta_0\|^2 \geq  \liminf_{p \rightarrow \infty} (c_1\|\mathsf{prox}[\rho_p](\bzero)\| - c_2)^2.
    \end{equation}
    (We use the same convention as in Theorem \ref{thm-cvx-lower-bound} when the minimizing set in \eqref{linear-cvx-estimator} is empty).
\end{lemma}

\begin{proof}[Proof of Lemma \ref{lem-beta-hat-large-from-prox-zero}]
    For each $p$, observe that whenever the minimizing set in \eqref{linear-cvx-estimator} is non-empty, $\frac2n\bX^\mathsf{T}(\by - \bX \widehat \bbeta_{\mathsf{cvx}}) \in \partial \rho_p (\widehat \bbeta_{\mathsf{cvx}})$, whence 
\begin{align*}
&\frac12 \|\bbeta\|^2 + \rho_p(\bbeta) \geq \frac12 \|\bbeta\|^2 + \rho_p(\widehat \bbeta_{\mathsf{cvx}}) + \frac2n (\bbeta - \widehat \bbeta_{\mathsf{cvx}})^\mathsf{T}\bX^\mathsf{T} (\by - \bX \widehat \bbeta_{\mathsf{cvx}}) \\
&\qquad= \frac12 \|\widehat \bbeta_{\mathsf{cvx}}\|^2 +\langle \widehat \bbeta_{\mathsf{cvx}}, \bbeta - \widehat \bbeta_{\mathsf{cvx}} \rangle  + \frac12 \|\bbeta - \widehat \bbeta_{\mathsf{cvx}}\|^2 + \rho_p(\widehat \bbeta_{\mathsf{cvx}})\\
&\qquad\qquad\qquad\qquad\qquad\qquad\qquad\qquad\qquad +\frac2n (\bbeta - \widehat \bbeta_{\mathsf{cvx}})^\mathsf{T}\bX^\mathsf{T} (\by - \bX \widehat \bbeta_{\mathsf{cvx}}) \\
&\qquad\geq \frac12 \|\widehat \bbeta_{\mathsf{cvx}}\|^2 + \rho_p(\widehat \bbeta_{\mathsf{cvx}}) + \left(\frac12 \|\bbeta - \widehat \bbeta_{\mathsf{cvx}}\| - \|\widehat \bbeta_{\mathsf{cvx}}\| - 2\frac{\|\bX\|_{\mathsf{op}}}{\sqrt n} \frac{\|\by-\bX \widehat \bbeta_{\mathsf{cvx}}\|}{\sqrt n}\right)\|\bbeta - \widehat \bbeta_{\mathsf{cvx}}\|\\
&\qquad\geq  \frac12 \|\widehat \bbeta_{\mathsf{cvx}}\|^2 + \rho_p(\widehat \bbeta_{\mathsf{cvx}}) + \left(\frac12 \|\bbeta\|  - \frac32 \|\widehat \bbeta_{\mathsf{cvx}}\| - 2\frac{\|\bX\|_{\mathsf{op}}}{\sqrt n}\frac{\|\by\|}{\sqrt n} - \frac{\|\bX\|_{\mathsf{op}}^2}{n} \|\widehat \bbeta_{\mathsf{cvx}}\|\right)\|\bbeta - \widehat \bbeta_{\mathsf{cvx}}\|.
\end{align*}
By \eqref{sequence-cvx-estimator}, $\frac12 \|\mathsf{prox}[\rho_p](\bzero)\|^2 + \rho(\mathsf{prox}[\rho_p](\bzero)) \leq  \frac12 \|\widehat \bbeta_{\mathsf{cvx}}\|^2 + \rho_p(\widehat \bbeta_{\mathsf{cvx}}) $. Thus, when evaluating the previous display at $\bbeta = \mathsf{prox}[\rho_p](\bzero)$, the expression in parentheses on the right-hand side is non-positive. 
That is,
\begin{align}\label{beta-cvx-shrunk-towards-infty}
    \|\widehat \bbeta_{\mathsf{cvx}}\| &\geq \frac{\frac12 \|\mathsf{prox}[\rho_p](\bzero)\| -2\frac{\|\bX\|_{\mathsf{op}}}{\sqrt n}\frac{\|\by\|}{\sqrt n}}{\frac32 + \frac{\|\bX\|_{\mathsf{op}}^2}{n}} \geq \frac{\frac12 \|\mathsf{prox}[\rho_p](\bzero)\| -2\frac{\|\bX\|_{\mathsf{op}}}{\sqrt n} \frac{\|\bw\| + \|\bX\|_{\mathsf{op}}\|\bbeta_0\|}{\sqrt n}}{\frac32 + \frac{\|\bX\|_{\mathsf{op}}^2}n}.
\end{align}
By \cite[Theorem 5.31]{Vershynin2012IntroductionMatrices} and the HDA assumption, 
\begin{equation}\label{largest-singular-value-convergence}
\|\bX\|_{\mathsf{op}}/\sqrt n  \stackrel{\mathrm{p}}\rightarrow 1 + \sqrt{1/\delta}=: c.
\end{equation}
Then, by the DSN assumption and the Continuous Mapping Theorem, $4\frac{\|\bX\|_{\mathsf{op}}}{\sqrt n} \frac{\|\bw\| + \|\bX\|_{\mathsf{op}}\|\bbeta_0\|}{\sqrt n} \stackrel{\mathrm{p}} \rightarrow 4c(\sigma + cs_2(\pi))$. 
Let $c_1 = \frac{1}{3 + 2 c^2}$ and $c_2 = 4cc_1(\sigma + cs_2(\pi)) + s_2(\pi)$. 
Then by \eqref{beta-cvx-shrunk-towards-infty} and Lemma \ref{lem-limsup-p-identities} from Appendix \ref{app-useful-tools}  whenever the minimizing set in \eqref{linear-cvx-estimator} is non-empty, and the convention $\|\binfty - \bbeta_0\|^2 = \infty$ otherwise, we have \eqref{beta-hat-large-from-prox-zero}.
\end{proof}

\noindent We now establish Claim \ref{bounded-minimization-is-enough}.

\begin{proof}[Proof of Claim \ref{bounded-minimization-is-enough}]
Assume we have shown \eqref{cvx-with-bounded-minimization-lower-bound} under the conditions of Theorem \ref{thm-cvx-lower-bound}.
Now, we assume the conditions of Theorem \ref{thm-cvx-lower-bound} and show the stronger \eqref{cvx-lower-bound}. 

Let $\{p(\ell)\}$ be the subsequence of $\{p\}$ containing exactly those $p$ for which $(c_1 \|\mathsf{prox}[\rho_p](\bzero)\| - c_2)^2 \geq \delta \tau_{\mathsf{reg,cvx}}^2 - \sigma^2 + 1$, and let $\{p'(\ell)\}$ its complement, that is, the subsequence of $\{p\}$ containing exactly those $p$ for which $(c_1 \|\mathsf{prox}[\rho_p](\bzero)\| - c_2)^2 < \delta \tau_{\mathsf{reg,cvx}}^2 - \sigma^2 + 1$.
We permit that one of these subsequences be finite.
It is straightforward to check that
\begin{equation}\label{liminf-split}
\liminf^{\mathrm{p}}_{p\rightarrow \infty}\|\widehat \bbeta_{\mathsf{cvx}}(p) - \bbeta_0\|^2 \geq \min\left\{\liminf^{\mathrm{p}}_{\ell\rightarrow \infty} \|\widehat \bbeta_{\mathsf{cvx}}(p(\ell)) - \bbeta_0\|^2 , \liminf^{\mathrm{p}}_{\ell\rightarrow \infty} \|\widehat \bbeta_{\mathsf{cvx}}(p'(\ell)) - \bbeta_0\|^2 \right\},
\end{equation}
if we adopt the convention that when either of these sequences is finite, the corresponding $\liminf\limits^{\mathrm{p}}$ is $\infty$.
We now check that each expression in the minimum on the right-hand side of \eqref{liminf-split} is bounded below by $\delta \tau_{\mathsf{reg,cvx}}^2 - \sigma^2$.
First, we show that $\liminf\limits^{\mathrm{p}}_{\ell\rightarrow \infty}\|\widehat \bbeta_{\mathsf{cvx}}(p(\ell)) - \bbeta_0\|^2 \geq \delta \tau_{\mathsf{reg,cvx}}^2 - \sigma^2$. If $\{p(\ell)\}$ is finite, there is nothing to check. If $\{p(\ell)\}$ is infinite, then we apply Lemma \ref{lem-beta-hat-large-from-prox-zero}.
Second, we show that $\liminf\limits^{\mathrm{p}}_{\ell\rightarrow \infty} \|\widehat \bbeta_{\mathsf{cvx}}(p'(\ell)) - \bbeta_0\|^2 \geq \delta \tau_{\mathsf{reg,cvx}}^2 - \sigma^2$. If $\{p'(\ell)\}$ is finite, there is nothing to check. If $\{p'(\ell)\}$ is infinite, then $\{\rho_{p'(\ell)}\} \in \cB$ by construction, and we apply the assumption of the claim.
Thus, for all $\{\rho_p\} \in \cC_{\delta,\pi}$, the left-hand side of \eqref{liminf-split} is bounded below by \eqref{cvx-lower-bound}, as desired.
\end{proof}

\subsection{Constructing oracles with not-too-small effective noise}
\label{subsec-oracle-bounded-improvement}

The bulk of the proof of Theorem \ref{thm-cvx-lower-bound} involves constructing a sequence of estimators to which we can apply the exact asymptotics of Proposition \ref{prop-strongly-convex-loss} and whose asymptotic estimation error is not too much smaller than that of the original sequence of estimators.
To do so, we take a subsequence of the $\{\rho_p\}$ and add a small but non-zero oracle term as in \eqref{oracle-penalty}.
The only place in our proof where the $\delta$-bounded width assumption plays a role is in showing that 
small oracle penalties cannot improve the estimation error too much.

\begin{lemma}\label{lem-oracle-risk-to-cvx-bound}
    Consider $\pi \in \cP_2(\reals)$, $\delta \in (0,\infty)$, and $\sigma \geq 0$. 
    Consider an increasing sequence of integers $\{p\}$ and a sequence $\{\rho_p\} \in \cC_{\delta,\pi} \cap \cB$. 
    \begin{enumerate}[(i)]

        \item 
        Consider arbitrary $\delta$ and sequence $\{\rho_p\} \in \cC_{\delta,\pi} \cap \cB$. 
        If $\tau_{\mathsf{lb}} > 0$ is such that $\delta \tau_{\mathsf{lb}}^2 - \sigma^2 < \mathsf{R_{seq,cvx}^{opt}}(\tau_{\mathsf{lb}};\pi)$, 
        then there exists sub-sequence $\{p(\ell)\}$, $\gamma > 0$, $\tau > \tau_{\mathsf{lb}}$, and $\lambda>0$ such that the following is true: with $\cT = (\pi, \{\rho_{p(\ell)}\})$, the quintuplet $\tau,\lambda,\delta,\gamma,\cT$ is strongly stationary.

        \item 
        Consider sequence $\{\rho_p\}$ and $\cT' = (\pi,\{\rho_p\})$.
        If $\delta > 1$,
        and $\tau\geq0$,$\lambda' > 0$ are such that
        $$
            \delta \tau^2 - \sigma^2 = \mathsf{R_{reg,cvx}^\infty}(\tau,\lambda',\cT'),
        $$
        then there exists sub-sequence $\{p(\ell)\}$ and $\lambda>0$ such that the following holds: with $\cT = (\pi,\{\lambda'\rho_p/\lambda\})$, the quintuplet $\tau,\lambda,\delta,\gamma=0,\cT$ is strongly stationary.

    \end{enumerate}

\end{lemma}
\noindent Most of the technical machinery of the proof of Theorem \ref{thm-cvx-lower-bound} is contained in the proof of Lemma \ref{lem-oracle-risk-to-cvx-bound}. The proof of Lemma \ref{lem-oracle-risk-to-cvx-bound} is provided in Appendix \ref{app-proof-of-lem-oracle-risk-to-cvx-bound}. 
In part (i), the reader should have in mind taking $\tau_{\mathsf{lb}}\uparrow \tau_{\mathsf{reg,cvx}}$ and $\varepsilon$ small, so that we may produce strongly stationary quintuplets with $\tau$ not too much smaller than $\tau_{\mathsf{reg,cvx}}$ in the case that $\tau_{\mathsf{reg,cvx}}$ is finite, or diverging in the case that $\tau_{\mathsf{reg,cvx}}$ is infinite.
Part (ii) is only used in establishing tightness of the convex lower bound when $\delta > 1$.

\subsection{Lower bounding the asymptotic loss}
\label{subsection-finish-cvx-lower-bound-proof}

Assume the conditions of Theorem \ref{thm-cvx-lower-bound}.
If $\tau_{\mathsf{reg,cvx}} = 0$, then \eqref{cvx-lower-bound} is trivial. Thus, assume $\tau_{\mathsf{reg,cvx}} > 0$.
We will show that for any $\{\rho_p\} \in \cC_{\delta,\pi} \cap \cB$ and any $\tau_{\mathsf{lb}} > 0$ such that $\delta \tau_{\mathsf{lb}}^2 - \sigma^2 > \mathsf{R_{seq,cvx}^{opt}}(\tau_{\mathsf{lb}};\pi)$,
\begin{equation}\label{cvx-lower-explicit-p-liminf}
    \lim_{p \rightarrow \infty} \P \left(\|\widehat \bbeta_{\mathsf{cvx}} - \bbeta_0 \|^2 < \delta \tau_{\mathsf{lb}}^2 - \sigma^2 \right) = 0.
\end{equation}
We then take $\tau_{\mathsf{lb}}\uparrow \tau_{\mathsf{reg,cvx}}$ such that $\delta \tau_{\mathsf{lb}}^2 - \sigma^2 > \mathsf{R_{seq,cvx}^{opt}}(\tau_{\mathsf{lb}};\pi)$ is satisfied along this sequence, which is permitted by the definition of $\tau_{\mathsf{reg,cvx}}$.
By Claim \ref{bounded-minimization-is-enough}, this is enough.

Assume otherwise. Then for some $\xi > 0$, we may pick a subsequence $\{p(\ell)\}$ such that 
\begin{equation}\label{small-risk-subsequence}
 \P \left( \|\widehat \bbeta_{\mathsf{cvx}}(p(\ell)) - \bbeta_0(p(\ell)) \|^2 < \delta\tau_{\mathsf{lb}}^2 - \sigma^2\right) > \xi
\end{equation}
for all $\ell$.
Observe $\{\rho_{p(\ell)}\} \in \cC_{\delta,\pi} \cap \cB$ because conditions \eqref{delta-bounded-width} and \eqref{eqdef-bounded-shrinkage-towards-infinity} are closed under taking subsequences. 
By Lemma \ref{lem-oracle-risk-to-cvx-bound}(i), we may choose $\gamma > 0$, a further subsequence $\{p'(\ell)\}$, $\tau > \tau_{\mathsf{lb}}$, and $\lambda > 0$ such that, with $\cT = (\pi, \{\rho_{p'(\ell)}^{(\gamma)}\})$, we have that $\tau,\lambda,\gamma,\delta,\cT$ is strongly stationary (here, we have used $\tau_{\mathsf{lb}} > 0$). 
By Proposition \ref{prop-strongly-convex-loss}, we have $\|\widehat \bbeta_{\mathsf{orc}}^{(\gamma)}(p'(\ell)) - \bbeta_0(p'(\ell))\|^2 \xrightarrow[\ell \rightarrow \infty]{\mathrm p} \delta\tau^2 - \sigma^2$, whence
\begin{equation}
    \lim_{\ell \rightarrow \infty} \P\left(\|\widehat \bbeta_{\mathsf{orc}}^{(\gamma)}(p'(\ell)) - \bbeta_0(p'(\ell))\|^2 < \delta\tau_{\mathsf{lb}}^2 - \sigma^2\right) = 0.
\end{equation}
By Lemma \ref{lem-oracle-is-better}, $\|\widehat \bbeta_{\mathsf{orc}}^{(\gamma)}(p'(\ell)) - \bbeta_0(p'(\ell))\|^2 \leq \|\widehat \bbeta_{\mathsf{cvx}}(p'(\ell)) - \bbeta_0(p'(\ell))\|^2$ for all $\ell$ and all realizations of $\bX$, whence 
\begin{equation}
    \lim_{\ell \rightarrow \infty} \P\left(\|\widehat \bbeta_{\mathsf{cvx}}(p'(\ell)) - \bbeta_0(p'(\ell))\|^2 < \delta\tau_{\mathsf{lb}}^2 - \sigma^2\right) = 0,
\end{equation}
contradicting  \eqref{small-risk-subsequence}. We conclude \eqref{cvx-lower-explicit-p-liminf}. 

\subsection{Tightness for $\delta > 1$}\label{sec:tightness-cvx-lb}
Tightness is trivial when both the left and right-hand side of \eqref{cvx-lower-bound} is infinite, so we assume $\tau_{\mathsf{reg,cvx}}^2 < \infty$.
In particular, $\mathsf{R^{opt}_{seq,cvx}}(\tau;\pi)$ is finite for some $\tau$.

Then, by Lemma \ref{lem-r-continuity}, $\mathsf{R^{opt}_{seq,cvx}}(\tau;\pi)$ is finite for all $\tau$ and continuous. 
Thus, by the definition of $\tau_{\mathsf{reg,cvx}}^2$,
we have $\delta \tau_{\mathsf{reg,cvx}}^2 - \sigma^2 = \mathsf{R^{opt}_{seq,cvx}}(\tau_{\mathsf{reg,cvx}};\pi)$.
The infimum in \eqref{R-seq-cvx-opt-asymptotic} can always be achieved by taking a sequence of $\{\{\rho_p^{(k)}\}_p\}_k$ approaching the infimum, and then taking a sequence $\{\rho_p^{(k(p))}\}_p$ where $k(p)$ goes to infinity appropriately as a function of $p$.
By passing to a subsequence, we can assume that the limit infimum is a limit.
Thus, we may assume we have a sequence $\{\rho_p\}_p$ such that
\begin{equation*}
    \delta \tau_{\mathsf{reg,cvx}}^2 - \sigma^2 
        = 
        \lim_{p \rightarrow \infty} \E_{\bbeta_0,\bz}\left[\left\|\mathsf{prox}[\rho_p](\bbeta_0 + \tau \bz) - \bbeta_0\right\|^2\right] = \mathsf{R_{reg,cvx}}(\tau_{\mathsf{reg,cvx}},1,\cT'),
\end{equation*}
where $\cT' = (\pi,\{\rho_p\})$.
Because $\left\|\mathsf{prox}[\rho_p](\bbeta_0 + \tau \bz) - \bbeta_0\right\| \geq \|\mathsf{prox}[\rho_p](\bzero)\| - \tau \|\bz\| - 2 \|\bbeta_0\|$, we may conclude that $\{\rho_p\} \in \cB$.
By Lemma \ref{lem-oracle-risk-to-cvx-bound}(ii),
we may find $\lambda > 0$ and a subsequence $\{p(\ell)\}$ such that $\tau_{\mathsf{reg,cvx}},\lambda,\delta,\gamma=0,\cT=(\pi,\{\rho_{p(\ell)}/\lambda\})$ is strongly stationary. 
By Proposition \ref{prop-strongly-convex-loss},
under the penalty sequence $\{\rho_{p(\ell)}/\lambda\}$ we have
$\|\widehat \bbeta_{\mathsf{cvx}} - \bbeta_0\|^2 \stackrel{\mathrm{p}}\rightarrow \delta \tau_{\mathsf{reg,cvx}}^2 - \sigma^2$.

The proof of Theorem \ref{thm-cvx-lower-bound} is complete. \hfill $\square$


\section{Proof of Lemma \ref{lem-oracle-risk-to-cvx-bound}}\label{app-proof-of-lem-oracle-risk-to-cvx-bound}

Lemma \ref{lem-oracle-risk-to-cvx-bound} contains most of the technical machinery of our proof. 
The argument relies on several lemmas, some of whose proofs are deferred to Appendix \ref{app-proofs-of-app-proof-of-lem-oracle-risk-to-cvx-bound-lemmas}. 

To guide the reader, we first provide a high-level overview of the argument. 

\begin{enumerate}

    \item 
    \emph{Finite sample version of fixed-point equations.}
    We begin by defining finite-sample versions of the fixed point equations \eqref{fixed-pt-prior-asymptotic} (see \eqref{fixed-pt-prior-finite} below).
    The solutions to \eqref{fixed-pt-prior-asymptotic} which we construct will be limits to solutions of \eqref{fixed-pt-prior-finite}.

    \item 
    \emph{Bounds on possible solutions to finite sample fixed-point equations.}
    By comparing the right and left-hand sides of \eqref{se-var-fixed-pt-finite},
    we place an upper (Lemma \ref{lem-oracle-not-too-bad}) and lower (Lemma \ref{lem-oracle-not-too-good}) bound on the noise variance $\tau^2$ at a solution to the finite-sample fixed point equations.
    The lower bound is the only location in our argument where the $\delta$-bounded width assumption is used, corresponding to the statement that the estimation error of the oracle estimator is not too much smaller than the convex lower bound.

    \item 
    \emph{Existence of solutions to finite sample fixed-point equations.}
    Using a topological argument, we show that solutions to the finite sample fixed-point equations must exist (Lemma \ref{lem-se-fixed-pt-existence-non-asymptotic}). 
    Although the Lemma is quite intuitive (having the flavor of a two-dimensional intermediate value theorem), we could not find a statement of the required result in the literature, and the proof is a bit involved.

    \item 
    \emph{From finite-sample fixed points to strongly stationary quintuplets.} 
    Using the existence of and bounds on the solutions to \eqref{fixed-pt-prior-finite}, we apply a compactness arguments to find a subsequence and construct the required strongly stationary quintuplet (see Section \ref{sec:finite-fix-pt-to-strong-stat}).

\end{enumerate}

\subsection{Solutions to finite-sample version of fixed point equations}\label{app-finite-sample-fixed-point-solutions}

We first consider the following finite-sample versions of the fixed point equations \eqref{fixed-pt-prior-asymptotic}:
\begin{subequations}\label{fixed-pt-prior-finite}
\begin{gather}
    \delta \tau^2 - \sigma^2 =  \mathsf{R_{reg,cvx}}(\tau_{\mathsf{orc}},\lambda_{\mathsf{orc}},\cT_p),\label{se-var-fixed-pt-finite}\\
    2\lambda\left(1 - \frac1{\delta(\lambda\gamma+1)}\mathsf{W_{reg,cvx}}(\tau_{\mathsf{orc}},\lambda_{\mathsf{orc}},\cT_p) \right) = 1.\label{se-lam-fixed-pt-finite}
\end{gather}
\end{subequations}
The first step in proving Lemma \ref{lem-oracle-risk-to-cvx-bound} is to establish the existence of solutions to these finite-sample equations and to control their size. This is achieved by the following series of lemmas, whose proofs are provided in Appendix \ref{app-proofs-of-app-proof-of-lem-oracle-risk-to-cvx-bound-lemmas}. 

\begin{lemma}\label{lem-oracle-not-too-bad}
Consider an lsc, proper, convex, function $\rho:\reals^p \rightarrow \reals \cup \{\infty\}$. 
Let $M \geq \|\mathsf{prox}[\rho](\bzero)\| $. 
Let $\cT_p = (\pi,\rho)$.
If $\delta > 1$ or $\gamma > 0$ or $\rho$ is $\kappa$-strongly convex with $\kappa > 0$,
there exists some $\tau_{\mathsf{max}}$ depending only on $\pi,M,\delta,\gamma,\kappa$ (and not on $p$) such that if $\tau,\lambda$ is a solution of \eqref{se-lam-fixed-pt-finite} at $\gamma$ with $\tau \geq \tau_{\mathsf{max}}$, then
\begin{equation}\label{se-large-tau-inequality}
    \delta\tau^{2} - \sigma^2 > \mathsf{R_{reg,cvx}}(\tau_{\mathsf{orc}},\lambda_{\mathsf{orc}},\cT_p).
\end{equation}
\end{lemma}

\noindent The inequality of Lemma \ref{lem-oracle-not-too-bad} says solutions to \eqref{fixed-pt-prior-finite} cannot be too big when either $\delta > 1$, $\gamma > 0$, or $\rho$ is strongly convex. The next lemma establishes --under certain additional restrictions-- the reverse inequality at a value of $\tau$ which is not too small. 

\begin{lemma}\label{lem-oracle-not-too-good}
Consider  $\{\rho_p\} \in \cC_{\delta,\pi}$ and $\tau_{\mathsf{lb}} > 0$ such that $\delta \tau_{\mathsf{lb}}^2 - \sigma^2 < \mathsf{R_{seq,cvx}^{opt}}(\tau_{\mathsf{lb}};\pi)$. 
For each $p$, let $\cT_p = (\pi,\rho_p)$. 
Then we can find $\gamma > 0$, $\tau_{\mathsf{min}} \geq \tau_{\mathsf{lb}}$, and a subsequence $\{p(\ell)\}$ such that for all $p$ in the subsequence we have the following: for all $\lambda$ which solves \eqref{se-lam-fixed-pt-finite} at $\tau_{\mathsf{min}},\gamma$,
\begin{equation}\label{se-small-tau-inequality}
\delta \tau_{\mathsf{min}}^{2} - \sigma^2 <  \mathsf{R_{reg,cvx}}(\tau_{\mathsf{min,orc}},\lambda_{\mathsf{orc}},\cT_p),
\end{equation}
where $\tau_{\mathsf{min,orc}} = \frac{\tau_{\mathsf{min}}}{\lambda \gamma + 1}$ and $\lambda_{\mathsf{orc}} = \frac{\lambda}{\lambda \gamma + 1}$.

\end{lemma}

\noindent Combining Lemmas \ref{lem-oracle-not-too-bad} and \ref{lem-oracle-not-too-good}, the next lemma allows us to choose an oracle parameter such that, along a subsequence of $\{p\}$, there exist solutions to \eqref{fixed-pt-prior-finite} with effective noise parameters $\tau$ which are neither too large or too small. 

\begin{lemma}\label{lem-se-fixed-pt-existence-non-asymptotic}
We have the following.
\begin{enumerate}[(i)]
    
    \item 
    Assume conditions of Lemma \ref{lem-oracle-not-too-good}. Assume additionally that $\{\rho_p\} \in \cB$. Then we can find $\gamma > 0$, $\tau_{\mathsf{max}} < \infty$, $\lambda_{\mathsf{max}} < \infty$, and a subsequence $\{p(\ell)\}$ such that for all $p$ in the subsequence, there exists solution $\tau,\lambda$ to \eqref{fixed-pt-prior-finite} with 
    $
    (\tau,\lambda) \in [\tau_{\mathsf{lb}},\tau_{\mathsf{max}}] \times [1/2,\lambda_{\mathsf{max}}].
    $

    \item 
    If $\delta > 1$ or $\{\rho_p\}$ are $\kappa$-strongly convex with $\kappa > 0$, part (i) holds except we may also take $\gamma = 0$.

\end{enumerate}
\end{lemma}

\noindent The needed characterization of solutions to \eqref{fixed-pt-prior-finite} is complete.

\subsection{From finite-sample fixed points to strongly stationary quintuplets}\label{sec:finite-fix-pt-to-strong-stat}

We now apply the lemmas above to prove Lemma \ref{lem-oracle-risk-to-cvx-bound}.

\begin{proof}[Proof of Lemma \ref{lem-oracle-risk-to-cvx-bound}(i)]

By Lemma \ref{lem-se-fixed-pt-existence-non-asymptotic}(i), we can (and do) choose $\gamma > 0$, $\tau_{\mathsf{max}} < \infty$, $\lambda_{\mathsf{max}} < \infty$, and a subsequence $\{p(\ell)\}$ such that for all $p$ in the subsequence there exists a solution $\tau_p,\lambda_p$ to \eqref{fixed-pt-prior-finite} with $(\tau_p,\lambda_p) \in [\tau_{\mathsf{lb}},\tau_{\mathsf{max}}] \times [1,\lambda_{\mathsf{max}}]$. 
By Bolzano-Weierstrass, we can find a further subsequence $\{p'(\ell)\}$ and $(\tau,\lambda) \in [\tau_{\mathsf{lb}} - \eps,\tau_{\mathsf{max}}] \times [1,\lambda_{\mathsf{max}}] $ such that
\begin{equation}\label{finite-sample-parameters-converge}
(\tau_{p'(\ell)},\lambda_{p'(\ell)})  \rightarrow (\tau,\lambda) \in [\tau_{\mathsf{lb}} ,\tau_{\mathsf{max}}] \times [1,\lambda_{\mathsf{max}}].
\end{equation}
To simplify notation, we write the subsequence as $\{p\}$. 
By \eqref{se-var-fixed-pt-finite} and \eqref{finite-sample-parameters-converge}, we get
\begin{equation}\label{finite-sample-r-converge}
\mathsf{R_{reg,cvx}}(\tau_{p,\mathsf{orc}},\lambda_{p,\mathsf{orc}},\cT_p) \underset{p\rightarrow\infty}{\longrightarrow} \delta\tau^2 - \sigma^2.
\end{equation}
We also have by the definition of $\tau_{\mathsf{orc}}$ and $\lambda_{\mathsf{orc}}$ and \eqref{finite-sample-parameters-converge} that
\begin{equation}\label{finite-sample-oracle-parameters-converge}
    (\tau_{p,\mathsf{orc}},\lambda_{p,\mathsf{orc}}) \rightarrow (\tau_{\mathsf{orc}},\lambda_{\mathsf{orc}}).
\end{equation}
Because $\{\rho_p\} \in \cB$, by Lemma \ref{lem-uniform-modulus-of-continuity}, ${\mathsf{R_{reg,cvx}}}(\tau',\lambda',\cT_p)$ are uniformly Lipschitz continuous on compact sets.
Thus, by \eqref{finite-sample-r-converge} and \eqref{finite-sample-oracle-parameters-converge},
\begin{equation}\label{r-solves-fixed-pt}
\mathsf{R_{reg,cvx}}(\tau_{\mathsf{orc}},\lambda_{\mathsf{orc}},\cT_p)  \underset{p \rightarrow \infty}{\longrightarrow} \delta\tau^2 - \sigma^2. 
\end{equation}
That is, the limit \eqref{summary-functions-fixed-asymptotic} exists at $\tau_{\mathsf{orc}},\lambda_{\mathsf{orc}}$, and the limiting value solves \eqref{se-var-fixed-pt-prior-asymptotic}. 

Similarly, by \eqref{se-lam-fixed-pt-finite}, for each $p$ we have $\mathsf{W_{reg,cvx}}(\tau_{p,\mathsf{orc}},\lambda_{p,\mathsf{orc}},\cT_p) = \delta(\lambda_p\gamma + 1)\left( 1 - \frac{1}{2\lambda_p}\right)$, so that
\begin{equation}\label{finite-sample-w-converge}
\mathsf{W_{reg,cvx}}(\tau_{p,\mathsf{orc}},\lambda_{p,\mathsf{orc}},\cT_p) \underset{p\rightarrow\infty}{\longrightarrow} \delta(\lambda \gamma + 1)\left( 1 - \frac 1{2\lambda}\right).
\end{equation}
By Lemma \ref{lem-uniform-modulus-of-continuity}, $\mathsf{W_{reg,cvx}}(\tau',\lambda',\cT_p)$ is uniformly Lipschitz continuous in $(\tau',\lambda')$ on compact sets.
Thus, by \eqref{finite-sample-w-converge} and \eqref{finite-sample-oracle-parameters-converge},
\begin{equation}\label{w-solves-fixed-pt}
\mathsf{W_{reg,cvx}}(\tau_{\mathsf{orc}},\lambda_{\mathsf{orc}},\cT_p)  \underset{p \rightarrow \infty}{\longrightarrow} \delta(\lambda \gamma + 1)\left( 1 - \frac 1{2\lambda}\right). 
\end{equation}
That is, the limit \eqref{summary-functions-fixed-asymptotic} exists at $\tau_{\mathsf{orc}},\lambda_{\mathsf{orc}}$, and the limiting value solves \eqref{se-lam-fixed-pt-prior-asymptotic}. 

Finally, by Lemma \ref{lem-uniform-modulus-of-continuity}, the functions $\mathsf{R_{reg,cvx}}(\tau',\lambda',\cT_p)$, $\mathsf{W_{reg,cvx}}(\tau',\lambda',\cT_p)$, and $\mathsf{K_{reg,cvx}}(\bT',\lambda',\cT_p)$ are uniformly equicontinuous in $\tau'$, $\lambda'$, and $\bT'$ on bounded sets. 
Further, the convergence \eqref{r-solves-fixed-pt} and \eqref{w-solves-fixed-pt} gives us that $\mathsf{R_{reg,cvx}}(\tau,\lambda,\cT_p)$ and $\mathsf{R_{reg,cvx}}(\tau,\lambda,\cT_p)$ are uniformly bounded over $p$. 
Further, for $\bT = \tau^2 \bI_2$, by \eqref{r-finite-p-def} and \eqref{k-finite-p-def}, we have $\mathsf{K_{reg,cvx}}(\bT,\lambda,\cT_p) = \mathsf{R_{reg,cvx}}(\tau,\lambda,\cT_p)$, so that $\mathsf{K_{reg,cvx}}(\bT,\lambda,\cT_p)$ are uniformly bounded over $p$.
Thus, by the Arzel\'a-Ascoli theorem, we may take a further subsequence $\{p(\ell)\}$ along which the limits \eqref{summary-functions-fixed-asymptotic} exist for all $\tau',\lambda',\bT'$.
We have now established that with $\cT = (\pi,\{\rho_{p(\ell)}\})$, the quintuplet $\tau,\lambda,\gamma,\delta,\cT = (\pi,\{\rho_{p(\ell)}\})$ is strongly stationary. 
\end{proof}

\begin{proof}[Proof of Lemma \ref{lem-oracle-risk-to-cvx-bound}(ii)]
    Because $\mathsf{W_{reg,cvx}}(\tau,\lambda',\cT_p') \in [0,1]$ for all $p$ by \eqref{finite-p-width-bounded-by-1} and \eqref{width-bounds},
    there is a subsequence $\{p(\ell)\}$ such that $\mathsf{W_{reg,cvx}}(\tau,\lambda',\cT_{p(\ell)}')$ converges to a limit $w$. Let $\lambda = \frac1{2(1-w/\delta)}$, so that $1 = 2\lambda(1-w/\delta)$.
    Note $\mathsf{R_{reg,cvx}^\infty}(\tau,\lambda,(\pi,\{\lambda'\rho_{p(\ell)}/\lambda\})) = \mathsf{R_{reg,cvx}^\infty}(\tau,\lambda',(\pi,\{\rho_{p(\ell)}\}))$ and $\mathsf{W_{reg,cvx}^\infty}(\tau,\lambda,(\pi,\{\lambda'\rho_{p(\ell)}/\lambda\})) = \mathsf{W_{reg,cvx}^\infty}(\tau,\lambda',(\pi,\{\rho_{p(\ell)}\}))$.
    Thus, \eqref{fixed-pt-prior-asymptotic} are satisfied for $\cT = (\pi,\{\lambda'\rho_p/\lambda\})$ at $\tau,\lambda,\gamma=0,\delta$.
    Now we may take a further subsequence such that the limits \eqref{summary-functions-fixed-asymptotic} exist for all $\tau',\bT'$ by the same argument used in the proof of Lemma \ref{lem-oracle-risk-to-cvx-bound}(i).
\end{proof}

\section{Proofs of Appendix \ref{app-proof-of-lem-oracle-risk-to-cvx-bound} Lemmas}\label{app-proofs-of-app-proof-of-lem-oracle-risk-to-cvx-bound-lemmas}

\subsection{Proof of Lemma \ref{lem-oracle-not-too-bad}}\label{proof-of-lem-oracle-not-too-bad}

We prove Lemma \ref{lem-oracle-not-too-bad} by controlling the size of $\mathsf{R_{reg,cvx}}(\tau,\lambda,\cT_p)$ for large $\tau$. The following claim is what we need.

\begin{claim}\label{claim-uncentered-prox-l2-bound}
For any lsc, proper, convex $\rho:\reals^p \rightarrow \reals\cup \{\infty\}$ and $\bz \sim \mathsf{N}(0,\bI_p/p)$ independent of $\bbeta_0$, 
\begin{equation}\label{uncentered-l2-bound}
\E_{\bbeta_0,\bz}[\|\mathsf{prox}[\rho](\bbeta_0 + \tau \bz) - \bbeta_0\|^2] \leq \left(\sqrt{\E_{\bbeta_0,\bz}[\langle \tau \bz,\mathsf{prox}[\rho](\bbeta_0 + \tau \bz)\rangle]} + \sqrt{\E_{\bbeta_0,\bz}[\|\mathsf{prox}[\rho](\bbeta_0) - \bbeta_0\|^2]}\right)^2.
\end{equation}
\end{claim}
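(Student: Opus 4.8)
The plan is to decompose the proximal error around the ``noiseless'' proximal point $\mathsf{prox}[\rho](\bbeta_0)$ and then apply Cauchy--Schwarz together with the firm non-expansiveness property (\ref{prox-firm-non-expansive}). Write $\by = \bbeta_0 + \tau\bz$, and set $\bu := \mathsf{prox}[\rho](\by) - \mathsf{prox}[\rho](\bbeta_0)$ and $\bv := \mathsf{prox}[\rho](\bbeta_0) - \bbeta_0$, so that $\mathsf{prox}[\rho](\by) - \bbeta_0 = \bu + \bv$. If either expectation appearing on the right-hand side of (\ref{uncentered-l2-bound}) is infinite there is nothing to prove, so assume both are finite; since $\mathsf{prox}[\rho]$ is $1$-Lipschitz by (\ref{prox-is-lipschitz}), all the expectations appearing below are then finite as well.

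First I would expand $\E_{\bbeta_0,\bz}[\|\mathsf{prox}[\rho](\by) - \bbeta_0\|^2] = \E[\|\bu\|^2] + 2\,\E[\langle \bu, \bv\rangle] + \E[\|\bv\|^2]$ and bound the middle term by two applications of Cauchy--Schwarz: pointwise, $\langle \bu,\bv\rangle \le \|\bu\|\,\|\bv\|$, and then $\E[\|\bu\|\,\|\bv\|] \le \sqrt{\E[\|\bu\|^2]}\,\sqrt{\E[\|\bv\|^2]}$. This already gives $\E_{\bbeta_0,\bz}[\|\mathsf{prox}[\rho](\by)-\bbeta_0\|^2] \le \big(\sqrt{\E[\|\bu\|^2]} + \sqrt{\E[\|\bv\|^2]}\big)^2$.

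Next I would control $\E[\|\bu\|^2]$. Applying (\ref{prox-firm-non-expansive}) (the case with no strong convexity) to the two points $\by = \bbeta_0+\tau\bz$ and $\bbeta_0$ gives, pointwise, $\|\bu\|^2 \le \langle \tau\bz, \bu\rangle = \langle \tau\bz, \mathsf{prox}[\rho](\bbeta_0+\tau\bz)\rangle - \langle \tau\bz, \mathsf{prox}[\rho](\bbeta_0)\rangle$. Taking expectations and using that $\bz$ is independent of $\bbeta_0$ with $\E[\bz]=\bzero$ --- so that, conditioning on $\bbeta_0$, $\E[\langle \tau\bz, \mathsf{prox}[\rho](\bbeta_0)\rangle] = 0$ --- I obtain $0 \le \E[\|\bu\|^2] \le \E_{\bbeta_0,\bz}[\langle \tau\bz, \mathsf{prox}[\rho](\bbeta_0+\tau\bz)\rangle]$. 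Since $x \mapsto (\sqrt{x} + c)^2$ is nondecreasing on $[0,\infty)$ for the constant $c := \sqrt{\E[\|\bv\|^2]} \ge 0$, substituting this bound into the inequality from the previous paragraph yields exactly (\ref{uncentered-l2-bound}), recalling that $\|\bv\|^2 = \|\mathsf{prox}[\rho](\bbeta_0) - \bbeta_0\|^2$.

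The computation is routine; the only points requiring a little care are (i) observing that the quantity under the square root on the right-hand side of (\ref{uncentered-l2-bound}) is genuinely nonnegative, which is exactly the content of $\E[\langle\tau\bz,\bu\rangle] \ge \E[\|\bu\|^2] \ge 0$ derived above, and (ii) the bookkeeping in the two Cauchy--Schwarz steps and in the final monotonicity step, which replaces $\E[\|\bu\|^2]$ inside the square by the possibly larger quantity $\E_{\bbeta_0,\bz}[\langle\tau\bz,\mathsf{prox}[\rho](\bbeta_0+\tau\bz)\rangle]$.
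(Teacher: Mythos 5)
Your proposal is correct and follows essentially the same route as the paper's proof: the same decomposition around $\mathsf{prox}[\rho](\bbeta_0)$, the same use of firm non-expansiveness (\ref{prox-firm-non-expansive}) combined with $\E_{\bbeta_0,\bz}[\langle \tau\bz,\mathsf{prox}[\rho](\bbeta_0)\rangle]=0$ to bound $\E[\|\bu\|^2]$ by $\E_{\bbeta_0,\bz}[\langle \tau\bz,\mathsf{prox}[\rho](\bbeta_0+\tau\bz)\rangle]$, and the same Cauchy--Schwarz treatment of the cross term. The only difference is cosmetic bookkeeping (you complete the square first and substitute at the end, while the paper bounds each term of the expansion directly), so there is nothing further to flag.
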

\begin{proof}[Proof of Claim \ref{claim-uncentered-prox-l2-bound}]
Write
\begin{subequations}\label{l2-split}
\begin{align}
\left\|\mathsf{prox}\left[\rho\right]\left(\bbeta_0 + \tau\bz\right)-\bbeta_0\right\|^2 &\leq \left\|\mathsf{prox}\left[\rho\right]\left(\bbeta_0 + \tau\bz\right)-\mathsf{prox}\left[\rho\right]\left(\bbeta_0\right)\right\|^2\label{l2-split-1}\\
&\quad+ 2\left\|\mathsf{prox}\left[\rho\right]\left(\bbeta_0 + \tau\bz\right)-\mathsf{prox}\left[\rho\right]\left(\bbeta_0\right)\right\| \left\|\mathsf{prox}\left[\rho\right]\left(\bbeta_0\right)-\bbeta_0\right\|\label{l2-split-2}\\
&\quad + \left\|\mathsf{prox}\left[\rho\right]\left(\bbeta_0\right)-\bbeta_0\right\|^2.\label{l2-split-3}
\end{align}
\end{subequations}
First, we bound the first term on the right-hand side. 

By \eqref{prox-firm-non-expansive}, we have 
\begin{equation}
    \|\mathsf{prox}[\rho](\bbeta_0 + \tau \bz) - \mathsf{prox}[\rho](\bbeta_0)\|^2 \leq \langle \tau \bz , \mathsf{prox}[\rho](\bbeta_0 + \tau \bz) - \mathsf{prox}[\rho](\bbeta_0)\rangle.
\end{equation}
Taking expectations of both sides and using that $\E_{\bbeta_0,\bz}[\langle \tau \bz , \mathsf{prox}[\rho](\bbeta_0)\rangle] = 0$ by the independence of $\bbeta_0$ and $\bz$ and the fact that $\E_{\bz}[\bz] = \bzero$, we get
\begin{equation}\label{bound-l2-first-term}
    \E_{\bbeta_0,\bz}[\|\mathsf{prox}[\rho](\bbeta_0 + \tau \bz) - \mathsf{prox}[\rho](\bbeta)\|^2] \leq \E_{\bbeta_0,\bz}[\langle \tau \bz , \mathsf{prox}[\rho](\bbeta_0 + \tau \bz)\rangle ].
\end{equation}
We bound the expectation of \eqref{l2-split-2} by Cauchy-Schwartz.
\begin{align}
\E_{\bbeta_0,\bz}[\|\mathsf{prox}\left[\rho\right]\left(\bbeta_0 + \tau\bz\right)&-\mathsf{prox}\left[\rho\right]\left(\bbeta_0\right)\| \left\|\mathsf{prox}\left[\rho\right]\left(\bbeta_0\right)-\bbeta_0\right\|] \nonumber\\ 
&\leq \sqrt{\E_{\bbeta_0,\bz}[\left\|\mathsf{prox}\left[\rho\right]\left(\bbeta_0 + \tau\bz\right)-\mathsf{prox}\left[\rho\right]\left(\bbeta_0\right)\right\|^2]}\sqrt{\E_{\bbeta_0,\bz}[\|\mathsf{prox}[\rho](\bbeta_0) - \bbeta_0\|^2]}\nonumber\\
&\leq \sqrt{\E_{\bbeta_0,\bz}[\langle \tau \bz,\mathsf{prox}[\rho](\bbeta_0 + \tau \bz)\rangle]}\sqrt{\E_{\bbeta_0,\bz}[\|\mathsf{prox}[\rho](\bbeta_0) - \bbeta_0\|^2]},\label{bound-l2-cross-term}
\end{align}
where in the third line we have used \eqref{bound-l2-first-term}.
Taking the expectation of \eqref{l2-split} and applying bounds  \eqref{bound-l2-first-term}, \eqref{bound-l2-cross-term} gives \eqref{uncentered-l2-bound}.
\end{proof}

We are ready to prove Lemma \ref{lem-oracle-not-too-bad}.
Fix $\gamma \geq 0$ and $\kappa \geq 0$, so that $\rho_p$ is $\kappa$-strongly convex (note, when $\kappa = 0$ we make no strong convexity assumption).
 Consider solutions $\tau,\lambda$ to \eqref{se-lam-fixed-pt-finite} at $\gamma$.
To simplify notation, we denote $\rho_{\mathsf{orc}} = \lambda_{\mathsf{orc}}\rho$.
 By \eqref{r-finite-p-def} and Claim \ref{claim-uncentered-prox-l2-bound}, 
\begin{align}
\frac1{\delta\tau^2} &\mathsf{R_{reg,cvx}}(\tau_{\mathsf{orc}},\lambda_{\mathsf{orc}},\cT_p) =  \frac1{\delta\tau^2}\E_{\bbeta_0,\bz}[\|\mathsf{prox}[\rho_{\mathsf{orc}}](\bbeta_0 + \tau_{\mathsf{orc}} \bz) - \bbeta_0\|^2] \nonumber\\
&\leq \left(\sqrt{\frac{1}{\delta\tau^2} \E_{\bbeta_0,\bz}[\langle \tau_{\mathsf{orc}}\bz,\mathsf{prox}[\rho_{\mathsf{orc}}](\bbeta_0 + \tau_{\mathsf{orc}}\bz)\rangle]} + \sqrt{ \frac1{\delta\tau^2}\E_{\bbeta_0,\bz}[\|\mathsf{prox}[\rho_{\mathsf{orc}}](\bbeta_0) - \bbeta_0\|^2]}\right)^2.\label{risk-upper-bound}
\end{align}

First we bound the second term on the right-hand side of \eqref{risk-upper-bound}.
By \eqref{finite-p-width-bounded-by-1} and \eqref{se-lam-fixed-pt-finite},
$
1 \geq 2\lambda \left(1 - \frac1{\delta(\lambda \gamma + 1)} \frac1{\lambdaorc\kappa+1}\right)
=2\lambda \left(1 - \frac1{\delta} \frac1{\lambda\kappa + \lambda \gamma +1}\right). 
$
If either $\delta > 1$, $\gamma > 0$, or $\kappa > 0$,
the right-hand side diverges to $\infty$ for $\lambda \rightarrow \infty$. Thus, there exists $\lambda_{\mathsf{max}}$ depending only on $\delta,\gamma,\kappa$ such that all solutions $\tau,\lambda$ to \eqref{se-lam-fixed-pt-finite} at $\gamma$ satisfy $\lambda \leq \lambda_{\mathsf{max}}$.
Then we have 
\begin{align*}
\|\mathsf{prox}[\rho_{\mathsf{orc}}](\bbeta_0) - \bbeta_0\| &\leq \|\mathsf{prox}[\rho](\bzero)\| + \|\mathsf{prox}[\lambda_{\mathsf{orc}}\rho](\bzero) - \mathsf{prox}[\rho](\bzero)\| \\
&\qquad\qquad+ \|\mathsf{prox}[\lambda_{\mathsf{orc}}\rho](\bbeta_0) - \mathsf{prox}[\lambda_{\mathsf{orc}}\rho](\bzero)\| + \|\bbeta_0\| \\
&\leq M + M|\lambda_{\mathsf{orc}} - 1| + 2\|\bbeta_0\| 
\leq M(\lambda_{\mathsf{max}} + 2) + 2\|\bbeta_0\|,
\end{align*}
where in the second inequality, we have used \eqref{prox-continuous-in-lambda} and \eqref{prox-is-lipschitz}, and in the third inequality, we have used $\lambda_{\mathsf{orc}} \leq \lambda \leq \lambda_{\mathsf{max}}$.
Thus, 
\begin{equation}\label{risk-upper-bound-term-2}
\E_{\bbeta_0,\bz}[\|\mathsf{prox}[\rho_{\mathsf{orc}}](\bbeta_0) - \bbeta_0\|^2]  \leq 2M^2(\lambda_{\mathsf{max}} + 2)^2 + 8s_2(\pi),
\end{equation}
where $s_2(\pi)$ is the second moment of $\pi$.

Second we bound the first term on the right-hand side of \eqref{risk-upper-bound}.
We bound the first term by using the fact that $\tau,\lambda,\gamma$ solve \eqref{se-lam-fixed-pt-finite}. 
In particular, by \eqref{se-lam-fixed-pt-finite} we have that 
\begin{equation}\label{width-bound-at-solution}
\frac1{\delta(\lambda\gamma+1)} \mathsf{W_{reg,cvx}}(\tau_{\mathsf{orc}},\lambda_{\mathsf{orc}},\cT_p)
= 1 - \frac1{2\lambda} \leq 1 - \frac1{2\lambda_{\mathsf{max}}}
\end{equation}
Then, applying \eqref{w-finite-p-def}, we get
\begin{align*}
     \frac{1}{\delta\tau^2} \E_{\bbeta_0,\bz}[\langle \tau_{\mathsf{orc}}\bz,\mathsf{prox}[\rho_{\mathsf{orc}}](\bbeta_0 + \tau_{\mathsf{orc}}\bz)\rangle] &= \frac1{\delta(\lambda \gamma + 1)^2}\mathsf{W_{reg,cvx}}(\tau_{\mathsf{orc}},\lambda_{\mathsf{orc}},\cT_p) \nonumber\\
    & \leq 1 - \frac{1}{2\lambda_{\mathsf{max}}}.
\end{align*}
Plugging this and \eqref{risk-upper-bound-term-2} into \eqref{risk-upper-bound}, we get
\begin{align}\label{risk-upper-bound-through-M-gamma-pi}
    \frac1{\delta\tau^2} &\mathsf{R_{reg,cvx}}(\tau_{\mathsf{orc}},\lambda_{\mathsf{orc}},\cT_p) \leq \left(\sqrt{1 - \frac{1}{2\lambda_{\mathsf{max}}}} + \sqrt{\frac{2M^2(\lambda_{\mathsf{max}} + 2)^2 + 8s_2(\pi)}{\delta\tau^2}  }\right)^2.
\end{align}
Choose $\tau_{\mathsf{max}}$ such that
\begin{equation}\label{tau-ub-large-enough}
1 > \frac{\sigma^2}{\delta\tau_{\mathsf{max}}^2} + \left(\sqrt{1 - \frac{1}{2\lambda_{\mathsf{max}}}} + \sqrt{\frac{2M^2(\lambda_{\mathsf{max}} + 2)^2 + 8s_2(\pi)}{\tau^2_{\mathsf{max}}\delta}}\right)^2,
\end{equation}
which is possibly because $1-\frac1{2\lambda_{\mathsf{max}}} < 1$. 
This choice depends only on $\pi,M,\gamma,\kappa,\delta$.
This inequality also holds for any $\tau \geq \tau_{\mathsf{max}}$. Chaining \eqref{risk-upper-bound-through-M-gamma-pi} and \eqref{tau-ub-large-enough} and performing some rearrangement, we get that \ref{se-large-tau-inequality} holds, as desired. \hfill $\square$

\subsection{Proof of Lemma \ref{lem-oracle-not-too-good}}\label{proof-of-lem-oracle-not-too-good}

The proof proceeds in three steps. The only place where the $\delta$-bounded width assumption is used is in Case 1 of Step 2.

\noindent {\bf Step 1: Construct interval on which $\delta \tau^2 - \sigma^2 < \mathsf{R^{opt}_{seq,cvx}}(\tau;\pi)$.}

\noindent By the definition of $\mathsf{R^{opt}_{seq,cvx}}(\tau;\pi)$, there exists $\zeta > 0$ such that 
\begin{equation*}
    \delta\tau_{\mathsf{lb}}^2 - \sigma^2 <  \mathsf{R^{opt}_{seq,cvx}}(\tau_{\mathsf{lb}};\pi,p) - \zeta 
\end{equation*}
 eventually.
By the regularity property established in Lemma \ref{lem-r-continuity},
we may pick $\Delta > 0$ such that $\mathsf{R^{opt}_{seq,cvx}}(\tau;\pi,p) > \mathsf{R^{opt}_{seq,cvx}}(\tau_{\mathsf{lb}};\pi,p) - \zeta/3$ and  $\delta\tau^2 - \sigma^2 < \delta \tau_{\mathsf{lb}}^2 - \sigma^2 + \zeta/3$ for all $\tau \in [\tau_{\mathsf{lb}},\tau_{\mathsf{lb}} + \Delta]$.
In particular, for all such $\tau$
\begin{equation}\label{finite-range-se-var-inequality}
    \delta \tau^2 - \sigma^2 \leq \delta \tau_{\mathsf{lb}}^2 - \sigma^2 + \zeta/3 <  \mathsf{R^{opt}_{seq,cvx}}(\tau_{\mathsf{lb}};\pi,p) - 2\zeta/3 < \mathsf{R^{opt}_{seq,cvx}}(\tau;\pi,p) - \zeta/3.
\end{equation}

\noindent {\bf Step 2: Choose oracle parameter with not-too-small oracle effective noise.}

\noindent The meaning of the preceding statement will become clear shortly. 
Let
\begin{equation}\label{eqdef-tau-lb}
    \tau_{\mathsf{min}} = \tau_{\mathsf{lb}} + \Delta.
\end{equation}
Denote 
\begin{equation}\label{max-noise-reduction-allowed}
    \frac{\tau_{\mathsf{lb}}}{\tau_{\mathsf{lb}} + \Delta} = 1 - \theta.
\end{equation}
For simplicity, for the remainder of the proof, we denote the subsequence $\{p(\ell)\}$ as $\{p\}$.
We will show how to choose $\gamma > 0$ such that, for each $p$, any solution $\lambda$ to \eqref{se-lam-fixed-pt-finite} at $\tau_{\mathsf{min}}$, $\gamma$ satisfies 
\begin{equation}\label{noise-reduction-lower-bound}
    \tau_{\mathsf{min,orc}}\geq \tau_{\mathsf{lb}},
\end{equation}
where we have denoted $\tau_{\mathsf{min,orc}} = \frac{\tau_{\mathsf{min}}}{\lambda \gamma + 1}$. 
This is what we mean by ``choose oracle parameter with not-too-small oracle effective noise.'' 
There are two cases.

\begin{itemize}
\item {\bf Case 1: $\delta \leq 1$.}

Because $\{\rho_p\} \in \cC_{\delta,\pi}$, by \eqref{delta-bounded-width}, we can (and do) choose $\bar \lambda > 0$  and $\xi > 0$ such that 
\begin{equation}
    \limsup_{p \rightarrow \infty} \sup_{\lambda > \bar \lambda, \tau' \in [\delta\tau_{\mathsf{min}}/2,\tau_{\mathsf{min}}]} \frac1{\tau'} \E_{\bbeta_0,\bz}[\langle \bz, \mathsf{prox}[\lambda \rho_p](\bbeta_0 + \tau' \bz)\rangle] < \delta(1 - \xi),
\end{equation}
(note that by assumption, $\delta / 2 < 1$, so the interval is non-empty).
Let $\{p(\ell)\}$ be a subsequence of $\{p\}$ such that 
\begin{equation}\label{uniform-delta-bounded-width-along-subsequence}
\sup_{\lambda > \bar \lambda, \tau' \in [\delta\tau_{\mathsf{min}}/2,\tau_{\mathsf{min}}]} \frac1{\tau'} \E_{\bbeta_0,\bz}[\langle \bz, \mathsf{prox}[\lambda \rho_{p(\ell)}](\bbeta_0 + \tau' \bz)\rangle] < \delta(1 - \xi)
\end{equation}
for all $\ell$.
Now choose
\begin{equation}\label{choose-gamma-delta-leq-1}
0 < \gamma < \min\left\{\frac2\delta - 1,\,\frac{\theta}{\bar \lambda},\,\frac{\theta}{1-\theta}2\xi\right\}.
\end{equation}
It is straightforward to check that the right-hand side is positive, so that such $\gamma$ exist.
Now consider any solution $\lambda$ to \eqref{se-lam-fixed-pt-finite} at $\tau_{\mathsf{min}},\gamma$.
Thus, 
\begin{align*}
\frac2\delta - 1 &>\gamma = 2\lambda\gamma\left(1 - \frac1{\delta(\lambda \gamma + 1)}\mathsf{W_{reg,cvx}}(\tau_{\mathsf{min,orc}},\lambda_{\mathsf{orc}},\cT_p)\right) \\ 
&\geq 2\lambda\gamma\left(1 - \frac1{\delta(\lambda \gamma + 1)}\right) = 2\left( \left(\frac1{\lambda \gamma + 1}\right)^{-1}-1\right)\left(1 - \frac1{\delta(\lambda \gamma + 1)}\right),
\end{align*}
where in the first inequality we have used \eqref{choose-gamma-delta-leq-1}, in the first equality we have used \eqref{se-lam-fixed-pt-finite}, and in the second inequality we have used \eqref{finite-p-width-bounded-by-1}.
The right-hand side is strictly decreasing in $\frac1{\lambda \gamma + 1}$. Moreover, the right-hand side equals $\frac2\delta - 1$ when $\frac1{\lambda \gamma + 1} = \frac\delta2$. We conclude that $\frac1{\lambda \gamma + 1} \geq \frac\delta2$, whence
\begin{equation}\label{crude-shrinkage-lower-bound}
\tau_{\mathsf{min}} \geq \tau_{\mathsf{min,orc}} \geq \frac{\delta\tau_{\mathsf{min}}}2,
\end{equation}
where the first inequality holds because trivially $1 \geq \frac1{\lambda \gamma + 1} $.
We now use the crude lower bound of \eqref{crude-shrinkage-lower-bound} to generate the lower bound \eqref{noise-reduction-lower-bound}.
Either $\lambda > \frac1{2\xi}$ or $\lambda \leq \frac1{2\xi}$. 
If $\lambda > \frac1{2\xi}$, then 
\begin{align}
\frac1{\tau_{\mathsf{min,orc}}}\E_{\bbeta_0,\bz}\left[\left\langle  \bz, \mathsf{prox}\left[\lambda_{\mathsf{orc}} \rho\right]\left(\bbeta_0 + \tau_{\mathsf{min,orc}} \bz\right)\right\rangle\right] &= \mathsf{W_{reg,cvx}}(\tau_{\mathsf{min,orc}},\lambda_{\mathsf{orc}},\cT_p) \nonumber\\
&= \delta(\lambda \gamma + 1)\left(1 - \frac1{2\lambda}\right) > \delta\left(1 - \xi \right),
\end{align}
where in the first line, we have used \eqref{w-finite-p-def}, and in the second line, we have used \eqref{se-lam-fixed-pt-finite}.
Combining this with \eqref{uniform-delta-bounded-width-along-subsequence} and \eqref{crude-shrinkage-lower-bound}, we conclude $\bar \lambda \geq \lambda_{\mathsf{orc}}$. 
Thus, $\frac1{\lambda \gamma + 1} = 1 - \frac{\lambda \gamma}{\lambda \gamma + 1} = 1 -  \lambda_{\mathsf{orc}} \gamma  \geq 1 - \bar \lambda \gamma$. 
By \eqref{eqdef-tau-lb}, \eqref{choose-gamma-delta-leq-1} and \eqref{max-noise-reduction-allowed},
$$
\tau_{\mathsf{min,orc}} = \frac{\tau_{\mathsf{lb}} + \Delta}{\lambda \gamma + 1} \geq (\tau_{\mathsf{lb}} + \Delta) \left(1 - \bar \lambda \gamma\right) \geq (\tau_{\mathsf{lb}} + \Delta)(1-\theta) = \tau_{\mathsf{lb}},
$$
so we have \eqref{noise-reduction-lower-bound}.
On the other hand, if $\lambda \leq \frac1{2\xi}$, then by \eqref{choose-gamma-delta-leq-1}
$$
\tau_{\mathsf{min,orc}} = \frac{\tau_{\mathsf{lb}} + \Delta}{\lambda \gamma + 1} \geq \frac{\tau_{\mathsf{lb}} + \Delta}{\gamma/(2\xi) + 1} \geq \frac{\tau_{\mathsf{lb}} + \Delta}{\frac{\theta}{1-\theta} + 1} = (\tau_{\mathsf{lb}} + \Delta)(1-\theta) = \tau_{\mathsf{lb}},
$$
so we also have \eqref{noise-reduction-lower-bound}.
Thus, if we choose $\gamma$ to satisfy \eqref{choose-gamma-delta-leq-1}, then \eqref{noise-reduction-lower-bound} holds at any solution $\lambda$ to \eqref{se-lam-fixed-pt-finite} at $\tau_{\mathsf{min}},\gamma$.

\item {\bf Case 2: $\delta > 1$.}

Choose
\begin{equation}\label{choose-gamma-delta-g-1}
0 \leq \gamma < \frac{2\theta (\delta - 1)}{(1-\theta)\delta}.
\end{equation}
Now consider any solution $\lambda$ to \eqref{se-lam-fixed-pt-finite} at $\tau_{\mathsf{min}},\gamma$.
By \eqref{finite-p-width-bounded-by-1},
\begin{align*}
    1   
        &= 
        2\lambda\left(1 - \frac1{\delta(\lambda \gamma + 1)} \mathsf{W_{reg,cvx}}(\tau_{\mathsf{min,orc}},\lambda_{\mathsf{orc}},\cT_p)\right) \geq 2\lambda \left(1 - \frac1{\delta}\right).
\end{align*}
We conclude that $\lambda \leq \frac{\delta}{2(\delta-1)}$.
Thus, by \eqref{choose-gamma-delta-g-1} and \eqref{max-noise-reduction-allowed},
$$
\tau_{\mathsf{min,orc}} = \frac{\tau_{\mathsf{lb}} + \Delta}{\lambda \gamma + 1} > \frac{\tau_{\mathsf{lb}} + \Delta}{\frac{\delta}{2(\delta-1)}\frac{2\theta(\delta-1)}{(1-\theta)\delta} + 1} = (\tau_{\mathsf{lb}} + \Delta)(1 - \theta) = \tau_{\mathsf{lb}},
$$
so we have \eqref{noise-reduction-lower-bound}.
Thus, if we choose $\gamma$ to satisfy \eqref{choose-gamma-delta-g-1}, then \eqref{noise-reduction-lower-bound} holds at any solution $\lambda$ to \eqref{se-lam-fixed-pt-finite} at $\tau_{\mathsf{min}},\gamma$.
\end{itemize}

\noindent {\bf Step 3: Combine steps 1 and 2.}

\noindent We now provide the construction required by the lemma.
We choose $\gamma, \tau_{\mathsf{min}},$ and subsequence $\{p(\ell)\}$ as in Step 2. 
We showed that, along this sequence, for any $\lambda$ which solves \eqref{se-lam-fixed-pt-finite}, we have \eqref{noise-reduction-lower-bound}. 
Because $\frac1{\lambda\gamma+ 1} \leq 1$, we also have $\tau_{\mathsf{min,orc}} \leq \tau_{\mathsf{lb}} + \Delta$. 
Thus, $\tau_{\mathsf{min,orc}} \in [\tau_{\mathsf{lb}},\tau_{\mathsf{lb}}+\Delta]$, and by \eqref{finite-range-se-var-inequality}, we have
$
\delta \tau_{\mathsf{min}}^2 - \sigma^2 < \delta\tau_{\mathsf{lb}}^2 - \sigma^2 + \zeta/3 < \mathsf{R^{opt}_{seq,cvx}}(\tau_{\mathsf{min,orc}};\pi,p).
$
We conclude \eqref{se-small-tau-inequality}. \hfill $\square$

\subsection{Proof of Lemma \ref{lem-se-fixed-pt-existence-non-asymptotic}}

\begin{proof}[Proof of Lemma \ref{lem-se-fixed-pt-existence-non-asymptotic}]

We prove parts (i) and (ii) in parallel.

Under the conditions of part (i), by Lemma \ref{lem-oracle-not-too-good}, we can (and do) choose $\gamma > 0, \tau_{\mathsf{min}} \geq \tau_{\mathsf{lb}}$, and a subsequence $\{p(\ell)\}$ of $\{p\}$ such that for all $p$ in the subsequence and all $\lambda$ which solves \eqref{se-lam-fixed-pt-finite} at $\tau_{\mathsf{min}},\gamma$, \eqref{se-small-tau-inequality} holds. 
Under the conditions of part (ii), we take $\tau_{\mathsf{min}} = \tau_{\mathsf{lb}}$ and $\gamma = 0$. Now there exists a subsequence such that \eqref{se-small-tau-inequality} holds for any $\lambda$ by the definition of $\mathsf{R_{reg,cvx}^{opt}}$ and $\tau_{\mathsf{lb}}$ (and in particular, it holds for those $\lambda$ solving \eqref{se-lam-fixed-pt-finite}).

Because $\{\rho_{p(\ell)}\} \in \cB$ (indeed, property \eqref{eqdef-bounded-shrinkage-towards-infinity} is closed under taking subsequences), we may choose $M$ such that $M \geq \|\mathsf{prox}[\rho_{p(\ell)}](\bzero)\|$ for all $\ell$. 
By Lemma \ref{lem-oracle-not-too-bad}, under the conditions of parts (i) and (ii) and the respective choices of $\gamma$, we can (and do) choose $\tau_{\mathsf{max}}$ such that if $\tau,\lambda$ is a solution of \eqref{se-lam-fixed-pt-finite} at $\gamma$ with $\tau \geq \tau_{\mathsf{max}}$, then \ref{se-large-tau-inequality} holds.

Choose $\lambda_{\mathsf{max}}> 0$ such that 
\begin{equation}\label{lambda-ub-property}
     2\lambda_{\mathsf{max}} \left(1 - \frac1{\delta(\lambda_{\mathsf{max}} \gamma + \lambda_{\mathsf{max}}\kappa + 1)}\right) > 1,
\end{equation}
where $\kappa = 0$ when $\{\rho_p\}$ is not uniformly strongly convex.
Note that this is possible in part (i) because $\gamma > 0$, and in part (ii) because either $\delta > 1$ or $\kappa > 0$.
Finally, choose $\lambda_{\mathsf{min}} > 0$ such that
\begin{equation}\label{lambda-lb-property}
     2\lambda_{\mathsf{min}} < 1.
\end{equation}
For simplicity, we denote the subsequence $\{p(\ell)\}$ by $\{p\}$ for the remainder of the proof.

For each $p$, denote
\begin{align}
    r_p(\tau,\lambda) &= \delta\tau^2 - \sigma^2 - \mathsf{R_{cvx,cvx}}(\tau_{\mathsf{orc}},\lambda_{\mathsf{orc}},\cT_p),\label{eqdef-little-rp}\\
    w_p(\tau,\lambda) &= 2\lambda \left(1 - \frac1{\delta(\lambda \gamma + 1)}\mathsf{W_{reg,cvx}}(\tau_{\mathsf{orc}},\lambda_{\mathsf{orc}},\cT_p) \right).\label{eqdef-little-wp}
\end{align}
By Lemma \ref{lem-uniform-modulus-of-continuity} and the continuity of the map $(\tau,\lambda) \mapsto \left(\frac{\tau}{\lambda \gamma + 1}, \frac{\lambda}{\lambda\gamma + 1}\right)$ on $(\tau,\lambda) \in [\tau_{\mathsf{min}},\tau_{\mathsf{max}}] \times [\lambda_{\mathsf{min}},\lambda_{\mathsf{max}}]$, we have that $w_p$ and $r_p$ are continuous on $[\tau_{\mathsf{min}},\tau_{\mathsf{max}}] \times [\lambda_{\mathsf{min}},\lambda_{\mathsf{max}}]$. 
To simplify notation in the argument that follows, we will work under the change of variables implemented by the linear bijection
\begin{align}
\iota: [0,1]\times[0,2] &\rightarrow [\tau_{\mathsf{min}}, \tau_{\mathsf{max}}] \times [\lambda_{\mathsf{min}}, \lambda_{\mathsf{max}}],\\
 (a,b) &\mapsto \left( (1-a)\tau_{\mathsf{min}} + a\tau_{\mathsf{max}},  \left(1 - \frac b2\right) \lambda_{\mathsf{min}} + \frac b2 \lambda_{\mathsf{max}}  \right).
\end{align}
The functions $r_p \circ \iota$ and $w_p \circ \iota$ are continuous on $[0,1] \times [0,2]$.
By \eqref{finite-p-width-bounded-by-1}, \eqref{width-bounds}, and \eqref{prox-width-bound}, we have for all $\tau,\lambda$ that $2\lambda \geq w_p(\tau,\lambda) \geq 2\lambda\left(1 - \frac1{\delta(\lambda\gamma+1)(1 + \lambda_{\mathsf{orc}}\kappa)}\right) = 2\lambda\left(1 - \frac1{\delta(1 + \lambda\gamma + \lambda\kappa)}\right)$. 
Thus, by \eqref{lambda-ub-property}, \eqref{lambda-lb-property}, and \eqref{eqdef-little-wp},
\begin{align}\label{w-on-lower-and-upper-edge}
    w_p\circ \iota(a,0) &< 1,\quad w_p\circ \iota (a,2) > 1 \quad \text{for all $a \in [0,1].$}
\end{align}
We seek $(a,b) \in [0,1]\times[0,2]$ such that 
\begin{equation}\label{solution-in-a-and-b}
r_p\circ \iota(a,b) = 0 \quad\text{and}\quad w_p \circ \iota(a,b) = 1.
\end{equation}
The next several paragraphs provide the construction, which essentially amounts to a type of two-dimensional intermediate value theorem.

Let $D_0 = [0,1]\times \{0\}$ and $D_2 = [0,1]\times \{2\}$. 
Let $S = \{(a,b) \in [0,1]\times[0,2]\mid w_p \circ \iota (a,b) \leq 1\}$.
Note that $D_0$ is a connected subset of $S$ by \eqref{w-on-lower-and-upper-edge}.
Let $C_0 = \bigcup C$, where the union is taken over connected sets $C \subset S$ which contain $D_0$. 
The set $C_0$ is connected \cite[Theorem 1.14]{Moise1977Geometric3}, so we are justified in calling $C_0$ ``the connected component of $S$ which contains $D_0$.'' The set $C_0$ is also closed because $S$ is closed and the closure of any connected set is still connected. 
Thus, it is compact.
By \eqref{w-on-lower-and-upper-edge}, $D_2 \cap S = \emptyset$, so that $C_0$ and $D_2$ are disjoint.
Because $C_0$ and $D_2$ are disjoint and compact, they are separated by some Euclidean distance $\xi > 0$. 

For any $\theta > 0$, define
\begin{equation}
    C_{0,\theta}= \{(a,b) \in [0,1]\times [0,2] \mid d((a,b),C_0) \leq \theta\},
\end{equation}
where $d$ denotes Euclidean distance.
Clearly, $C_{0,\theta}$ is closed. For $\theta < \xi/3$, $C_{0,\theta}$ is distance at least $2\xi/3$ from $D_2$.
We consider the lattice on $[0,1] \times [0,2]$ consisting of points $\left(\frac iN, \frac jN\right)$ for $i \in \{0,1,\ldots,N\}$ and $j \in \{0,1,\ldots,2N\}$, where $N$ is chosen to be large enough so that 
\begin{equation}
\theta_N := \frac{\sqrt 5}{N} = \mathsf{diam}\left(\left[\frac iN, \frac{i+2}N\right] \times \left[\frac jN, \frac{j+1}N\right]\right) < \frac{\xi}3.
\end{equation} 
Here, $\mathsf{diam}$ denotes the supremal distance between two points contained in a set. 
We define a set of points $\cV$ and line segments $\cE$ as follows. 
The vertex set $\cV$ is
\begin{equation}
    \cV = \left\{ \bv_{ij} := \left(\frac iN, \frac jN\right) \Bigm\vert i \in \{0,1,\ldots,N\},\, j \in \{0,1,\ldots,2N\} \right\}.
\end{equation}
The edge set $\cE$ contains ``horizontal'' edges $E^{H}_{ij} := \left\{\left(\frac iN, \frac jN \right),\left( \frac{i+1}N, \frac jN \right)\right\}$ and ``vertical'' edges $E^{V}_{ij} := \left\{\left(\frac iN, \frac jN\right),\left(\frac iN, \frac{j+1}N\right)\right\}$ for certain values of $i,j$, as we now specify.
\begin{description}
    \item[Horizontal edges.] The edge $E^H_{ij} \in \cE$ if and only if the following are all true.
    \begin{enumerate}[(i)]
        \item $i \in \{0,\ldots,N-1\}$ and $j \in \{1,\ldots,2N-1\}$ (ie. we exclude edges along the bottom or top edge of $[0,1] \times [0,2]$).
        \item Either (i) $j-i$ is even and exactly one of the open rectangles $\Big(\frac iN, \frac {i+2}N\Big) \times \Big(\frac jN, \frac{j+1}N\Big)$ and $\Big(\frac {i-1}N, \frac {i+1}N\Big) \times \Big( \frac {j-1}N, \frac jN\Big)$ has non-empty intersection with  $C_{1,\xi/3}$, or (ii) $j-i$ is odd and exactly one of the open rectangles $\Big(\frac {i-1}N, \frac {i+1}N\Big) \times \Big( \frac jN, \frac{j+1}N\Big)$ and $\Big(\frac {i}N, \frac {i+2}N\Big) \times \Big( \frac {j-1}N, \frac jN\Big)$ has non-empty intersection with $C_{1,\xi/3}$.
    \end{enumerate}
    \item[Vertical edges.] The edge $E^V_{ij} \in \cE$ if and only if the following are all true.
    \begin{enumerate}[(i)]
        \item $i \in \{0,\ldots,2N-1\}$ and $j \in \{1,\ldots,N-1\}$ (ie. we exclude edges along the left or right edge of $[0,1] \times [0,2]$).
        \item $j - i$ is even and exactly one of the open rectangles $\Big(\frac{i - 2}N, \frac iN\Big)\times \Big(\frac jN, \frac{j+1}N\Big)$ and $\Big(\frac iN, \frac {i+2}N\Big)\times \Big(\frac jN, \frac{j+1}N\Big)$ has non-empty intersection with $C_{1,\xi/3}$.
    \end{enumerate}
    \end{description}
    
\begin{remark}
    To interpret the preceding definitions, the reader should have in mind the following picture. We tile the rectangle $[0,1] \times [0,2]$ with ``bricks'' of width 2 and height 1 whose alignment is offset by 1 in neighboring rows (as is done in \cite[Theorem 4.4]{Moise1977Geometric3}). The collection of edges we have specified delineates the outer-boundary of the union of bricks in the tiling which intersect $C_{0,\xi/3}$ (excluding the shared boundary with $[0,1] \times [0,2]$ itself). We should think of think of this as a more topologically well-behaved approximation to the boundary of $C_{0}$ itself.
\end{remark}

\noindent We establish the following series of claims. 

\begin{claim}\label{claim-edges-far-from-sets}
    For all edges $E \in \cE$, all points $\bp \in E$ are distance at least $\theta_N$ and at most $2\theta_N$ from $C_0$ and at least $\xi/3$ from $D_2$.
\end{claim}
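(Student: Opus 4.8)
Fix $E\in\cE$ and $\bp\in E$. The plan is to reduce the claim to two applications of the triangle inequality once the combinatorics of the brick tiling is unwound. The first step is to record what membership in $\cE$ means geometrically: by the defining conditions, $E$ is a unit lattice segment lying on the common boundary of exactly two bricks of the tiling — the two open rectangles appearing in the relevant parity/orientation case of the definition of $\cE$ — and, among those two bricks, \emph{exactly one} meets $C_{0,\theta_N}$. Write $B^{\mathrm{in}}$ for the adjacent brick that meets $C_{0,\theta_N}$ and $B^{\mathrm{out}}$ for the one that does not. Because $E$ lies on the boundary of each, every $\bp\in E$ lies in $\overline{B^{\mathrm{in}}}\cap\overline{B^{\mathrm{out}}}$; I would verify this inclusion directly from elementary containments such as $[\tfrac iN,\tfrac{i+1}N]\subseteq[\tfrac{i-1}N,\tfrac{i+1}N]\cap[\tfrac iN,\tfrac{i+2}N]$ (and its vertical analogue), handling the four cases in the definition identically, and noting that a brick clipped by $\partial([0,1]\times[0,2])$ near the left or right side only shrinks, so nothing changes. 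Finally I would note that $\overline{B^{\mathrm{in}}}$ and $\overline{B^{\mathrm{out}}}$ each sit inside a closed $\tfrac2N\times\tfrac1N$ box, hence have diameter at most $\theta_N=\sqrt5/N$.

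With this in hand the two bounds on $d(\bp,C_0)$ are immediate. For the upper bound, pick $q\in B^{\mathrm{in}}\cap C_{0,\theta_N}$ and, using compactness of $C_0$, pick $c\in C_0$ with $d(q,c)\le\theta_N$; then $d(\bp,C_0)\le d(\bp,q)+d(q,c)\le\theta_N+\theta_N=2\theta_N$ since $\bp,q\in\overline{B^{\mathrm{in}}}$. For the lower bound, the open rectangle $B^{\mathrm{out}}$ is disjoint from the closed set $C_{0,\theta_N}$, so $d(q',C_0)>\theta_N$ for every $q'\in B^{\mathrm{out}}$; writing $\bp$ as a limit of such $q'$ (possible since $\bp\in\overline{B^{\mathrm{out}}}$) and using continuity of $x\mapsto d(x,C_0)$ gives $d(\bp,C_0)\ge\theta_N$. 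For the distance to $D_2$ I would combine the upper bound with the separation $d(C_0,D_2)\ge\xi$: $d(\bp,D_2)\ge d(C_0,D_2)-d(\bp,C_0)\ge\xi-2\theta_N>\xi-2\cdot\tfrac\xi3=\tfrac\xi3$, where the strict inequality is exactly the condition $\theta_N<\xi/3$ imposed when $N$ was fixed.

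I do not expect a genuine obstacle: the only step needing care is the first paragraph, namely extracting cleanly from the (mildly intricate) parity conditions that each edge of $\cE$ separates precisely two bricks — exactly one of which meets $C_{0,\theta_N}$ — and that every point of the edge lies in the closure of both. Everything after that is two lines of triangle inequalities together with the bookkeeping inequality $\theta_N<\xi/3$. To keep the write-up short I would carry out the horizontal, $j-i$ even case in detail and remark that the remaining three cases are identical after relabeling indices.
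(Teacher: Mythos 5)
Your proof is correct, and it follows the same geometric route as the paper: each edge lies on the common boundary of the two bricks invoked in its definition, exactly one of which meets the thickened set; the diameter bound $\theta_N$ plus a triangle inequality gives the upper bound, disjointness of the other (open) brick from the thickened set plus continuity of $x\mapsto d(x,C_0)$ gives the lower bound, and the separation $d(C_0,D_2)\geq\xi$ together with $\theta_N<\xi/3$ gives the bound away from $D_2$.

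One point of divergence worth recording. The paper's edge definition literally refers to ``$C_{1,\xi/3}$'' (a typo), and the paper's own proof of this claim reads it as $C_{0,\xi/3}$, which yields $d(\bp,C_0)\in[\xi/3,\,\xi/3+\theta_N]$ --- bounds that satisfy ``at least $\theta_N$'' but do \emph{not} give the stated upper bound $2\theta_N$. You instead read the definition as intersection with $C_{0,\theta_N}$, and that is the reading consistent with the claim as stated, with the proofs of the degree claims (which explicitly use $C_{0,\theta_N}$), and with the final limiting argument, which needs the endpoints to be within $2\theta_N$ of $C_0$ so that their limits as $N\to\infty$ land on $\partial C_0$. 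So your constants are the ones the rest of the argument actually requires. Two small caveats in your write-up: the closed $\tfrac2N\times\tfrac1N$ box has diameter exactly $\theta_N$ (``at most'' is fine), and in the lower-bound step the disjointness hypothesis only controls points of $B^{\mathrm{out}}$ inside $[0,1]\times[0,2]$ (since $C_{0,\theta_N}$ is defined as a subset of that rectangle), so one should approach $\bp$ through in-domain points of $B^{\mathrm{out}}$; the edge-exclusion conditions (i) guarantee such points exist, and your remark about clipped bricks essentially covers this.
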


\begin{proof}[Proof of Claim \ref{claim-edges-far-from-sets}] 
Note each edge is contained in the boundary of each of the rectangles invoked in its definition. 
That is, for horizontal edges $E^H_{ij}$ with $j- i$ even, we have 
$
E^H_{ij} \in \Big[\frac iN, \frac {i+2}N\Big] \times \Big[ \frac jN, \frac{j+1}N\Big]$ and $\Big[\frac {i-1}N, \frac {i+1}N\Big] \times \Big[ \frac {j-1}N, \frac jN\Big]$, and for $j - i$ odd we have $E_{ij}^H \in  \Big[\frac {i-1}N, \frac {i+1}N\Big] \times \Big[ \frac jN, \frac{j+1}N\Big]$ and $\Big[\frac {i}N, \frac {i+2}N\Big] \times \Big[ \frac {j-1}N, \frac jN\Big]$. 
For vertical edges, we have $E^V_{ij} \in \Big[\frac{i - 2}N, \frac iN\Big]\times \Big[\frac jN, \frac{j+1}N\Big]$ and $\Big[\frac iN, \frac {i+2}N\Big]\times \Big[\frac jN, \frac{j+1}N\Big]$.
Thus, all edges $E \in \cE$ are contained in the boundary of a rectangle which does not intersect $C_{0,\xi/3}$, so that all $\bp \in E$ are distance at least $\xi/3 > \theta_N$ from $C_0$.
Also, all edges $E \in \cE$ are contained in the boundary of a rectangle of diameter $< \theta_N$ with non-empty intersection with $C_{0,\xi/3}$. 
Because every point of $C_{0,\xi/3}$ is distance at most $\xi/3$ from $C_0$, we see that all $\bp \in E$ are distance at most $\xi/3 + \theta_N < 2\xi/3$ from $C_0$. 
Because $C_0$ and $D_2$ are separated by distance $\xi$, all $\bp \in E$ are distance at least $\xi/3$ from $D_2$.
We have established Claim \ref{claim-edges-far-from-sets}.
\end{proof}

\begin{claim}\label{claim-degree-is-0-or-2}
    For $i \neq 0$ or $N$ and $j \neq 0$ or $2N$, the vertex $\bv_{ij}$ is the endpoint of either 0 or 2 edges in $\cE$. (That is, this applies to vertices not on the boundary of $[0,1] \times [0,2])$.
\end{claim}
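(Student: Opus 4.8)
The plan is to unwind the definition of $\cE$ at a fixed interior vertex $\bv_{ij}$ (so $1\leq i\leq N-1$ and $1\leq j\leq 2N-1$) and exploit the parity of $j-i$. First I would observe that exactly three bricks of the running--bond tiling meet at $\bv_{ij}$: the bricks in the row between heights $j/N$ and $(j+1)/N$ start at $x$--indices congruent to $j\bmod 2$, and those in the row between $(j-1)/N$ and $j/N$ start at $x$--indices congruent to $(j-1)\bmod 2$, so exactly one of these two rows has a brick corner at $x=i/N$. When $j-i$ is even there are two bricks $B_{\mathrm{UL}},B_{\mathrm{UR}}$ above $\bv_{ij}$ and one brick $B_{\mathrm D}$ below it; when $j-i$ is odd there are two below (call them $B_{\mathrm{DL}},B_{\mathrm{DR}}$) and one above ($B_{\mathrm U}$). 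The vertical segment running through the middle of the ``lone'' brick is never a brick interface, which is exactly why the corresponding candidate edge is excluded by the parity condition in the definition of vertical edges (e.g.\ $E^V_{i,j-1}$ requires $(j-1)-i$ even, impossible when $j-i$ is even).

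Next, in the case $j-i$ even, I would check by reading off the defining rectangles that the three remaining candidate edges are precisely the three pairwise interfaces of $B_{\mathrm{UL}},B_{\mathrm{UR}},B_{\mathrm D}$: one gets $E^H_{i-1,j}\in\cE$ iff exactly one of $B_{\mathrm{UL}}$ and $B_{\mathrm D}$ meets $C_{0,\xi/3}$, $E^H_{i,j}\in\cE$ iff exactly one of $B_{\mathrm{UR}}$ and $B_{\mathrm D}$ does, and $E^V_{i,j}\in\cE$ iff exactly one of $B_{\mathrm{UL}}$ and $B_{\mathrm{UR}}$ does. Writing $\epsilon_{\mathrm{UL}},\epsilon_{\mathrm{UR}},\epsilon_{\mathrm D}\in\{0,1\}$ for the indicators that the respective (open) brick meets $C_{0,\xi/3}$, the number of edges of $\cE$ incident to $\bv_{ij}$ equals $(\epsilon_{\mathrm{UL}}\oplus\epsilon_{\mathrm D})+(\epsilon_{\mathrm{UR}}\oplus\epsilon_{\mathrm D})+(\epsilon_{\mathrm{UL}}\oplus\epsilon_{\mathrm{UR}})$, i.e.\ the number of bichromatic edges of the triangle on three vertices colored by $\epsilon_{\mathrm{UL}},\epsilon_{\mathrm{UR}},\epsilon_{\mathrm D}$. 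A two--coloring of a $3$--cycle has either $0$ bichromatic edges (the monochromatic colorings) or exactly $2$ (any $2$--$1$ split), which gives the claim. The case $j-i$ odd is identical after reflecting in a horizontal line, with $B_{\mathrm{UL}},B_{\mathrm{UR}},B_{\mathrm D}$ replaced by $B_{\mathrm{DL}},B_{\mathrm{DR}},B_{\mathrm U}$ and $E^V_{i,j}$ replaced by $E^V_{i,j-1}$; this is the combinatorial mechanism behind the brick construction of a discrete boundary curve used in \cite[Theorem 4.4]{Moise1977Geometric3}.

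I expect the only delicate point to be bookkeeping near the boundary of $[0,1]\times[0,2]$, where one of the three bricks protrudes outside the rectangle. Because $C_{0,\xi/3}\subseteq[0,1]\times[0,2]$ by construction, a protruding brick meets $C_{0,\xi/3}$ if and only if its intersection with $[0,1]\times[0,2]$ does, so the indicators $\epsilon$ are unaffected and the counting argument goes through verbatim; one just needs the hypotheses $i\neq 0,N$ and $j\neq 0,2N$ to guarantee that all four candidate lattice edges $E^H_{i-1,j},E^H_{i,j},E^V_{i,j-1},E^V_{i,j}$ have indices in the ranges permitted by the definition of $\cE$. That verification is routine, and it is the step I would carry out most carefully.
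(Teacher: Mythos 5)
Your proposal is correct and follows essentially the same route as the paper: parity of $j-i$ rules out one of the four candidate edges, and each of the remaining three edges belongs to $\cE$ exactly when the two adjacent bricks (of the three meeting at $\bv_{ij}$) disagree about intersecting $C_{0,\xi/3}$, so the incident-edge count is the number of bichromatic sides of a two-colored triangle, i.e.\ $0$ or $2$. Your triangle/XOR phrasing and the remark about bricks protruding past the boundary are just cosmetic refinements of the paper's case analysis.
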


\begin{proof}[Proof of \ref{claim-degree-is-0-or-2}] The only edges which possibly have endpoint $\bv_{ij}$ are vertical edges $E^V_{ij},E^V_{i(j-1)}$ and horizontal edges $E^H_{ij},E^H_{(i-1)j}$.
First, consider that $j - i$ is even. Then $E^V_{i(j-1)} \not \in \cE$ because $j - 1 - i$ is not even.
There are three rectangles whose intersection with $C_{0,\theta_N}$ determine the membership of the remaining three edges, $E^V_{ij}$, $E^H_{ij}$, and $E^H_{(i-1)j}$, in $\cE$. They are $\Big(\frac{i - 2}N, \frac iN\Big)\times \Big(\frac jN, \frac{j+1}N\Big)$, $ \Big(\frac iN, \frac {i+2}N\Big)\times \Big(\frac jN, \frac{j+1}N\Big)$, and $ \Big(\frac {i-1}N, \frac {i+1}N\Big)\times \Big(\frac {j-1}N, \frac jN\Big)$. 
The edge $E^V_{ij}$ is in $\cE$ if exactly one of the first two rectangles has non-empty intersection with $C_{0,\theta_N}$; the edge $E^H_{ij}$ is in $\cE$ if exactly one of the last two has non-empty intersection with $C_{0,\theta_N}$; and the edge $E^H_{(i-1)j}$ is in $\cE$ if exactly one of the first and last rectangle has has non-empty intersection with $C_{0,\theta_N}$. 
Thus, if exactly one or two of the three rectangles has non-empty intersection with $C_{0,\theta_N}$, then two of the edges $E^V_{ij},E^H_{ij},E^H_{(i-1)j}$ is in $\cE$; otherwise, none of these edges are in $\cE$. 
The case $j-i$ odd is similar.
This establishes Claim \ref{claim-degree-is-0-or-2}.
\end{proof}

\begin{claim}\label{claim-degree-is-0-or-1}
    If $i = 0$ or $N$ or $j = 0$ or $2N$, then the vertex $\bv_{ij}$ is the endpoint of either 0 or 1 edges in $\cE$.
\end{claim}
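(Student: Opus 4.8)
The plan is to prove Claim \ref{claim-degree-is-0-or-1} by the same finite case check used for Claim \ref{claim-degree-is-0-or-2}, now exploiting the index restrictions that are built into the definition of $\cE$. First I would record the elementary fact that the only lattice segments incident to a vertex $\bv_{ij}$ are the at most four edges $E^H_{ij}$, $E^H_{(i-1)j}$, $E^V_{ij}$, and $E^V_{i(j-1)}$ — the ones going right, left, up, and down from $\bv_{ij}$, respectively. Hence the degree of $\bv_{ij}$ in $(\cV,\cE)$ is at most four, and it suffices to show that when $\bv_{ij}$ lies on the boundary of $[0,1]\times[0,2]$ at most one of these four candidates belongs to $\cE$, and none does when $\bv_{ij}$ is one of the four corners. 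The mechanism is that the conditions labelled (i) in the definitions of horizontal and vertical edges are exactly designed to omit edges lying along a side of $[0,1]\times[0,2]$ (for horizontal edges the restriction $j\in\{1,\dots,2N-1\}$ removes the top and bottom sides, and for vertical edges the restriction removes the left and right sides), so every boundary-lying candidate is automatically excluded.

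Next I would go through the four sides of the rectangle in turn. On the left side ($i=0$, $0<j<2N$): the edge $E^H_{(i-1)j}=E^H_{(-1)j}$ does not exist, the two vertical candidates $E^V_{0j},E^V_{0(j-1)}$ lie along the left side and are excluded, so only $E^H_{0j}$ can survive. On the right side ($i=N$): $E^H_{Nj}$ is ruled out because its index violates $i\le N-1$, the two vertical candidates lie along the right side and are excluded, so only $E^H_{(N-1)j}$ can survive. On the bottom side ($j=0$): the two horizontal candidates $E^H_{i0},E^H_{(i-1)0}$ lie along the bottom and are excluded, the edge $E^V_{i(-1)}$ does not exist, so only $E^V_{i0}$ can survive. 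On the top side ($j=2N$): the two horizontal candidates are excluded as before, the edge $E^V_{i(2N)}$ does not exist, so only $E^V_{i(2N-1)}$ can survive. Thus each non-corner boundary vertex has at most one incident edge, i.e.\ degree $0$ or $1$. Finally, at a corner — say $\bv_{00}$ — the only candidates are $E^H_{00}$ and $E^V_{00}$, both of which lie on a side of the rectangle and are excluded; the same check at $\bv_{N0}$, $\bv_{0(2N)}$, and $\bv_{N(2N)}$ shows every corner vertex has degree $0$.

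There is no real obstacle here: the argument is purely combinatorial, and the only point that needs care is matching the asymmetric index ranges in the definitions of $E^H_{ij}$ and $E^V_{ij}$ to the corresponding side of the rectangle and treating the four corners as a separate case where the count drops from one to zero. Combined with Claim \ref{claim-degree-is-0-or-2}, this shows that in the graph $(\cV,\cE)$ every interior vertex has degree $0$ or $2$ and every boundary vertex has degree $0$ or $1$, so that each connected component is either a cycle contained in the interior or a simple path whose two endpoints lie on the boundary of $[0,1]\times[0,2]$. This is precisely the structure needed to extract the boundary-to-boundary path along which, by an intermediate-value argument applied to $r_p\circ\iota$ and $w_p\circ\iota$, one produces a point satisfying \eqref{solution-in-a-and-b}.
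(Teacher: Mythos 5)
Your proof is correct and follows essentially the same route as the paper's, which simply notes that for $i=0$ the only candidate edge not violating the boundary restrictions in condition (i) is $E^H_{0j}$ and declares the other sides similar; you have just carried out the identical case check explicitly for all four sides and the corners. The only point worth noting is that you (rightly) read the index restriction for vertical edges according to its stated intent of excluding the left and right sides, i.e.\ $i\in\{1,\dots,N-1\}$, $j\in\{0,\dots,2N-1\}$, since the ranges as printed in the paper have $i$ and $j$ transposed.
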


\begin{proof}[Proof of Claim \ref{claim-degree-is-0-or-1}] For $i = 0$, it is easy to check that the only edge which could be in $\cE$ without violating conditions (i) is $E^H_{0j}$. The other cases are similar, establishing Claim \ref{claim-degree-is-0-or-1}.
\end{proof}

Though we have defined $\cV$ and $\cE$ as sets of points and line segments in the plane, we may think of them as vertices and edges in a graph $\cG = (\cV,\cE)$.
Claims \ref{claim-degree-is-0-or-2} and \ref{claim-degree-is-0-or-1} establish by elementary graph theory that the graph is partitioned into connected components, each of which is a path whose endpoints are on the boundary of $[0,1]\times[0,2]$ and whose other vertices are in the interior of $[0,1]\times[0,2]$.
These paths contain each of the vertices in the path exactly once.

\begin{claim}\label{claim-path-connects-left-and-right}
    There is a path $\bp_0,\ldots,\bp_K$ in the graph $\cG$ such that $ \bp_0 \in \{0\} \times [0,2]$ and $\bp_K \in \{1\} \times [0,2]$, the left and right boundary of $[0,1]\times [0,2]$.
\end{claim}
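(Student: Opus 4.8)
The plan is to prove Claim \ref{claim-path-connects-left-and-right} by a parity/crossing argument, which is the two-dimensional analogue of the intermediate value theorem already being set up in the proof. First I would use the structure established in Claims \ref{claim-degree-is-0-or-2} and \ref{claim-degree-is-0-or-1}: the graph $\cG = (\cV,\cE)$ has all interior vertices of degree $0$ or $2$ and all boundary vertices of degree $0$ or $1$, so its connected components are disjoint simple paths whose two endpoints lie on the boundary of $[0,1]\times[0,2]$ (isolated vertices being degenerate ``paths'' with no edges, which we may ignore). The key point is then to show that at least one of these paths has one endpoint on the left edge $\{0\}\times[0,2]$ and the other on the right edge $\{1\}\times[0,2]$.

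The main step is a parity count of edge-endpoints along the bottom edge $D_0 = [0,1]\times\{0\}$ (equivalently, a count of how the paths ``cross'' the region near $C_0$). Recall $D_0 \subset C_0$ by (\ref{w-on-lower-and-upper-edge}), and by Claim \ref{claim-edges-far-from-sets} every edge of $\cE$ is at distance between $\theta_N$ and $2\theta_N$ from $C_0$ and at distance at least $\xi/3$ from $D_2$; in particular no edge of $\cE$ touches $D_0$ or $D_2$, so the path endpoints on the boundary all lie on the left or right edges. I would count, for each lattice column boundary $x = i/N$ with $1\le i\le N-1$, the number of vertical edges $E^V_{i'j} \in \cE$ crossing it — more precisely, I would track the parity of the number of bricks in the tiling meeting $C_{0,\xi/3}$ in a given row, row by row from bottom to top. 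Because the bottom row of bricks all meet $C_{0,\xi/3}$ (as $D_0\subset C_0$) while the top row meets none of them (as $C_0$ and $D_2$ are separated by distance $\xi > 2\theta_N$), the boundary delineated by $\cE$ must separate $D_0$ from $D_2$ inside $[0,1]\times[0,2]$, and hence some path of $\cG$ runs from the left edge to the right edge. This is essentially the standard fact that the boundary of the union of a tiling-approximation of a closed set $C_0$ which contains the bottom edge and misses the top edge contains an arc joining the left edge to the right edge; one reference for the relevant planar topology is \cite[Theorem 4.4]{Moise1977Geometric3}, whose ``brick'' construction we have imitated precisely so this argument applies.

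Concretely, I would argue as follows. Let $U$ be the union of the closed bricks in the tiling that meet $C_{0,\xi/3}$. Then $U$ is a closed set containing $D_0$ and disjoint from $D_2$, and $\cE$ is exactly the set of edges of $\partial U$ lying in the open rectangle $(0,1)\times(0,2)$ together with the portions of $\partial U$ on the left and right edges being the endpoints. Since $U \supset D_0$ and $U \cap D_2 = \emptyset$, the frontier $\partial U$ must contain a connected arc separating $D_0$ from $D_2$ within $[0,1]\times[0,2]$; any such arc, being composed of lattice edges and having both ends on $\partial([0,1]\times[0,2])$ but touching neither $D_0$ nor $D_2$, must have both ends on $\{0\}\times[0,2] \cup \{1\}\times[0,2]$, and if both ends were on the same (say left) edge the arc would bound a region not separating $D_0$ from $D_2$ — a contradiction. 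Hence there is a path $\bp_0,\dots,\bp_K$ in $\cG$ with $\bp_0 \in \{0\}\times[0,2]$ and $\bp_K \in \{1\}\times[0,2]$, as claimed.

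The main obstacle is making the planar-topology separation argument fully rigorous at the level of the lattice graph — that is, carefully justifying that $\partial U$ contains a left-to-right path without hand-waving about Jordan curves. I expect to handle this by the purely combinatorial parity argument (tracking the parity of the number of ``filled'' bricks per row and invoking that the degree-$2$ structure forces paths to continue until they hit the boundary), rather than invoking continuum topology; this keeps everything elementary and finite. The rest — reading off that along this path $w_p\circ\iota$ transitions appropriately and then locating a zero of $r_p\circ\iota$ on it via the one-dimensional intermediate value theorem applied to $r_p\circ\iota$ restricted to the path — will be straightforward and is presumably carried out in the steps following Claim \ref{claim-path-connects-left-and-right}.
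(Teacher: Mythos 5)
Your overall strategy (a discrete separation/crossing argument on the brick tiling) is in the right spirit, but the central step is asserted rather than proved, and as written it is circular. The assertion that ``the frontier $\partial U$ must contain a \emph{connected} arc separating $D_0$ from $D_2$'' is exactly the nontrivial content of Claim \ref{claim-path-connects-left-and-right}: it is true that the set $\partial U$ separates $D_0$ from $D_2$, but $\partial U$ decomposes into several disjoint paths of $\cG$, and the claim is precisely that a \emph{single} component spans from the left edge to the right edge; this does not follow from set-level separation without further argument. Your subsequent step (``if both ends of the arc were on the same edge, it would bound a region not separating $D_0$ from $D_2$'') presupposes that the particular path you follow is already a separating arc, which is what needed to be shown. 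Moreover, the combinatorial substitute you propose --- tracking ``the parity of the number of filled bricks per row'' --- is not the right invariant: the number of filled bricks in a row carries no direct information about how many components of $\cE$ cross a given horizontal or vertical line, so the deferment of the topology to ``a purely combinatorial parity argument'' leaves the key step unproved.

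For comparison, the paper's proof is constructive and local: it identifies the \emph{topmost} horizontal edge $E^H_{0j_{\mathsf{max}}}$ incident to the left boundary (which exists because the bottom-left brick meets $C_{0,\theta_N}$ while the top-left brick does not), follows the unique path of $\cG$ starting there, excludes endpoints on $D_0$ and $D_2$ via Claim \ref{claim-edges-far-from-sets}, and excludes a return to the left boundary by closing the path into a polygon with a segment of the left edge and invoking the polygonal Jordan curve theorem \cite[Theorem 2.1]{Moise1977Geometric3} together with the connectedness of $C_{0,\xi/3}$ (which contains $D_0$ and meets the witness rectangle forcing $E^H_{0j_{\mathsf{max}}}\in\cE$, yet misses the polygon) to reach a contradiction. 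If you want to avoid continuum topology altogether, a correct parity argument is available but it should count \emph{path endpoints on the left boundary}, not bricks per row: an endpoint there corresponds to an edge $E^H_{0j}\in\cE$, i.e.\ a filled/unfilled alternation in the column of bricks adjacent to the left edge read bottom to top; since the bottom brick is filled and the top brick is not, this count is odd, and since every path has exactly two endpoints, all of which lie on the left or right boundary (by your own correct observation via Claim \ref{claim-edges-far-from-sets} and Claims \ref{claim-degree-is-0-or-2}--\ref{claim-degree-is-0-or-1}), some path must have one endpoint on each side. Either of these completions would close the gap; as submitted, the proof is incomplete.
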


\begin{proof}[Proof of Claim \ref{claim-path-connects-left-and-right}] Observe that $\Big(0,\frac2N\Big) \times \Big(0,\frac1N\Big)$ intersects $C_{0,\theta_N}$ because it is distance 0 from $[0,1]\times \{0\} = D_0\subset C_0$. 
Also, $\Big(-\frac1N,\frac1N\Big) \times \Big( \frac{2N-1}N, 2 \Big)$ does not intersect $C_{0,\theta_N}$ because it has diameter $\theta_N < \xi/3$ and intersects $D_2$, which has distance at least $\xi$ from $C_0$.
Thus, there is a $j_{\mathsf{max}}$ the maximal value of $j$ such that $\Big(\frac{i(j)}{N}, \frac{i(j) + 2}N\Big) \times \Big(\frac{j - 1}N, \frac jN\Big)$ has non-empty intersection with $C_{0,\theta_N}$, where we have denoted $i(j) = -1$ if $j$ is even and $i(j) = 0$ if $j$ is odd.
By the definition of $\cE$, we see that $E^H_{0j_{\mathsf{max}}} \in \cE$ and $j_{\mathsf{max}}$ is the maximal $j$ for which this is true.
Let $\bp_0 = \bv_{0j_{\mathsf{max}}}$ and $\bp_0,\bp_1,\ldots,\bp_K$ be the connected path in $\cG$ to which $\bp_0$ belongs.
We claim $\bp_K \in \{1\} \times [0,2]$. 
We have already established that $\bp_K$ is on the boundary of $[0,1] \times [0,2]$, so we only need to eliminate the possibility that it belongs to the top, bottom, or left boundaries.
Because $\bp_K$ is contained in an edge $E \in \cE$, we have $\bp_K \not \in D_2$, the top boundary, by Claim \ref{claim-edges-far-from-sets}.
Similarly, $\bp_K \not \in D_0$, the bottom boundary, because $D_0 \subset C_0$ and, by Claim \ref{claim-edges-far-from-sets}, $\bp_K$ is distance at least $\theta_N$ from $C_0$.
Finally, consider that $\bp_K$ were in $\{0\} \times [0,2]$, the left boundary. Then the final edge in the path is $E^V_{0j}$ for some $j \neq j_{\mathsf{max}}$. By the definition of $j_{\mathsf{max}}$, we in fact have $j < j_{\mathsf{max}}$. Also, $j > 0$ because otherwise $\bp_{K-1}$ is also on the boundary of $[0,1]\times[0,2]$.
If we connect $\bp_K = \left(0,\frac jN\right)$ and $\bp_0 = \left(0,\frac{j_{\mathsf{max}}}N\right)$ by a line-segment, then $\bp_0,\bp_1,\ldots,\bp_K$ are the vertices of a polygon $P$ (formally, the union of line segments connecting the adjacent vertices and $\bp_0,\bp_K$). 
By \cite[Theorem 2.1]{Moise1977Geometric3}, $\reals^2 \setminus P$ has two connected components which are disconnected from each other, one of which is bounded and one of which is unbounded.
It is straightforward to check that the open rectangle $\Big(0,\frac{1}N\Big)\times\Big(\frac{j_{\mathsf{max}}-1}{N},\frac{j_{\mathsf{max}}}N\Big)$ is in the bounded component,\footnote{This can be established rigorously by computing the ``index'' in the sense of \cite[Lemma 2.2]{Moise1977Geometric3} of a point $\bp$ in its interior. Compute the index via a horizontal ray which starts at $\bp$ and points left. This ray intersect the polygon in 1 point, so has index 1. See \cite{Moise1977Geometric3} for details.} and $D_0$ is in the unbounded component (because $j > 0$).
But $\Big(\frac{i(j_{\mathsf{max}})}{N},\frac{i(j_{\mathsf{max}}) + 2}N\Big)\times\Big(\frac{j_{\mathsf{max}}-1}{N},\frac{j_{\mathsf{max}}}N\Big)$ intersects $C_{0,\xi/3}$ but not the polygon, and $D_0 \subset C_{0,\xi/3}$ and $C_{0,\xi/3}$ is connected, which contradicts that $\Big(0,\frac{1}N\Big)\times\Big(\frac{j_{\mathsf{max}}-1}{N},\frac{j_{\mathsf{max}}}N\Big)$ and $D_0$ are contained in disconnected components of $\reals^2 \setminus P$. Thus, we conclude $\bp_K \not \in \{0\} \times[0,2]$, the left boundary. We have established Claim \ref{claim-path-connects-left-and-right}.
\end{proof}

Now we construct such a path for a sequence $N \rightarrow \infty$. That is, for each $N$ we have a path $\bp_0^{(N)},\ldots,\bp_{K_N}^{(N)}$ such that $\bp_0^{(N)} \in \{0\}\times[0,2]$, $\bp_{K_N}^{(N)} \in \{1\} \times [0,2]$, and whose edges satisfy Claim \ref{claim-edges-far-from-sets}.
By compactness, we may take a subsequence $\{N(\ell)\}$ of $\{N\}$ such that $\bp_{N(\ell)0} \rightarrow \bp_{\mathsf{left}}$ and $\bp_{N(\ell)K_{N(\ell)}} \rightarrow \bp_{\mathsf{right}}$ for some $\bp_{\mathsf{left}},\bp_{\mathsf{right}}$. Because by Claim \ref{claim-edges-far-from-sets} the points $\bp_{N(\ell)0}$ and $\bp_{N(\ell)K_{N(\ell)}}$ are between distance $\theta_N$ and $2\theta_N$ from $C_0$, we have that $\bp_{\mathsf{left}},\bp_{\mathsf{right}} \in \partial C_0$. Thus, $w_p \circ \iota(\bp_{\mathsf{left}}) = w_p \circ \iota(\bp_{\mathsf{right}}) = 1$. 
Thus, by  Lemmas \ref{lem-oracle-not-too-bad} and \ref{lem-oracle-not-too-good}, we have $r_p \circ \iota (\bp_{\mathsf{left}}) < 0$ and $r_p \circ \iota(\bp_{\mathsf{right}}) > 0$.
By the continuity of $r_p \circ \iota$, we have for sufficiently large $\ell$ that $r_p \circ \iota (\bp_{N(\ell)0}) > 0$ and $r_p \circ \iota(\bp_{N(\ell)K_{N(\ell)}}) < 0$.
Then, by the Intermediate Value Theorem along the path $\bp_{N(\ell)0},\ldots,\bp_{N(\ell)K_{N(\ell)}}$, we have for each $\ell$ sufficiently large a point $\bp_{N(\ell)}$ on the path such that $r_p \circ\iota (\bp_{N(\ell)}) = 0$.
By compactness, there exists a further subsequence $\{N'(\ell)\}$ of $\{N(\ell)\}$ such that $\bp_{N'(\ell)} \rightarrow \bp^*$. 
By continuity, we have $r_p \circ \iota(\bp^*) = 0$. 
Further, because $\bp_{N'(\ell)}$ is between distance $\theta_{N'(\ell)}$ and $2\theta_{N'(\ell)}$ from $C_0$, we have in fact that $\bp^* \in \partial C_0$, whence $w_p \circ \iota(\bp^*) = 1$.
With $(\tau,\lambda) = \iota(\bp^*)$, we have that $(\tau,\lambda) \in [\tau_{\mathsf{reg,cvx}} - \eps,\tau_{\mathsf{max}}] \times [1/2,\lambda_{\mathsf{max}}]$ and $\tau,\lambda$ solves \eqref{se-var-fixed-pt-finite}, \eqref{se-lam-fixed-pt-finite}, as desired.
\end{proof}

\section{Proofs for Section \ref{sec-beyond-square-error}: beyond mean square error}
\label{app:app-proofs-for-beyond-square-error}

\begin{proof}[Proof of Proposition \ref{prop-beyond-squared-error}]
    By the strong stationarity of $\tau,\lambda,\delta,\cT$, we have by \eqref{r-finite-p-def}, \eqref{summary-functions-fixed-asymptotic}, \eqref{fixed-pt-prior-asymptotic}, that $\E_{\tilde \bbeta_0,\bz}\left[\|\mathsf{prox}[\lambda \rho_p](\tilde\bbeta_0 + \tau \bz) - \tilde\bbeta_0\|^2\right]$ is bounded, where $\tilde \beta_{0j} \stackrel{\mathrm{iid}}\sim \pi/\sqrt{p}$.
    By Jensen's, also $\E_{\tilde \bbeta_0,\bz}\left[\|\mathsf{prox}[\lambda \rho_p](\tilde\bbeta_0 + \tau \bz) - \tilde\bbeta_0\|\right]$ is bounded.
    By the triangle inequality that 
    \begin{align*}
    \|\mathsf{prox}[\lambda \rho_p](\bzero)\| &\leq \|\mathsf{prox}[\lambda \rho_p](\tilde\bbeta_0 + \tau\bz) - \tilde\bbeta_0\| + \|\tilde\bbeta_0\|\\
    &\qquad+ \|\mathsf{prox}[\lambda \rho_p](\bzero) - \mathsf{prox}[\lambda \rho_p](\tilde\bbeta_0 + \tau \bz)\| \\
    &\leq  \|\mathsf{prox}[\lambda \rho_p](\tilde\bbeta_0 + \tau\bz) - \tilde\bbeta_0\| + \|\tilde\bbeta_0\|+ \|\tilde\bbeta_0 + \tau \bz\|,
    \end{align*}
    where in the second inequality we have applied \eqref{prox-is-lipschitz} from Appendix \ref{app-proximal-operator-identities}. 
    Taking expectations on both sides, we get that $\mathsf{prox}[\lambda \rho_p](\bzero)$ is bounded.
    Further, again by \eqref{prox-is-lipschitz} from Appendix \ref{app-proximal-operator-identities}, we have that the sequence (in $p$) of functions $(\bx_1,\bx_2) \mapsto (\bx_1, \mathsf{prox}[\lambda \rho_p](\bx_2))$ is uniformly pseudo-Lipschitz of order 1 and bounded at $(\bzero,\bzero)$. 
    Then, by Lemma \ref{lem-pseudo-lipschitz-closed-under-composition} from Appendix \ref{app-useful-tools}, we have the sequence of functions $(\bx_1,\bx_2) \mapsto \ell_p(\bx_1,\mathsf{prox}[\lambda \rho_p](\bx_2))$ is uniformly pseudo-Lipschitz of order $k$. 
    By Proposition  \ref{prop-strongly-convex-loss}(iii) applied to $\tilde \tau, \tilde \lambda, \delta,\tilde \cT$, we then have
    \begin{equation}\label{big-noise-arbitrary-loss}
    \ell_p\left(\bbeta_0,\mathsf{prox}[\lambda \rho_p] \left(\widehat \bbeta_{\mathsf{cvx}} +  2\lambda\frac{\bX^\mathsf{T}(\by - \bX \widehat \bbeta_{\mathsf{cvx}})}{n}\right)\right) \stackrel{\mathrm{p}}\simeq \E_{\bz}\left[\ell_p\left(\bbeta_0,\bbeta_0 + \tau \bz\right)\right],
    \end{equation}
    where we have used that either $\delta > 1$ or $\{\rho_p\} \in \cC_*$.
    Further, by Lemma \ref{lem-pseudo-lipschitz-closed-under-random-element}, the sequence of functions $(\bx_1,\bx_2) \mapsto \E_{\bz} \left[\ell_p(\bx_1,\mathsf{prox}[\lambda \rho_p](\bx_2 + \sqrt{\tau^2 - \tilde \tau^2} \bz)\right]$ is uniformly pseudo-Lipschitz of order $k$. 
    By Proposition \ref{prop-strongly-convex-loss}(iii) applied to $\tilde \tau, \tilde \lambda, \delta,\tilde \cT$, we then have under either (i) the HDA and RSN assumption, or (ii) the HDA and DSN assumptions if $\tilde \rho_p$ are symmetric, that
    \begin{align}
    &\E_{\bz}\left[\ell_p\left(\bbeta_0 , \mathsf{prox}\left[\lambda  \rho_p\right]\left(\widehat {\tilde \bbeta}_{\mathsf{cvx}} +  2\lambda\frac{\bX^\mathsf{T}(\by - \bX \widehat{\tilde \bbeta}_{\mathsf{cvx}})}{n} + \sqrt{\tau^2 - \tilde \tau^2}\bz\right) \right)\right]\nonumber \\
    &\qquad\qquad\qquad\qquad\qquad\qquad\qquad\qquad\qquad\stackrel{\mathrm{p}}\simeq \E_{\bz_1,\bz_2}\left[\ell_p\left(\bbeta_0,\bbeta_0 + \tilde \tau \bz_1 + \sqrt{\tau^2 - \tilde \tau^2} \bz_2\right)\right]\nonumber\\
    &\qquad\qquad\qquad\qquad\qquad\qquad\qquad\qquad\qquad= \E_{\bz}\left[\ell_p\left(\bbeta_0,\bbeta_0 + \tau \bz \right)\right],\label{expected-noise-expansion-arbitrary-loss}
    \end{align}
    where $\bz_1,\bz_2 \sim \mathsf{N}(\bzero,\bI_p/p)$ are independent and we have used that either $\delta > 1$ or $\{\rho_p\} \in \cC_*$.
    By Lemma \ref{lem-emp-to-exp-non-symm}, we have that 
    \begin{align}
        &\E_{\bz}\left[\ell_p\left(\bbeta_0 , \mathsf{prox}\left[\lambda  \rho_p\right]\left(\widehat {\tilde \bbeta}_{\mathsf{cvx}} +  2\lambda\frac{\bX^\mathsf{T}(\by - \bX \widehat{\tilde \bbeta}_{\mathsf{cvx}})}{n} + \sqrt{\tau^2 - \tilde \tau^2}\bz\right) \right)\right] \nonumber \\
        &\qquad\qquad\qquad\qquad\stackrel{\mathrm{p}}\simeq \ell_p\left(\bbeta_0 , \mathsf{prox}\left[\lambda  \rho_p\right]\left(\widehat {\tilde \bbeta}_{\mathsf{cvx}} +  2\lambda\frac{\bX^\mathsf{T}(\by - \bX \widehat{\tilde \bbeta}_{\mathsf{cvx}})}{n} + \sqrt{\tau^2 - \tilde \tau^2}\bz\right) \right).\label{noise-expansion-arbitrary-loss}
    \end{align}
    Combining \eqref{big-noise-arbitrary-loss}, \eqref{expected-noise-expansion-arbitrary-loss}, and \eqref{noise-expansion-arbitrary-loss}, and using the definition \eqref{post-processing}, we get \eqref{post-processing-loss-equivalent}, as desired.
\end{proof}

\begin{proof}[Proof of Theorem \ref{conj-beyond-squared-error}]
    By the same argument as in Claim \ref{bounded-minimization-is-enough}, 
    it is enough to show \eqref{beyond-squared-error-lower-bound} for $\{\rho_p\} \in \cC_* \cap \cB$.
    Assume for the sake of contradiction that the left-hand side of \eqref{beyond-squared-error-lower-bound} is less than the right-hand side.
    By passing to a subsequence, we 
    may assume we have $\{\rho_p\} \in \cC_*$ such that 
    \begin{equation*}
        \lim^{\mathrm{p}}_{p \rightarrow \infty} \frac1p \sum_{j=1}^p \ell\left(\sqrt{p} \beta_{0j},\sqrt{p}\widehat \beta_{\mathsf{cvx},j}\right) < \E_{\beta_0,z}[\ell(\beta_0,\eta(\beta_0 + \tau_{\mathsf{reg,cvx}}z)].
    \end{equation*}
    By Lemma \ref{lem-oracle-risk-to-cvx-bound},
    we may find a further subsequence $\{p(\ell)\}$, $\tau \geq \tau_{\mathsf{reg,cvx}}$, and $\lambda > 0$ such that with $\cT = (\pi,\{\rho_{p(\ell)}\})$,
    the quintuplet $\tau,\lambda,\delta,\gamma = 0,\cT$ is strongly stationary.
    By the KKT conditions for \eqref{linear-cvx-estimator},
    \begin{equation*}
        \widehat \bbeta_{\mathsf{cvx}} = \mathsf{prox}[\lambda \rho]\left(\widehat \bbeta_{\mathsf{cvx}} + 2\lambda \frac{\bX^\top(\by - \bX \widehat \bbeta_{\mathsf{cvx}})}{n}\right),
    \end{equation*}
    whence Proposition \ref{prop-strongly-convex-loss}(iii) implies (either under the HDA and RSN assumptions, or, if the penalties are symmetric, with the RSN assumption replaced by the DSN assumption)
    \begin{align*}
        \lim^{\mathrm{p}}_{p\rightarrow \infty} \frac1p \sum_{j=1}^p \ell\left(\sqrt{p}\beta_{0j},\sqrt{p}\widehat \beta_{\mathsf{cvx},j}\right)
            &\stackrel{\mathrm{p}}\simeq
            \E_{\bz}\left[\frac1p \sum_{j=1}^p \ell\left(\sqrt{p}\beta_{0j},\sqrt{p}\mathsf{prox}[\lambda \rho_p](\bbeta_0 + \tau_{\mathsf{reg,cvx}}\bz)_j\right)\right ]
            \\
            &\stackrel{\mathrm{p}}\simeq
            \E_{\tilde\bbeta_0,\bz}\left[\frac1p \sum_{j=1}^p \ell\left(\sqrt{p}\tilde\beta_{0j},\sqrt{p}\mathsf{prox}[\lambda \rho_p](\tilde \bbeta_0 + \tau_{\mathsf{reg,cvx}}\bz)_j\right)\right ] 
            \\
            &\geq
            \E_{\beta_0,z}[\ell(\beta_0,\eta(\beta_0 + \tau_{\mathsf{reg,cvx}}))],
    \end{align*}
    where the second inequality holds by Lemma \ref{lem-emp-to-exp-non-symm} under the DSN and RSAN assumption or by Lemma \ref{lem-pseudo-lipschitz-empirical-to-expectation} when $\rho_p$ are symmetric and the DSN assumption holds (here $\tilde \beta_{0j} \stackrel{\mathrm{iid}}\sim \pi/\sqrt{p}$;
    and the final inequality holds by the optimality of $\eta$ with respect to the loss $\ell$.
    Moreover, if $\eta \neq \mathsf{prox}[\lambda\rho_p]$, which occurs when $\eta$ is not a proximal operator, this inequality is strict.

    By Theorem \ref{thm-cvx-lower-bound}, when $\delta > 1$ the convex lower bound is strict. 
    As we saw in its proof in Section \ref{sec:tightness-cvx-lb},
    tightness holds because there exists $\{\rho_p\} \in \cC_*$ and $\lambda \geq 0$ such that with $\cT = (\pi,\{\rho_p\})$ we have that $\tau_{\mathsf{reg,cvx}},\lambda,\gamma=0,\delta,\cT$ is strongly stationary.
    Thus, for any Lipschitz $\eta'$, by Proposition \ref{prop-strongly-convex-loss} we have
    \begin{equation*}
        \lim^{\mathrm{p}}_{p \rightarrow \infty} \frac1p \sum_{j=1}^p \ell\left(\sqrt{p}\beta_{0j},\eta'\left(\sqrt{p} \widehat \bbeta_{\mathsf{cvx},j} + 2\lambda\frac{[\bX^\mathsf{T}(\by - \bX \widehat \bbeta_{\mathsf{cvx}})]_j}{n}\right)\right) = \E_{\beta_0,z}[\ell(\beta_0,\eta'(\beta_0 + \tau_{\mathsf{reg,cvx}}z))].
    \end{equation*}
    Because the set of Lipschitz functions is dense in $L_2(\pi * \normal(0,\tau_{\mathsf{reg,cvx}}^2))$,
    taking the infimum over $\eta'$ gives $\E_{\beta_0,z}[\ell(\beta_0,\eta(\beta_0,\tau_{\mathsf{reg,cvx}}z))]$ on the right-hand side.
    This completes the proof.
\end{proof}

\section{The role of the $\delta$-bounded width assumption}\label{sec-role-of-delta-bounded-width-restriction}

The primary weakness of Theorem \ref{thm-cvx-lower-bound} is its restriction to sequences of convex functions in $\cC_{\delta,\pi}$. For $\delta > 1$, this is no restriction at all.
In this section, we provide some reflection on the nature of the restriction for $\delta < 1$ and the role it plays in Theorem \ref{thm-cvx-lower-bound}.
No other sections or appendices depend upon the results in this appendix.

First, we observe that for $\delta < 1$, Theorem \ref{thm-cvx-lower-bound} does not hold  if we instead take the infimum in \eqref{cvx-lower-bound} over $\{\rho_p\} \in \cC$, the collection of all sequences of convex penalties.

\begin{claim}\label{claim-beat-statistical-risk}
    Take $\rho_p = 0$ (so $\{\rho_p\} \not \in \cC_{\delta,\pi}$).
    Under the RSN assumption, if $\delta < 1$,
    there exists a random sequence $\widehat \bbeta_{\mathsf{cvx}}$ such that for each $p$ we have $\widehat \bbeta_{\mathsf{cvx}} \in \arg\min_{\bbeta} \frac1n\|\by - \bX \bbeta\|^2 + \rho_p(\bbeta)$ with probability 1 but
    $$
    \lim^{\mathrm{p}}_{p \rightarrow \infty} \|\widehat \bbeta_{\mathsf{cvx}} - \bbeta_0\|^2 = \frac{\delta \sigma^2}{1-\delta}.
    $$
    For some such values of $\pi,\delta,\sigma$, we have $\frac{\delta \sigma^2}{1 - \delta} < \delta \tau_{\mathsf{reg,stat}}^2 - \sigma^2 \leq \delta \tau_{\mathsf{reg,cvx}}^2 - \sigma^2$.
\end{claim}

\begin{proof}
    For sufficiently large $p$, we have $p > n$ because $n/p \rightarrow \delta < 1$.
    Take such sufficiently large $p$.
    Define 
    \begin{equation}\label{cheating-cvx-estimator}
    \widehat \bbeta_{\mathsf{cvx}} = \arg\min_{\bbeta} \left\{  \|\bbeta - \bbeta_0\|^2 \Bigm\vert \bbeta \in \arg\min_{\bbeta'} \left\{\frac1n\|\by - \bX\bbeta\|^2\right\}\right\}.
    \end{equation}
    Clearly $\widehat \bbeta_{\mathsf{cvx}} \in \arg\min_{\bbeta} \frac1n\|\by - \bX \bbeta\|^2 + \rho_p(\bbeta)$. 
    Let the singular value decomposition of $\bX$ be $\bU\bS\bV^\mathsf{T}$ where $\bU \in \reals^{n \times n}$ is orthonormal, $\bS \in \reals^{n\times n}$ is diagonal, and $\bV \in \reals^{p \times n}$ has orthonormal columns. Let $\bV_\perp \in \reals^{p\times(p-n)}$ have orthonormal columns orthogonal to those of $\bV$. Because $p> n$, this makes sense, and moreover, $\bX$ is full-rank with probability 1, whence $\bS$ is non-singular.
    We parameterize $\bbeta$ as $\bV \bb + \bV_\perp \bb_\perp$ for $\bb \in \reals^n$ and $\bb_\perp \in \reals^{p-n}$. 
    Then
    $$
    \frac1n \|\by - \bX \bbeta\|^2 = \frac1n\left\|\by - \bU \bS \bV^\mathsf{T}(\bV \bb + \bV_\perp \bb_\perp)\right\|^2 = \frac1n \|\by - \bU \bS \bb\|^2 = \frac1n \left\|\bU^\mathsf{T}\by - \bS \bb\right\|.
    $$
     Because $\bS$ is non-singular,
    $$
    \arg\min_{\bbeta}\left\{\frac1n \|\by - \bX \bbeta\|^2\right\} = \left\{ \bV \bS^{-1} \bU^\mathsf{T} \by + \bV_\perp \bb_\perp \mid \bb_\perp \in \reals^{p-n}\right\}.
    $$
    Observe
    \begin{align}
    \left\|\bV \bS^{-1} \bU^\mathsf{T} \by + \bV_\perp \bb_\perp - \bbeta_0\right\|^2 &= \left\|\bV \bS^{-1} \bU^\mathsf{T} \by + \bV_\perp \bb_\perp - \bV\bV^\mathsf{T}\bbeta_0 - \bV_\perp \bV_\perp^\mathsf{T}\bbeta_0\right\|^2 \nonumber \\ 
    &=\left \|\bS^{-1} \bU^\mathsf{T} \by - \bV^\mathsf{T}\bbeta_0\right\|^2 + \left\|\bb_\perp - \bV_\perp^\mathsf{T}\bbeta_0\right\|^2.
    \end{align} 
    This is minimized at $\bb_\perp = \bV_\perp^{\mathsf{T}}\bbeta_0$, whence
    \begin{equation}\label{cheating-beta-cvx-matrix-form}
        \widehat \bbeta_{\mathsf{cvx}} = \bV \bS^{-1} \bU^\mathsf{T} \by + \bV_\perp \bV_\perp^\mathsf{T} \bbeta_0.
    \end{equation}
    Now consider the oracle estimator with parameter $\gamma$.
    The objective we must minimize is
    \begin{align*}
        \frac1n \|\by - \bX \bbeta\|^2 + \frac\gamma2 \|\bbeta - \bbeta_0\|^2 &= \frac1n \left\|\bU^\mathsf{T} \by - \bS \bb\right\|^2 + \frac\gamma2 \left\|\bb - \bV^\mathsf{T} \bbeta_0\right\|^2 + \frac\gamma2\left\|\bb_\perp - \bV_\perp^\mathsf{T} \bbeta_0\right\|^2\\
    &= \left(\bb - \ba\right)^\mathsf{T}\left(\bS^2/n + \gamma \bI_n/2\right)\left(\bb - \ba\right)+ \frac\gamma2\left\|\bb_\perp - \bV_\perp^\mathsf{T} \bbeta_0\right\|^2,
    \end{align*}
    where $\ba = \left(\bS^2/n + \gamma \bI_n/2\right)^{-1}\left(\bS \bU^\mathsf{T} \by/n + \gamma \bV^\mathsf{T}\bbeta_0/2\right)$. 
    Thus,
    \begin{align}
        \widehat \bbeta_{\mathsf{cvx}}^{(\gamma)}  = \bV \left(\bS^2/n + \gamma \bI_n/2 \right)^{-1} \left(\bS \bU^\mathsf{T} \by/n + \gamma \bV^\mathsf{T}\bbeta_0/2\right) + \bV_\perp \bV_\perp^\mathsf{T}\bbeta_0.
    \end{align}
    We get
    \begin{align}
        \|\widehat \bbeta_{\mathsf{cvx}} - \widehat \bbeta_{\mathsf{cvx}}^{(\gamma)}\| &= \left\|\bV (\bS^2/n)^{-1} \bS \bU^\mathsf{T} \by/n - \bV \left(\bS^2/n + \gamma \bI_n/2 \right)^{-1} \left(\bS \bU^\mathsf{T} \by/n + \gamma \bV^\mathsf{T}\bbeta_0/2\right)\right\|\nonumber\\
        &=  \left\|(\bS^2/n)^{-1} \bS \bU^\mathsf{T} \by/n - \left(\bS^2/n + \gamma \bI_n/2 \right)^{-1} \left(\bS \bU^\mathsf{T} \by/n + \gamma \bV^\mathsf{T}\bbeta_0/2\right)\right\|\nonumber\\
        &\leq \left\| \left((\bS^2/n)^{-1} - \left(\bS^2/n + \gamma \bI_n/2 \right)^{-1}  \right) \bS\bU^\mathsf{T}\by/n \right\| \nonumber\\
        &\qquad\qquad\qquad\qquad\qquad\qquad + \left\| \left(\bS^2/n + \gamma \bI_n/2 \right)^{-1} \gamma \bV^\mathsf{T} \bbeta_0/2 \right\|\nonumber\\
        &\leq  \left\| (\bS^2/n)^{-1} - \left(\bS^2/n + \gamma \bI_n/2 \right)^{-1} \right\|_{\mathsf{op}} \frac{\left\|\bS\right\|_{\mathsf{op}}}{\sqrt n} \frac{\|\by\|}{\sqrt n}\nonumber\\
        &\qquad\qquad \qquad\qquad\qquad\qquad+ \frac\gamma2 \left\|\left(\bS^2/n + \gamma \bI_n/2 \right)^{-1}\right\|_{\mathsf{op}} \left\|\bV^\mathsf{T} \bbeta_0\right\|\nonumber\\
        &= \left|\frac1{\sigma_{\mathsf{min}}(\bX)^2/n} - \frac1{\sigma_{\mathsf{min}}(\bX)^2/n + \gamma/2}\right| \frac{\|\bX\|_{\mathsf{op}}}{\sqrt n} \frac{\|\by\|}{\sqrt n} + \frac\gamma2 \frac1{\sigma_{\mathsf{min}}(\bX)^2/n + \gamma/2} \left\|\bV^\mathsf{T}\bbeta_0\right\|\nonumber\\
        &= \varepsilon(\gamma) O_p(1), \label{orc-converges-to-super-orc}
    \end{align}
    for some deterministic function $\varepsilon(\gamma) \downarrow 0$ as $\gamma \rightarrow 0$ and $O_p(1)$ tight over both $p$ and $\gamma$, where $\sigma_{\mathsf{min}}(\bX)$ is the minimal non-zero singular value of $\bX$ and we have used that $\|\bX\|_{\mathsf{op}}/\sqrt{n}$ and $\sigma_{\mathsf{min}}(\bX)/\sqrt{n}$ both converge in probability to constants by \cite[Theorem 5.31]{Vershynin2012IntroductionMatrices}. 
    
    Let $\cT = (\pi,\{\rho_p = 0\})$.
    Because $\rho_p = 0$, for all $\tau,\lambda$, $\mathsf{prox}[\lambda \rho_p](\bbeta_0 + \tau \bz) - \bbeta_0 = \bbeta_0 + \tau \bz - \bbeta_0 = \tau \bz$. 
    Then by \eqref{r-finite-p-def} and \eqref{w-finite-p-def}, we have that $\mathsf{R_{reg,cvx}^\infty}(\tau,\lambda,\cT) = \tau^2$ and $\mathsf{W_{reg,cvx}^\infty}(\tau,\lambda,\cT) = 1$.
    Thus, at oracle parameter $\gamma$, the fixed point equations \eqref{fixed-pt-prior-asymptotic} are equivalent to
    \begin{gather}
        \delta \tau^2 - \sigma^2 = \frac{\tau^2}{(\lambda \gamma + 1)^2}
        \;\;\;
        \text{and}
        \;\;\;
        2\lambda\left(1 - \frac1\delta \frac{1}{\lambda \gamma + 1} \right) = 1.\label{trivial-fixed-pt}
    \end{gather}
    It is straightforward to see that such a solution exists: we may choose non-negative $\lambda$ to solve the second equation in \eqref{trivial-fixed-pt} by the intermediate value theorem; at this value of $\lambda$ we have $\frac1{\lambda \gamma + 1} < \delta$, whence setting $\tau^2 = \frac{\sigma^2}{\delta - (\lambda\gamma+1)^{-2}}$ solves the first equation in \eqref{trivial-fixed-pt}.
    Then $\tau,\lambda,\gamma,\delta,\cT$ is strongly stationary. 
    
    Equation \eqref{trivial-fixed-pt} implies that $\frac{1}{\lambda \gamma + 1} < \delta$, which implies $\lambda > \frac{\delta^{-1} - 1}\gamma \rightarrow \infty$ as $\gamma \rightarrow 0$ because $\delta < 1$.
    We conclude that $\frac{1}{\lambda \gamma + 1}= \delta\left(1-\frac1{2\lambda}\right) \rightarrow\delta$ as $\gamma \rightarrow 0$. 
    Then, writing equation \eqref{trivial-fixed-pt} as $\delta (\lambda \gamma + 1)^2 \frac{\tau^2}{(\lambda \gamma + 1)^2} - \sigma^2 = \frac{\tau^2}{(\lambda \gamma + 1)^2}$, we get $\frac{\tau^2}{(\lambda \gamma + 1)^2} = \frac{\sigma^2}{\delta(\lambda \gamma + 1)^2 - 1} \rightarrow \frac{\sigma^2}{\delta^{-1} - 1} = \frac{\delta \sigma^2}{1-\delta}$.
    In particular, by Proposition \ref{prop-strongly-convex-loss}, we have
    \begin{equation}\label{trivial-oracle-convergence}
    \lim_{\gamma \rightarrow 0} \lim^{\mathrm{p}}_{p \rightarrow \infty}  \|\widehat \bbeta_{\mathsf{cvx}}^{(\gamma)} - \bbeta_0\|^2 = \frac{\delta \sigma^2}{1-\delta}.
    \end{equation}
    By \eqref{orc-converges-to-super-orc}, we have $ \|\widehat \bbeta_{\mathsf{cvx}} - \bbeta_0\|^2 = \|\widehat \bbeta_{\mathsf{cvx}}^{(\gamma)} - \bbeta_0\|^2 + \varepsilon(\gamma)O_p(1)$. Taking $\gamma \rightarrow 0$ and applying \eqref{trivial-oracle-convergence}, we get
    $$
    \lim_{p \rightarrow \infty}^{\mathrm{p}}\|\widehat \bbeta_{\mathsf{cvx}} - \bbeta_0\|^2 =  \frac{\delta\sigma^2}{1-\delta},
    $$
    as desired.
    
    It is easy to construct examples in which this is smaller than $\delta\tau_{\mathsf{reg,stat}}^2 - \sigma^2$ and $\tau_{\mathsf{reg,stat}}\leq \tau_{\mathsf{reg,cvx}}$.
    Here is one construction. 
    Observe that all solutions $\tau_{\mathsf{reg,stat}}^2$ to \eqref{tau-stat-is-stationary} must satisfy $\tau_{\mathsf{reg,stat}}^2 \geq \sigma^2/\delta$.
    Thus, for fixed $\sigma$, we have $\lim_{\delta \rightarrow 0} (\delta \tau_{\mathsf{reg,stat}}^2 - \sigma^2) = \lim_{\delta \rightarrow 0} \mathsf{mmse}_{\pi}(\tau_{\mathsf{reg,stat}}^2) = \lim_{\tau \rightarrow \infty} \mathsf{mmse}_{\pi}(\tau^2) = s_2(\pi)$ \cite[Eq.~(61)]{DongningGuo2011EstimationError}. 
    Moreover, $\lim_{\delta \rightarrow 0} \frac{\delta \sigma^2}{1-\delta} = 0$. 
    Thus, if $s_2(\pi) > 0$ (which is true unless $\pi$ is a point mass at 0), then for sufficiently small $\delta$ we have $\frac{\delta \sigma^2}{1-\delta} <\delta \tau_{\mathsf{reg,stat}}^2 - \sigma^2$. When the minimizer of \eqref{tau-stat-def} is unique, we have $\delta \tau_{\mathsf{reg,stat}}^2 - \sigma^2 \leq \delta \tau_{\mathsf{reg,cvx}}^2 - \sigma^2$ by Theorem \ref{thm-cvx-cannot-beat-alg}. Because the minimizer of \eqref{tau-stat-def} is unique for almost every $(\delta,\sigma)$ (w.r.t. Lebegesgue measure), for some $\sigma$ there are arbitrarily large $\delta$ at which the minimizer of \eqref{tau-stat-def} is unique.
    This completes the construction. 
    \end{proof}
Of course, Claim \ref{claim-beat-statistical-risk} does not --indeed, could not-- imply that we can achieve smaller than Bayes risk using convex M-estimation.
The construction of $\widehat \bbeta_{\mathsf{cvx}}$ given in \eqref{cheating-cvx-estimator} uses knowledge of $\bbeta_0$, so is information theoretically inaccessible to the statistician.
Indeed, even though our measurements and our penalty are completely uninformative along directions parallel to the null space of $\bX$, the estimator $\widehat \bbeta_{\mathsf{cvx}}$ in \eqref{cheating-cvx-estimator} achieves perfect estimation along these directions, as captured by the term $\bV_\perp \bV_\perp^\mathsf{T} \bbeta_0$ in \eqref{cheating-beta-cvx-matrix-form}.

The counterexample of Claim \ref{claim-beat-statistical-risk} demonstrates that the conclusion of Theorem \ref{thm-cvx-lower-bound} is too strong to remove all restrictions on the penalty sequence in \eqref{cvx-lower-bound}.
This is because Theorem \ref{thm-cvx-lower-bound} applies to all mechanisms for breaking ties between members of the minimizing set, even those which rely on knowledge of $\bbeta_0$. 
The counterexample of Claim \ref{claim-beat-statistical-risk} uses an uninformative penalty. When $\rho_p = 0$, the set of minimizers is large, and we have much to gain from breaking ties by looking at $\bbeta_0$, something which the conditions of Theorem \ref{thm-cvx-lower-bound} do not prohibit.

This discussion is perhaps unsurprising given the way in which the $\delta$-bounded width assumption is used in the proof of Theorem \ref{thm-cvx-lower-bound}.
The $\delta$-bounded width assumption is used only in the proof of Lemma \ref{lem-oracle-risk-to-cvx-bound}.
This Lemma shows that the oracle estimator which exploit knowledge of $\bbeta_0$ does so weakly enough that it achieves loss at best negligibly smaller than the convex lower bound (see Lemma \ref{lem-oracle-risk-to-cvx-bound}).
It is not hard to show that when $\rho_p = 0$, even arbitrarily weak oracles can dramatically improve the performance of the M-estimator by allowing us to estimating $\bbeta_0$ exactly correctly along the null space of $\bX$.
Said more generally (but more heuristically), when the minimizing set of the original M-estimator is large --as it is when $\delta < 1$ and $\rho_p = 0$-- arbitrarily weak oracles break ties in the way that best exploits knowledge of $\bbeta_0$. 
Thus, arbitrarily weak oracles achieve a non-negligible improvement over the convex lower bound.
Indeed, this is essentially what we have used in the proof of Claim \ref{claim-beat-statistical-risk}. 

This is not to say that the statistician can do better by choosing penalty sequence from $\cC$ rather than $\cC_{\delta,\pi}$.
Without making statements which are fully precise, we conjecture that (i) no convex M-estimator which with high-probability returns a singleton minimizing set (or perhaps even a minimizing set which is ``small'' in an appropriate sense) can achieve asymptotic loss smaller than $\delta\tau_{\mathsf{reg,cvx}}^2 - \sigma^2$, and  (ii) no convex procedure which breaks ties among members of the minimizing set with a polynomial-time algorithm can achieve asymptotic loss smaller than $\delta\tau_{\mathsf{reg,amp}*}^2 - \sigma^2$.\footnote{Perhaps the lower bound is even larger than this, because we are requiring that we use convex M-estimation for at least a part of the procedure.}
If (i) is true, then it is possible to expand, at least slightly, the set over which we take the infimum in Theorem \ref{thm-cvx-lower-bound}.
We suspect that the restriction $\{\rho_p\} \in \cC_{\delta,\pi}$ corresponds closely, though not exactly, to the restriction that the minimizing set be ``small'' in the appropriate sense.
Resolving (i) would require identifying the appropriate weaker condition.
Successfully resolving statement (ii) would require addressing some of the deepest and most insurmountable problems in the theory of computational complexity.
Exploring whether and in what sense any of these speculations is true is beyond the scope of the current work.

\section{Proof of Proposition \ref{prop-achieving-the-bound}}\label{app-proof-of-prop-achieving-the-bound}

\begin{proof}[Proof of Proposition \ref{prop-achieving-the-bound}(i)]
    In fact,
    we prove \eqref{minimal-loss-upper-bound}
    when the infimum is taken over $\{\rho_p\} \in \cC_*$, the sequences of uniformly strongly convex penalties:
    \begin{equation}\label{minimal-loss-upper-bound-str-cvx}
        \inf_{\{\rho_p\} \in \cC_{*}} \lim^{\mathrm{p}}_{p \rightarrow \infty} \|\widehat \bbeta_{\mathsf{cvx}} - \bbeta_0\|^2 \leq \delta\tau^2 - \sigma^2.
    \end{equation}
    This is stronger than \eqref{minimal-loss-upper-bound}.
    If we show \eqref{minimal-loss-upper-bound-str-cvx},
    we can conclude that under RSN assumption \eqref{minimal-loss-upper-bound} holds also when the limit in probability is replaced by $\lim_{p \rightarrow \infty}\E_{\bbeta_0,\bw,\bX}\big[\| \hat \bbeta_{\mathsf{cvx}} - \bbeta_0\|_2^2\big]$ by applying Lemma \ref{lem-strongly-convex-DSN-to-RSN-risk} and using that $\cC_* \subset \cC_{\delta,\pi}$ (see Proposition \ref{claim-a2-for-strong-convex-penalty}).

    The proof of \eqref{minimal-loss-upper-bound-str-cvx} proceeds in three steps.\\
    \noindent {\bf Step 1: Construct lsc, proper, convex $\rho: \reals \rightarrow \reals$ such that $\mathsf{prox}[\rho]$ is the Bayes estimator.}
    This construction is provided in \cite[pg.~14567]{Bean2013}. We provide most of the details for completeness.
    Let $p_Y(x)$ be the density of $\pi * \mathsf{N}(0,\tau^2)$ (recall, $\tau > 0$, so that this exists).
    Let $m(y) = - \tau^2 \log p_Y(y)$ and $p_2(x) = \frac12 x^2$. 
    By assumption, $m$ is convex.
    Observe that up to the additive constant $\tau^2 \log \left( \sqrt{2\pi} \tau\right)$
    \begin{equation}\label{negative-log-density}
        m(y) = - \tau^2 \log \int e^{-\frac1{2\tau^2}(y - x)^2}\pi(\mathrm{d}x) = \frac12 y^2 - \tau^2 \log \int e^{ \frac1{\tau^2} y x - \frac1{2\tau^2}x^2 }\pi(\mathrm{d}x).
    \end{equation}
    We identify the second term on the right-hand side --up to a multiplicative and additive constant-- as the cumulant generating function of the probability distribution with density proportional to $e^{-\frac1{2\tau^2}x^2}$ with respect to $\pi$. 
    This term can be written as $p_2(y) - m(y)$.
    Because, for all $y$, $e^{ \frac1{\tau^2} y x - \frac1{2\tau^2}x^2 }$ is bounded over $x$, this term is finite for all $y$, and by \cite[Theorem 1.13]{Brown1986FundamentalsTheory}, it is infinitely differentiable and lsc, proper, and convex in $y$.  
    
    For an lsc, proper, convex $f: \reals \rightarrow \reals \cup\{\infty\}$, the \emph{Fenchel-Legendre conjugate} $f^*$ is defined by $f^*(g) = \sup_{x \in \reals}\{gx - f(x)\}$.
    Define
    \begin{equation}
        \rho = (p_2 - m)^* - p_2.
    \end{equation}
    This makes sense because we have argued that $p_2 - m$ is lsc, proper, and convex. Moreover, as argued in \cite[pg. 14567]{Bean2013} by appeal to \cite[Proposition 9.b]{Moreau1965ProximiteHilbertien}, $\rho$ so defined is convex.\footnote{Roughly, this is because $p_2 - m$ is ``less convex'' than $p_2$, so $(p_2 - m)^*$ is ``more convex'' than $p_2$.}
    Define the \emph{Moreau envelope} of $\rho$ by $\mathsf{M}[\rho](y) = \inf_x \left\{\frac12(y-x)^2 + \rho(x)\right\}$. 
    Repeating the argument of \cite[pg.~14567]{Bean2013}, we have
    \begin{align}
        \mathsf{M}[\rho](y) &= \inf_x \left\{\frac12(y-x)^2 + \rho(x)\right\}= p_2(y) - \sup_x \left\{yx - (\rho(x) + p_2(x))\right\}\nonumber\\
    & = \left(p_2 - (\rho + p_2)^*\right)(y) = m(y),
    \end{align}
    where we use that for any lsc, proper, convex $f$, we have $f^{**} = f$ \cite[Theorem 12.2]{Rockafellar1997ConvexAnalysis}.
    By a fundamental identity for Moreau envelopes (see \cite[pg.~14567]{Bean2013} and references therein), we get 
    \begin{equation}\label{moreau-derivative-to-prox}
    \frac{\mathrm{d}}{\mathrm{d}y}\mathsf{M}[\rho](y) = \mathsf{prox}[\rho^*](y) = y - \mathsf{prox}[\rho](y).
    \end{equation}
    Let $\eta:\reals \rightarrow \reals$ be the Bayes estimator with respect to $\ell_2$-loss in the scalar model $y = \beta_0 + \tau z$ where $\beta_0 \sim \pi$ and $z \sim \mathsf{N}(0,1)$ independent of $\beta_0$. 
    That is $\eta(y) = \E_{\beta_0,z}[\beta_0|y]$.
   Recall $\tau > 0$.
    Thus, by Tweedie's formula (Lemma \ref{lem-tweedies-formula}), $\eta(y) = y - m'(y)$. By comparison with \eqref{moreau-derivative-to-prox}, we conclude
    \begin{equation}\label{bayes-as-prox}
        \eta(y) = \mathsf{prox}[\rho](y).
    \end{equation} 
    
    \noindent {\bf Step 2: Strongly stationary $\tau,\lambda,\gamma=0,\delta,\cT$ with uniformly strongly convex penalty.}
    
    \noindent 
    We have $\mathsf{mmse}_\pi(\tau^2) = \E_{\beta_0,z}[(\eta(y) - \beta_0)^2]$, whence by \eqref{bayes-as-prox} and the assumption of the proposition
    \begin{equation}\label{proximal-bayes-risk}
    \delta \tau^2 - \sigma^2 > \mathsf{mmse}_\pi(\tau^2) = \E_{\beta_0,z}\left[(\mathsf{prox}[\rho](\beta_0 + \tau z) - \beta_0)^2\right].
    \end{equation}
    Observe also that for any $f:\reals\rightarrow \reals$ measurable for which the following expectations exist and are finite,
    \begin{align*}
        \E_{\beta_0,z}\left[f(y)(\E_{\beta_0,z}[\beta_0|y] - \beta_0)\right] &= \E_{\beta_0,z}\left[\E_{\beta_0,z}[f(y)(\E_{\beta_0,z}[\beta_0|y] - \beta_0)\mid y]\right]\\
    &=  \E_{\beta_0,z}\left[f(y)\E_{\beta_0,z}[\E_{\beta_0,z}[\beta_0|y] - \beta_0 \mid y]\right] = 0.
    \end{align*}
    Let $f(y) = y - \mathsf{prox}[\rho](y)$ in the previous display and recall $y = \beta_0 + \tau z$. After rearrangement and using the $\E_{\beta_0,z}[z\beta_0] = 0$,
    \begin{equation}\label{proximal-bayes-width}
        \frac1\tau\E_{\beta_0,z}[z\,\mathsf{prox}[\rho](y)] = \frac{1}{\tau^2}\E_{\beta_0,z}\left[(\mathsf{prox}(y)-\beta_0)^2\right] < \delta - \frac{\sigma^2}{\tau^2} \leq \delta.
    \end{equation}
     
    Now consider $\kappa > 0$ and define
    \begin{equation}\label{bayes-to-rho-perturbation}
    \rho^{(\kappa)}(x) = \rho(x) + \frac\kappa2 x^2.
    \end{equation}
    Then by \eqref{sequence-cvx-estimator}
    \begin{align}
    \mathsf{prox}[\rho^{(\kappa)}](y) &= \arg\min_x \left\{\frac12(y-x)^2 + \rho(x) + \frac\kappa2 x^2\right\} = \arg\min_x \left\{\frac12\left(\frac1{1+\kappa}y-x\right)^2 + \frac1{1+\kappa}\rho(x) \right\}\nonumber \\
    &= \mathsf{prox}\left[\frac1{1+\kappa}\rho\right]\left(\frac1{1+\kappa}y\right).\label{prox-with-ridge-identity}
    \end{align}
    First, we will choose $\kappa > 0$ sufficiently small such that \eqref{proximal-bayes-risk} and \eqref{proximal-bayes-width} still hold with $\rho$ replaced by $\rho^{(\kappa)}$.
    To make our notation more compact, we let $c_\kappa = \frac1{1+\kappa}$.
    Let $a = \mathsf{prox}[\rho](y) - \beta_0$ and $b = \mathsf{prox}\left[\rho^{(\kappa)}\right](y) - \beta_0$.
    We have
    \begin{align}
        |a| &\leq |\mathsf{prox}[\rho](0)| + |\mathsf{prox}[\rho](y) - \mathsf{prox}[\rho](0)| + |\beta_0|\leq |\mathsf{prox}[\rho](0)| + |y| + |\beta_0|,\label{a-perturbed-bayes-bound}\\
    |b| &\leq  |\mathsf{prox}[\rho](0)| + \left|\mathsf{prox}\left[c_\kappa \rho \right](0) - \mathsf{prox}[\rho](0)\right| + \left|\mathsf{prox}\left[c_\kappa \rho\right]\left(c_\kappa y\right) - \mathsf{prox}\left[c_\kappa \rho\right](0)\right| + |\beta_0|\nonumber\\
    &\leq |\mathsf{prox}[\rho](0)| + |\mathsf{prox}[\rho](0)| \left|c_\kappa - 1\right| + \left|c_\kappa y\right| + \beta_0\nonumber\\
    &\leq (c_\kappa + 2)|\mathsf{prox}[\rho](0)| + |c_\kappa y| + |\beta_0|,\label{b-perturbed-bayes-bound}\\
    |a - b| &\leq \left|y - \mathsf{prox}[\rho](y)\right| \left|c_\kappa - 1\right|\leq \left(|y| + |\mathsf{prox}[\rho](y) - \mathsf{prox}[\rho](0)| + |\mathsf{prox}[\rho](0)|\right)|c_\kappa - 1|\nonumber\\
    &\leq \left(|\mathsf{prox}[\rho](0)| + 2|y|\right)|c_\kappa - 1|,\label{a-b-perturbed-bayes-bound}
    \end{align}
    where in \eqref{a-perturbed-bayes-bound}, we have used \eqref{prox-is-lipschitz}, and in both \eqref{b-perturbed-bayes-bound} and  \eqref{a-b-perturbed-bayes-bound}, we have used \eqref{prox-is-lipschitz} and \eqref{prox-continuous-in-lambda}.
    Then by \eqref{a-perturbed-bayes-bound} and \eqref{b-perturbed-bayes-bound}, we have $|a| \vee |b| \leq (c_\kappa + 2) |\mathsf{prox}[\rho](0)| + |y| + |\beta_0|$. Applying this bound, Jensen's inequality, \eqref{norm-squared-difference-inequality}, and \eqref{a-b-perturbed-bayes-bound}, we conclude
    \begin{align}
        &\left|\E_{\beta_0,z}\left[(\mathsf{prox}[\rho](y) - \beta_0)^2\right] - \E_{\beta_0,z}\left[(\mathsf{prox}[\rho^{(\kappa)}](y) - \beta_0)^2\right]\right|  \leq \E_{\beta_0,z}\left[|a^2 - b^2|\right]\nonumber\\
        &\qquad \leq 2\E_{\beta_0,z}\Big[ \Big((c_\kappa + 2) |\mathsf{prox}[\rho](0)| + |y| + |\beta_0|\Big)\Big(|\mathsf{prox}[\rho](0)| + 2|y|\Big)\Big]|c_\kappa - 1| \xrightarrow[\kappa \rightarrow 0]{} 0,\label{bayes-to-perturbed-risk-bound}
    \end{align}
    because $c_\kappa - 1 \rightarrow 0$ as $\kappa \rightarrow 0$, and the expectation is bounded.
    Also, by Jensen's inequality, Cauchy-Schwartz, and \eqref{a-b-perturbed-bayes-bound},
    \begin{align}
        &\left|\frac1\tau\E_{\beta_0,z}\left[ z \mathsf{prox}[\rho](y)\right] - \frac1\tau \E_{\beta_0,z}[z \mathsf{prox}[\rho^{(\kappa)}](y)]\right| \leq \frac1\tau \E_{\beta_0,z}\left[|z( a-b)| \right] \leq \frac1\tau \E_{\beta_0,z}[(a-b)^2]^{1/2}\nonumber\\
    &\qquad\qquad \leq \frac1\tau \E_{\beta_0,z}\big[(|\mathsf{prox}[\rho](0)| + 2|y|)^2\big](c_\kappa - 1)^2 \xrightarrow[\kappa \rightarrow 0]{} 0.\label{bayes-to-perturbed-width-bound}
    \end{align}
    By \eqref{proximal-bayes-risk}, \eqref{proximal-bayes-width}, \eqref{bayes-to-perturbed-risk-bound}, and \eqref{bayes-to-perturbed-width-bound}, we can (and do) choose $\kappa$ sufficiently small that
    \begin{gather}
    \E_{\beta_0,z}\left[(\mathsf{prox}[\rho^{(\kappa)}](y) - \beta_0)^2\right] < \delta \tau^2 - \sigma^2,\label{prox-perturbed-risk}\\
    \frac1\tau\E_{\beta_0,z}\left[z\mathsf{prox}[\rho^{(\kappa)}](y)\right] < \delta.\label{prox-perturbed-width}
    \end{gather}
    
    We now will define an lsc, proper, convex function $\tilde \rho:\reals \rightarrow \reals$ such that \eqref{prox-perturbed-risk} holds with equality and \eqref{prox-perturbed-width} holds with the same strict inequality when $\rho^{(\kappa)}$ is replaced by $\tilde \rho$.
    By \eqref{prox-perturbed-risk}, we may choose $c \in \reals$ such that 
    \begin{equation}\label{shift-prox-to-achieve-risk}
        \E_{\beta_0,z}\left[(\mathsf{prox}[\rho^{(\kappa)}](y) + c - \beta_0)^2\right] = \delta \tau^2 - \sigma^2.
    \end{equation}
    Define the lsc, proper, convex function $\tilde \rho:\reals \rightarrow \reals$ by $\tilde \rho(x) = -cx + \rho^{(\kappa)}(x - c)$. 
    Then by \eqref{prox-shift}, we have $\mathsf{prox}[\tilde \rho](y) = \mathsf{prox}[\rho^{(\kappa)}](y) + c$. Then  \eqref{shift-prox-to-achieve-risk} can be written 
    \begin{gather}\label{rho-stationary-risk}
        \E_{\beta_0,z}\left[\left(\mathsf{prox}\left[\tilde\rho\right](y) - \beta_0\right)^2\right] = \delta \tau^2 - \sigma^2
    \end{gather}
    Further, by \eqref{prox-perturbed-width} and because $\E_{z}[zc] = 0$, we have
    \begin{gather}\label{rho-stationary-width}
    \frac1\tau\E_{\beta_0,z}\left[z\mathsf{prox}[\tilde \rho](y)\right] = \frac1\tau \E_{\beta_0,z}\left[z \mathsf{prox}[\rho^{(\kappa)}](y)\right] < \delta.
    \end{gather}

    Let $\lambda = \frac12\left(1 - \frac1{\delta\tau}\E_{\beta_0,z}\left[z\mathsf{prox}[\tilde \rho](y)\right]\right)^{-1}$, where $\lambda > 0$ by \eqref{rho-stationary-width}. 
    For each $p$, define the lsc, proper, symmetric, convex function $\rho_p: \reals^p \rightarrow \reals \cup \{\infty\}$ by
    \begin{equation}\label{rho-to-rho-p}
    \rho_p(\bx) = \frac1{\lambda p}\sum_{j=1}^p \tilde \rho (\sqrt p x_j).
    \end{equation}
    By \eqref{prox-change-of-scaling} and \eqref{prox-of-separable}, we have for each $1 \leq j \leq p$ and $\by \in \reals^p$ that $\mathsf{prox}[\lambda \rho_p](\by)_j = \frac1{\sqrt p} \mathsf{prox}[\tilde \rho](\sqrt p y_j)$.
    For each $p$, let $\tilde \bbeta_0 \in \reals^p$ be random with coordinates distributed iid from $\pi/\sqrt p$ and $\bz \sim \mathsf{N}(\bzero,\bI_p/p)$.
    These coordinates are denoted $\tilde \beta_{0j}$ and $z_j$.
    As above, $\beta_0,z$ denote independent random variables distributed from $\pi$ and $\mathsf{N}(0,1)$, respectively.
    For any $\tau' \geq 0$ and $\bT' \in S_+^2$, we have 
    \begin{subequations}\label{p-independent-rwk}
    \begin{align}
         \E_{\tilde \bbeta_0,\bz}\left[\left\|\mathsf{prox}\left[\lambda \rho_p\right](\tilde \bbeta_0 + \tau' \bz) - \tilde \bbeta_0\right\|^2\right] &= \frac1p\sum_{j=1}^p \E_{\beta_{0j},z_j} \left[(\mathsf{prox}[ \tilde \rho](\sqrt p (\tilde \beta_{0j} + \tau' z_j)) - \sqrt p\tilde \beta_{0j})^2\right]\nonumber \\
     &= \E_{\beta_0,z}\left[(\mathsf{prox}[ \tilde \rho](\beta_0 + \tau' z) - \beta_0)^2\right],\label{multivariate-prox-perturbed-r}\\
    \frac1\tau\E_{\tilde \bbeta_0,\bz}\left[\langle \bz , \mathsf{prox}[\lambda  \rho_p ](\tilde\bbeta_0 + \tau' \bz)\rangle\right] &= \frac1{\tau p}\sum_{j=1}^p\E_{\tilde \beta_{0j},z_j}\left[\sqrt{p}z_j\mathsf{prox}[  \tilde \rho ](\sqrt p(\tilde\beta_{0j} + \tau' z_j)\right]  \nonumber\\
    &= \frac1\tau \E_{\beta_0,z}\left[z\mathsf{prox}[ \tilde \rho ](\beta_0 + \tau' z)\right], \label{multivariate-prox-perturbed-w}
    \end{align}
    \begin{align}
     &\E_{\tilde\bbeta_0\bz_1,\bz_2}\left[\left\langle\mathsf{prox}\left[\lambda \rho_p\right]\left(\tilde \bbeta_0 + \bz_1\right)-\tilde \bbeta_0,\mathsf{prox}\left[\lambda\rho_p\right]\left(\tilde \bbeta_0 + \bz_2\right)-\tilde \bbeta_0\right\rangle\right]\nonumber\\
     &\qquad\qquad = \frac1p \sum_{j=1}^p \E_{\tilde \beta_{0j},z_{1j},z_{2j}}\left[\left(\mathsf{prox}\left[ \tilde \rho \right]\left(\sqrt{p}(\tilde \beta_{0j} + z_{1j})\right)-\sqrt{p}\tilde \beta_{0j}\right)\left(\mathsf{prox}\left[ \tilde \rho \right]\left(\sqrt{p}(\tilde \beta_{0j} + z_{2j})\right)-\sqrt{p}\tilde \beta_{0j}\right)\right]\nonumber\\
     &\qquad\qquad =  \E_{ \beta_0,z_1,z_2}\left[\left(\mathsf{prox}\left[ \tilde \rho \right]\left( \beta_0 + z_1\right)- \beta_0\right)\left(\mathsf{prox}\left[ \tilde \rho \right]\left( \beta_0 + z_2\right)-\beta_0\right)\right].\label{multivariate-gaussian-perturbed-k}
    \end{align}
    \end{subequations}
    Let $\cT = (\pi,\{\rho_p\})$. 
    We see the limits \eqref{summary-functions-fixed-asymptotic} exist for all $\tau' \geq0$, $\bT' \succeq \bzero$ at the $\lambda$ we have defined.
    By \eqref{rho-stationary-risk}, \eqref{multivariate-prox-perturbed-r}, \eqref{multivariate-prox-perturbed-w}, and the definition of $\lambda$, we see that equations \eqref{fixed-pt-prior-asymptotic} are satisfied at $\tau,\lambda,\gamma=0,\delta,\cT$.
    Thus, $\tau,\lambda,\gamma=0,\delta,\cT$ is strongly stationary.
    
    \noindent {\bf Step 3: Exactly characterize the asymptotic risk.}
    
    \noindent By \eqref{bayes-to-rho-perturbation} and \eqref{rho-to-rho-p}, observe that $\rho_p$ has uniform strong convexity parameter $\kappa > 0$.
    Because $\tau,\lambda,\gamma=0,\delta,\cT$ is strongly stationary, by Proposition \ref{prop-strongly-convex-loss} we have $\|\widehat \bbeta_{\mathsf{cvx}} - \bbeta_0\|^2 \stackrel{\mathrm{p}}\rightarrow \delta\tau^2 - \sigma^2$ where $\widehat \bbeta_{\mathsf{cvx}}$ is defined as with respect to the penalties  \eqref{rho-to-rho-p}. 
    This holds under the HDA and either the RSN or DSN assumptions because the penalties are symmetric.
    
    Thus, by construction, we see that the risk $\delta \tau^2 - \sigma^2$ is achieved on the class $\cC_*$ of uniformly strongly convex sequences of estimators, whence \eqref{minimal-loss-upper-bound-str-cvx} follows.
\end{proof}

\noindent To prove Proposition \ref{prop-achieving-the-bound}.(ii), we will need the following lemma.

\begin{lemma}\label{lem-non-log-concave-gap}
    Consider $\pi \in \cP_2(\reals)$ and $\tau > 0$ such that $\pi * \mathsf{N}(0,\tau^2)$ does not have log-concave density with respect to Lebesgue measure on $\reals$.
    Then 
    \begin{equation}\label{optimal-prox-strict-inequality}
        \mathsf{R^{opt}_{seq,cvx}} (\tau;\pi) > \mathsf{mmse}_\pi(\tau^2).
    \end{equation}
\end{lemma}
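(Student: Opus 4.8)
\textbf{Proof proposal for Lemma \ref{lem-non-log-concave-gap}.}

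The plan is to argue by contraposition: I will show that if the optimal symmetric, convex proximal denoiser in the sequence model achieves the Bayes risk, i.e.\ $\mathsf{R^{opt}_{seq,cvx}}(\tau;\pi) = \mathsf{mmse}_\pi(\tau^2)$, then $\pi * \mathsf{N}(0,\tau^2)$ must have log-concave density. The starting point is that $\mathsf{mmse}_\pi(\tau^2)$ is the risk of the Bayes estimator $\eta(y) = \E_{\beta_0,z}[\beta_0 \mid \beta_0 + \tau z = y]$, which is \emph{the unique} minimizer of the mean squared error over all measurable functions. So if the convex proximal operators achieve this value asymptotically, then a near-optimal sequence of symmetric convex penalties $\{\rho_p\}$ must produce proximal operators $\mathsf{prox}[\lambda\rho_p]$ that converge (in an appropriate averaged $L^2$ sense under the $\bbeta_0 + \tau\bz$ distribution) to the separable application of $\eta$.

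Next I would use the structural constraints that Lemma \ref{lem-prox-effective-denoiser-is-lipschitz} imposes on $\mathsf{prox}[\rho_p]$ for symmetric $\rho_p$: for any two coordinates, $(y_i - y_j)(b_i - b_j) \ge 0$ and $|b_i - b_j| \le |y_i - y_j|$, where $\bb = \mathsf{prox}[\rho_p](\by)$. In words, the proximal operator of any symmetric convex function acts, coordinatewise, like a $1$-Lipschitz \emph{nondecreasing} function of the coordinate value (this is exactly the monotonicity/non-expansiveness one expects of a scalar proximal operator, here transferred to the non-separable symmetric setting via the coordinate-swap trick). The limiting denoiser obtained from a risk-optimal sequence must inherit these properties in the $p\to\infty$ limit: it must be (essentially) a $1$-Lipschitz nondecreasing scalar function applied coordinatewise. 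But by uniqueness of the Bayes estimator, that limiting scalar function must agree $p_Y$-almost everywhere with $\eta$, where $p_Y$ is the density of $\pi * \mathsf{N}(0,\tau^2)$. Hence $\eta$ is $1$-Lipschitz and nondecreasing. Then I invoke Tweedie's formula (Lemma \ref{lem-tweedies-formula}): $\eta(y) = y + \tau^2 \frac{\mathrm d}{\mathrm d y}\log p_Y(y)$, so $\eta'(y) = 1 + \tau^2 (\log p_Y)''(y)$ in the distributional sense. The condition $\eta' \le 1$ is precisely $(\log p_Y)'' \le 0$, i.e.\ log-concavity of $p_Y$. (Note $p_Y$ is automatically smooth and strictly positive since it is a Gaussian convolution of a second-moment measure, so there is no regularity obstruction and the distributional and pointwise statements coincide.) This contradicts the non-log-concavity hypothesis, giving the strict inequality in \eqref{optimal-prox-strict-inequality}.

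The main obstacle, and the step requiring care, is making rigorous the claim that a \emph{risk-optimal limiting sequence} of symmetric convex proximal operators must yield the Bayes denoiser and must inherit the coordinatewise monotone $1$-Lipschitz structure pointwise (a.e.\ $p_Y$) in the limit. Several technical points arise: (i) one must pass from the infimum-over-$\rho_p$ definition \eqref{R-seq-cvx-opt-finite}–\eqref{R-seq-cvx-opt-asymptotic} to an actual near-minimizing sequence and extract a subsequential limit of the maps $\mathsf{prox}[\rho_p]$ — here the uniform $1$-Lipschitz control from Lemma \ref{lem-prox-effective-denoiser-is-lipschitz}, together with Wasserstein convergence of $\widehat\pi_{\bbeta_0}$ to $\pi$ and Gaussian concentration (Lemma \ref{lem-gaussian-pseudo-lipschitz-concentration}), should give enough compactness via Arzel\`a--Ascoli after reducing to a scalar profile; (ii) one must argue that the limiting scalar profile is genuinely a function of the single coordinate value (not of the empirical distribution of the other coordinates) — this again follows from the pairwise Lipschitz bound \eqref{effectively-lipschitz}, which prevents the value at coordinate $i$ from depending on far-away coordinates beyond what the coordinate value itself dictates; (iii) strict convexity of the squared loss then forces exact agreement with $\eta$. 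I would structure the proof so that these compactness and identification steps are carried out once, cleanly, with the algebraic conclusion via Tweedie being a one-line consequence. An alternative, possibly slicker route worth noting: if $p_Y$ is \emph{not} log-concave on some interval, then $\eta' > 1$ there on a set of positive measure, so \emph{no} $1$-Lipschitz function — in particular no symmetric-convex proximal operator, by \eqref{effectively-lipschitz} — can match $\eta$ on that set; quantifying the resulting irreducible excess risk $\mathsf{R^{opt}_{seq,cvx}}(\tau;\pi) - \mathsf{mmse}_\pi(\tau^2) > 0$ directly via a projection-onto-$1$-Lipschitz-functions argument may be the cleanest path and avoids the subsequence extraction entirely.
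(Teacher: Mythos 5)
Your main route (contraposition: equality would force the Bayes denoiser $\eta$ to be a monotone $1$-Lipschitz limit of symmetric convex proximal maps, and then Tweedie's formula, Lemma \ref{lem-tweedies-formula}, gives log-concavity) has a genuine gap exactly at the steps you flag as ``requiring care,'' and those steps are the actual mathematical content of the lemma. In particular, step (ii) is not justified: the pairwise bound \eqref{effectively-lipschitz} of Lemma \ref{lem-prox-effective-denoiser-is-lipschitz} constrains the \emph{relative} values of two coordinates within a single realization, but the proximal map of a non-separable symmetric penalty (a convex constraint, SLOPE/OWL, etc.) genuinely depends on the whole vector, so there is no scalar ``profile'' to extract by Arzel\`a--Ascoli; the domain $\reals^p$ changes with $p$, and nothing in \eqref{effectively-lipschitz} prevents the value at coordinate $i$ from depending on the empirical configuration of the other coordinates. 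Reducing a near-optimal sequence of such maps to a single monotone $1$-Lipschitz scalar function and identifying it with $\eta$ is precisely what needs to be proved, and as written the argument is circular at that point. A second, smaller issue: since the infimum in \eqref{R-seq-cvx-opt-finite}--\eqref{R-seq-cvx-opt-asymptotic} need not be attained, the contrapositive must be run through near-minimizers, which forces a quantitative rather than soft argument anyway.

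Your ``alternative, possibly slicker route'' at the end is in fact the paper's proof, but the one sentence you give skips the step that makes it work. Because the prox is not coordinatewise separable, you cannot literally project $\eta$ onto $1$-Lipschitz scalar functions; instead the paper argues per realization. From non-log-concavity and Tweedie one gets an interval around some $v$ on which $\eta' > 1+\xi$, hence $\eta(y)>\eta(v)+(1+\xi)(y-v)$ on $[v+\eps,v+2\eps]$ and the reverse on $[v-2\eps,v-\eps]$. By the law of large numbers, with probability tending to one a positive fraction of the coordinates of $\sqrt{p}\,\by$ lands in each window. On that event a dichotomy driven by \eqref{effectively-lipschitz} applies: either some left-window coordinate is estimated at or below the line $y+\tfrac{\eta(v)-v}{\sqrt p}$, which drags \emph{every} right-window coordinate at least $\xi\eps/\sqrt p$ below $\eta$, or all left-window coordinates sit above that line and are then at least $\xi\eps/\sqrt p$ above $\eta$. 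Either way $\|\mathsf{prox}[\rho](\by)-\eta(\by)\|^2$ is bounded below by a constant uniformly over symmetric convex $\rho$, and the Pythagorean identity $\E\|\mathsf{prox}[\rho](\by)-\bbeta_0\|^2=\E\|\mathsf{prox}[\rho](\by)-\eta(\by)\|^2+\mathsf{mmse}_\pi(\tau^2)$ (which you correctly identified) turns this into a finite-$p$ lower bound that survives the liminf and yields \eqref{optimal-prox-strict-inequality}. So your ingredients (Tweedie, the Pythagorean decomposition, Lemma \ref{lem-prox-effective-denoiser-is-lipschitz}) are the right ones, but to have a proof you must replace both the compactness/identification sketch and the ``projection onto $1$-Lipschitz functions'' phrase by this per-realization, two-window argument (or something equivalent that handles non-separable proximal maps).
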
  

\begin{proof}[Proof of Lemma \ref{lem-non-log-concave-gap}]
    Throughout this proof, we will let $Y \sim \pi * \mathsf{N}(0,\tau^2)$ be a random variable.
    Because $\tau > 0$, $\pi * \mathsf{N}(0,\tau^2)$ has density with respect to Lebesgue measure which is infinitely continuously differentiable.
    Call this density $p_Y$.
    Let $\eta:\reals \rightarrow \reals$ be the Bayes estimator of $\beta_0$ given observation $\beta_0 + \tau z$ where $\beta_0 \sim \pi$ and $z \sim \mathsf{N}(0,1)$ independent of $\beta_0$.
    By Tweedie's formula (Lemma \ref{lem-tweedies-formula}),
    \begin{equation}
    \eta(y) = y + \tau^2 \frac{\mathrm{d}}{\mathrm{d}y}\log p_Y(y).
    \end{equation}
    Because $\pi *\mathsf{N}(0,\tau^2)$ is not log-concave and has infinitely continuously differentiable density, there exists $v \in \reals$ and $\xi,\varepsilon > 0$ such that $\frac{\mathrm{d}^2}{\mathrm{d}y^2}\log p_Y(y) > \xi/\tau^2$ on $[v - 2\eps,v+2\eps]$. Thus,
    $$
    \eta'(y) > 1 + \xi \quad \text{ on } \quad [v - 2\eps,v+2\eps].
    $$
    Then, for any 1-Lipschitz function $\eta_{\mathrm{Lip}}:\reals \rightarrow \reals$,
    either $|\eta(y) - \eta_{\mathrm{Lip}}(y)| \geq \xi \varepsilon$ for $y \in [v+\varepsilon,v+2\varepsilon]$ (if $\eta_{\mathrm{Lip}}(v) \leq \eta(v)$),
    or $|\eta(y) - \eta_{\mathrm{Lip}}(y)| \geq \xi \varepsilon$ for $y \in [v-2\varepsilon,v-\varepsilon]$ (if $\eta_{\mathrm{Lip}}(v) \geq \eta(v)$).
    Thus, for any 1-Lipshictz function,
    \begin{equation}\label{eq:lipsch-gap}
        \E_{Y\sim p_Y}[(\eta_{\mathrm{Lip}}(Y) - \eta(Y))^2] \geq \xi \varepsilon \min\{p_Y([v+\varepsilon,v+2\varepsilon]),p_Y([v-2\varepsilon,v-\varepsilon])\} =: \Delta > 0.
    \end{equation}

    Consider $\bbeta_0\in \reals^p$ with coordinates distributed iid from $\pi/\sqrt p$ and $\bz \sim \mathsf{N}(0,\bI_p/p)$ independent of $\bbeta_0$. 
    Let $\by = \bbeta_0 + \tau \bz$. 
    Clearly, $\sqrt p \by$ has coordinates distributed iid from $\pi * \mathsf{N}(0,\tau^2)$.
    We define the application of $\eta$ to a vector by
    $
    \eta(\by)_j = \frac1{\sqrt p}\eta(\sqrt p y_j).
    $
    This agrees with  \eqref{tweedies-formula} when $p = 1$, so no confusion should result.
    Observe that 
    $
    \eta(\by) = \E_{\bbeta_0,\bz}[\bbeta_0 | \by].
    $
    Because $\mathsf{prox}[\rho](\by)_j - \eta(y_j)$ is uncorrelated with $\eta(y_j) - \beta_{0j}$ conditional on $\by_{-j},\bbeta_{0,-j}$ (where these denote the coordinates of $\by,\bbeta_0$ excluding coordinate $j$), we have
    \begin{align}\label{risk-to-bayes-comparison}
    \E_{\bbeta_0,\bz}\left[(\mathsf{prox}[\rho](\by)_j - \beta_{0j})^2|\by_{-j},\bbeta_{0,-j}\right] &= \E_{\bbeta_0,\bz}\left[(\mathsf{prox}[\rho](\by)_j - \eta(y_j))^2|\by_{-j},\bbeta_{0,-j}\right]\nonumber \\
    &\qquad\qquad\qquad\qquad + \E_{\bbeta_0,\bz}\left[(\eta(y_j) - \beta_{0j})^2|\by_{-j},\bbeta_{0,-j}\right]\nonumber \\
    &=\E_{\bbeta_0,\bz}\left[(\mathsf{prox}[\rho](\by)_j - \eta(y_j))^2|\by_{-j},\bbeta_{0,-j}\right] + \mathsf{mmse}_\pi(\tau^2)/p.
    \end{align}
    
    For any lsc, proper, convex $\rho:\reals^p \rightarrow \reals \cup \{\infty\}$,
    fixing $\by_{-j}$ the function $y_j \mapsto \mathsf{prox}[\rho](\by)$ is 1-Lipschitz, 
    whence in fact $y_j \mapsto \mathsf{prox}[\rho](\by)_j$ is 1-Lipschitz.
    Then by \eqref{eq:lipsch-gap}, 
    $$
        \E_{\bbeta_0,\bz}\left[(\mathsf{prox}[\rho](\by)_j - \eta(y_j))^2|\by_{-j},\bbeta_{0,-j}\right] \geq \Delta/p, \;\; \text{almost surely.}
    $$ 
    We conclude
    \begin{equation*}
        \E_{\bbeta_0,\bz}[\|\mathsf{prox}[\rho](\by) - \bbeta_0\|^2]
        =
        \sum_{j=1}^p \E_{\bbeta_0,\bz}[\E_{\bbeta_0,\bz}[(\mathsf{prox}[\rho](\by)_j - \beta_{0j})^2|\by_{-j},\bbeta_{0,-j}]]
        \geq 
        \mathsf{mmse}_\pi(\tau^2) + \Delta.
    \end{equation*}
    The proof is complete.
\end{proof}

\noindent We are ready to prove the second part of Proposition \ref{prop-achieving-the-bound}.

\begin{proof}[Proof of Proposition \ref{prop-achieving-the-bound}.(ii)]
    By Lemma \ref{lem-non-log-concave-gap}, we have  $\mathsf{R^{opt}_{seq,cvx}} (\tau;\pi) > \mathsf{mmse}_\pi(\tau^2)$. 
    By assumption, $\mathsf{mmse}_\pi(\tau^2) \geq \delta \tau^2 - \sigma^2$. Thus, $\mathsf{R^{opt}_{seq,cvx}} (\tau;\pi) > \delta \tau^2 - \sigma^2$.
    By Lemma \ref{lem-r-continuity}, the left and right-hand sides are continuous in $\tau$, so that there exists $\tau' > \tau$ with $\mathsf{R^{opt}_{seq,cvx}} (\tau';\pi) > \delta {\tau'}^2 - \sigma^2$. Then by \eqref{eqdef-tau-reg-cvx}, $\tau_{\mathsf{reg,cvx}} \geq \tau' > \tau$.
    Proposition \ref{prop-achieving-the-bound}(ii) then follows from Theorem \ref{thm-cvx-lower-bound}.
\end{proof}

\noindent Finally, we prove the third part of Proposition \ref{prop-achieving-the-bound}.

\begin{proof}[Proof of Proposition \ref{prop-achieving-the-bound}.(iii)]
    If $\pi * \normal(0,\tau^2_{\mathsf{reg,amp}*})$ has log concave density,
    so too does $\pi * \normal(0,\tau^2)$ for all $\tau > \tau_{\mathsf{reg,amp}*}$.
    By the definition of $\tau_{\mathsf{reg,amp}*}$ (Eq.\ \eqref{alg-bound-sup-to-inf-form-def}), we have 
    $\delta \tau^2 - \sigma^2 > \mathsf{mmse}_\pi(\tau^2)$ for all such $\tau$. 
    Then by Proposition \ref{prop-achieving-the-bound}(i), Theorem \ref{thm-cvx-lower-bound}, and the fact that $\cC_* \subset \cC_{\delta,\pi}$,
    we have
    \begin{equation*}
        \delta\tau^2 - \sigma^2 
            \geq 
            \inf_{\{\rho_p\} \in \cC_*}  \lim_{p \rightarrow \infty}^{\mathrm p} \| \widehat \bbeta_{\mathsf{cvx}} - \bbeta_0\|^2 
            \geq
            \inf_{\{\rho_p\} \in \cC_{\delta,\pi}}  \lim_{p \rightarrow \infty}^{\mathrm p} \| \widehat \bbeta_{\mathsf{cvx}} - \bbeta_0\|^2 
            \geq 
            \delta \tau_{\mathsf{cvx,reg}}^2 - \sigma^2.
    \end{equation*}
    Taking $\tau \downarrow \tau_{\mathsf{reg,amp}*}$ gives $\tau^2_{\mathsf{reg,amp}*} = \tau_{\mathsf{reg,cvx}}^2$.

    If $\pi * \normal(0,\tau^2_{\mathsf{reg,amp}*})$ does not have log concave density,
    then because $\delta \tau^2_{\mathsf{reg,amp}*} - \sigma^2 = \mathsf{mmse}_\pi(\tau^2_{\mathsf{reg,amp}*})$, we have $\tau_{\mathsf{reg,cvx}}^2 > \tau^2_{\mathsf{reg,amp}*}$ by Proposition \ref{prop-achieving-the-bound}.

    The argument for $\tau_{\mathsf{reg,stat}}^2$ is completely analogous.
\end{proof}

\section{Connection with the random signal and noise model}\label{app-proofs-of-lemmas-DSN-to-RSN}

In this appendix we state and prove two lemmas that provide explicit connection between the deterministic and random signal and noise models.
The first lemma will allow us to extend lower bounds on the $\liminf$ of sequences of estimation errors.
\begin{lemma}\label{lem-DSN-to-RSN-lower-bound}
    Fix $\pi \in \cP_2(\reals)$, $\delta \in (0,\infty)$, and $\sigma \geq 0$. 
    Consider any sequence of estimators $\{\widehat \bbeta\}$ (ie.\ measurable functions of $\by,\bX$ and potentially some auxiliary noise). 
    Assume that the HDA and DSN assumptions imply that for some constant $c$ we have 
    \begin{equation}\label{generic-probabilistic-bound}
    \liminf_{p \rightarrow \infty}^{\mathrm p} \|\widehat \bbeta - \bbeta_0\|^2 \geq c.
    \end{equation}
    Then under the HDA and RSN assumption (where the randomness in $\bbeta_0$ and $\bw$ is independent of the auxiliary noise used to construction $\widehat \bbeta$), we have 
    \begin{equation}\label{generic-expectation-bound}
    \liminf_{p \rightarrow \infty} \E_{\bbeta_0,\bw,\bX}\left[\|\widehat \bbeta - \bbeta\|^2\right] \geq c.
    \end{equation}
\end{lemma}
\begin{proof}[Proof of Lemma \ref{lem-DSN-to-RSN-lower-bound}]
    By \cite[Lemma 8.4]{Bickel1981SomeBootstrap}, if $\beta_{0j} \stackrel{\mathrm{iid}}\sim \pi/\sqrt p$ for $\pi \in \cP_2(\reals)$, then
    \begin{equation}
        d_{\mathrm{W}}(\widehat \pi_{\bbeta_0},\pi) \stackrel{\mathrm{as}}\rightarrow 0,
    \end{equation}
    where $\widehat \pi_{\bbeta_0}$ is as in \eqref{DSN-assumption}.
    Further, under assumption RSN, by the strong law of large numbers, $\frac1n\|\bw\|^2 \stackrel{\mathsf{as}}\rightarrow \sigma^2$. 
    Thus, under the RSN assumption the sequences $\{\bbeta_0\}$, $\{\bw\}$ satisfy the DSN assumption with probability 1.
    Thus, if \eqref{generic-probabilistic-bound} holds under the DSN assumption, we have under the RSN assumption that for all $\eps > 0$
    \begin{equation}
         \P_{\bbeta_0,\bw,\bX}\left( \|\widehat \bbeta - \bbeta_0\|^2 > c - \eps\Bigm\vert\bbeta_0,\bw\right) \xrightarrow[p\rightarrow\infty]{\mathrm{as}} 1.
    \end{equation}
    Observe by bounded convergence
    \begin{align*}
        \E_{\bbeta_0,\bw}[\|\widehat \bbeta - \bbeta\|^2] &= \E_{\bbeta_0,\bw,\bX}\left[\E_{\bbeta_0,\bw,\bX}\left[\|\widehat \bbeta - \bbeta\|^2 \Bigm\vert \bbeta_0,\bw\right]\right]\\
        &\geq (c-\eps)\E_{\bbeta_0,\bw,\bX}\left[\P_{\bbeta_0,\bw,\bX}\left(\|\widehat \bbeta - \bbeta\|^2 > c-\eps \Bigm\vert \bbeta_0,\bw\right)\right] \rightarrow c-\eps.
    \end{align*}
    Taking $\eps \downarrow 0$ gives \eqref{generic-expectation-bound}.
\end{proof}

\noindent Observe that Lemma \ref{lem-DSN-to-RSN-lower-bound} applies to any sequence of estimators $\{\widehat \bbeta\}$ defined in any way. 
In particular, the estimators need not be defined via convex M-estimation. 
The second lemma will allow us to extend the exact loss characterization of Proposition \ref{prop-strongly-convex-loss}.
\begin{lemma}\label{lem-strongly-convex-DSN-to-RSN-risk}
    Fix $\pi \in \cP_2(\reals)$, $\delta \in (0,\infty)$, and $\sigma \geq 0$. 
    Consider a sequence $\{\rho_p\} \in \cC_*$ and the corresponding M-estimators \eqref{linear-cvx-estimator} (which always exist and are unique by strong convexity).
    Assume that the HDA and DSN assumptions imply that for some constant $c$ we have
    \begin{equation}\label{convex-probabilistic-convergence}
    \|\widehat \bbeta_{\mathsf{cvx}} - \bbeta_0\|^2 \stackrel{\mathrm{p}}\rightarrow c.
    \end{equation}
    Then under the HDA and RSN assumption
    \begin{equation}\label{convex-average-convergence}
    \lim_{p \rightarrow \infty} \E_{\bbeta_0,\bw,\bX}\left[\|\widehat \bbeta_{\mathsf{cvx}} - \bbeta_0\|^2\right] = c.
    \end{equation}
\end{lemma}
\begin{proof}[Proof of Lemma \ref{lem-strongly-convex-DSN-to-RSN-risk}]
    Let $\gamma > 0$ be such that $\rho_p$ is strongly convex with parameter $\gamma$ for all $p$.
    Because $\rho_p$ is strongly convex, it has a unique minimizer, which we will denote by $\bm_p$. 
    First we show that $\|\bm_p\|$ is bounded in $p$.
    Without loss of generality, we may assume $\rho_p(\bm_p) = 0$ for all $p$. Thus, $\rho_p(\bbeta) \geq \frac{\gamma}2 \|\bbeta - \bm_p\|^2$ for all $p$ and all $\bbeta \in \reals^p$.
    By \eqref{linear-cvx-estimator},
    \begin{align*}
        \frac1n\|\by - \bX \widehat \bbeta_{\mathsf{cvx}}\|^2 + \frac\gamma2 \|\widehat \bbeta_{\mathsf{cvx}} - \bm_p\|^2 \leq \frac1n \|\by - \bX \widehat \bbeta_{\mathsf{cvx}}\|^2 + \rho_p(\widehat \bbeta_{\mathsf{cvx}}) \leq \frac1n \|\by - \bX \bm_p\|^2.
    \end{align*}
    By optimality, we have that $\frac2n\bX^\mathsf{T}(\by - \bX \widehat \bbeta_{\mathsf{cvx}}) \in \partial \rho_p(\widehat \bbeta_{\mathsf{cvx}})$.
    Thus,
    \begin{align*}
        \rho_p(\widehat \bbeta_{\mathsf{cvx}}) \geq \rho_p(\bm_p) &\geq \rho_p(\widehat \bbeta_{\mathsf{cvx}}) + \frac2n(\by - \bX \widehat \bbeta_{\mathsf{cvx}})^\mathsf{T}\bX(\bm_p - \widehat \bbeta_{\mathsf{cvx}}) + \frac\gamma2 \|\bm_p - \widehat \bbeta_{\mathsf{cvx}}\|^2\\
        &\geq \rho_p(\widehat \bbeta_{\mathsf{cvx}}) - \frac2n \|\by - \bX \widehat \bbeta_{\mathsf{cvx}}\|\|\bX\|_{\mathsf{op}}\|\bm_p - \widehat \bbeta_{\mathsf{cvx}}\| +  \frac\gamma2 \|\bm_p - \widehat \bbeta_{\mathsf{cvx}}\|^2.
    \end{align*}
    Also,
    $$
    \|\by - \bX \widehat \bbeta_{\mathsf{cvx}}\| \leq \|\by - \bX \bbeta_0\| + \|\bX(\widehat \bbeta_{\mathsf{cvx}} - \bbeta_0)\| \leq \|\bw\| +\|\bX\|_{\mathsf{op}}\|\widehat \bbeta_{\mathsf{cvx}} - \bbeta_0\|.
    $$
    Combining the previous two displays,
    $$
    \|\bm_p - \widehat \bbeta_{\mathsf{cvx}}\| \leq \frac4\gamma \frac{\|\bX\|_{\mathsf{op}}}{\sqrt n} \left(\frac{\|\bw\|}{\sqrt n} + \frac{\|\bX\|_{\mathsf{op}}}{\sqrt n} \|\widehat \bbeta_{\mathsf{cvx}} - \bbeta_0\|\right).
    $$
    In particular,
    \begin{align*}
        \|\bm_p\| &\leq \|\bbeta_0\| + \|\widehat \bbeta_{\mathsf{cvx}} - \bbeta_0\| + \|\bm_p - \widehat \bbeta_{\mathsf{cvx}}\|\\
        &\leq  \|\bbeta_0\| + \|\widehat \bbeta_{\mathsf{cvx}} - \bbeta_0\| + \frac4\gamma \frac{\|\bX\|_{\mathsf{op}}}{\sqrt n} \left(\frac{\|\bw\|}{\sqrt n} + \frac{\|\bX\|_{\mathsf{op}}}{\sqrt n} \|\widehat \bbeta_{\mathsf{cvx}} - \bbeta_0\|\right).
    \end{align*}
    The random variable $\|\bX\|_{\mathsf{op}} /\sqrt n$ is tight by \cite{Anderson2010}, the random variable $\|\bw \|/\sqrt n$ is tight by the law of large numbers, and $\|\widehat \bbeta_{\mathsf{cvx}} - \bbeta_0\|$ is tight under the DSN assumption by assumption.
    Because $\|\bm_p\|$ is deterministic, it must be bounded in $p$. Let $M$ be such that $\|\bm_p\| \leq M$ for all $p$.

    Now we turn to proving \eqref{convex-average-convergence} under the RSN assumption.
    As in the proof of Lemma \ref{lem-DSN-to-RSN-lower-bound}, we have that the sequences $\{\bbeta_0\}, \{\bw\}$ satisfy the DSN assumption with probability 1.
    Thus, we have \eqref{convex-probabilistic-convergence}. By Vitali's convergence theorem, we only need to verify that $\|\widehat \bbeta_{\mathsf{cvx}} - \bbeta_0\|^2$ is uniformly integrable over $p$ \cite[Theorem 16.14]{Billingsley2012ProbabilityMeasure}.
    Observe that for any $\bbeta \in \reals^p$,
    \begin{align*}
        \frac1n \|\by - \bX \widehat \bbeta_{\mathsf{cvx}}\|^2 + \rho_p(\widehat \bbeta_{\mathsf{cvx}}) &\geq \frac1n \|\by\|^2 - \frac2n \by^\mathsf{T}\bX \widehat \bbeta_{\mathsf{cvx}} + \frac\gamma2 \|\widehat \bbeta_{\mathsf{cvx}} - \bm_p\|^2\\
        &\geq \frac1n \|\by\|^2 - 2\frac{\|\bX^\mathsf{T}\by\|}{n} \|\widehat \bbeta_{\mathsf{cvx}}\| + \frac\gamma2 \|\widehat \bbeta_{\mathsf{cvx}}\|^2  - \gamma M \|\widehat \bbeta_{\mathsf{cvx}}\|\\
        &\geq \frac1n\|\by\|^2 + \frac\gamma4\|\widehat \bbeta_{\mathsf{cvx}}\|^2 - \frac1\gamma \left(2\frac{\|\bX^\mathsf{T}\by\|}n + \gamma M\right)^2\\
        &\geq \frac1n\|\by\|^2 + \frac\gamma4\|\widehat \bbeta_{\mathsf{cvx}}\|^2 - \frac{8\|\bX^\mathsf{T}\by\|^2}{\gamma n^2} - 2\gamma M^2.
    \end{align*}
    Further, recalling $\rho_p(\bm_p) = 0$, by \eqref{linear-cvx-estimator} and the triangle inequality
    \begin{align*}
        \frac1n\|\by - \bX \bm_p\|^2 + \rho_p(\bm_p) &=  \frac1n\|\by - \bX \bm_p\|^2 \leq \frac2n \|\by - \bX \bbeta_0\|^2 + \frac2n\|\bX(\bm_p - \bbeta_0)\|^2\\
        &= \frac2n \|\bw\|^2 + \frac2n \|\bX(\bm_p - \bbeta_0)\|^2.
    \end{align*}
    Combining the previous two displays, we get
    \begin{align}
        \|\widehat \bbeta_{\mathsf{cvx}}\|^2 &\leq \frac4\gamma \left(\frac1n \|\by - \bX \widehat \bbeta_{\mathsf{cvx}}\|^2 + \rho(\widehat \bbeta_{\mathsf{cvx}})  - \frac1n\|\by\|^2  +  \frac{8\|\bX^\mathsf{T}\by\|^2}{\gamma n^2} + 2 \gamma M^2\right)\nonumber\\
        &\leq  \frac4\gamma \left(\frac1n \|\by - \bX \bm_p\|^2 + \rho(\bm_p)  +  \frac{8\|\bX^\mathsf{T}\by\|^2}{\gamma n^2} + 2 \gamma M^2\right)\nonumber\\
        &\leq \frac4\gamma\left(\frac2n\|\bw\|^2+ \frac2n\|\bX (\bm_p - \bbeta_0)\|^2  + \frac8\gamma \left(\frac{\|\bX^\mathsf{T}\bw\|}n + \frac{\|\bX^\mathsf{T}\bX\bbeta_0\|}n\right)^2 + 2\gamma M^2\right)\nonumber\\
        &\leq   \frac4\gamma \left(\frac2n \|\bw\|^2 +  \frac4n \|\bX\bm_p\|^2+ \frac4n\|\bX\bbeta_0\|^2  +  \frac{16}{\gamma n^2}\|\bX^\mathsf{T}\bw\|^2 + \frac{16}{\gamma n^2}\|\bX^\mathsf{T}\bX\bbeta_0\|^2 + 2 \gamma M^2\right).\label{beta-hat-bound}
    \end{align}
    We show the right-hand side is uniformly integrable one term at a time.
    First, we recall two well-known facts about uniform integrability, which we state without proof.
    \begin{claim}\label{ui-of-empirical-mean}
        If the collection (over $j$) $\{A_j\}$ is uniformly integrable, then the collection (over $p$) $\left\{\frac1p\sum_{i=1}^p A_i\right\}_p$ is uniformly integrable.
    \end{claim}
    \begin{claim}\label{ui-of-independent-products}
        If $\{A_p\}$ and $\{B_p\}$ are uniformly integrable and for each $p$ the random variables $A_p$ and $B_p$ are defined on the same probability space and are independent, then $\{A_pB_p\}$ are uniformly integrable.
    \end{claim}
    \noindent First, the $\frac2n \|\bw\|^2$ are uniformly integrable by Claim \ref{ui-of-empirical-mean} because the $w_j^2$ are integrable from the same distribution. 
    Second, $\frac4n \|\bX\bm_p\|^2 \sim 4\|\bm_p\|^2\chi^2_n/n \stackrel{\mathrm{d}}= \frac{4\|\bm_p\|^2}n \sum_{i=1}^n Z_i^2$, where $Z_i \stackrel{\mathrm{iid}}\sim \mathsf{N}(0,1)$. 
    By Claim \ref{ui-of-empirical-mean}, the $\frac1n \sum_{i=1}^n Z_i^2 $ are uniformly integrable, and because the $\|\bm_p\|^2$ are bounded, the $\frac{4\|\bm_p\|^2}n \sum_{i=1}^n Z_i^2$, and hence the $\frac4n \|\bX\bm_p\|^2$, are the uniformly integrable by Claim \ref{ui-of-independent-products}.
    Third, $\frac4n \|\bX\bbeta_0\|^2 = \frac4n \sum_{i=1}^n \left(\sum_{j=1}^p X_{ij}\beta_{0j}\right)^2$. Observe that conditional on $\bbeta_0$, the random variable $\sum_{j=1}^p X_{ij}\beta_{0j}$ has distribution $\mathsf{N}(0,\|\bbeta_0\|^2)$, so that $\left(\sum_{j=1}^p X_{ij}\beta_{0j}\right)^2 \stackrel{\mathrm{d}}= Z^2\|\bbeta_0\|^2$ for $Z \sim \mathsf{N}(0,1)$ independent of $\bbeta_0$.
    Observe that the $\|\bbeta_0\|^2 = \frac1p \sum_{j=1}^p (\sqrt p \beta_{0j})^2$ are uniformly integrable (over $p$) by Claim \ref{ui-of-empirical-mean} because $\sqrt p \beta_{0j} \sim \pi \in \cP_2(\reals)$ for all $p$.
    Then, by Claim \ref{ui-of-independent-products}, the $Z^2\|\bbeta_0\|^2$, and hence the $\left(\sum_{j=1}^p X_{ij}\beta_{0j}\right)^2 $, are uniformly integrable. 
    Then, by Claim \ref{ui-of-empirical-mean},  the $\frac4n\|\bX\bbeta_0\|^2$ are uniformly integrable.
    Fourth, the $\frac{16}{\gamma n^2}\|\bX^\mathsf{T}\bw\|^2$ are uniformly integrable by the same argument (just replace $\bbeta_0$ with $\bw/\sqrt{n}$ and switch $i,j$ and $n,p$).
    Fifth, and lastly, we show the $\frac{16}{\gamma n^2}\|\bX^\mathsf{T}\bX\bbeta_0\|^2 $ are uniformly integrable.
    We have
    \begin{align*}
        \frac{\|\bX^\mathsf{T} \bX \bbeta_0\|^2}{n^2} & = \frac1n\sum_{j=1}^p \left(\frac1{\sqrt n}\sum_{i=1}^n X_{ij} [\bX \bbeta_0]_i\right)^2  = \frac1n\sum_{j=1}^p \left(\frac1{\sqrt n} \sum_{i=1}^nX_{ij}^2 \beta_{0j} + \frac1{\sqrt n} \sum_{i=1}^n \sum_{l \neq j}^pX_{ij} X_{il}\beta_{0l}\right)^2\\
    &\leq 2   \underbrace{\sum_{j=1}^p \left(\frac1n \sum_{i=1}^n X_{ij}^2 \beta_{0j}\right)^2}_{:= a} + \underbrace{\frac2n \sum_{j=1}^p \left( \frac1{\sqrt n} \sum_{i=1}^n \sum_{l \neq j}^pX_{ij} X_{il}\beta_{0l}\right)^2}_{:= b}.
    \end{align*}
    We write $a$ as $\frac2{n^2p} \sum_{j=1}^p\sum_{i_1,i_2=1}^n X_{i_1j}^2X_{i_2j}^2(\sqrt p \beta_{0j})^2$, which are uniformly integrable by Claim \ref{ui-of-empirical-mean} because the $X_{i_1j}^2X_{i_2j}^2(\sqrt p \beta_{0j})^2$ are integrable and have one of only two possible distributions (depending on whether $i_1 = i_2$ or $i_1 \neq i_2$) which do not depend on $n,p$.
    Now we consider $b$.
    We denote the columns of $\bX$ by $\bX_j$. 
    Observe that conditional on $\bX_j,\bbeta_0$, we have $\frac1{\sqrt n} \sum_{i=1}^n \sum_{l \neq j}^pX_{ij} X_{il}\beta_{0l} \sim \mathsf{N}\left(0,\left(\sum_{l \neq j}^p \beta_{0l}^2\right) \frac{\|\bX_j\|^2}n\right)$, so that in fact, $\left( \frac1{\sqrt n} \sum_{i=1}^n \sum_{l \neq j}^pX_{ij} X_{il}\beta_{0l}\right)^2 \stackrel{\mathrm{d}}= Z^2\left(\sum_{l \neq j}^p \beta_{0l}^2\right) \frac{\|\bX_j\|^2}n$ for $Z \sim \mathsf{N}(0,1)$ independent of $\bbeta_0,\bX$.
    Observe that the $\left(\sum_{l \neq j}^p \beta_{0l}^2\right)$ are uniformly integrable because they are dominated by $\|\bbeta_0\|^2$, whose uniform integrability we already established. Further, the $\frac{\|\bX_j\|^2}n = \frac1n \sum_{i=1}^n X_{ij}^2$ are uniformly integrable by Claim \ref{ui-of-empirical-mean}.
    Then the $Z^2\left(\sum_{l \neq j}^p \beta_{0l}^2\right) \frac{\|\bX_j\|^2}n$ are uniformly integrable by two applications of Claim \ref{ui-of-independent-products}, because they are the product of three independent and uniformly integrable terms.
    Thus, the $b$'s are uniformly integrable by Claim \ref{ui-of-empirical-mean}, and the $\frac{\|\bX^\mathsf{T} \bX \bbeta_0\|^2}{n^2} $ are uniformly integrable by the uniform integrability of the $a$'s and $b$'s.
    We conclude that the right-hand side of \eqref{beta-hat-bound} is uniformly integrable, whence the $\|\widehat \bbeta_{\mathsf{cvx}}\|^2$ are uniformly integrable.
    
    Because
    \begin{align*}
        \|\widehat \bbeta_{\mathsf{cvx}} - \bbeta_0\|^2 \leq 2\left(\|\widehat \bbeta_{\mathsf{cvx}}\|^2 + \|\bbeta_0\|^2\right),
    \end{align*}
    and $\|\bbeta_0\|^2$ are uniformly integrable,
    we also have the $\|\widehat \bbeta_{\mathsf{cvx}} - \bbeta_0\|^2$ are uniformly integrable, completing the proof.
\end{proof}

\section{Proof of Proposition \ref{prop-bayes-risk-asymptotic}, Proposition \ref{prop-bAMP-achieves-alg-bound}, and equivalence of $\tau_{\mathsf{reg,amp}*}$ and $\tau_{\mathsf{reg,amp}}$}\label{app-proof-of-prop-bayes-risk-asymptotic-and-bAMP-achives-alg-bound}

\begin{proof}[Proof of Proposition \ref{prop-bayes-risk-asymptotic}]\label{proof-of-prop-bayes-risk-asymptotic}
Model \cite[eq.~(1)]{barbier2018optimal} is equivalent to our model \eqref{linear-model} under the following change of variable (with the notation of \cite{barbier2018optimal} on the left).
    \begin{align*}
    \bPhi &\leftarrow \bX, & \bX^*  &\leftarrow \sqrt p \bbeta_0, & Y_\mu &\leftarrow y_i, & A_\mu  &\leftarrow w_i,
    &(m,n)  &\leftarrow (n,p),\\
    \varphi(x,a) &= x + a, & P_0 &\leftarrow \pi, & \alpha &\leftarrow \delta, & r &\leftarrow 1/\tau^2, & \rho &\leftarrow s_2(\pi),
    \end{align*}
    where we have used an equal sign for any quantity which we do not have our own notation for.
    The authors of \cite{barbier2018optimal} denote by $X_0,Z_0$ independent random scalars distributed from $P_0$ and $\mathsf{N}(0,1)$ respectively. In our notation, we denote by $\beta_0,z$ independent random scalars distributed from $\pi$ and $\mathsf{N}(0,1)$ respectively. 
    We will also denote the random scalar $y = \beta_0/\tau + z$.
    To avoid clutter, we will write $s_2$ in place of $s_2(\pi)$ for the remainder of the proof.
    The authors of \cite{barbier2018optimal} define in Eq.~(5) (where we have already converted to our notation)
    \begin{align}
    \psi_{\pi}(1/\tau^2) &= \E_{\beta_0,z} \log \int  e^{\beta_0\beta/\tau^2 + \beta z/\tau - \beta^2/2\tau^2} \pi(\mathrm{d}\beta) = \E_{\beta_0,z} \log \left( e^{\frac12(\beta_0/\tau + z)^2} \int e^{-\frac 12(\beta_0/\tau + z -\beta/\tau )^2}\right) \pi(\mathrm{d}\beta)\nonumber\\
    &=  \frac{s_2}{2\tau^2} + \frac12  + \E_{\beta_0,z}  \int e^{-\frac 12(y -\beta/\tau )^2}\pi(\mathrm{d} \beta) 
    = \frac{s_2}{2\tau^2} - i(\tau^2),\label{psi-pi}
    \end{align}
    where the last line follows by \eqref{eqdef-i}.
    Their $P_{\mathsf{out}}\left(Y_\mu\big|\frac1{\sqrt n}\left[\bPhi \bX^*\right]_{\mu}\right)$ is the conditional density (w.r.t. Lebesgue measure) of $Y_\mu \bigm\vert \frac1{\sqrt n}\left[\bPhi \bX^*\right]_{\mu}$ (in their notation), which in our notation is $P_{\mathsf{out}}\left(y|x\right) = \frac1{\sqrt{2\pi}\sigma}\exp\left(-\frac1{2\sigma^2}(y-x)^2\right)$. 
    The authors of \cite{barbier2018optimal} denote by $V,W$ independent random scalars distributed from $\mathsf{N}(0,1)$ and by $\tilde Y_0$ a random scalar distributed from $P_{\mathsf{out}}(\cdot|\sqrt{q} V + \sqrt{s_2 - q}W)$. 
    In our notation and with our choice of $P_{\mathsf{out}}$, we denote by $z_1,z_2$ independent random scalars distributed from $\mathsf{N}(0,1)$ (corresponding to $V,W$ respectively) and observe that $\sqrt q z_1 + \sqrt{s_2 - q}z_2 + \sigma z_3 \sim P_{\mathsf{out}}(\cdot|\sqrt{q} z_1 + \sqrt{s_2 - q}z_2)$ where $z_3 \sim \mathsf{N}(0,1)$ independent of $z_1,z_2$.
    The authors of \cite{barbier2018optimal}  define in Eq.\ (6) (where we have already converted to our notation)
    \begin{align}
    \Psi_{P_{\mathsf{out}}}(q;s_2) &= \E_{z_1,z_2,z_3}\log \int \frac1{\sqrt{2\pi}}e^{-\frac12 w^2}P_{\mathsf{out}}\left(\sqrt{q}z_1 + \sqrt{s_2 - q}z_2 + \sigma z_3| \sqrt{q}z_1 + \sqrt{s_2 - q}w\right)\mathrm{d}w\nonumber\\
    &= \E_{z_1,z_2,z_3}\log \int \frac1{\sqrt{2\pi}}e^{-\frac12 w^2}\frac1{\sqrt{2\pi}\sigma}\exp\left(-\frac1{2\sigma^2}\left(\sqrt{s_2 - q}z_2 + \sigma z_3 - \sqrt{s_2-q}w\right)^2\right) \mathrm{d}w\nonumber\\ 
    &= \E_{z_2,z_3}\log\left(\frac1{\sqrt{2\pi(\sigma^2 + s_2 - q)}} \exp\left(-\frac1{2(\sigma^2 + s_2 - q)} \left(\sqrt{s_2 - q}z_2 + \sigma z_3 \right)^2\right)\right)\nonumber\\
    &= - \frac12\log 2\pi - \frac12 \log(\sigma^2 + s_2 - q)  - \frac12 .\label{psi-p-out}
    \end{align}
    The authors of \cite{barbier2018optimal} define in Eq.\ (4)
    \begin{equation}\label{f-rs}
    f_{\mathsf{RS}}(q,1/\tau^2;s_2) = \psi_{\pi}(1/\tau^2) + \delta\Psi_{P_{\mathsf{out}}}(q;s_2) - \frac{q}{2\tau^2}.
    \end{equation}
    In Eq.\ (3) they define a parameter $q^*$ via a variational formula which in Theorem 1 they show to be equivalent to defining $q^*$ as the first coordinate of
    \begin{equation}\label{free-energy-maximization}
    \arg\max_{(q,\tau)\in\Gamma} f_{\mathsf{RS}}(q,1/\tau^2;s_2),
    \end{equation}
    whenever maximizers exist and the first coordinate of maximizing pairs is unique,
    where
    $$
    \Gamma = \left\{(q,\tau) \in [0,s_2] \times [0,\infty] \Bigm\vert \frac{\mathrm{d}}{\mathrm{d} q} f_{\mathsf{RS}}(q,1/\tau^2;s_2) = \frac{\mathrm{d}}{\mathrm{d} \tau^{-2}} f_{\mathsf{RS}}(q,1/\tau^2;s_2)  = 0\right\}.
    $$
    Some calculus applied to \eqref{psi-pi}, \eqref{psi-p-out}, and \eqref{f-rs} shows that $ \frac{\mathrm{d}}{\mathrm{d} q} f_{\mathsf{RS}}(q,1/\tau^2;s_2) = 0$ if and only if $s_2 - q = \delta\tau^2 - \sigma^2$. 
    That is,
    \begin{equation}\label{q-in-terms-of-tau}
        (q,\tau) \in \Gamma \Rightarrow q = s_2 - \delta \tau^2 + \sigma^2.
    \end{equation}
    We claim that maximizing $f_{\mathsf{RS}}$ over $\Gamma$ is equivalent to maximizing $f_{\mathsf{RS}}$ over the larger set $q = s_2 - \delta \tau^2 + \sigma^2$, as we now show.
    By  \eqref{psi-pi}, \eqref{psi-p-out}, and \eqref{f-rs}, we have
    \begin{align}
    f_{\mathsf{RS}}(s_2 - \delta \tau^2 + \sigma^2,1/\tau^2;s_2) &= \frac{s_2}{2\tau^2} - i(\tau^2) - \frac\delta2 \log2\pi - \frac\delta2 \log(\delta\tau^2) - \frac\delta2 - \frac{s_2 - \delta \tau^2 + \sigma^2}{2\tau^2} \nonumber \\ 
    &= -\left(\frac{\sigma^2}{2\tau^2} - \frac{\delta}2 \log \left(\frac{\sigma^2}{\tau^2}\right) + i(\tau^2)\right) + C 
    = - \phi(\tau^2) + C,\label{f-rs-after-q-elimination}
    \end{align}
    where $C$ is a constant which depends only on $\delta,\sigma^2$ and numerical constants.
    For $\tau \rightarrow 0$ and $\tau \rightarrow \infty$, we see from \eqref{f-rs-after-q-elimination} that $f_{\mathsf{RS}}(s_2 - \delta \tau^2 + \sigma^2,1/\tau^2;s_2)$ goes to $- \infty$, so that it is maximized at a point for which $\frac{\mathrm{d}}{\mathrm{d}\tau^{-2}} f_{\mathsf{RS}}(\delta \tau^2 - \sigma^2,1/\tau^2;s_2) = 0$. Because $\frac{\mathrm{d}}{\mathrm{d} q} f_{\mathsf{RS}}(q,1/\tau^2;s_2)\Big|_{q =s_2 - \delta \tau^2 + \sigma^2} = 0$, we have that $f_{\mathsf{RS}}(s_2 - \delta \tau^2 + \sigma^2,1/\tau^2;s_2) $ is maximized at a $\tau$ for which $\frac{\mathrm{d}}{\mathrm{d} \tau^{-2}} f_{\mathsf{RS}}(q,1/\tau^2;s_2)\Big|_{q =s_2 - \delta \tau^2 + \sigma^2}  = 0$. 
    That is, any maximizer $(q,\tau)$ of $f_{\mathsf{RS}}$ which satisfies $q = s_2 - \delta \tau^2 + \sigma^2$ must lie in $\Gamma$, as claimed.
    In particular, maximizing $f_{\mathsf{RS}}$ over the weaker constraint $q = s_2 - \delta \tau^2 + \sigma^2$ yields the same maximizers as maximizing $f_{\mathsf{RS}}$ over the stronger constraint $(q,\tau) \in \Gamma$.
    
    To summarize, all solutions to \eqref{free-energy-maximization} are constructed in the following way: 
    let $\tau^* \in \arg\max_\tau\{-\phi(\tau^2)\} = \arg\min_\tau \phi(\tau^2)$ and let $q^* = s_2 - \delta \tau^2 + \sigma^2$.
    Further, by \eqref{q-in-terms-of-tau}, when $\tau^*$  is the unique minimizer of $\phi$, we have $q$* is the unique first coordinate of maximizers of \eqref{free-energy-maximization}.
    We see that $\tau^* = \tau_{\mathsf{reg,stat}}$.
     After converting into our notation, Theorem 2 of \cite{barbier2018optimal} and their Eq.\ (8) state that under certain assumptions which we will list, $\lim_{p \rightarrow \infty}\E_{\bbeta_0,\bw,\bX}\left[\|\E_{\bbeta_0,\bw,\bX}[\bbeta_0|\by,\bX] - \bbeta_0 \|^2\right] = s_2 - q^*$.  
    The assumptions they require are that $\pi \in \cP_\infty(\reals)$, $\E_{\bbeta_0,\bw,\bX}\left[\left|\sum_{j=1}^p X_{1j} \beta_{0j} + w_1\right|^{2+\gamma}\right]$ is bounded for some $\gamma > 0$ (for us, it is bounded for all $\gamma > 0)$, the function $\varphi$ is continuous, $\sigma > 0$, and the minimizer $q^*$ is unique.
    These are all satisfied in our setting when the minimizer of $\phi$ is unique. 
    By equation \eqref{tau-stat-is-stationary} and because $s_2 - q^* = \delta \tau_{\mathsf{reg,stat}}^2 - \sigma^2 $, equation \eqref{bayes-risk-asymptotic} follows.
    
    Finally, by \cite[Theorm 2]{barbier2018optimal}, we have for fixed $\sigma^2$ that the maximizer $q^*$ is unique for almost every $\delta$ (w.r.t. Lebesuge measure). By Fubini's theorem, this holds for almost every $(\delta,\sigma)$ (w.r.t. Lebesgue measure).
    \end{proof}
    
In the proof of Corollary \ref{cor-alg-lower-bound} in Section \ref{sec:lower-bounds-and-benchmarks}, we use the following claim.
\begin{claim}\label{claim-alg-boudn-sup-to-inf-form-is-generic}
    For any $\pi \in \cP_2(\reals)$, the equality $\tau_{\mathsf{reg,amp}}^2 = \tau_{\mathsf{reg,amp}*}^2$ holds for almost every value of $\delta,\sigma$ (w.r.t.\ Lebesgue measure).
\end{claim}

\begin{proof}[Proof of Claim \ref{claim-alg-boudn-sup-to-inf-form-is-generic}]
    Comparing \eqref{tau-alg-def} and \eqref{alg-bound-sup-to-inf-form-def}, we see $\tau_{\mathsf{reg,amp}*}^2 \geq \tau_{\mathsf{reg,amp}}^2$ always. 
    Consider the case that $\tau_{\mathsf{reg,amp}} < \tau_{\mathsf{reg,amp}*}$.
    By \eqref{tau-alg-def}, for all $\tau \in (\tau_{\mathsf{reg,amp}}, \tau_{\mathsf{reg,amp}*}]$ we have $\delta \tau^2 - \sigma^2 \geq \mathsf{mmse}_\pi(\tau^2)$. 
    By \eqref{alg-bound-sup-to-inf-form-def}, for all $\tau > \tau_{\mathsf{reg,amp}*}$, we have $\delta \tau^2 - \sigma^2 > \mathsf{mmse}_\pi(\tau^2)$. By the continuity of $\mathsf{mmse}_\pi(\tau^2)$ \cite{DongningGuo2011EstimationError}, we have $\delta \tau_{\mathsf{reg,amp}*}^2 - \sigma^2 = \mathsf{mmse}_\pi(\tau_{\mathsf{reg,amp}*}^2)$. 
    Combining these three observations, we conclude by the differentiability of $\mathsf{mmse}_\pi(\tau^2)$ at $\tau_{\mathsf{reg,amp}*} > 0$ \cite{DongningGuo2011EstimationError} that $\delta = \frac{\mathrm{d}}{\mathrm{d} \tau^2} \mathsf{mmse}_\pi(\tau^2)\Big|_{\tau = \tau_{\mathsf{reg,amp}*}^2}$ and $\sigma^2 = \delta \tau_{\mathsf{reg,amp}*}^2 - \mathsf{mmse}_\pi(\tau_{\mathsf{reg,amp}*}^2)$. 
    Thus, the set of $\delta,\sigma^2$ for which $\tau_{\mathsf{reg,amp}}^2 = \tau_{\mathsf{reg,amp}*}^2$ holds is contained within the set 
    $$
    \left\{\left(\frac{\mathrm{d}\phantom{\tau^2}}{\mathrm{d}\tau^2}\mathsf{mmse}_\pi(\tau^2), \tau^2 \frac{\mathrm{d}\phantom{\tau^2}}{\mathrm{d}\tau^2}\mathsf{mmse}_\pi(\tau^2) - \mathsf{mmse}_\pi(\tau^2)\right)\Bigm\vert \tau^2 >0\right\},
    $$ 
    which has Lebesgue measure 0 because $\mathsf{mmse}_\pi$ is infinitely differentiable \cite{DongningGuo2011EstimationError}.
\end{proof}

\begin{proof}[Proof of Proposition \ref{prop-bAMP-achieves-alg-bound}]\label{proof-of-prop-bAMP-achieves-alg-bound}
    We will prove the proposition under the DSN assumption. Because the DSN assumption holds almost surely under the RSN assumption, the proposition also holds under the RSN assumption.

    The proposition is nearly an instance of Theorem 14 of \cite{Berthier2017StateFunctions},
    except that $\eta_t$ as we have defined it need not be Lipschitz continuous, which is required by \cite{Berthier2017StateFunctions}.
    Versions of Proposition \ref{prop-bAMP-achieves-alg-bound} appear elsewhere in the literature (e.g., \cite{Berthier2017StateFunctions,barbier2017}), 
    though often without proof, citing works in which state evolution for AMP is established for Lipschitz denoisers \cite{BM-MPCS-2011,javanmard2013state,Berthier2017StateFunctions}. 
    For the sake of completeness,
    we address here the minor technical difficulty that arises when $\eta_t$ is not Lipschitz using a truncation technique which is standard in the AMP literature.

    The truncation argument requires the following lemma.
    \begin{lemma}\label{lem:eta-linear-growth}
        There exist constants $C_t > 0$ such that for each $t$, $|\eta_t(y)| \leq C_t(1 + |y|)$.
    \end{lemma}

    \begin{proof}
        Assume $K$ is such that $\pi([-K,K]) \geq 1/2$.
        For $y > K$,
        we have 
        \begin{align*}
            \E_{\beta_0,z}[\beta_0|\beta_0+\tau_tz= y] &\leq y + \int_0^\infty \P_{\beta_0,z}(\beta_0 \geq y + t | \beta_0 + \tau_t z = y) \de t\\
            &\leq 2y+K + \int_0^\infty \frac{\int_{2y+K + t}^\infty \exp(-(y-s)^2/(2\tau_t^2)) \pi(\de s)}{\int_{-\infty}^\infty \exp(-(y-s)^2/(2\tau_t^2))\pi(\de s)} \de t\\
            & \leq 2y+K + \frac{\exp(-(y+K)^2/(2\tau_t^2))}{\exp(-(y+K)^2/(2\tau_t^2))/2} = 2y+K+2.
        \end{align*}
        A similar argument shows that for $y < - K$, $\E_{\beta_0,z}[\beta_0|\beta_0+\tau_tz= y] \geq 2y - K - 2$.
        This establishes the lemma.
    \end{proof}

    Define $\eta_{M,t}(y) = \eta_t(y) \ones_{ |y| \leq M} + \eta_t(M)\ones_{ y > M } + \eta_t(-M) \ones_{y < M}$.
    The reason for defining this truncation is that, because $\eta_t$ has continuous first derivative, $\eta_{M,t}$ is Lipschitz continuous.

    Define $\tau_{M,0}^2 = \tau_0^2$ and
    \begin{gather}
        \tau_{M,t+1}^2 = \frac1\delta(\sigma^2 + \E_{\beta_0,z}[(\eta_{M,t}(\beta_0 + \tau_{M,t}z) - \beta_0)^2]), \;\; t \geq 0,\label{B-AMP-SE-M}\\
        \mathsf{b}_{M,t} = \frac1\delta \E_{\beta_0,z}[\eta_{M,t-1}'(\beta_0 + \tau_{M,t-1}z)]. 
    \end{gather}
    The truncated Bayes AMP iteration is
    \begin{equation}\label{eq:truncated-AMP}
        \begin{gathered}
            \br_M^t = \frac{\by - \bX \widehat \bbeta^t}{n} + \mathsf{b}_{M,t} \br_M^{t-1},\\
            \widehat \bbeta_M^{t+1} = \eta_{M,t}(\widehat \beta_{M,t}^t + \bX^\top \br_M^t).
        \end{gathered}
    \end{equation}
    To prove Proposition \ref{prop-bAMP-achieves-alg-bound},
    we will use Theorem 14 of \cite{Berthier2017StateFunctions} to establish state evolution for the iteration \eqref{eq:truncated-AMP} and then show that for large $M$, iteration \eqref{eq:truncated-AMP} approximates iteration \eqref{B-AMP}.

    \noindent \textbf{State evolution for truncated Bayes AMP}.

    We claim for any fixed $t$,
    \begin{equation}\label{eq:truncated-est-err}
        \lim_{p \rightarrow \infty}^{\mathrm p} \| \widehat \bbeta_M^t - \bbeta_0\|^2 = \E_{\beta_0,z}[(\eta_{M,t}(\beta_0 + \tau_{M,t} z)^2 - \beta_0^2)].
    \end{equation}
    This statement follow directly from Theorem 14 of \cite{Berthier2017StateFunctions} because, due to the truncation, all the technical conditions of that theorem are satisfied, as we now show.

    First, Theorem 14 of \cite{Berthier2017StateFunctions} is related to our setting by the following change of variables (with the notation of \cite{Berthier2017StateFunctions} on the left).
    \begin{equation}\label{B-AMP-COV}
    \begin{aligned}
        \bA &\leftarrow \frac1{\sqrt n}\bX, & \btheta_0 &\leftarrow\sqrt p \bbeta_0, & \by &\leftarrow \sqrt{\frac pn} \by, & \bw &\leftarrow \sqrt{ \frac pn } \bw, & \widehat \btheta^t &\leftarrow \sqrt p \widehat \bbeta^t_M,\\
        \br^t &\leftarrow \sqrt{np}\br^t_M, & (m,n) &\leftarrow (n,p),             & \eta_t(\bx)_j &\leftarrow \eta_{M,t}(\sqrt{p}x_j), & \sigma_w^2 &\leftarrow \sigma^2/\delta, & \mathsf{b}_t &\leftarrow \mathsf{b}_{M,t}.
    \end{aligned}
    \end{equation}

    We must check conditions (C1) - (C6) of \cite{Berthier2017StateFunctions} and one more condition which we list as equation \eqref{bMt-condition} below.
    (C1) holds by assumption; (C2) holds because the posterior mean is continuously differentiable to all orders,\footnote{See \cite[Theorem 2.7.1]{Lehmann2005TestingHypotheses}. Because the posterior mean as a function of $y$ under Gaussian noise is the mean of an exponential family with natural parameter $y/\tau^2$, this theorem applies.}\label{footnote-smooth-bayes-estimator}, so it is Lipschitz on compact intervals, and $\eta_{M,t}$ defined by $\eta_{M,t}(\bx)_j = \eta_{M,t}(\sqrt{p}x_j)/\sqrt{p}$ is uniformly Lipschitz;
    and (C3) and (C4) hold by the DSN assumption.

    For (C5), we must check that $\lim_{p \rightarrow \infty}\E_{\bz}\left[\left\langle \bbeta_0, \eta_{M,t}(\bbeta_0 + \tau \bz)\right\rangle\right]$ exists and is finite.
    Note the functions $(\bx_1,\bx_2) \mapsto \bx_1$ and $(\bx_1, \bx_2) \mapsto \eta_{M,t}(\bx_1 + \tau \bx_2)$ are uniformly pseudo-Lipschitz of order 1 (the first trivially, the second by (C2)), and their norm evaluated at $\bzero$ is bounded over $p$.
    Because these functions are symmetric, Lemma \ref{lem-pseudo-lipschitz-empirical-to-expectation}, which gives
    \begin{align}
    \lim_{p \rightarrow \infty}\E_{\bz}[\langle \bbeta_0, &\eta_{M,t}(\bbeta_0 + \tau \bz)\rangle] = \lim_{p \rightarrow \infty} \E_{\tilde \bbeta_0,\bz} \left[\left\langle \tilde \bbeta_0, \eta_{M,t}( \tilde\bbeta_0 + \tau \bz)\right\rangle\right]\nonumber \\ 
    &= \lim_{p \rightarrow \infty} \frac1p \sum_{j=1}^p \E_{\tilde \bbeta_0,\bz}\left[\sqrt p \tilde \beta_{0j} \eta_{M,t} (\sqrt p \tilde \beta_{0j} + \tau \sqrt p z_j)\right]= \E_{\beta_0,z}[ \beta_0\eta_{M,t}(\beta_0 + \tau z)],\label{C5-limit}
    \end{align}
    where $\tilde \bbeta_0$ has coordinates distributed iid from $\pi/\sqrt p$, and $\beta_0\sim \pi,\,z\sim\mathsf{N}(0,1)$ independent.
    Because $\eta_{M,t}$ is bounded, the expectation on the right-hand side is finite, and (C5) is established.

    For (C6), we must show that or any $s,t$ and any $2\times 2$ covariance matrix $\bT \in S_+^2$, the limit  $\lim_{p \rightarrow \infty}\E_{\bz_1,\bz_2}\left[\left\langle \eta_{M,s}(\bbeta_0 + \bz_1), \eta_{M,t}(\bbeta_0 + \bz_2)\right\rangle\right]$ exists and is finite, where $(\bz_1,\bz_2) \sim \mathsf{N}(\bzero, \bT \otimes \bI_p / p)$.
    This is shown in the same way we establisehd (C5).

    Under the change of variables (with the notation of \cite{Berthier2017StateFunctions} on the left) $\tau_t^2 \leftarrow \tau_{M,t}^2$, 
    iteration (206), (207) of \cite{Berthier2017StateFunctions} becomes the scalar iteration \eqref{B-AMP-SE-M}. 
    Under this change of variables, 
    the condition given by equation (208) of \cite{Berthier2017StateFunctions}
    becomes
    \begin{equation}\label{bMt-condition}
    \mathsf{b}_{M,t} \stackrel{\mathrm p}\simeq \frac1n \E_{\bz}\left[\mathrm{div}\, \eta_{M,t-1}\left(\bbeta_0 + \tau_{M,t-1}\bz\right)\right],
    \end{equation}
    where $\bz \sim \mathsf{N}(\bzero,\bI_p/p)$.
    Note $\tau_{M,t-1} > 0$ by induction: for the base case, use $s_2(\pi) > 0$, 
    and then use that the right-hand side of \eqref{B-AMP-SE-M} must be positive whenever $\tau_{M,t} > 0$ because perfect recovery with a non-trivial prior is impossible under Guassian corruption.
    Then by Gaussian integration by parts (Lemma \ref{lem-steins-lemma}), we have 
    \begin{equation}\label{steins-for-BAMP}
     \frac1n \E_{\bz}\left[\mathrm{div}\, \eta_{M,t-1}\left(\bbeta_0 + \tau_{M,t-1}\bz\right)\right] = \frac p{\tau_{M,t-1}n} \E_{\bz}\left[\langle \bz,\eta_{M,t-1}\left(\bbeta_0 + \tau_{M,t-1}\bz\right)\rangle\right].
    \end{equation}

    As the dimension $p$ varies, the functions $(\bx_0,\bx_1) \mapsto \bx_1$ and $(\bx_0,\bx_1) \mapsto \eta_{M,t-1}(\bx_0 + \tau_{M,t-1}\bx_1)$ are uniformly pseudo-Lipschitz of order 1 (the first trivially, the second by (C2)).
    By the same argument as in (C5), their norm when evaluated at $\bzero$ is bounded (over $p$).
    Thus, by Lemma \ref{lem-pseudo-lipschitz-closed-under-inner-product}, the functions $(\bx_0,\bx_1) \mapsto  \langle \bx_1, \eta_{M,t-1}(\bx_0 + \tau_{M,t-1} \bx_1)\rangle$ are uniformly pseudo-Lipschitz of order 2.
     Because these functions are also symmetric and $\{\bbeta_0\}$ satisfies the DSN assumption \eqref{DSN-assumption},
    we may apply Lemma \ref{lem-pseudo-lipschitz-empirical-to-expectation}, which gives
    \begin{align}
        \E_{\bz}&\left[\langle \bz,\eta_{M,t-1}\left(\bbeta_0 + \tau_{M,t-1}\bz\right)\rangle\right] \stackrel{\mathrm{p}}\simeq \E_{\tilde \bbeta_0,\bz}\left[\left\langle \bz,\eta_{M,t-1}(\tilde \bbeta_0 + \tau_{M,t-1}\bz)\right\rangle\right] \nonumber \\
        &= \frac1p \sum_{j=1}^p \E_{\tilde \bbeta_0,\bz} \left[\left\langle \sqrt p z_{0j} \eta_{M,t-1}(\sqrt p \tilde \beta_{0j} + \sqrt p z_{0j})\right\rangle \right]
        = \frac1p \sum_{j=1}^p \E_{\beta_0,z} \left[\left\langle z \eta_{M,t-1}\left( \beta_0 + z_0\right)\right\rangle \right]\nonumber \\
        &= \tau_{M,t-1} \E_{\beta_0,z}\left[\eta_{M,t-1}'(\beta_0 + \tau_{M,t-1}z)\right],\label{width-limit-BAMP}
    \end{align}
    where we have taken $\tilde \bbeta_0$ with coordinates distributed iid from $\pi/\sqrt p$, in the second equality we have used \eqref{B-AMP-full-p-denoiser}, in the third line we have taken $\beta_0 \sim \pi,\, z\sim \mathsf{N}(0,1)$ independent, and in the fourth equality we have used Lemma \ref{lem-steins-lemma} and the fact that $\eta_{M,t-1}:\reals \mapsto \reals$ is Lipschitz (see (C2)).
    By the HDA assumption, $n/p \rightarrow \delta$, whence \eqref{steins-for-BAMP} and \eqref{width-limit-BAMP} yield \eqref{bMt-condition}.

    \noindent {\bf Estimation error of truncated Bayes AMP}

    \noindent Having checked the above conditions, we may apply Theorem 14 of \cite{Berthier2017StateFunctions}.
    Because $\eta_{M,t}:\reals^p\rightarrow \reals^p$ are uniformly pseudo-Lipschitz of order 1 by (C2) and $\|\eta_{M,t}(\bzero) - \bzero\| = \|\eta_{M,t}(\bzero)\|$ is uniformly (over $p$) bounded by the argument in (C5), we have by Lemma \ref{lem-pseudo-lipschitz-closed-under-inner-product} that the functions $(\bx_0,\bx_1) \mapsto \left\|\eta_{M,t}\left(\bx_1\right) - \bx_0\right\|^2$ are uniformly pseudo-Lipschitz of order 2. 
    Thus, by Claim \ref{claim-pseudo-lipschitz-change-of-normalization}, the functions $\Psi_p(\bx_0,\bx_1) := \left\|\eta_{M,t}\left(\bx_1/\sqrt p\right) - \bx_0/\sqrt p\right\|^2$ are \cite{Berthier2017StateFunctions}-uniformly pseudo-Lipschitz of order 2.\footnote{See \eqref{berthier-pseudo-lipschitz}. This terminology just refers to the use of the notion of being uniformly pseudo-Lipschitz under the different choice of normalization used by \cite{Berthier2017StateFunctions}. Thus, it tells us the functions to which we are able to apply their theorem.}
    Under the change of variables \eqref{B-AMP-COV}, we have $\widehat \btheta^t + \bA^\mathsf{T} \br^t \leftarrow \sqrt p(\widehat \bbeta^t_M + \bX \br^t_M)$ and $\btheta_0 \leftarrow \sqrt p \bbeta_0$. Then (justification follows equations)
    \begin{align}
    &\left\|\widehat \bbeta^{t+1}_M - \bbeta_0\right\|^2 =  \left\|\eta_{M,t}\left(\widehat \bbeta^{t}_M + \bX^\mathsf{T}\br^{t}_M\right) - \bbeta_0\right\|^2 \stackrel{\mathrm{p}}\simeq \E_{\bz} \left[\left\|\eta_{M,t}\left(\bbeta_0 + \tau_{M,t} \bz\right) - \bbeta_0\right\|^2\right]\nonumber\\
    &\qquad\qquad\stackrel{\mathrm{p}}\simeq \E_{\tilde \bbeta_0,\bz} \left[\|\eta_{M,t}(\tilde \bbeta_0 + \tau_{M,t} \bz) - \bbeta_0\|^2\right]
    = \frac1p \sum_{j=1}^p \E_{\tilde \bbeta_0,\bz}\left[(\eta_{M,t}(\sqrt p \tilde \beta_{0j} + \tau_{M,t} \sqrt p z_j ) - \sqrt p \beta_{0j})^2\right]\nonumber\\
    &\qquad\qquad = \E_{\beta_0,z}\left[\left(\eta_{M,t}\left(\beta_0 + \tau_{M,t} z\right) - \beta_0\right)^2\right],\label{truncated-iterates-loss}
    \end{align}
    where in the first equality we have used \eqref{B-AMP}; in the first line we have taken $\bz \sim \mathsf{N}(\bzero,\bI_p/p)$; in the first probabilistic equality we have used Theorem 14 of \cite{Berthier2017StateFunctions} (in particular, Eq.~(210) applied to $\psi_p$); in the second probabilistic equality we have used Lemma \ref{lem-pseudo-lipschitz-empirical-to-expectation}; in the second line we have taken $\tilde \bbeta_0$ with coordinates distributed iid from $\pi/\sqrt p$; in the third line we have used \eqref{B-AMP-full-p-denoiser}; and in the fourth line we have taken $\beta_0 \sim \pi,\,z\sim \mathsf{N}(0,1)$ independent.

    \noindent {\bf The truncated and untruncated state evolutions are close}

    \noindent By induction, for each $t \geq 0$, we have
    \begin{equation}\label{tau-M-to-tau}
    \lim_{M \rightarrow \infty} \tau_{M,t} = \tau_t
    \;\;\;
    \text{and}
    \;\;\;
    \lim_{M \rightarrow \infty} \mathsf{b}_{M,t} = \mathsf{b}_t.
    \end{equation} 
    Indeed, the base case $(t=0)$ holds by definition. 
    For the induction step, assume $\tau_{M,t} \rightarrow \tau_{t}$.
    Denote $\eta(\cdot;\tau)$ the Bayes estimator at noise level $\tau$. That is, $\eta(y;\tau) = \E_{\beta_0,z}[\beta_0|\beta_0 + \tau z = y]$.
    By the same argument we used in (C2), the Bayes estimator $\eta$ is continuous in $\tau$ for $\tau > 0$.
    Then, by the inductive hypothesis, $\eta(y;\tau_{M,t}) \xrightarrow[M \rightarrow \infty]{} \eta(y;\tau_{t})$ for all $y \in \reals$. 
    Further, because for $\beta_0,z$ fixed we have $M > \beta_0 + \tau_{t} z$ for sufficiently large $M$, we have 
    \begin{equation}\label{pointwise-eta-Mt-convergence}
    \eta_{M,t}(\beta_0 + \tau_{M,t} z) \xrightarrow[M \rightarrow \infty]{} \eta(\beta_0 + \tau_{t} z;\tau_{t}) \text{ pointwise.} 
    \end{equation}
    Also, $|\eta_{M,t}(\beta_0 + \tau_{M,t} z)|<|\eta(\beta_0 + \tau_{M,t}z;\tau_{M,t})|$ and
    the collection of random variables $\{\eta(\beta_0 +\tau z;\tau)^2 \mid \tau \geq 0\}$ is uniformly integrable because $\eta(\beta_0 +\tau z;\tau)^2 = \E_{\beta_0,z} [\beta_0|\beta_0 + \tau z]^2 \leq \E_{\beta_0,z}[\beta_0^2|\beta_0 + \tau z]$, and $\E_{\beta_0,z}[ \E_{\beta_0,z}[\beta_0^2|\beta_0 + \tau z]\mathbf{1}_{ \E_{\beta_0,z}[\beta_0^2|\beta_0 + \tau z] > C}] = \E_{\beta_0,z}[\beta_0^2\mathbf{1}_{ \E_{\beta_0,z}[\beta_0^2|\beta_0 + \tau z] > C}]$ becomes uniformly small for sufficiently large $C$ because $\P_{\beta_0,z}( \E_{\beta_0,z}[\beta_0^2|\beta_0 + \tau z] > C ) \leq \frac{s_2(\pi)}{C}$ by Markov's inequality.
    Thus, in fact, the collection $\{(\eta_{M,t}(\beta_0 + \tau_{M,t} z) - \beta_0)^2\}$ over all values of $M$ and $t$ is uniformly integrable.
    By Vitali's Convergence Theorem (see e.g. \cite[Theorem 5.5.2]{Durrett2010Probability:Examples}) and \eqref{pointwise-eta-Mt-convergence}, we have
    \begin{align}
    \E_{\beta_0,z}\left[\left(\eta_{M,t}\left(\beta_0 + \tau_{M,t} z\right) - \beta_0\right)^2\right] &\xrightarrow[M \rightarrow \infty]{} \E_{\beta_0,z}\left[\left(\eta\left(\beta_0 + \tau_{t} z,\tau_{t}\right) - \beta_0\right)^2\right] = \mathsf{mmse}_\pi\left(\tau_{t}^2\right).\label{fixed-t-se-convergence}
    \end{align} 
    By \eqref{B-AMP-SE-2}, \eqref{B-AMP-SE-M} and \eqref{fixed-t-se-convergence}, we have \eqref{tau-M-to-tau}.
    The induction is complete, so in fact $\tau_{M,t} \rightarrow \tau_t$ as $M \rightarrow \infty$ holds for all $t$.
    A similar argument shows that convergence of $\mathsf{b}_{M,t}$.

    \noindent {\bf The truncated and untruncated state evolutions are close}

    \noindent We claim
    \begin{equation*}
        \lim_{M \rightarrow \infty} \limsup^{\mathrm{p}}_{p\rightarrow \infty}\|\widehat \bbeta_{M}^t - \widehat \bbeta^t \|^2 = 0.
    \end{equation*}
    This follows inductively by combining $|\eta_t(y) - \eta_{M,t}(y)| \leq C_t(1+|y|)\mathbf{1}_{|y|\geq M}$ (Lemma \ref{lem:eta-linear-growth}), $\mathsf{b}_{M,t} \rightarrow \mathsf{b}_t$, and the boundedness (in probability) of $\|\widehat \bbeta_M^t\|^2$. 
    Thus, by \eqref{eq:truncated-est-err} and \eqref{tau-M-to-tau},
    we conclude \eqref{M-to-infty-iterates-loss}.

    \noindent {\bf Bayes AMP achieves noise variance $\tau_{\mathsf{reg,amp}*}^2$}

    \noindent Now we prove \eqref{large-t-bAMP-SE}.
    Because $s_2(\pi) > 0$, for all $\tau > 0$, we have $\mathsf{mmse}_\pi(\tau^2) < s_2(\pi)$. Thus, for $\tau^2 \geq \frac1\delta(\sigma^2 + s_2(\pi))$, we have $\delta \tau^2 - \sigma^2 \geq s_2(\pi) > \mathsf{mmse}_\pi(\tau^2)$. Thus, by \eqref{alg-bound-sup-to-inf-form-def} and \eqref{B-AMP-SE-1} and the continuity of $\mathsf{mmse}_\pi(\tau^2)$ in $\tau^2$, we have $\tau_0 > \tau_{\mathsf{reg,amp}*}$.
    Further, if $\tau > \tau_{\mathsf{reg,amp}*}$, because $\mathsf{mmse}_\pi(\tau^2)$ is strictly increasing in $\tau$ (see \cite[Eq.~(65)]{DongningGuo2011EstimationError}), we have $\frac1\delta\left(\sigma^2 + \mathsf{mmse}_\pi(\tau^2)\right) > \frac1\delta \left(\sigma^2 + \mathsf{mmse}_\pi(\tau_{\mathsf{reg,amp}*}^2)\right) = \tau_{\mathsf{reg,amp}*}^2$.
    Thus, by \eqref{B-AMP-SE-2} and induction, we have $\tau_{t-1} > \tau_t > \tau_{\mathsf{reg,amp}*}$ for all $t$.
    Further, because $\mathsf{mmse}_\pi(\tau^2)$ is continuous in $\tau^2$ \cite{DongningGuo2011EstimationError}, for all $\eps > 0$ such that $\tau_{\mathsf{reg,amp}*} + \eps < \tau_0$, we have $\inf_{\tau \in [\tau_{\mathsf{reg,amp}*} + \eps,\tau_0]} \left\{\tau^2 - \frac1\delta \left(\sigma^2 + \mathsf{mmse}_\pi(\tau^2)\right) \right\} > 0$.
    Thus, for all $t$ such that $\tau_t > \tau_{\mathsf{reg,amp}*} + \eps$, we have $\tau_t - \tau_{t+1}$ is bounded below by a positive constant.
    Thus, for $t$ sufficiently large we must have $\tau_t \leq \tau_{\mathsf{reg,amp}*} + \eps$. Because this is true for all sufficiently small $\eps > 0$, we have $\limsup_{t\rightarrow \infty} \tau_t \leq \tau_{\mathsf{reg,amp}*}$.
    Because we also have $\tau_t > \tau_{\mathsf{reg,amp}*}$ for all $t$, we in fact have \eqref{large-t-bAMP-SE}.

    Eq.\ \eqref{bAMP-achieves-bound} now follows by \eqref{M-to-infty-iterates-loss} and taking $t$ sufficiently large.
\end{proof}

\section{Proof of Theorem \ref{thm-high-low-snr-gaps}}\label{sec:ThmGaps}

\begin{proof}[Proof of Theorem \ref{thm-high-low-snr-gaps}.(i)]
    Throughout the proof, we will drop $\pi$ from our notation for the moments of $\pi$. That is, we write $s_k$ in place of $s_k(\pi)$.
    Observe that $\mathsf{mmse}_\pi(\tau^2) \leq s_2$. Also, $\mathsf{R^{opt}_{seq,cvx}}(\tau;\pi) \leq s_2$ because at each $p$ we may take in \eqref{R-seq-cvx-opt-finite} the function $\rho_p(\bx) = \mathbb{I}_{\bx = \bzero}$ which is $0$ when $\bx = \bzero$ and $\infty$ otherwise.
Thus,
    \begin{gather}
    \frac{\sigma^2}{\delta} \leq \frac{\sigma^2 + \mathsf{mmse}_\pi(\tau^2)}{\delta}  \leq \frac{\sigma^2 + s_2}{\delta},\label{tau-stat-bound}\\
     \frac{\sigma^2}{\delta}\leq \frac{\sigma^2 +\mathsf{R^{opt}_{seq,cvx}}(\tau;\pi)}{\delta} \leq  \frac{\sigma^2 + s_2}{\delta}.\label{tau-cvx-bound}
    \end{gather}
    By \eqref{tau-stat-bound} and \eqref{tau-stat-is-stationary}, $\frac{\sigma^2}{\delta} \leq \tau_{\mathsf{reg,stat}}^2 \leq\frac{\sigma^2 + s_2}{\delta}$, whence $\tau_{\mathsf{reg,stat}} = \frac{\sigma}{\sqrt \delta} + O\left(\frac1{\sqrt{\delta}}\right)$, where we have used the inequality that for $a,b\geq 0$ we have $\sqrt{a + b} \leq \sqrt{a} + \sqrt{b}$.
    By Lemma \ref{lem-r-continuity}, $\mathsf{R^{opt}_{seq,cvx}}(\tau;\pi)$ is continuous in $\tau$, whence by \eqref{eqdef-tau-reg-cvx}, we have $\delta \tau_{\mathsf{reg,cvx}}^2 - \sigma^2 = \mathsf{R^{opt}_{seq,cvx}}(\tau_{\mathsf{reg,cvx}};\pi)$. Combined with \eqref{tau-cvx-bound}, we have $\frac{\sigma^2}{\delta} \leq \tau_{\mathsf{reg,cvx}}^2 \leq\frac{\sigma^2 + s_2}{\delta}$, whence $\tau_{\mathsf{reg,cvx}} =  \frac{\sigma}{\sqrt \delta} + O\left(\frac1{\sqrt{\delta}}\right)$ as well.
    
    With $\beta_0\sim \pi,\,z\sim \mathsf{N}(0,1)$ independent and $y = \beta_0 + \tau z$, we have (justification to follow) 
    \begin{align}
        \frac{\mathrm{d}}{\mathrm{d}\tau}\mathsf{mmse}_\pi(\tau^2) &= -\frac2{\tau^3}\frac{\mathrm{d}}{\mathrm{d}\tau^{-2}} \mathsf{mmse}_\pi(\tau^2) = \frac2{\tau^3} \E_{\beta_0,z}\left[\E_{\beta_0,z}\left[(\beta_0 - \E_{\beta_0,z}[\beta_0|y])^2\big|y\right]^2\right]\nonumber\\
    &\leq  \frac2{\tau^3} \E_{\beta_0,z}\left[(\beta_0 - \E_{\beta_0,z}[\beta_0|y])^4\right] 
    \leq 32\sqrt{24} \tau,\label{mmse-derivative-bound}
    \end{align}
    where in the second equality we have used \cite[Proposition 9]{DongningGuo2011EstimationError}, in the first inequality we have used Jensen's inequality, and in the second inequality we have used \cite[Proposition 5]{DongningGuo2011EstimationError}.
    Further, because $\pi \in \cP_6(\reals)$, $\tau^{-2} \mapsto \mathsf{mmse}_\pi(\tau^2)$ is continuously differentiable to second order on $[0,\infty)$ \cite[Proposition 7]{DongningGuo2011EstimationError}, whence $\frac{\mathrm{d}}{\mathrm{d}\tau^{-2}}\mathsf{mmse}_{\pi}(\tau^2)$ is bounded for $\tau \geq C$ for any $C > 0$. Combined with \eqref{mmse-derivative-bound} (which bounds the derivative for small $\tau$), we get that $\mathsf{mmse}_\pi(\tau^2)$ is Lipschitz in $\tau$ on the entirety of its domain $[0,\infty)$.
     Because $\mathsf{R^{opt}_{seq,cvx}}(\tau;\pi)$ is also Lipschitz in $\tau$, we have by Theorem \ref{thm-cvx-lower-bound}
    $$
    \Delta(\pi,\delta,\sigma) \geq \mathsf{R^{opt}_{seq,cvx}}(\tau_{\mathsf{reg,cvx}};\pi) - \mathsf{mmse}_\pi(\tau_{\mathsf{reg,stat}}^2) = \mathsf{R^{opt}_{seq,cvx}} (\sigma/\sqrt{\delta};\pi) - \mathsf{mmse}_\pi(\sigma^2/\delta) + O(1/\sqrt \delta).
    $$
   Thus, we have \eqref{gap-high-snr}.
\end{proof}

\begin{proof}[Proof of Theorem \ref{thm-high-low-snr-gaps}.(ii)]
    As in the proof of part (i), throughout the proof, we will drop $\pi$ from our notation for the moments of $\pi$. That is, we write $s_k$ in place of $s_k(\pi)$.
    By \cite[Eq. (61)]{DongningGuo2011EstimationError}, we have
    \begin{align}
        \mathsf{mmse}_\pi(\tau^2) &= s_2 - s_2^2 \frac1{\tau^2} + \frac12 \left(2s_2^3 - s_3^2\right) \frac1{\tau^4}
        - \frac16 \left(15 s_2^4 - 12 s_2 s_3^2 - 6 s_2^2s_4 + s_4^2\right) \frac1{\tau^6}+ O\left(\frac1{\tau^8}\right),\label{mmse-low-snr-expansion}
    \end{align}
    where $O\left(\frac1{\tau^8}\right)$ hides constants depending only on the moments of $\pi$. 
    Define $\kappa^2 = \frac{\sigma^2}{\delta}\left(1 + \frac{s_2}{\sigma^2}\right)$ and $\Delta = \kappa^2 - \tau_{\mathsf{reg,stat}}^2$.
    By \eqref{tau-stat-is-stationary} and some rearrangement, we have 
    \begin{equation}\label{fixed-pt-in-Delta}
    s_2 - \delta \Delta = \mathsf{mmse}_\pi\left(\kappa^2  - \Delta\right).
    \end{equation}
    For the remainder of the proof, $O$ will also hide constants depending $\delta$ (in addition to the moments of $\pi$), but will not depend on $\sigma^2$ and likewise on $\kappa^2$ or $\Delta$.
    We see that $\Delta \leq \frac{s_2}{\delta} = O(1)$, so that by \eqref{mmse-low-snr-expansion} we have
    \begin{align}
        \mathsf{mmse}_\pi\left(\kappa^2 - \Delta \right) &= s_2 - \frac{s_2^2}{\kappa^2} \left(1 + \frac{\Delta}{\kappa^2} + \frac{\Delta^2}{\kappa^4} + O \left(\frac{1}{\kappa^6}\right) \right)+ \frac12 \frac{2s_2^3 - s_3^2}{\kappa^4} \left(1 + O\left(\frac{\Delta}{\kappa^2}\right)\right)\nonumber\\
        &\quad- \frac16 \frac{15 s_2^4 - 12 s_2 s_3^2 - 6 s_2^2s_4 + s_4^2} {\kappa^6} \left(1 + O\left(\frac{\Delta}{\kappa^2}\right)\right)
        + O\left(\frac{1}{\kappa^8}\right).\label{mmse-Delta-expansion}
    \end{align}
    Comparing with \eqref{fixed-pt-in-Delta} and using $\Delta = O(1)$, we see that 
    $
    \Delta = \frac{s_2^2}{\delta\kappa^2} + O\left(\frac1{\kappa^4}\right).
    $
    Thus, we have
    \begin{align*}
    \frac{s_2^2}{\kappa^2} \left(1 + \frac{\Delta}{\kappa^2} + \frac{\Delta^2}{\kappa^4} + O \left(\frac{1}{\kappa^6}\right) \right) &= \frac{s_2^2}{\kappa^2} + \frac{s_2^4}{\delta \kappa^6} + O\left(\frac1{\kappa^8}\right),\\
     \frac12 \frac{2s_2^3 - s_3^2}{\kappa^4} \left(1 + O\left(\frac{\Delta}{\kappa^2}\right)\right) &=  \frac12 \frac{2s_2^3 - s_3^2}{\kappa^4} + O\left(\frac1{\kappa^8}\right).
    \end{align*}
    Plugging into \eqref{mmse-Delta-expansion}, we have
    \begin{align}
        \mathsf{mmse}_\pi(\tau_{\mathsf{reg,stat}}^2) & = \mathsf{mmse}_\pi(\kappa^2 - \Delta) = s_2 - \frac{s_2^2}{\kappa^2}  + \frac12 \frac{2s_2^3 - s_3^2}{\kappa^4}\nonumber\\
         &\qquad - \frac{s_2^4}{\delta \kappa^6} - \frac16 \frac{15 s_2^4 - 12 s_2 s_3^2 - 6 s_2^2s_4 + s_4^2} {\kappa^6} + O\left(\frac1{\kappa^8}\right).\label{mmse-tau-stat-expansion}
    \end{align}
    We now write this expansion in terms of the signal-to-noise parameter $\mathsf{snr}$. 
    Applying the definition of $\kappa^2$, we have $\frac{s_2}{\kappa^2} = \delta\, \mathsf{snr}(1 - \mathsf{snr} + \mathsf{snr}^2) + O(\mathsf{snr}^4)$, $\frac{s_2^2}{\kappa^4} = \delta^2 \mathsf{snr}^2(1 - 2\, \mathsf{snr}) + O(\mathsf{snr}^4)$, and $\frac{s_2^3}{\kappa^6} = \delta^3\,\mathsf{snr}^3 + O(\mathsf{snr}^4)$.
    Plugging into \eqref{mmse-tau-stat-expansion} and rearranging, we get
    \begin{align}
    \mathsf{mmse}_\pi(\tau_{\mathsf{reg,stat}}^2) &= s_2 - s_2 \delta\, \mathsf{snr} 
    + s_2 \left(\delta + \delta^2 \left(1 - \frac{s_3^2}{2s_2^3}\right)\right)\mathsf{snr}^2\nonumber \\
    &\quad+ s_2 \left(- \delta - \delta^2\left(3 - \frac{s_3^2}{s_2^3}\right) - \delta^3 \left(\frac52 - 2 \frac{s_3^2}{s_2^3} - \frac{s_4}{s_2^2} + \frac{s_4^2}{6s_2^4}\right)\right)\mathsf{snr}^3 + O(\mathsf{snr}^4).\label{mmse-tau-stat-in-zeta-expansion}
    \end{align}
   
    Now let $\tau_{\mathsf{ridge}}$ solve
    \begin{align}\label{optimal-ridge-effective-noise}
        \delta \tau^2 -\sigma^2 = \mathsf{mmse}_{\mathsf{N}(0,s_2)}(\tau^2).
    \end{align}
   We will show that ridge regression with appropriately chosen regularization achieves risk $\mathsf{mmse}_{\mathsf{N}(0,s_2)}(\tau_{\mathsf{ridge}}^2)$. 
    Let
    $
    \rho_p(\bx) =  \frac{\sigma^2}{\delta s_2} \|\bx\|^2.
    $
    The risk of this estimator has been studied previously by \cite{karoui2013asymptotic}. 
    We repeat the analysis here for completeness.
        Let 
    \begin{align}\label{optimal-ridge-effective-lam}
        \lambda_{\mathsf{ridge}} = \frac{\delta\tau_{\mathsf{ridge}}^2}{2\sigma^2}.
    \end{align}
    Observe then that $\mathsf{prox}[\lambda_{\mathsf{ridge}} \rho_p](\by) = \frac{s_2}{s_2 + \tau_{\mathsf{ridge}}^2} \by$.
    Let $\cT = (\pi,\{\rho_p\})$.
    Observe that $\mathsf{R_{reg,cvx}^\infty}(\tau_{\mathsf{ridge}},\lambda_{\mathsf{ridge}},\cT) = \frac{s_2\tau^2}{s_2 + \tau^2} = \mathsf{mmse}_{\mathsf{N}(0,s_2)}(\tau_{\mathsf{ridge}}^2)$ and $\mathsf{W_{reg,cvx}^\infty}(\tau_{\mathsf{ridge}},\lambda_{\mathsf{ridge}},\cT) = \frac{s_2}{s_2 + \tau_{\mathsf{ridge}}^2}$. 
    One can then check using \eqref{optimal-ridge-effective-noise} and \eqref{optimal-ridge-effective-lam} that $\tau_{\mathsf{ridge}},\lambda_{\mathsf{ridge}}$ solve \eqref{fixed-pt-prior-asymptotic} at $\gamma=0$. 
    Because $\rho_p$ are uniformly strongly convex, by Proposition \ref{prop-strongly-convex-loss}.(ii) and Lemma \ref{lem-strongly-convex-DSN-to-RSN-risk}, we have
    \begin{equation}\label{ridge-asymptotic-risk}
    \lim_{p \rightarrow \infty}  \E_{\bbeta_0,\bw,\bX} \left[\|\widehat \bbeta_{\mathsf{cvx}} - \bbeta_0\|^2\right] = \mathsf{R_{orc,cvx}^\infty}(\tau_{\mathsf{ridge}},\lambda_{\mathsf{ridge}},\cT) = \mathsf{mmse}_{\mathsf{N}(0,s_2)}(\tau_{\mathsf{ridge}}^2).
    \end{equation}
 Because $\tau_{\mathsf{ridge}}$ solves \eqref{optimal-ridge-effective-noise}, we in fact have that formula \eqref{mmse-tau-stat-in-zeta-expansion} holds for $\mathsf{mmse}_{\mathsf{N}(0,s_2)}(\tau_{\mathsf{ridge}}^2)$ after replacing the moments with those of $\mathsf{N}(0,s_2)$. 
That is,
    \begin{align}
    \mathsf{mmse}_{\mathsf{N}(0,s_2)}(\tau_{\mathsf{ridge}}^2) &= s_2 - s_2 \delta \,\mathsf{snr} + s_2 \left(\delta + \delta^2\right)\mathsf{snr}^2 + s_2 \left(- \delta - 3 \delta^2 - \delta^3 \right)\mathsf{snr}^3 + O(\mathsf{snr}^4),\label{ridge-in-zeta-expansion}
    \end{align}
    where we have used that the third moment of $\mathsf{N}(0,s_2)$ is 0 and fourth moment of $\mathsf{N}(0,s_2)$ is $3s_2^2$.

By \cite[Eq.~(65)]{DongningGuo2011EstimationError}, we have
    \begin{align*}
    -\frac{\mathrm{d}}{\mathrm{d} \tau^{-2}} \mathsf{mmse}_\pi(\tau^2) &= \E_{\beta_0,z}\big[(\E_{\beta_0,z} \left[(\E_{\beta_0,z}[\beta_0|y] - \beta_0)^2|y\right])^2\big]
     \leq \E_{\beta_0,z}\big[\E_{\beta_0,z} \left[(\E_{\beta_0,z}[\beta_0|y] - \beta_0)^4|y\right]\big]\\
    &\leq 8 \E_{\beta_0,z}\left[\E_{\beta_0,z}[\beta_0|y]^4 + \beta_0^4\right] \leq 16 \E_{\beta_0,\bz}[\beta_0^4] = 16 s_4,
\end{align*}
where $y$ denotes $\beta_0 + \tau z$.
Thus, $\frac{\mathrm{d}}{\mathrm{d} \tau^2} \mathsf{mmse}_\pi(\tau^2) = -  \frac1{\tau^4} \frac{\mathrm{d}}{\mathrm{d} \tau^{-2}} \mathsf{mmse}_\pi(\tau^2) \leq \frac{16 s_4}{\tau^4}$.
Thus, for sufficiently large $\tau$, the derivative of the right-hand side of \eqref{potential-derivative} is strictly negative, so that for sufficiently large $\sigma$ there can be at most one solution to \eqref{tau-stat-is-stationary} in the region $[\sigma^2/\delta,\infty)$. 
But all solutions $\tau_{\mathsf{reg,stat}}^2$ must satisfy $\tau_{\mathsf{reg,stat}}^2 \geq \sigma^2/\delta$, whence the minimizer of \eqref{tau-stat-def} is unique for sufficiently large $\sigma$.
Then, by Proposition \ref{prop-bayes-risk-asymptotic} and Eq. \eqref{ridge-asymptotic-risk}, for sufficiently large $\sigma$ we have $\Delta(\pi,\delta,\sigma) \leq \mathsf{mmse}_{\mathsf{N}(0,s_2)}(\tau_{\mathsf{ridge}}^2) - \mathsf{mmse}_\pi(\tau_{\mathsf{reg,stat}}^2)$. 
Combining \eqref{mmse-tau-stat-in-zeta-expansion} and \eqref{ridge-in-zeta-expansion}, we get \eqref{gap-low-snr-third-moment}, as desired.    
 \end{proof}

\section{Proofs for Section \ref{sec-examples}: examples}
\label{app:ProofExamples}

\subsection{Proof of Proposition \ref{claim-a2-for-strong-convex-penalty}}

This follows from inequality \eqref{prox-width-bound} proved in Appendix \ref{app-proximal-operator-identities}, which gives
\begin{align}
     \frac1{\tau}\E_{\bbeta_0,\bz}\left[\left\langle \bz, \mathsf{prox}\left[\lambda \rho_p\right]\left(\bbeta_0 + \tau\bz\right)\right\rangle\right] &\leq \frac1{\lambda \gamma + 1},\label{strong-convexity-bound}
\end{align}
because $\lambda \rho_p$ has strong convexity parameter $\lambda \gamma$.
The right-hand side of \eqref{strong-convexity-bound} does not depend upon $\tau$ or $p$.
Thus, choosing $1/(\bar \lambda \gamma + 1) < \delta$ yields the proposition.

\subsection{Proof of Proposition \ref{claim-a2-for-convex-constraints}}
claim
  Observe $\mathsf{prox}[\lambda \rho_p](\bx) = \Pi_{C_p}(\bx)$ for all $\lambda$, where $\Pi_{C_p}$ denotes projection onto the set $C_p$.
    Further, observe that $\E_{\bbeta_0,\bz}[\langle \bz, \bbeta_0\rangle] = 0$.
    Thus, we must show 
    \begin{equation}\label{delta-bounded-width-projection-form}
    \limsup_{p\rightarrow \infty}\sup_{\tau \in T} \frac1{\tau}\E_{\bbeta_0,\bz}\left[\left\langle \bz, \Pi_{C_p}\left(\bbeta_0 + \tau\bz\right) -\bbeta_0\right\rangle\right] < \delta.
    \end{equation}
    First, we argue conditionally on $\bbeta_0$, which for now we treat as fixed.
    To simplify notation, we translate our problem---both $\bbeta_0$ and $C_p$---by $-\bbeta_0$, so that we may without loss of generality consider $\bbeta_0 = \bzero$. 
    In the translated problem, denote  $\bb = \Pi_{C_p}(\bzero)$.
    Then
    \begin{align}\label{inner-product-z-with-proj}
        \left\langle \tau \bz , \Pi_{C_p}\left(\tau \bz \right) \right\rangle 
    &= \underbrace{\left\langle \tau \bz - \bb, \Pi_{C_p}\left(\tau \bz \right) - \bb\right\rangle}_{(*)} +  \underbrace{\left\langle \bb  , \Pi_{C_p}\left(\tau \bz\right) - \bb \right\rangle + \langle  \tau\bz , \bb \rangle}_{(**)}.
    \end{align}
    For $t \in [0,1]$, we have $t \bb + (1-t) \Pi_{C_p}\left(\tau \bz\right) \in C_p$. 
    Thus,
    $$
    \frac{\mathrm{d}}{\mathrm{d} t}\left\|\tau \bz - \left(t \bb + (1-t) \Pi_{C_p}\left(\tau \bz\right)\right)\right\|^2\Bigg|_{t = 0} \geq 0.
    $$
    Some rearrangement gives 
    \begin{align}\label{inner-product-z-with-proj-1}
        (*) &\geq \|\Pi_{C_p}(\tau \bz) - \bb\|^2 \geq \|\Pi_{C_p}(\tau \bz)\|^2 - 2 \|\bb\| \|\tau \bz\|.
    \end{align}
    Cauchy-Schwartz gives 
    \begin{align}\label{inner-product-z-with-proj-2}
    (**) &\geq  -\|\bb\|\Big(\|\Pi_{C_p}(\tau \bz) - \bb\| + \|\tau\bz\|\Big) \geq -2\|\bb\|\|\tau \bz\|,
    \end{align}
    where in the second inequality we have used that projections onto convex sets are 1-Lipschitz.
    Also, if $\|\Pi_{C_p}( \tau \bz)\| > \eps$, then $\Pi_{C_p}( \tau \bz) \in T_{C_p \cap B^c(\bzero,\eps)}(\bzero)$. 
    Thus,
    \begin{align}\label{T-closer-than-C}
    \|\tau \bz -\Pi_{C_p}(\tau \bz) \| \geq \|\tau \bz - \Pi_{T_{C_p \cap B^c(\bzero,\eps)}}(\tau \bz)\| \quad \text{if}\quad \|\Pi_{C_p}(\tau \bz)\| > \eps.
    \end{align}
    Thus, if $\|\Pi_{C_p}(\tau \bz)\| > \eps$,
    \begin{align*}
    \|\Pi_{T_{C_p \cap B^c(\bzero,\eps)}}(\tau \bz)\|^2 &= \|\tau \bz\|^2 - \|\tau \bz - \Pi_{T_{C_p \cap B^c(\bzero,\eps)}}(\tilde\bz)\|^2\\
    &\geq \|\tau \bz\|^2 - \|\tau \bz - \Pi_{C_p}(\tau \bz)\|^2 \\
    &= 2 \langle \tau \bz, \Pi_{C_p}(\tau \bz) \rangle - \|\Pi_{C_p}(\tau \bz)\|^2\\
    &\geq \langle \tau \bz, \Pi_{C_p}(\tau \bz)\rangle - 4\|\bb\| \|\tau \bz\|,
    \end{align*}
    where in the second line, we have used \eqref{T-closer-than-C}, and in the last line, we have used \eqref{inner-product-z-with-proj}, \eqref{inner-product-z-with-proj-1}, and \eqref{inner-product-z-with-proj-2}.
    We conclude that 
    \begin{align*}
    \langle \tau \bz, \Pi_{C_p}(\tau \bz) \rangle &\leq \left(\|\Pi_{T_{C_p \cap B^c(\bzero,\eps)}}(\tau \bz)\|^2 + 4 \|\bb\| \|\tau \bz\|\right)\ones_{\|\Pi_{C_p}(\tau \bz)\| > \eps} + \|\tau \bz\| \eps \ones_{\|\Pi_{C_p}(\tau \bz)\| \leq \eps}\\
    &\leq \|\Pi_{T_{C_p \cap B^c(\bzero,\eps)}}(\tau \bz)\|^2 + (4 \|\bb\|+ \eps) \|\tau \bz\|.
    \end{align*}
    Substituting the value of $\bb$, undoing the translation, and averaging over $\bbeta_0$ and $\bz$, we get
    \begin{align}
        \E_{\bbeta_0,\bz} \left[\left\langle \tau \bz, \Pi_{C_p}\left(\bbeta_0+\tau \bz\right) - \bbeta_0 \right \rangle \right] &\leq \E_{\bbeta_0,\bz}\left[\|\Pi_{T_{C_p \cap B^c(\bbeta_0,\eps)}}\left(\tau \bz\right)\|^2\right] + \E_{\bbeta_0,\bz}\left[\left(4 d\left(\bbeta_0,C_p\right)+ \eps\right)\left\|\tau \bz\right\|\right]\\
    &= \tau^2 \E_{\bbeta_0}\left[w\left(T_{C_p \cap B^c(\bbeta_0,\eps)}\right)\right] + \tau\E_{\bbeta_0,\bz}\left[\left(4 d\left(\bbeta_0,C_p\right)+ \eps\right)\left\| \bz\right\|\right].
    \end{align}
    By independence of $\bbeta_0$ and $\bz$, and \eqref{beta0-eventually-in-Cp}, we get 
    \begin{equation}
    \limsup_{p \rightarrow \infty} \E_{\bbeta_0,\bz}\left[\left(4 d\left(\bbeta_0,C_p\right)+ \eps\right)\left\|\tau \bz\right\|\right] \leq  \eps \tau \E_{\bbeta_0,\bz}\left[\|\bz\|\right] \leq \eps\tau.
    \end{equation}
    Fix compact $[\tau_{\mathsf{min}},\tau_{\mathsf{max}}] \subset
    (0,\infty)$.
\begin{equation}
    \limsup_{p\rightarrow \infty}\sup_{\tau \in T} \frac1{\tau}\E_{\bbeta_0,\bz}\left[\left\langle \bz, \Pi_{C_p}\left(\bbeta_0 + \tau\bz\right) -\bbeta_0\right\rangle\right] 
\le    \limsup_{p\rightarrow \infty}\E_{\bbeta_0}\left[w\left(T_{C_p \cap B^c(\bbeta_0,\eps)}\right)\right]+\frac{\eps}{\tau_{\mathsf{min}}} =\overline{\delta}(\eps)+\frac{\eps}{\tau_{\mathsf{min}}} ,
    \end{equation}
where we defined $\overline{\delta}(\eps) :=   \limsup_{p\rightarrow \infty}\E_{\bbeta_0}\left[w\left(T_{C_p \cap B^c(\bbeta_0,\eps)}\right)\right]$. 
The claim  \eqref{delta-bounded-width-projection-form} follows from taking the limit $\eps\to 0$ and using 
Eq.~\eqref{asymptotic-width-bounded-by-delta}.

\subsection{Proof of Proposition \ref{claim-a2-for-separable-penalties}}

    Applying the change of scaling identity for proximal operators (see Appendix \ref{app-proximal-operator-identities}, Eq.~\eqref{prox-change-of-scaling}), we get
\begin{equation}
    \mathsf{prox}[\lambda \rho_p](\bbeta_0 + \tau \bz)_j = \mathsf{prox}[\lambda \rho]\left(\sqrt{p}(\beta_{0j} + \tau z_j)\right)/\sqrt{p},
\end{equation}
so that 
\begin{align}
    \frac1{\tau}\E_{\bbeta_0,\bz}[\langle \bz, \mathsf{prox}[\lambda \rho_p](\bbeta_0 + \tau \bz)\rangle] &= \frac1{ \tau}  \E_{\beta_{0},z}  [z \mathsf{prox}[\lambda \rho](\beta_0 + \tau z) ]\nonumber.
\end{align}
Having removed the dependence on $p$, the left-hand side of \eqref{delta-bounded-width} becomes 
\begin{align*}
    \sup_{\lambda > \bar \lambda,\tau \in T} \frac1{\tau} \E_{\beta_0,z}[ z \mathsf{prox}[\lambda \rho](\beta_0 + \tau z)].
\end{align*}
It is easy to check using \eqref{sequence-cvx-estimator} that for any fixed $\beta_0,z,\tau$, we have $\lim_{\lambda \rightarrow \infty} \mathsf{prox}[\lambda \rho](\beta_0 + \tau z) = \Pi_{C}(\beta_0 + \tau z)$. 
Further, if $m \in C$, we have by the 1-Lipschitz property of the proximal operator \eqref{prox-is-lipschitz} that $|\mathsf{prox}[\lambda \rho](\beta_0 + \tau z)| < |m| + |\beta_0| + \tau |z|$.
By dominated convergence, we have
\begin{align}
\lim_{\lambda \rightarrow \infty}   \frac1{ \tau} \E_{\beta_0,z}[ z \mathsf{prox}[\lambda \rho](\beta_0 + \tau z)] &=   \frac1{ \tau} \E_{\beta_0,z}[ z \Pi_C(\beta_0 + \tau z)]. \end{align}
By Gaussian integration by parts (Appendix \ref{app-proximal-operator-identities}, Eq.~\eqref{prox-gaussian-IBP}),
\begin{align}
  \frac1{ \tau} \E_{\beta_0,z}[ z \Pi_C(\beta_0 + \tau z)] = \E_{\beta_0,z}[\ones_{\beta_0 + \tau z \in C}]= \P_{\beta_0,z}(\beta_0 + \tau z \in C).
\end{align}

First assume the $\delta$-bounded width assumption is satisfied.
Observe $\sup_{\lambda \geq \bar \lambda} \frac1{ \tau} \E_{\beta_0,z}[ z \mathsf{prox}[\lambda \rho](\beta_0 + \tau z)] \geq \P_{\beta_0,z}(\beta_0 + \tau z \in C)$,
whence for any compact $T \subset (0,\infty)$, we have $\delta > \sup_{\tau \in T} \P_{\beta_0,z}(\beta_0 + \tau z \in C)$.
Because $\tau \mapsto \P_{\beta_0,z}(\beta_0 + \tau z \in C)$ is continuous and converges to 0 as $\tau \rightarrow \infty$, we have $\sup_{\tau > \eps} \P_{\beta_0,z}(\beta_0 + \tau z \in C) = \sup_{\tau \in [\eps,M]} \P_{\beta_0,z}(\beta_0 + \tau z \in C)$ for some finite $M$.
We conclude $\sup_{\tau > \eps}\P_{\beta_0,z}(\beta_0 + \tau z \in C) < \delta$.

Conversely, assume $\sup_{\tau > \eps}\P_{\beta_0,z}(\beta_0 + \tau z \in C) < \delta$.
Because $\mathsf{prox}[\lambda \rho]$ is 1-Lipschitz, we have that $\tau \rightarrow \frac1{ \tau} \E_{\beta_0,z}[ z \mathsf{prox}[\lambda \rho](\beta_0 + \tau z)]$ is $L$-Lipschitz on compact $T = [\tau_{\mathsf{min}},\tau_{\mathsf{max}}]\subset(0,\infty)$ for some sufficiently large $L$ depending on $T$ (a complete argument of this fact occurs in a more general setting in the proof of Lemma \ref{lem-uniform-modulus-of-continuity} in Appendix \ref{app-regularity-lemmas}).
Thus, in order to pick $\bar \lambda$ such that $\sup_{\lambda \geq \bar \lambda, \tau \in T}\frac1{ \tau} \E_{\beta_0,z}[ z \mathsf{prox}[\lambda \rho](\beta_0 + \tau z)] < \delta$, we can choose a $\frac{\delta - \sup_{\tau} \P(\beta_0 + \tau z \in C)}{4L}$-cover of $T$ and a $\bar \lambda$ sufficiently large such that $ \frac1{ \tau} \E_{\beta_0,z}[ z \mathsf{prox}[\lambda \rho](\beta_0 + \tau z)] < \frac{\delta + \sup_{\tau} \P(\beta_0 + \tau z \in C)}{2}$ for all $\lambda > \bar \lambda$ and all $\tau$ in the cover.

\subsection{Proof of Proposition \ref{claim-a2-for-owl-penalty}}

    Fix any compact interval $T = [\tau_{\mathsf{min}},\tau_{\mathsf{max}}] \subset (0,\infty)$. 
    First, we describe how to choose a $\bar \lambda$ for which \eqref{delta-bounded-width} holds, and then we will prove that our choice works.
    Pick $\eps$ with
    \begin{equation}\label{choice-of-epsilon-a2-for-owl-1}
        \frac{\delta^2\tau_{\mathsf{min}}^2}{4(s_2(\pi)+ \tau_{\mathsf{max}}^2)} > \eps > 0,
    \end{equation}
    such that, for any $A\subseteq \reals$,
    \begin{equation}\label{choice-of-epsilon-a2-for-owl-2}
        \P(A) \leq \eps \quad\Longrightarrow\quad \E_{\beta_0}[\beta_0^2\ones_{A}] < \delta^2\tau_{\mathsf{min}}^2/32 \quad\text{and}\quad
 \E_z[ \tau_{\mathsf{max}}^2z^2\ones_{A}] < \delta^2\tau_{\mathsf{min}}^2/32\, .
    \end{equation} 
    Pick $t$ such that 
    \begin{equation}\label{choice-of-t-a2-for-owl}
    \P_{\beta_0}(|\beta_0| > t) < \eps \quad \text{and} \quad \P_{z}(|\tau z| > t) < \eps.
    \end{equation}
    Finally pick $\xi > 0$ such that $j \leq (1-\eps)p$ implies $\kappa_j^{(p)} > \xi$. We claim that \eqref{delta-bounded-width} is satisfied for $\bar \lambda = 2t/\xi$. 
    
    First we recall that the proximal operator for the OWL penalty satisfies (e.g.\ see Lemma 3.1 of \cite{Su2016SLOPEMinimax})
    \begin{equation}\label{prox-of-owl-l2-bound}
    \|\mathsf{prox}[\lambda \rho_p](\bx)\| \leq \|(|\bx| - \lambda \bkappa^{(p)}/\sqrt{p})_+\|,
    \end{equation}
    where $|\bx|$ denotes the coordinate-wise absolute value and $(\cdot)_+$ denotes the coordinate-wise positive part.
    By Cauchy-Schwartz, for any $\tau,\lambda$,
    \begin{align}
        \frac1{\tau}\E_{\bbeta_0,\bz}[\langle \bz, \mathsf{prox}[\lambda \rho_p](\bbeta_0 + \tau \bz ) \rangle] & \leq \frac1{\tau} \sqrt{\E_{\bz}[\| \bz \|^2]\E_{\bbeta_0,\bz}[\|\mathsf{prox}[ \rho_p ](\bbeta_0 + \tau \bz)\|^2]}\nonumber\\
        &\leq \frac1{\tau} \sqrt{\E_{\bbeta_0,\bz}[\|(|\bbeta_0 + \tau \bz| - \lambda \bkappa^{(p)}/\sqrt{p})_+\|^2]},\label{owl-guass-width-upper-bound-1}
    \end{align}
    where the last inequality holds by \eqref{prox-of-owl-l2-bound}.
    For $\lambda > \bar \lambda$,
    \begin{align*}
        \left\|\left(|\bbeta_0 + \tau \bz| - \frac{\lambda \bkappa^{(p)}}{\sqrt{p}}\right)_+\right\|^2 &= \sum_{j=1}^{\lfloor (1-\eps)p\rfloor} \left(|\beta_{0j} + \tau z_j| -\frac{\lambda \kappa_j^{(p)}}{\sqrt{p}}\right)_+^2 + \sum_{j=\lfloor (1-\eps)p\rfloor + 1}^p \left(|\beta_{0j} + \tau z_j| - \frac{\lambda \kappa_j^{(p)}}{\sqrt{p}}\right)_+^2\\
        &\leq \sum_{j=1}^{\lfloor (1-\eps)p\rfloor} \left(\beta_{0j} + \tau z_j\right)^2\ones_{|\beta_{0j} + \tau z_j| > \frac{2t}{\sqrt{p}}} + \sum_{j=\lfloor (1-\eps)p\rfloor + 1}^p \left(\beta_{0j} + \tau z_j\right)^2\\
        &\leq 2\sum_{j=1}^{\lfloor (1-\eps)p\rfloor}\left(\beta_{0j}^2 + \tau^2 z_j^2 \right)(\ones_{|\beta_{0j}| > \frac{t}{\sqrt{p}}} + \ones_{|\tau z_j| > \frac{t}{\sqrt{p}}}) + \sum_{j=\lfloor (1-\eps)p\rfloor + 1}^p \left(\beta_{0j} + \tau z_j\right)^2,
    \end{align*}
    where in the first inequality, we have used that $\lambda \kappa_j^{(p)}/\sqrt{p} \geq 2t/\sqrt{p}$ because $\lambda > 2t/\xi$ and $\kappa_j^{(p)} > \xi$ for $j \leq (1-\eps)p$ and that for any $x,y \in \reals$ we have $(|x| - y)_+^2 \leq x^2\ones_{|x|>y}$; and in the second inequality, we have used that $\left(\beta_{0j} + \tau z_j\right)^2 \leq 2\beta_{0j}^2 + 2\tau^2 z_j^2$ and $\ones_{|\beta_{0j} + \tau z_j| > \frac{2t}{\sqrt p}} \leq \ones_{|\beta_{0j}| > \frac{t}{\sqrt p}} + \ones_{|\tau z_j| > \frac{t}{\sqrt p}}$.
    Taking expectations, inequality \eqref{owl-guass-width-upper-bound-1} becomes
    \begin{align*}
        \frac1{\tau}\E_{\bbeta_0,\bz}[\langle  \bz, \mathsf{prox}[\lambda \rho_p](\bbeta_0 + \tau \bz ) \rangle]  &\leq \frac1{\tau} \sqrt{ 2\E_{\beta_0,z}\left[\left(\beta_{0}^2 + \tau^2 z^2 \right)(\ones_{|\beta_{0}| > t} + \ones_{|\tau z| > t})
        \right] + \eps (\E_{\beta_0}[\beta_0^2] + \tau^2)  }\\
        &\leq \frac1\tau \sqrt{\delta^2 \tau_{\mathsf{min}}^2/4 + \delta^2\tau_{\mathsf{min}}^2/4}\\
        &\leq \delta/\sqrt{2} < \delta,
    \end{align*}
    where in the second inequality we have bounded the first term under the square-root by \eqref{choice-of-epsilon-a2-for-owl-2} and the second term under the square-root by \eqref{choice-of-epsilon-a2-for-owl-1}, and in the third inequality, we have used that $\tau_{\mathsf{min}} \leq \tau$.
    This completes the proof.

\section{Proximal operator identities}\label{app-proximal-operator-identities}

We collect here various identities and properties of proximal operators, defined in \eqref{sequence-cvx-estimator}. Many arguments are included because they are not well-known, others for the reader's convenience. Throughout this section, $\rho:\reals^p \rightarrow \reals \cup\{\infty\}$ is an lsc, proper, convex function which is $\gamma$-strongly convex for $\gamma \geq 0$ (if $\rho$ is not strongly convex, we take $\gamma = 0$).

\begin{itemize}
    \item We have the following sub-gradient identity, which follows by the KKT conditions applied to \eqref{sequence-cvx-estimator}.
    \begin{equation}\label{prox-to-rho-subgradient}
        \by - \mathsf{prox}[\rho](\by) \in \partial \rho(\mathsf{prox}[\rho](\by)).
    \end{equation}
    
    \item We have the following fixed point identity, which follows from \eqref{prox-to-rho-subgradient}.
    \begin{equation}\label{prox-fixed-points-are-minimizers}
        \by = \mathsf{prox}[\rho](\by) \Longleftrightarrow \text{$\by$ minimizes $\rho$}.
    \end{equation}
    
    \item $\mathsf{prox}[\rho]$ is firmly non-expansive \cite[p.~131]{Parikh2013ProximalAlgorithms}. That is,
    \begin{equation}\label{prox-firm-non-expansive}
        \langle \by - \by' , \mathsf{prox}[\rho](\by) - \mathsf{prox}[\rho](\by') \rangle \geq (1+\gamma)\|\mathsf{prox}[\rho](\by) - \mathsf{prox}[\rho](\by')\|^2.
    \end{equation}
    
    \item  $\mathsf{prox}[\rho]$ is $(1+\gamma)^{-1}$-Lipschitz \cite{Parikh2013ProximalAlgorithms}. That is, for $\by,\by' \in \reals^p$,
        \begin{equation}\label{prox-is-lipschitz}
            \|\mathsf{prox}[\rho](\by) - \mathsf{prox}[\rho](\by')\| \leq (1+\gamma)^{-1}\|\by - \by'\|.
        \end{equation}
        This follows by applying Cauchy-Schwartz to the left-hand side of \eqref{prox-firm-non-expansive} and rearrangement.
        
        \item $\mathsf{prox}[\lambda\rho]$ satisfies the following continuity property in regularization parameter $\lambda$. For $\lambda > 0$, $\lambda' \geq 0$, we have
    \begin{equation}\label{prox-continuous-in-lambda}
                \|\mathsf{prox}[\lambda \rho](\by) - \mathsf{prox}[\lambda' \rho](\by)\| \leq \|\by - \mathsf{prox}[\lambda \rho](\by)\|\left|\frac{\lambda '}{\lambda} - 1\right|,
    \end{equation}
    as we now argue.
    For simplicity, denote $\ba = \mathsf{prox}[\lambda \rho](\by)$. 
    By \eqref{prox-to-rho-subgradient}, we have $\by - \ba \in \lambda \partial \rho(\ba)$. 
    Scaling by $\frac{\lambda'}{\lambda}$, we have $\frac{\lambda'}{\lambda}(\by - \ba) \in \lambda' \partial \rho(\ba)$.  Thus, 
    \begin{equation}\label{prox-subgradient-change-with-lambda}
        \left(\frac{\lambda '}{\lambda} - 1\right)(\by - \ba) \in \partial \left(\frac12\|\by - \bx\|^2 + \lambda ' \rho(\bx)\right)\Big|_{\bx = \ba}.
    \end{equation}
    Denote $\ba' = \mathsf{prox}[\lambda'\rho](\by)$.
    We have
    \begin{align*}
    \frac12\|\by - \ba\|^2 + \lambda'\rho(\ba) &\geq  \frac12\|\by - \ba'\|^2 + \lambda'\rho(\ba') + \frac12 \|\ba - \ba'\|^2\\
    &\geq \frac12\|\by - \ba\|^2 + \lambda'\rho(\ba) + \left\langle \left(\frac{\lambda'}{\lambda} - 1\right)(\by - \ba) , \ba' - \ba \right\rangle +\|\ba' - \ba\|^2,
    \end{align*}
    where in both inequalities we have used the strong convexity of $\bx \mapsto \frac12 \|\by - \bx\|^2 + \lambda' \rho(\bx)$, in the first inequality we have used that this function has sub-gradient $\bzero$ at $\ba'$ by optimality, and in the second inequality we have used \eqref{prox-subgradient-change-with-lambda}.
    By Cauchy-Schwartz, rearrangement, and substitution for the values of $\ba$ and $\ba'$, we get \eqref{prox-continuous-in-lambda}.

        \item $\mathsf{prox}[\rho]$ is almost everywhere differentiable. This follows because $\mathsf{prox}[\rho]$ is Lipschitz \cite{Evans2015MeasureFunctions}. 
        Whenever we write the divergence $\mathrm{div}\,\mathsf{prox}[\rho]$ and the Jacobian $\mathrm{D}\,\mathsf{prox}[\rho]$, they are understood to be defined almost everywhere.
        For all $\by$ for which the left-hand sides are defined, we have by \eqref{prox-is-lipschitz},
        \begin{gather}
                \mathrm{div}\,\mathsf{prox}[\rho](\by) \leq \frac{p}{1+\gamma},\label{prox-divergence-bound}\\
        \left\|\mathrm{D}\,\mathsf{prox}[\rho](\by)\right\|_{\mathsf{op}} \leq \frac{1}{1+\gamma},\label{prox-jacobian-bound}
        \end{gather}
        and by \eqref{prox-firm-non-expansive}, 
        \begin{equation}\label{prox-jacobian-non-negative-definite}
                \mathrm{D}\,\mathsf{prox}[\rho](\by) \succeq \bzero.
        \end{equation}
        By \eqref{prox-jacobian-non-negative-definite}, we have
        \begin{equation}
                \mathrm{div}\,\mathsf{prox}[\rho](\by) \geq 0.\label{prox-divergence-non-negative}
        \end{equation}
        
        \item We may apply Stein's Lemma (i.e.\ Gaussian integration by parts) to proximal operators. That is, for any $\tau \geq 0$,
        \begin{equation}\label{prox-gaussian-IBP}
             \E_{\bz}\left[\langle \bz, \mathsf{prox}[\rho](\bbeta_0 + \tau \bz)\rangle\right] = \frac{\tau}{p}\E_{\bz}[\mathrm{div}\,\mathsf{prox}[\rho](\bbeta_0 + \tau \bz)],
        \end{equation}
        where $\bz \sim \mathsf{N}(\bzero,\bI_p/p)$.
    To see this, observe that real-valued function $z_i \mapsto \mathsf{prox}[\rho](\bbeta_0 + \tau\bz)_i$, holding the other $z_j$'s fixed, is Lipschitz continuous. Thus, it is the indefinite integral of its almost-everywhere derivative. Applying Stein's lemma \cite{Stein1981EstimationDistribution}, averaging over the other $z_j$'s, and summing over $i$ yields \eqref{prox-gaussian-IBP}.
    
    \item By \eqref{prox-jacobian-bound} and \eqref{prox-gaussian-IBP}, we have for any $\tau \geq 0$ and $\bb \in \reals^p$,
        \begin{equation}\label{prox-width-bound}
                \E_{\bz}\left[\langle \bz , \mathsf{prox}[\rho](\bb + \tau\bz)\rangle\right] \leq \frac{\tau}{1+\gamma},
        \end{equation}
        where $\bz \sim \mathsf{N}(\bzero,\bI_p/p)$.

        \item Proximal operators obey the following identity under the change of scaling of $\rho$ \cite[p. 130]{Parikh2013ProximalAlgorithms}. Let $\tilde \rho(\bx) = a \rho(b\bx)$. Then
        \begin{equation}\label{prox-change-of-scaling}
            \mathsf{prox}[\tilde \rho](\by) = \mathsf{prox}\left[ab^2\rho\right](b\by)/b.
    \end{equation}
    
    \item Proximal operators shift by a constant under the following perturbation. For $\bc \in \reals^p$ fixed, let $\tilde \rho(\bx) = -  \langle \bc, \bx \rangle + \rho(\bx - \bc)  $.  We have,
    \begin{align}
        \mathsf{prox}[\tilde \rho](\by) &= \arg\min_{\bx} \left\{\frac12\|\by - \bx\|^2 - \langle \bc, \bx \rangle + \rho(\bx - \bc)\right\}\nonumber\\
        &= \arg\min_{\bx}\left\{\frac12 \|\by - (\bx - \bc)\|^2 + \rho(\bx - \bc) \right\}\nonumber\\
        &= \mathsf{prox}[\rho](\by) + \bc.\label{prox-shift}
    \end{align}
    
    \item Proximal operators of separable functions are separable. In particular, if $\rho(\bx) = \sum_{j=1}^p f(x_j)$ for some lsc, proper, convex $f:\reals \rightarrow \reals \cup \{\infty\}$, then for all $j$,
    \begin{equation}\label{prox-of-separable}
        \mathsf{prox}[\rho](\by)_j = \mathsf{prox}[f](y_j).
    \end{equation}

    \item The oracle penalty correspons to a decrease in both noise level and regularization.
    Precisely,
    \begin{gather}\label{oracle-prox-noise-reduction-form}
        \mathsf{prox}[\lambda\rho^{(\gamma)}]\left(\bbeta_0 + \tau \bz\right) = \mathsf{prox}\left[\frac{\lambda}{\lambda\gamma + 1}\rho\right]\left(\bbeta_0 + \frac{\tau}{\lambda \gamma + 1}\bz\right).
    \end{gather}
    Indeed, for any $\by$
    \begin{align*}
        \mathsf{prox}[\lambda \rho^{(\gamma)}](\by) &= \arg\min_{\bx} \left\{\frac12 \|\by - \bx\|^2 + \lambda\rho(\bx) + \frac{\lambda\gamma}{2}\|\bx - \bbeta_0\|^2\right\}\nonumber\\
        &= \arg\min_{\bx} \left\{\frac12 \left\|\frac{\lambda \gamma \bbeta_0 + \by}{\lambda \gamma + 1} - \bx\right\|^2 + \frac{\lambda}{\lambda \gamma +1}\rho(\bx) \right\}
        = \mathsf{prox}\left[\frac{\lambda}{\lambda \gamma + 1} \rho\right]\left(\frac{\lambda \gamma \bbeta_0 + \by}{\lambda \gamma  +1}\right).
    \end{align*}
    Eq.~\eqref{oracle-prox-noise-reduction-form} corresponds to $\by = \bbeta_0 + \tau \bz$.

\end{itemize}

\section{Useful tools}\label{app-useful-tools}

We omit proof for the first five lemmas, which are easy to verify. Lemmas \ref{lem-pseudo-lipschitz-closed-under-inner-product}, \ref{lem-pseudo-lipschitz-closed-under-fixing-arguments}, and \ref{lem-pseudo-lipschitz-closed-under-random-element} appear as Lemmas 20, 21, and 22 of \cite{Berthier2017StateFunctions}.
The remaining lemmas in this section are well-known, and we provide citations for each.

\begin{lemma}\label{lem-limsup-p-identities}
    The probabilistic limit supremum satisfies the following. 
    For any real valued random variables $X_p,Y_p$ such that, for each $p$, $X_p$ and $Y_p$ are defined on the same probability space,
    \begin{gather}
    \limsup_{p\rightarrow \infty}^{\mathrm{p}} X_p+Y_p \leq \left(\limsup_{p \rightarrow \infty}^{\mathrm{p}} X_p\right)+\left(\limsup_{p \rightarrow \infty}^{\mathrm{p}} Y_p\right).\label{limsup-p-sums}
    \end{gather}
    If $X_p \geq 0$ and $\mathrm{p}-\limsup_{p \rightarrow \infty} X_p < \infty$, then $X_p = O_{\mathrm{p}}(1)$.
    If $X_p, Y_p \geq 0$, then
    \begin{gather}
        \limsup_{p\rightarrow \infty}^{\mathrm{p}} X_pY_p \leq \left(\limsup_{p \rightarrow \infty}^{\mathrm{p}} X_p\right)\left(\limsup_{p \rightarrow \infty}^{\mathrm{p}} Y_p\right).\label{limsup-p-products}
    \end{gather}   
\end{lemma}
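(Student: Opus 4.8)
The plan is to reduce the three assertions to the pointwise reformulation
\[
\limsup^{\mathrm p}_{p\to\infty} X_p \;=\; \inf\Big\{ s \in \reals \;\Big|\; \lim_{p\to\infty}\P(X_p > s) = 0 \Big\},
\]
which is obtained by unwinding $\limsup^{\mathrm p}_p X_p = -\liminf^{\mathrm p}_p(-X_p)$ together with the definition of $\liminf^{\mathrm p}$ in Section \ref{section-notations}, and then to apply elementary union bounds.

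For \eqref{limsup-p-sums}, set $a = \limsup^{\mathrm p}_p X_p$ and $b = \limsup^{\mathrm p}_p Y_p$; we may assume $a,b<\infty$, since otherwise the right-hand side is $+\infty$. Fix any $s>a$ and $t>b$. By the reformulation $\P(X_p>s)\to 0$ and $\P(Y_p>t)\to 0$, and since $\{X_p+Y_p>s+t\}\subseteq\{X_p>s\}\cup\{Y_p>t\}$, a union bound gives $\P(X_p+Y_p>s+t)\to 0$, hence $\limsup^{\mathrm p}_p(X_p+Y_p)\le s+t$; letting $s\downarrow a$ and $t\downarrow b$ proves \eqref{limsup-p-sums}. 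Inequality \eqref{limsup-p-products} follows in exactly the same way: when $X_p,Y_p\ge 0$ one has $\{X_pY_p>st\}\subseteq\{X_p>s\}\cup\{Y_p>t\}$ for all $s,t\ge 0$, so the same argument yields $\limsup^{\mathrm p}_p(X_pY_p)\le st$ for every $s>a\ge 0$ and $t>b\ge 0$, and $\inf_{s>a,\,t>b} st = ab$. (The degenerate case $a=0$, $b=+\infty$, in which the stated inequality would require the convention $0\cdot\infty=+\infty$, does not arise in our applications, where the relevant $\limsup^{\mathrm p}$'s are finite because the underlying sequences converge in probability to constants.)

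For the tightness claim, let $c=\limsup^{\mathrm p}_p X_p<\infty$ and fix $\eps>0$. Pick any $M_0>c$; the reformulation gives $\P(X_p>M_0)\to0$, so there is $p_0$ with $\P(X_p>M_0)<\eps$ for all $p\ge p_0$. Each of the finitely many real-valued random variables $X_1,\dots,X_{p_0-1}$ is a.s.\ finite, so there are $M_1,\dots,M_{p_0-1}<\infty$ with $\P(X_p>M_p)<\eps$ for $p<p_0$; taking $M=\max\{M_0,M_1,\dots,M_{p_0-1}\}$ and using $X_p\ge 0$ (so $|X_p|=X_p$) yields $\sup_p\P(|X_p|>M)<\eps$, i.e.\ $X_p=O_{\mathrm p}(1)$. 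No step here presents a genuine obstacle; the lemma is elementary, and the only points deserving minor care are the reformulation of $\limsup^{\mathrm p}$ as an infimum of tail thresholds and the bookkeeping of $\pm\infty$ in \eqref{limsup-p-sums} and of $0\cdot\infty$ in \eqref{limsup-p-products}.
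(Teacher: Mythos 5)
Your proof is correct. There is nothing in the paper to compare it against: Lemma \ref{lem-limsup-p-identities} is one of the lemmas whose proof the paper omits as ``easy to verify,'' and your argument is exactly the natural one — rewrite $\limsup^{\mathrm{p}}$ as the infimum of tail thresholds $\inf\{s\mid \P(X_p>s)\to 0\}$ (which follows correctly from the paper's definition via $\limsup^{\mathrm{p}}X_p=-\liminf^{\mathrm{p}}(-X_p)$), then use the inclusions $\{X_p+Y_p>s+t\}\subseteq\{X_p>s\}\cup\{Y_p>t\}$ and, for nonnegative variables, $\{X_pY_p>st\}\subseteq\{X_p>s\}\cup\{Y_p>t\}$, plus a finite-index adjustment for tightness. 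Your explicit handling of the degenerate conventions ($\infty-\infty$ in \eqref{limsup-p-sums}, $0\cdot\infty$ in \eqref{limsup-p-products}), which do not occur in the paper's applications, is if anything more careful than the paper itself.
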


\begin{lemma}[Lemma 20 in \cite{Berthier2017StateFunctions}]\label{lem-pseudo-lipschitz-closed-under-inner-product}
    Consider two sequences $f: (\reals^p)^{\ell_1} \rightarrow \reals^p$ and $g: (\reals^p)^{\ell_2} \rightarrow \reals^p$, $p \geq 1$, of uniformly pseudo-Lipschitz functions of order $k$ such that $\|f(\bzero)\|$, $\|g(\bzero)\|$ are bounded over $p$. The sequence of functions $\varphi:(\reals^p)^{\ell_1} \times (\reals^p)^{\ell_2} 
\rightarrow \reals$, $p \geq 1$ defined by $\varphi(\bx,\by) = \langle f(\bx),g(\by) \rangle$ is uniformly pseudo-Lipschitz of order $2k$.
\end{lemma}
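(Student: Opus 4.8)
The plan is to use the standard two-step argument for showing that uniform pseudo-Lipschitz bounds are stable under taking inner products. First I would promote the pseudo-Lipschitz hypotheses on the sequences $f$ and $g$, together with the boundedness of $\|f(\bzero)\|$ and $\|g(\bzero)\|$, to \emph{polynomial growth} bounds of order $k$. Then I would split the increment of $\varphi(\bx,\by) = \langle f(\bx), g(\by)\rangle$ into two telescoped pieces, bound each piece by Cauchy--Schwarz, and control one factor by its Lipschitz modulus and the other by its growth bound; the bookkeeping then produces a pseudo-Lipschitz bound of order $2k$.

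For the growth bound, applying the order-$k$ pseudo-Lipschitz inequality for $f$ between $(\bx_1,\dots,\bx_{\ell_1})$ and the origin gives
\[
\|f(\bx_1,\dots,\bx_{\ell_1})\| \;\le\; \|f(\bzero)\| + C\Big(1 + \sum_{i} \|\bx_i\|^{k-1}\Big)\sum_{i}\|\bx_i\| .
\]
Using the elementary inequalities $a^{k-1}b \le a^k + b^k$ and $a \le 1 + a^k$ (valid for $a,b\ge 0$, $k\ge 1$), the right-hand side is at most $C_1\big(1 + \sum_i \|\bx_i\|^k\big)$ for a constant $C_1$ depending only on $C$, $\sup_p\|f(\bzero)\|$, $\ell_1$, and $k$ --- in particular not on $p$. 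The same computation gives the analogous bound for $g$ with constant $C_1'$ depending on $\ell_2$.

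For the increment, I would write, for inputs $(\bx,\by)$ and $(\bx',\by')$,
\[
\varphi(\bx,\by) - \varphi(\bx',\by') = \langle f(\bx) - f(\bx'),\, g(\by)\rangle + \langle f(\bx'),\, g(\by) - g(\by')\rangle ,
\]
so that by Cauchy--Schwarz $|\varphi(\bx,\by) - \varphi(\bx',\by')| \le \|f(\bx)-f(\bx')\|\,\|g(\by)\| + \|f(\bx')\|\,\|g(\by)-g(\by')\|$. Inserting the order-$k$ Lipschitz bound for $f$ and the order-$k$ growth bound for $g$ into the first summand yields the prefactor
\[
\Big(1 + \sum_i\|\bx_i\|^{k-1} + \sum_i\|\bx_i'\|^{k-1}\Big)\Big(1 + \sum_j\|\by_j\|^{k}\Big)
\]
multiplying $\sum_i\|\bx_i - \bx_i'\|$. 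The key point is that each cross term $\|\bx_i\|^{k-1}\|\by_j\|^{k}$ is dominated by $\|\bx_i\|^{2k-1} + \|\by_j\|^{2k-1}$ via Young's inequality with conjugate exponents $\tfrac{2k-1}{k-1}$ and $\tfrac{2k-1}{k}$ (for $k\ge 2$; the case $k=1$ is trivial), while the low-order solo terms obey $a^{k-1}\le 1 + a^{2k-1}$; hence the prefactor is at most $C_2\big(1 + \sum_i\|\bx_i\|^{2k-1} + \sum_i\|\bx_i'\|^{2k-1} + \sum_j\|\by_j\|^{2k-1}\big)$, and $\sum_i\|\bx_i-\bx_i'\|$ is bounded by $\sum_i\|\bx_i-\bx_i'\| + \sum_j\|\by_j-\by_j'\|$. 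Treating the second summand symmetrically and adding the two bounds yields the order-$2k$ uniform pseudo-Lipschitz inequality for $\varphi$, with a constant depending only on $C$, $\sup_p\|f(\bzero)\|$, $\sup_p\|g(\bzero)\|$, $\ell_1$, $\ell_2$, and $k$.

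The only genuine obstacle is the polynomial juggling in the last step: one must check that the product of the two prefactors collapses into a sum of pure $(2k-1)$-st powers with a $p$-independent constant, being careful that the numbers of arguments $\ell_1,\ell_2$ enter only through the constant and not through the exponents. This is entirely elementary (Young / weighted AM--GM plus $a\le 1+a^m$). I note that the statement is exactly Lemma 20 of \cite{Berthier2017StateFunctions}, so one may alternatively simply invoke it; the sketch above is the self-contained argument.
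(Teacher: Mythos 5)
Your argument is correct: the growth bound obtained from the pseudo-Lipschitz property plus boundedness at $\bzero$, the telescoping of the inner product with Cauchy--Schwarz, and the Young/AM--GM collapse to $(2k-1)$-st powers give exactly the order-$2k$ bound required by Definition \ref{eqdef-uniformly-pseudo-lipschitz}, with constants independent of $p$. The paper itself omits the proof and simply cites Lemma 20 of \cite{Berthier2017StateFunctions}, and your sketch is essentially the standard argument given there, so there is nothing further to reconcile.
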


\begin{lemma}[Lemma 21 in \cite{Berthier2017StateFunctions}]\label{lem-pseudo-lipschitz-closed-under-fixing-arguments}
    Let $t$, $s$, and $k$ be any three positive integers. Consider a sequence (in $p$) of $\bx_1,\ldots,\bx_s \in \reals^p$ such that $\|\bx_j\| \leq c_j$ for some constants $c_j$ independent of $p$, for $j = 1,\ldots,s$, and a sequence (in $p$) of uniformly pseudo-Lipschitz functions $\varphi_p: (\reals^p)^{t+s} \rightarrow \reals$. Then the sequence of functions $\phi_p(\cdot) := \varphi_p(\cdot,\bx_1,\ldots,\bx_s)$ is also uniformly pseudo-Lipschitz of order $k$.
\end{lemma}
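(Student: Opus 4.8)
The plan is to verify the uniform pseudo-Lipschitz inequality (\ref{def-uniformly-pseudo-lipschitz}) for $\{\phi_p\}$ directly, feeding the one for $\{\varphi_p\}$ together with the boundedness hypothesis on the $\bx_j$. First I would unfold the definitions: given two $t$-tuples $(\by_1,\ldots,\by_t)$ and $(\by_1',\ldots,\by_t')$ in $(\reals^p)^t$, we have $\phi_p(\by_1,\ldots,\by_t)-\phi_p(\by_1',\ldots,\by_t') = \varphi_p(\by_1,\ldots,\by_t,\bx_1,\ldots,\bx_s)-\varphi_p(\by_1',\ldots,\by_t',\bx_1,\ldots,\bx_s)$. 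Applying uniform pseudo-Lipschitzness of $\{\varphi_p\}$ of order $k$ to these two $(t+s)$-tuples, the last $s$ slots coincide, so the corresponding increments $\|\bx_j-\bx_j\|=0$ drop out and only $\sum_{i=1}^t\|\by_i-\by_i'\|$ survives in the second factor.

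Second, I would control the first factor, which is $C\bigl(1+\sum_{i=1}^t\|\by_i\|^{k-1}+\sum_{i=1}^t\|\by_i'\|^{k-1}+2\sum_{j=1}^s\|\bx_j\|^{k-1}\bigr)$. By hypothesis $\|\bx_j\|\le c_j$ with each $c_j$ independent of $p$, hence $2\sum_{j=1}^s\|\bx_j\|^{k-1}\le 2\sum_{j=1}^s c_j^{k-1}=:D<\infty$, a constant not depending on $p$ (nor on $m$, which equals $1$ here). Therefore the first factor is at most $C(1+D)\bigl(1+\sum_{i=1}^t\|\by_i\|^{k-1}+\sum_{i=1}^t\|\by_i'\|^{k-1}\bigr)$, and setting $C':=C(1+D)$ yields exactly inequality (\ref{def-uniformly-pseudo-lipschitz}) for $\{\phi_p\}$ at order $k$ with a constant independent of $p$ and $m$.

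I expect no genuine obstacle here; the single point requiring care is that the resulting constant $C'$ be uniform in $p$, which is guaranteed precisely because the bounds $c_j$ do not depend on $p$ (if the $c_j$ were permitted to grow with $p$, the conclusion would fail). Everything else is routine bookkeeping with the additive structure of the pseudo-Lipschitz bound.
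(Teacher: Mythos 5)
Your verification is correct: fixing the last $s$ arguments kills their increments in the pseudo-Lipschitz bound, and the residual $2\sum_j\|\bx_j\|^{k-1}\le 2\sum_j c_j^{k-1}$ is absorbed into the constant since the $c_j$ are independent of $p$, giving the inequality for $\{\phi_p\}$ with $C'=C(1+D)$. The paper omits the proof entirely (it simply cites Lemma 21 of \cite{Berthier2017StateFunctions}), and your argument is exactly the routine verification intended there, so nothing more is needed.
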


\begin{lemma}[Lemma 22 in \cite{Berthier2017StateFunctions}]\label{lem-pseudo-lipschitz-closed-under-random-element}
    Let $t$ be any positive integer. Consider a sequence (in $p$) of uniformly pseudo-Lipschitz functions $\varphi_p : (\reals^p)^t \rightarrow \reals$ of order $k$.
    The sequence of functions $\phi_p: (\reals^p)^t \rightarrow \reals$ such that $\phi_p(\bx_1,\ldots,\bx_t) = \E_{\bz}\left[\varphi_p(\bx_1,\ldots,\bx_{t-1},\bx_t + \tau \bz)\right]$ 
    where $\bz \sim \mathsf{N}(\bzero,\bI_p/p)$ and $\tau \geq 0$ does not depend on $p$, is also uniformly pseudo-Lipschitz of order $k$.
\end{lemma}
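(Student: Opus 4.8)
The plan is to verify the defining inequality of uniform pseudo-Lipschitzness for $\{\phi_p\}$ directly, using Jensen's inequality to pass the absolute value inside the expectation and then exploiting that the perturbation $\tau\bz$ cancels in the increment term. First I would record that $\phi_p$ is well defined and finite: since $\varphi_p$ is pseudo-Lipschitz of order $k$, we have $|\varphi_p(\bx_1,\ldots,\bx_{t-1},\bx_t+\tau\bz)| \le |\varphi_p(\bzero,\ldots,\bzero)| + C\big(1 + \sum_{i<t}\|\bx_i\|^{k-1} + \|\bx_t+\tau\bz\|^{k-1}\big)\big(\sum_{i<t}\|\bx_i\| + \|\bx_t+\tau\bz\|\big)$, a polynomial in $\|\bz\|$, hence integrable against the Gaussian law of $\bz$.

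Next, fix $\bx_1,\dots,\bx_t$ and $\by_1,\dots,\by_t$ in $\reals^p$ and write $\phi_p(\bx_1,\ldots,\bx_t) - \phi_p(\by_1,\ldots,\by_t) = \E_\bz\big[\varphi_p(\bx_1,\ldots,\bx_{t-1},\bx_t+\tau\bz) - \varphi_p(\by_1,\ldots,\by_{t-1},\by_t+\tau\bz)\big]$. Applying $|\E[\cdot]| \le \E[|\cdot|]$ and then the pseudo-Lipschitz bound for $\varphi_p$ with the \emph{same} realization of $\bz$ in both arguments, the increment factor becomes $\sum_{i=1}^{t-1}\|\bx_i - \by_i\| + \|(\bx_t+\tau\bz) - (\by_t+\tau\bz)\| = \sum_{i=1}^t \|\bx_i - \by_i\|$, which is deterministic and pulls out of the expectation. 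One is left with $C\,\big(\sum_{i=1}^t\|\bx_i-\by_i\|\big)\,\E_\bz\big[1 + \sum_{i=1}^{t-1}(\|\bx_i\|^{k-1}+\|\by_i\|^{k-1}) + \|\bx_t+\tau\bz\|^{k-1} + \|\by_t+\tau\bz\|^{k-1}\big]$. It then remains to bound $\E_\bz[\|\bx_t+\tau\bz\|^{k-1}]$ (and the analogous $\by_t$ term) by $c\,(1 + \|\bx_t\|^{k-1})$ with $c$ independent of $p$. I would use $\|\bx_t + \tau\bz\|^{k-1} \le 2^{\max\{k-2,0\}}\big(\|\bx_t\|^{k-1} + \tau^{k-1}\|\bz\|^{k-1}\big)$ (convexity or subadditivity of $u\mapsto u^{k-1}$ according to whether $k\ge 2$ or $k\le 2$), and then control $\E_\bz[\|\bz\|^{k-1}]$ uniformly in $p$: since $\bz\sim\mathsf{N}(\bzero,\bI_p/p)$, $\|\bz\|^2 = \tfrac1p\chi^2_p$ has $\E_\bz[\|\bz\|^2]=1$, so by Jensen (for $k-1\le 2$), or for $k-1>2$ by $\E_\bz[\|\bz\|^{k-1}] \le \E_\bz[(\|\bz\|^2)^m]^{(k-1)/(2m)}$ with any integer $m\ge (k-1)/2$ together with $\E_\bz[(\|\bz\|^2)^m] = p^{-m}\prod_{i=0}^{m-1}(p+2i) \le (1+2m)^m$, one gets $\sup_p\E_\bz[\|\bz\|^{k-1}] \le c_k < \infty$ for a constant depending only on $k$. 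Substituting back and absorbing constants, there is $C'$ depending only on $C$, $k$, $t$, $\tau$ (and not on $p$) with
\[
|\phi_p(\bx_1,\dots,\bx_t) - \phi_p(\by_1,\dots,\by_t)| \le C'\Big(1 + \sum_{i=1}^t\|\bx_i\|^{k-1} + \sum_{i=1}^t\|\by_i\|^{k-1}\Big)\sum_{i=1}^t\|\bx_i-\by_i\|,
\]
which is precisely uniform pseudo-Lipschitzness of order $k$.

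The only point requiring a moment of care — hardly a genuine obstacle — is the uniform-in-$p$ moment bound $\sup_p\E_\bz[\|\bz\|^{k-1}] < \infty$, which hinges on the scaling $\bz\sim\mathsf{N}(\bzero,\bI_p/p)$ that makes $\|\bz\|$ concentrate near $1$ rather than grow with $p$; everything else is bookkeeping with Jensen's inequality and the triangle inequality.
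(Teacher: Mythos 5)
Your proof is correct, and it is essentially the standard argument: the paper itself omits the proof of this lemma, simply citing Lemma 22 of \cite{Berthier2017StateFunctions}, whose proof proceeds by exactly this direct verification — apply the pseudo-Lipschitz bound of $\varphi_p$ with the same realization of $\bz$ so the increment $\tau\bz$ cancels, then bound $\E_{\bz}\left[\|\bx_t+\tau\bz\|^{k-1}\right]$ by $c\left(1+\|\bx_t\|^{k-1}\right)$ using that $\E_{\bz}\left[\|\bz\|^{k-1}\right]$ is bounded uniformly in $p$ under the $\mathsf{N}(\bzero,\bI_p/p)$ normalization. You correctly identified that this uniform moment bound is the only point needing care, and your handling of it (via Jensen and the $\chi^2_p/p$ moments) is sound.
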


\begin{lemma}\label{lem-pseudo-lipschitz-closed-under-composition}
    If $f:\reals^p \rightarrow \reals^n$ is a sequence (in $p$) of uniformly pseudo-Lipschitz functions of order $k$ and $g: \reals^n \rightarrow \reals^m$ is a sequence (in $p$) of uniformly pseudo-Lipschitz functions of order $l$ such that $\|g(\bzero)\|$ is bounded, then $g \circ f: \reals^p \rightarrow \reals^m$ is a sequence of uniformly pseudo-Lipschitz functions of order $kl$.
\end{lemma}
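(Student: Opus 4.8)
The plan is to chain the two defining pseudo-Lipschitz inequalities; the only substance is a bit of polynomial bookkeeping plus controlling the magnitude $\|f(\bx)\|$ in terms of $\|\bx\|$. Let $f,g$ denote generic members of the two sequences, with uniform pseudo-Lipschitz constants $C_f,C_g$. By Definition \ref{eqdef-uniformly-pseudo-lipschitz} with $\ell=1$,
\begin{equation*}
\|f(\bx)-f(\by)\|\le C_f\big(1+\|\bx\|^{k-1}+\|\by\|^{k-1}\big)\|\bx-\by\|,\qquad \|g(\bu)-g(\bv)\|\le C_g\big(1+\|\bu\|^{l-1}+\|\bv\|^{l-1}\big)\|\bu-\bv\|,
\end{equation*}
and composing (apply the $g$-bound at $\bu=f(\bx)$, $\bv=f(\by)$, then the $f$-bound) gives
\begin{equation*}
\|g(f(\bx))-g(f(\by))\|\le C_gC_f\big(1+\|f(\bx)\|^{l-1}+\|f(\by)\|^{l-1}\big)\big(1+\|\bx\|^{k-1}+\|\by\|^{k-1}\big)\|\bx-\by\|.
\end{equation*}

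Next I would bound $\|f(\bx)\|$. Taking $\by=\bzero$ in the $f$-estimate yields $\|f(\bx)\|\le\|f(\bzero)\|+C_f(1+\|\bx\|^{k-1})\|\bx\|$, so — using that $\sup_p\|f(\bzero)\|<\infty$, which holds in each application of this lemma — we get $\|f(\bx)\|\le C'(1+\|\bx\|^{k})$ and hence $\|f(\bx)\|^{l-1}\le C''(1+\|\bx\|^{k(l-1)})$ for constants $C',C''$ independent of $p,n,m$. (This uniform bound on $f$ at the origin is genuinely needed, not on $g$: for $f_p(x)=a_p+x$ with $a_p\to\infty$ and $g_p(u)=u^2$, both are uniformly pseudo-Lipschitz and $g_p(0)=0$, yet the composition $x\mapsto(a_p+x)^2$ fails to be uniformly pseudo-Lipschitz of order $2$.) Substituting, the right-hand side above is at most a constant — depending only on $C_f,C_g,\sup_p\|f(\bzero)\|,k,l$, not on $p,n,m$ — times $\big(1+\|\bx\|^{k(l-1)}+\|\by\|^{k(l-1)}\big)\big(1+\|\bx\|^{k-1}+\|\by\|^{k-1}\big)\|\bx-\by\|$.

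The last step is to expand the product of the two bracketed polynomials: every monomial $\|\bx\|^{a}\|\by\|^{b}$ that appears has $a+b\le k(l-1)+(k-1)=kl-1$, and since $\|\bx\|^{a}\|\by\|^{b}\le\max(\|\bx\|,\|\by\|)^{a+b}\le\|\bx\|^{a+b}+\|\by\|^{a+b}\le 1+\|\bx\|^{kl-1}+\|\by\|^{kl-1}$, the bracketed factor collapses to $C'''\big(1+\|\bx\|^{kl-1}+\|\by\|^{kl-1}\big)$. This is exactly the order-$kl$ pseudo-Lipschitz estimate with a constant independent of $p$ and $m$, which proves the claim.

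The argument is entirely elementary, so there is no real obstacle; the only mildly tedious part is the exponent accounting in the final expansion, and the one conceptual point to get right is that what must be uniformly bounded at the origin is the inner map $f$ (and, as used elsewhere in the paper, this is verified in every application).
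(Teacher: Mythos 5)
Your chaining argument is correct, and since the paper declares this lemma ``easy to verify'' and omits its proof, the composition-plus-exponent-bookkeeping you give is exactly the intended (and essentially the only) argument; the key quantitative step, $\|f(\bx)\|\le \|f(\bzero)\|+C_f(1+\|\bx\|^{k-1})\|\bx\|\le C'(1+\|\bx\|^{k})$, and the final monomial estimate $\|\bx\|^{a}\|\by\|^{b}\le 1+\|\bx\|^{a+b}+\|\by\|^{a+b}$ with $a+b\le kl-1$ are both right.

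Your side remark is also substantively correct and worth stating: as printed, the hypothesis ``$\|g(\bzero)\|$ bounded'' is not what the proof needs, and your counterexample ($f_p(\bx)=a_p\be_1+\bx$ with $a_p\to\infty$, $g_p(\bu)=\|\bu\|^{2}$, so $g_p(\bzero)=0$ yet $g_p\circ f_p$ is not uniformly pseudo-Lipschitz of order $2$) shows the literal statement fails; the hypothesis that makes the lemma true is $\sup_p\|f(\bzero)\|<\infty$. This corrected hypothesis is precisely what the paper verifies at every invocation: boundedness of $\mathsf{prox}[\lambda\rho_p](\bzero)$ in the proof of Proposition \ref{prop-beyond-squared-error}, of $\|e_t(\bzero)\|/\sqrt{p}$ in Appendix \ref{app-proof-of-prop-strongly-convex-loss}, and of $\|\bbeta_0\|$ (via DSN) when composing with $\bx\mapsto\bbeta_0-\bx$. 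Note also that boundedness of $\|g(\bzero)\|$ is irrelevant to the pseudo-Lipschitz estimate for $g\circ f$ itself (it only involves differences of $g$); it matters only if one additionally wants $\|g(f(\bzero))\|$ bounded so that the composition can later be fed into Lemma \ref{lem-pseudo-lipschitz-closed-under-inner-product}. So your proof establishes the lemma in the form in which it is actually used, and flags a hypothesis that should be amended from $g$ to $f$.
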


\begin{lemma}[Stein's lemma \cite{Stein1981EstimationDistribution}]\label{lem-steins-lemma} 
    Let $f:\reals^p \rightarrow \reals$ be ``almost differentiable'' in the sense that there exists measurable $\nabla f: \reals^p \rightarrow \reals^p$ such that, for all $\bdelta \in \reals^p$,
    $$
    f(\bx + \bdelta) - f(\bx) = \int_0^1 \langle \bdelta ,\nabla f(\bx + t \bdelta)\rangle \mathrm{d}t,
    $$
    for almost every $\bx \in \reals^p$. In particular, this is satisfied if $f$ is pseudo-Lipschitz.
    If $(\bz_1,\bz_2) \sim \mathsf{N}(\bzero,\bT \otimes \bI_p/p)$ for some $\bT \in S_+^2$,  then
    \begin{equation}
        \E_{\bz_1,\bz_2}\left[\langle \bz_1, f(\bz_2)\rangle\right] = \frac{T_{12}}p \E[ \mathrm{div}\, f(\bz_2)].
    \end{equation}
\end{lemma}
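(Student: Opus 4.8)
\textbf{Proof proposal for Lemma \ref{lem-steins-lemma}.}
The plan is to reduce the statement to the one‑dimensional Gaussian integration‑by‑parts formula, applied coordinate by coordinate. Write $f=(f_1,\dots,f_p)$ (so that $\langle\bz_1,f(\bz_2)\rangle$ and $\mathrm{div}\,f$ are well defined) and expand $\E_{\bz_1,\bz_2}[\langle\bz_1,f(\bz_2)\rangle]=\sum_{i=1}^p\E[z_{1,i}\,f_i(\bz_2)]$. The structural point is that under the covariance $\bT\otimes\bI_p/p$ the entry $z_{1,i}$ is correlated, among all coordinates of $(\bz_1,\bz_2)$, only with $z_{2,i}$; since the pair is jointly Gaussian, $z_{1,i}$ is therefore \emph{independent} of $(z_{2,j})_{j\neq i}$. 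If $T_{22}=0$ then positive semidefiniteness forces $T_{12}=0$ and $\bz_2=\bzero$ almost surely, so both sides of the claimed identity vanish; assume henceforth $T_{22}>0$ and use the conditional‑mean decomposition $z_{1,i}=\tfrac{T_{12}}{T_{22}}z_{2,i}+\xi_i$, where $\xi_i$ is a centered Gaussian uncorrelated with — hence independent of — all of $\bz_2$. Taking expectations gives $\E[z_{1,i}f_i(\bz_2)]=\tfrac{T_{12}}{T_{22}}\,\E[z_{2,i}f_i(\bz_2)]$.

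Next I would handle $\E[z_{2,i}f_i(\bz_2)]$ by integration by parts along the $i$‑th axis. Conditioning on $(z_{2,j})_{j\neq i}$, the scalar variable $t=z_{2,i}$ is $\normal(0,T_{22}/p)$, and by the ``almost differentiable'' hypothesis the map $t\mapsto f_i(\bz_2)$ is, for almost every fixed value of the other coordinates, the indefinite integral of $\partial_i f_i$ along that line (take $\bdelta=se_i$ in the hypothesis and change variables). The scalar identity $\E[t\,g(t)]=\tfrac{T_{22}}{p}\,\E[g'(t)]$ for $t\sim\normal(0,T_{22}/p)$ then yields $\E[z_{2,i}f_i(\bz_2)\mid (z_{2,j})_{j\neq i}]=\tfrac{T_{22}}{p}\,\E[\partial_i f_i(\bz_2)\mid (z_{2,j})_{j\neq i}]$. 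Averaging over the remaining coordinates and substituting into the previous step gives $\E[z_{1,i}f_i(\bz_2)]=\tfrac{T_{12}}{p}\,\E[\partial_i f_i(\bz_2)]$; summing over $i$ and recognising $\sum_i\partial_i f_i=\mathrm{div}\,f$ produces the asserted formula. The scalar building block here is the usual Stein identity \cite{Stein1981EstimationDistribution}.

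The main obstacle is making the slice‑wise integration by parts rigorous from only the stated hypothesis. One must verify: (i) that the a.e.\ chain‑rule identity in the hypothesis, combined with Fubini's theorem, really delivers the one‑dimensional fundamental theorem of calculus for almost every fixing of $(z_{2,j})_{j\neq i}$; (ii) that $\partial_i f_i$ (and $f_i$ itself) is integrable against the Gaussian density, so the disintegration over $(z_{2,j})_{j\ne i}$ and the scalar identity are licit; and (iii) that the boundary terms in the one‑dimensional integration by parts vanish, which holds once $\partial_i f_i$ has at most polynomial growth, by the Gaussian tail. In all uses of this lemma in the paper $f$ is a proximal operator, hence globally Lipschitz, so $\partial_i f_i$ is bounded and (i)–(iii) are immediate; it therefore suffices to establish the lemma under the pseudo‑Lipschitz assumption, the general ``almost differentiable'' case being handled by the identical argument once the integrability in (ii) is assumed.
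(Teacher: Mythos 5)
The paper gives no proof of this lemma at all---it is stated as a known result and cited directly to Stein (1981)---so there is nothing internal to compare against; your argument is the standard one and it is correct: the coordinatewise conditional decomposition $z_{1,i}=\tfrac{T_{12}}{T_{22}}z_{2,i}+\xi_i$ with $\xi_i$ independent of $\bz_2$, followed by the univariate Gaussian integration-by-parts identity applied slice-wise and summed over $i$, together with the correct observation that $T_{22}=0$ forces $T_{12}=0$ and makes both sides vanish. Your reading of $f$ as vector-valued (despite the statement writing $f:\reals^p\to\reals$) is the intended one, since otherwise $\langle\bz_1,f(\bz_2)\rangle$ and $\mathrm{div}\,f$ do not typecheck, and your flagged caveats---a.e.\ absolute continuity along lines via Fubini from the ``almost differentiable'' hypothesis, and integrability of $f_i$ and $\partial_i f_i$ against the Gaussian, both automatic in the pseudo-Lipschitz case actually used in the paper---are exactly the points a fully rigorous write-up would need to spell out.
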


\begin{lemma}[Tweedie's Formula. Eq.~(2.8) in \cite{Efron2011TweediesBias}]\label{lem-tweedies-formula}  
    Fix $\tau > 0$. Fix $\pi $ any probability measure on $\reals$. Let $y = \beta_0 + \tau z$ where $\beta_0 \sim \pi$, $z \sim \mathsf{N}(0,1)$ independent. Let $p_Y$ denote the density of $y$ with respect to Lebesgue measure. Then
    \begin{equation}\label{tweedies-formula}
        \E_{\beta_0,z}\left[\beta_0 | y\right] = y + \tau^2 \frac{\mathrm{d}}{\mathrm{d} y} \log p_Y(y).
    \end{equation}
    (See \cite{Efron2011TweediesBias} for a discussion of earlier references for this remarkable formula.)
\end{lemma}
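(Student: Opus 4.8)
The plan is to differentiate the marginal density of $y$ under the integral sign and recognize the outcome as a multiple of the posterior mean. Write $\varphi_\tau(t):=(2\pi\tau^2)^{-1/2}e^{-t^2/(2\tau^2)}$, so that the conditional density of $y$ given $\beta_0$ is $p(y\mid\beta_0)=\varphi_\tau(y-\beta_0)$ and $p_Y(y)=\int_\reals \varphi_\tau(y-\beta)\,\pi(\mathrm{d}\beta)$. Since $\varphi_\tau$ is everywhere strictly positive, so is $p_Y$; hence $\log p_Y$ is well defined and $\tfrac{\mathrm{d}}{\mathrm{d}y}\log p_Y(y)=p_Y'(y)/p_Y(y)$ as soon as differentiability of $p_Y$ is established.

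First I would justify the Leibniz rule. We have $\partial_y\varphi_\tau(y-\beta)=-\tfrac{y-\beta}{\tau^2}\varphi_\tau(y-\beta)$, and since $t\mapsto|t|\varphi_\tau(t)$ is bounded on $\reals$ by a finite constant $c_\tau$, the integrand $\partial_y\varphi_\tau(y-\beta)$ is dominated, uniformly over $y$, by the constant $c_\tau/\tau^2$, which is $\pi$-integrable because $\pi$ is a probability measure. Therefore $p_Y$ is differentiable with $p_Y'(y)=-\tfrac{1}{\tau^2}\int_\reals(y-\beta)\varphi_\tau(y-\beta)\,\pi(\mathrm{d}\beta)$; note that no moment assumption on $\pi$ is needed. (Iterating the same argument shows $p_Y\in C^\infty$.)

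Next I would identify the posterior mean. By Bayes' rule, a version of the conditional expectation is $m(y):=\E_{\beta_0,z}[\beta_0\mid y]=\int_\reals\beta\,\tfrac{\varphi_\tau(y-\beta)}{p_Y(y)}\,\pi(\mathrm{d}\beta)$, and this integral converges absolutely for every $y$ since $|\beta|\varphi_\tau(y-\beta)\le c_\tau+|y|(2\pi\tau^2)^{-1/2}$. Dividing the formula for $p_Y'(y)$ by $p_Y(y)$ and keeping $y-\beta$ inside the integral gives
\[
\tau^2\,\frac{p_Y'(y)}{p_Y(y)}=-\Big(y-\int_\reals\beta\,\tfrac{\varphi_\tau(y-\beta)}{p_Y(y)}\,\pi(\mathrm{d}\beta)\Big)=m(y)-y,
\]
and rearranging gives exactly \eqref{tweedies-formula}.

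The only real obstacle is the bookkeeping in the differentiation-under-the-integral step and the verification that the various integrals converge; both are routine precisely because the Gaussian kernel $\varphi_\tau$ and its derivatives decay faster than any polynomial. An entirely equivalent alternative is to observe that $\partial_y$ of the joint ``density'' $\varphi_\tau(y-\beta)\,\pi(\mathrm{d}\beta)$ equals $-\tfrac{y-\beta}{\tau^2}$ times that joint density and then integrate out $\beta$; this is the computation carried out by Efron \cite{Efron2011TweediesBias}.
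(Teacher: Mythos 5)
Your proof is correct. The paper itself gives no argument for this lemma—it is imported by citation to Efron—and your derivation is exactly the standard one (and the one Efron uses): differentiate $p_Y(y)=\int\varphi_\tau(y-\beta)\,\pi(\mathrm{d}\beta)$ under the integral sign, which your uniform bound on $|t|\varphi_\tau(t)$ legitimately justifies, and recognize $\tau^2 p_Y'(y)/p_Y(y)$ as $m(y)-y$. The only nuance worth recording is that when $\pi$ has no first moment, the pointwise integral $m(y)$ you write down converges absolutely, but calling it ``the'' conditional expectation $\E[\beta_0\mid y]$ requires a word (it is the mean of the posterior law, which is what the formula asserts); in every application in the paper $\pi\in\cP_2(\reals)$, so this is immaterial.
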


\begin{lemma}[Gaussian concentration of Lipschitz functions. Theorem 5.6 in \cite{Boucheron2016ConcentrationIndependence}]\label{lem-gaussian-lipschitz-concentration}
    If $\bz \sim \mathsf{N}(\bzero,\bI_p/p)$ and $f:\reals^p \rightarrow \reals$ is an $L$-Lipschitz function, then for all $t > 0$,
    \begin{equation}\label{gaussian-lipschitz-concentration}
        \P_{\bz}\left(\left|f(\bz) - \E_{\bz}[f(\bz)]\right| \geq t \right) \leq e^{-\frac{p}{2L^2} t^2}.
    \end{equation}
\end{lemma}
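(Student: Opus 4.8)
This lemma is the classical Gaussian concentration inequality for Lipschitz functions, quoted here from \cite{Boucheron2016ConcentrationIndependence}, so within the paper it requires no new argument; if one wanted a self-contained proof, the natural route is the log-Sobolev/Herbst argument. First I would strip off the rescaling: writing $\bz=\bg/\sqrt p$ with $\bg\sim\mathsf{N}(\bzero,\bI_p)$, the function $h:=f(\,\cdot\,/\sqrt p)$ is $(L/\sqrt p)$-Lipschitz in the Euclidean norm, so it suffices to prove that for the standard Gaussian and any $M$-Lipschitz $h:\reals^p\to\reals$ one has $\P_{\bg}\!\big(h(\bg)-\E[h(\bg)]\ge t\big)\le e^{-t^2/(2M^2)}$, and then apply this with $M=L/\sqrt p$ to both $h$ and $-h$. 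A routine mollification (convolve $h$ with a smooth bump, then truncate) reduces matters to $h\in C^\infty$ with $\sup_{\bx}\|\nabla h(\bx)\|\le M$, the general Lipschitz case following by a limiting argument that does not increase the Lipschitz constant.

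The heart of the matter is the Herbst argument. I would start from the Gaussian logarithmic Sobolev inequality $\mathrm{Ent}_{\gamma_p}(g^2)\le 2\,\E_{\gamma_p}[\|\nabla g\|^2]$ for the standard Gaussian $\gamma_p$, obtained by tensorizing its one-dimensional version, and apply it with $g=e^{\lambda h/2}$, $\lambda>0$. Using $\|\nabla e^{\lambda h/2}\|^2=\tfrac{\lambda^2}{4}e^{\lambda h}\|\nabla h\|^2\le \tfrac{\lambda^2M^2}{4}e^{\lambda h}$ gives, for $H(\lambda):=\E_{\gamma_p}[e^{\lambda h}]$, the differential inequality $\lambda H'(\lambda)-H(\lambda)\log H(\lambda)\le \tfrac{\lambda^2M^2}{2}H(\lambda)$. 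Dividing by $\lambda^2 H(\lambda)$ recognizes the left side as $\tfrac{d}{d\lambda}\big(\lambda^{-1}\log H(\lambda)\big)$; integrating from $0$, where $\lambda^{-1}\log H(\lambda)\to\E[h]$, yields $\log H(\lambda)\le \lambda\E[h]+\tfrac{\lambda^2M^2}{2}$. A Chernoff bound then gives $\P(h-\E[h]\ge t)\le \exp\!\big(-\lambda t+\tfrac{\lambda^2M^2}{2}\big)$, and optimizing at $\lambda=t/M^2$ gives the one-sided bound $e^{-t^2/(2M^2)}$; running the argument for $-h$ and combining produces the two-sided estimate claimed (the constant is sharp, with no factor $2$, if one instead centers at the median and appeals to Gaussian isoperimetry).

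The only genuinely nontrivial input is the Gaussian log-Sobolev inequality itself — everything downstream of it (the Herbst integration, the Chernoff step, the rescaling, the mollification) is bookkeeping — so in practice, and as the paper does, I would simply cite it (\cite[Theorem~5.6]{Boucheron2016ConcentrationIndependence}, or equivalently invoke the Tsirelson–Ibragimov–Sudakov inequality via Gaussian isoperimetry) rather than reprove it. If a fully self-contained treatment were required, the main obstacle would be establishing the one-dimensional Gaussian log-Sobolev inequality and tensorizing it; a secondary point to handle carefully is the mollification step, ensuring the smooth approximants keep Lipschitz constant $\le M$ and that the mean and the tail probabilities pass cleanly to the limit.
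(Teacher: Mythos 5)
Your proposal is correct and matches the paper's treatment: the paper's entire proof consists of the rescaling (noting that $\sqrt{p}\,\bz \sim \mathsf{N}(\bzero,\bI_p)$ and that $f$ is an $(L/\sqrt{p})$-Lipschitz function of it) followed by a citation of \cite[Theorem 5.6]{Boucheron2016ConcentrationIndependence}, so your Herbst/log-Sobolev sketch is extra detail the paper does not supply. The one caveat, which concerns the paper's statement as much as your write-up, is that the cited theorem is one-sided and the union-bound combination of the two tails yields the two-sided bound only with a factor $2$ — a factor the paper silently reinstates wherever the lemma is actually applied (e.g.\ the bounds of the form $2e^{-p^{1/2}t^2/2}$ in the proof of Proposition \ref{prop-strongly-convex-loss}).
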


\begin{lemma}[Gaussian Poincar\'e inequality. Theorem 3.20 in \cite{Boucheron2016ConcentrationIndependence}] \label{gaussian-poincare-inequality} 
Let $\bz \sim \mathsf{N}(\bzero,\bI_p/p)$ and $\varphi:\reals^p \rightarrow \reals$ be continuous and weakly differentiable. Then, for some universal constant $c$,
\begin{equation}
    \Var[\varphi(\bz)] \leq \frac{c}{p}\E_{\bz}\left[\|\nabla \varphi (\bz) \|^2\right].
\end{equation}
\end{lemma}

\end{appendix}

\end{document}